\newcommand{\Z}{\mathbb{Z}}
\newcommand{\R}{\mathbb{R}}
\renewcommand{\P}{\mathcal{P}}
\newcommand{\M}{\mathscr{M}}
\newcommand{\Graph}{\mathbb{G}}
\newcommand{\las}{\mathop{\mathrm{las}}\nolimits}
\newcommand{\Tr}{\mathop{\mathrm{Tr}}\nolimits}
\newcommand{\pack}{\mathop{\mathrm{pack}}\nolimits}
\newcommand{\cov}{\mathop{\mathrm{cov}}\nolimits}
\newcommand{\supp}{\mathop{\mathrm{supp}}\nolimits}
\newcommand{\topo}{\mathrm{top}}
\def\MR#1{\href{https://www.ams.org/mathscinet-getitem?mr=#1}{MR#1}}
\def\arXiv#1{arXiv:\href{https://arXiv.org/abs/#1}{#1}}
\newtheorem{theorem}{Theorem}[section]
\newtheorem{proposition}[theorem]{Proposition}
\newtheorem{corollary}[theorem]{Corollary}
\newtheorem{lemma}[theorem]{Lemma}
\theoremstyle{remark}
\newtheorem{remark}[theorem]{Remark}
\theoremstyle{definition}
\newtheorem{definition}[theorem]{Definition}
\numberwithin{equation}{section}
\numberwithin{figure}{section}
\numberwithin{table}{section}
\title{Sphere packing bounds via rescaling}
\author{Henry Cohn}
\address{Microsoft Research New England\\ One Memorial
  Drive\\ Cambridge, MA 02142\\ USA}
\email{cohn@microsoft.com}
\author{Andrew Salmon}
\address{Department of Mathematics\\ Massachusetts Institute of
  Technology\\ Cambridge, MA 02139\\ USA}
\email{asalmon@mit.edu}
\date{August 24, 2021}
\thanks{Salmon was supported by an internship at Microsoft Research
  New England.}
\begin{document}

\begin{abstract}
We study the relationship between local and global density for sphere
packings, and in particular the convergence of packing densities in
large, compact regions to the Euclidean limit. We axiomatize key
properties of sphere packing bounds by the concept of a packing bound
function, and we study the special case of sandwich functions, which
give a framework for inequalities given by the Lov\'asz sandwich
theorem.  We show that every packing bound function tends to a
Euclidean limit on rectifiable sets, generalizing the work of
Borodachov, Hardin, and Saff.  Linear and semidefinite programming
bounds yield packing bound functions, and we develop a Lasserre
hierarchy that converges to the optimal sphere packing density.
\end{abstract}

\maketitle

\tableofcontents

\section{Introduction}

Packing problems in Euclidean space involve a balance between local
and global behavior: the constraint that the bodies being packed
cannot overlap is purely local, but dense local configurations do not
always extend to dense global packings. For example, the regular
dodecahedron is the smallest possible Voronoi cell for a
three-dimensional sphere packing \cite{HalesMcLaughlin}, but regular
dodecahedra do not tile space, and in fact rhombic dodecahedra yield
the best overall packing density \cite{Hales,Flyspeck}. This tension
between local and global optimality is called \emph{geometrical
frustration} in physics (see, for example, \cite{SadocMosseri}), and
it is not well understood mathematically. Aside from the case of
packing convex, centrally symmetric bodies in at most two dimensions
\cite[Section~25]{FejesToth}, it is not known how to reduce the
general packing problem to considering a bounded number of bodies, and
there is little reason to believe such a reduction is always possible,
especially in high dimensions. Instead, the global packing density can
be obtained as a limit of the packing densities in bounded but
increasingly large regions \cite{Groemer}, and this limiting process
seems to be essential.  For packing of spheres and covering by bodies
of bounded diameter, the first appearance of this sort of limit
appears to be in a paper by Kolmogorov and Tikhomirov
\cite[Theorem~IX]{kolmogorovtikhomirov}, which proves a limit theorem
for Jordan-measurable sets (see \cite{kolmogorovselected} for an
English translation).

Borodachov, Hardin, and Saff \cite{borodachov2007asymptotics} analyzed
this limit in broad generality. To formulate their result, we need
some notation.  Given a bounded subset $C$ of $\R^d$, let $\pack(C)$
be the largest possible size of a subset $X$ of $C$ such that all
points in $X$ are at distance at least $2$ from each other. In other
words, $\pack(C)$ is the maximum number of unit spheres that can be
centered at points of $C$, if their interiors are not allowed to
overlap. Borodachov, Hardin, and Saff work with subsets that may not
be full-dimensional, such as the surface of a sphere in $\R^d$.
Recall that a Borel subset of $\R^d$ is called \emph{$n$-rectifiable}
if it is the image of a bounded Borel subset of $\R^n$ under a
Lipschitz function from $\R^n$ to $\R^d$.  Let $\mathcal{H}_n$ denote
$n$-dimensional Hausdorff measure on $\R^d$, let $\mathcal{L}_d =
\mathcal{H}_d$ denote $d$-dimensional Lebesgue measure, let $B_r^d(x)$
be the open ball of radius $r$ in $\R^d$ centered at $x$, and let $rC$
be $C$ dilated by a factor of~$r$.

The following theorem is a special case of Theorem~2.2 in
\cite{borodachov2007asymptotics}, after rescaling:

\begin{theorem}[Borodachov, Hardin, and Saff
\cite{borodachov2007asymptotics}]\label{theorem:classicalpackingasymptotics}
Let $1 \le n \le d$, and let $C$ be a compact, $n$-rectifiable subset
of $\R^d$ with $\mathcal{H}_n(C) > 0$. Then the limit
\[
\lim_{r \to \infty} \frac{\pack(rC)}{\mathcal{H}_n(rC)}
\mathcal{H}_n(B_1^n)
\]
exists and equals the sphere packing density in $\R^n$.
\end{theorem}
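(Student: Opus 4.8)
The plan is to reduce to the classical ``flat'' case and then exploit that an $n$-rectifiable set is, at $\mathcal{H}_n$-almost every point and at all sufficiently small scales, indistinguishable from a piece of an affine $n$-plane. Since $\mathcal{H}_n(rC)=r^n\mathcal{H}_n(C)$, the statement is equivalent to
\[
\lim_{r\to\infty}\frac{\pack(rC)\,\mathcal{H}_n(B_1^n)}{r^n}=\delta_n\,\mathcal{H}_n(C),
\]
where $\delta_n$ is the sphere packing density of $\R^n$. I take as known the case of a bounded Jordan-measurable subset $A$ of an affine $n$-plane, $\pack(rA)\,\mathcal{H}_n(B_1^n)/r^n\to\delta_n\mathcal{H}_n(A)$ -- essentially the definition of $\delta_n$ together with the limit theorem of Kolmogorov and Tikhomirov for Jordan-measurable sets (see the introduction and \cite{Groemer}). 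I also use two elementary facts: $\pack$ is monotone under inclusion and subadditive under (even overlapping) finite unions; and a $(1+\epsilon)$-bi-Lipschitz map distorts a separation by at most a factor $1+\epsilon$, hence, via the flat case in the limit, changes $\lim_r\pack(r\,\cdot)/r^n$ by at most a factor $(1+\epsilon)^n$.

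Fix $\epsilon>0$. By standard facts about rectifiable sets, at $\mathcal{H}_n$-almost every $x\in C$ the set $C$ has $n$-density $1$ and an approximate tangent plane $x+P_x$, and near such a point $C$ coincides, outside a subset of relative measure tending to $0$, with the graph of an $\epsilon$-Lipschitz function over a region filling all but small relative measure of a disk $D_{x,\rho}\subset x+P_x$; in particular the orthogonal projection of this near-flat part onto $x+P_x$ and the graph map going back are inverse $(1+O(\epsilon))$-bi-Lipschitz maps. Applying Vitali's covering theorem to $\mathcal{H}_n$ restricted to $C$, I would choose \emph{finitely many} pairwise disjoint balls $B(x_i,\rho_i)$ centred at such points, with each $\rho_i$ as small as needed (but independent of $r$), so that $\sum_i\mathcal{H}_n(D_{x_i,\rho_i})\in[(1-\epsilon)\mathcal{H}_n(C),(1+\epsilon)\mathcal{H}_n(C)]$ and $\mathcal{H}_n(C\cap B(x_i,\rho_i))\le(1+\epsilon)\mathcal{H}_n(D_{x_i,\rho_i})$. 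Write $L$ for the ``leftover'': $C\setminus\bigcup_iB(x_i,\rho_i)$ together with (the closures inside the compact $C$ of) the bits of $C\cap B(x_i,\rho_i)$ on which the flat approximation fails, arranged to be closed and to have arbitrarily small $\mathcal{H}_n$-measure.

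Given this decomposition both inequalities are routine. For the lower bound, inside each $C\cap B(x_i,\rho_i)$ one lifts through the graph map -- which only increases distances -- a near-optimal packing of a Jordan sub-disk $D_i'\subset D_{x_i,\rho_i}$ with $\mathcal{H}_n(D_i')\ge(1-O(\epsilon))\mathcal{H}_n(D_{x_i,\rho_i})$ obtained from inner regularity; discarding from each the fraction $O(1/(r\rho_i))=o(1)$ of points within $2/r$ of the bounding sphere makes the union over the disjoint balls a legitimate packing of $C$, and the flat case applied to the finitely many $D_i'$ gives $\liminf_r\pack(rC)\mathcal{H}_n(B_1^n)/r^n\ge(1-O(\epsilon))\delta_n\mathcal{H}_n(C)$. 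For the upper bound, $\pack(rC)\le\sum_i\pack\bigl(r(C\cap B(x_i,\rho_i))\bigr)+\pack(rL)$, and each $\pack\bigl(r(C\cap B(x_i,\rho_i))\bigr)\le\pack\bigl((1+O(\epsilon))rD_{x_i,\rho_i}\bigr)+\pack(rL_i)$ via the projection, where $L_i$ is the non-flat bit; summing the flat case over the finitely many disks produces the term $(1+O(\epsilon))\delta_n\mathcal{H}_n(C)$. Sending $\epsilon\to0$ in either inequality would finish the proof, \emph{provided} the leftover contributes nothing.

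That last proviso is the main obstacle: one must show $\pack(rL)=o(r^n)$ (uniformly in the construction), and this cannot be deduced merely from $\mathcal{H}_n(L)$ being small. Packing numbers are not controlled by Hausdorff measure -- a set of $\mathcal{H}_n$-measure zero can have $\pack(r\,\cdot)\asymp r^n$ when it is spread out at fine scales; a countable dense subset of a segment already does, and for that set the conclusion of the theorem genuinely fails, which is exactly why the hypothesis that $C$ be \emph{compact} is indispensable. What rescues the argument is that $L$ is closed and is a subset of the $n$-rectifiable set $C$, hence is itself a closed $n$-rectifiable set of small finite $\mathcal{H}_n$-measure; by Federer's theorem that the $n$-dimensional Minkowski content of a closed $n$-rectifiable set equals its Hausdorff measure, the covering numbers of $L$ at scale $1/r$, and therefore $\pack(rL)$ which they dominate, are $O\bigl(\mathcal{H}_n(L)\,r^n\bigr)$. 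Letting first $r\to\infty$ and then $\epsilon\to0$ (so $\mathcal{H}_n(L)\to0$) then collapses both the $\liminf$ and the $\limsup$ of $\pack(rC)\,\mathcal{H}_n(B_1^n)/r^n$ to $\delta_n\mathcal{H}_n(C)$. The genuinely delicate step hidden in this plan is engineering the decomposition so that $L$ really is a \emph{closed} set whose $\mathcal{H}_n$-measure stays small -- one must use compactness of $C$ and the fine structure of rectifiable sets with care here, rather than cavalierly discarding a set of measure zero.
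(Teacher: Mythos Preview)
Your plan is sound and close in spirit to what the paper does, but the packaging differs. The paper does not prove Theorem~\ref{theorem:classicalpackingasymptotics} in isolation; it deduces it from the general Theorem~\ref{poppypackingbound} for arbitrary packing bound functions, specialized to $A=\pack$. For the rectifiable step, the paper bypasses your local tangent-plane/graph picture entirely and invokes Federer's Lemma~3.2.18 (stated here as Lemma~\ref{epsiloncharts}): there exist compact $C_i\subset\R^n$ and $(1+\varepsilon)$-bi-Lipschitz maps $\psi_i\colon C_i\to C$ with disjoint images exhausting $C$ up to $\mathcal{H}_n$-measure zero. This gives the bi-Lipschitz charts globally and cleanly, so the lower bound is a direct application of the Lipschitz axiom to $\psi_i^{-1}$ on finitely many charts, and the upper bound uses $\psi_i$ plus the union bound. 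Your Vitali-at-density-points approach reaches the same decomposition but with more to verify; in particular, the assertion that near a density-$1$ point with an approximate tangent plane $C$ ``coincides with the graph of an $\epsilon$-Lipschitz function'' is not literally true pointwise and needs to be routed through a Lipschitz-graph decomposition first, which is essentially what Lemma~\ref{epsiloncharts} packages.

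Where the two arguments genuinely coincide is the leftover: you correctly identify that the crux is $\pack(rL)=o(r^n)$ for a \emph{closed} rectifiable $L$ of small measure, and that this follows from $\mathcal{M}_n(L)=\mathcal{H}_n(L)$. The paper isolates this as Proposition~\ref{minkzero} (a regularity lemma comparing $\limsup A(rC)/r^n$ with $\limsup A(rC')/r^n$ when $\mathcal{M}_n(C')>\mathcal{M}_n(C)-\varepsilon$) together with Lemma~\ref{minkeq}, which supplies $\mathcal{M}_n=\mathcal{H}_n$ on compact subsets. So your diagnosis of the delicate step is exactly right; the paper's version just states it once at the level of Minkowski content so it can be reused for every packing bound function, not only $\pack$.
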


In this paper, we begin by extending
Theorem~\ref{theorem:classicalpackingasymptotics} to a class of
functions we call packing bound functions. The function $\pack$
defined above will be a packing bound function, as will various upper
bounds for $\pack$, including linear and semidefinite programming
bounds. Our extension thus analyzes how these sphere packing bounds
behave when applied to increasingly large regions of space. For
comparison, Hardin, Saff, and Vlasiuk \cite{HSV} analyze conditions on
short-range interactions between particles that suffice to obtain
fairly general asymptotics. Our approach differs conceptually in
studying not just optimization problems over particle configurations,
but also related quantities such as bounds.

To define packing bound functions, we use the following notation. Let
$d(x,y) = |x-y|$ be the standard $\ell^2$ metric on $\R^d$ (the two
uses of $d$ are not ambiguous in practice), and let $\overline{C}$
denote the closure of $C$. For two subsets $C$ and $C'$ of $\R^d$, the
distance $d(C, C')$ is the infimum over all distances $d(x,x')$ such
that $x \in C$ and $x' \in C'$ (with $d(C, C')=\infty$ if $C$ or $C'$
is empty), and $C(\varepsilon) = \{x \in \R^d: d(x,C) < \varepsilon\}$
is the $\varepsilon$-neighborhood of $C$.  We say a function $\psi
\colon C \to C'$ is \emph{distance-increasing} if $d(\psi(x),\psi(y))
\ge d(x,y)$ for all $x,y \in C$. Let $\mathcal{B}_d$ be the set of all
bounded Borel subsets of $\R^d$, and let $\mathcal{B} = \bigcup_d
\mathcal{B}_d$.

\begin{definition}\label{defpackingbound}
A \emph{packing bound function} is a map $A$ from $\mathcal{B}$ to
$[0,\infty)$ such that the following axioms hold for all elements $C$
and $C'$ of $\mathcal{B}$ and $\varepsilon>0$:
\begin{enumerate}
    \item (Sphere bound) If $C$ is a nonempty set contained in the
      interior of a ball of radius $1$, then $A(C) = 1$.
    \item (Lipschitz inequality) If there exists a distance-increasing
      function $\psi \colon C \to C'$, then $A(C) \le A(C')$.
    \item (Union axiom) If $C$ and $C'$ are subsets of the same
      ambient space $\R^d$ and $d(C, C') \ge 2$, then $A(C \cup C') =
      A(C) + A(C')$.
    \item (Mesh axiom) If $C \subseteq C'(\varepsilon)$, then
      $A(\frac{1}{1+\varepsilon}C) \le A(C')$.
\end{enumerate}
\end{definition}

One example is the function $\pack$. For another, let $\cov(C)$ be the
size of the smallest covering of $C$ by open balls of radius $1$, with
sphere centers not necessarily in $C$.  In other words, for a subset
$C$ of $\R^d$, $\cov(C)$ is the infimum of $|X|$ over subsets $X
\subseteq \R^d$ such that $C \subseteq X(1)$. We will show that both
$\pack$ and $\cov$ are packing bound functions, and that they are the
extreme packing bound functions. More precisely, under the partial
ordering $A_1 \le A_2$ defined by $A_1(C) \le A_2(C)$ for all $C$,
every packing bound function $A$ satisfies $\pack \le A \le \cov$. The
inequality $\pack \le A$ explains the name ``packing bound function.''

Between $\pack$ and $\cov$, we construct a range of packing bound
functions, most notably the linear programming bound. Each of these
functions can be viewed in two ways, as an upper bound for packings or
as a lower bound for coverings,\footnote{This duality is less
interesting than it might sound, because any number less than $1$ is a
trivial lower bound for covering, and any number greater than $1$ is a
trivial upper bound for packing.} and each packing bound function has
consistent large-scale limiting behavior:

\begin{theorem}\label{theorem:packingboundlimit}
Let $1 \le n \le d$, let $C$ be a compact, $n$-rectifiable subset of
$\R^d$ with $\mathcal{H}_n(C) > 0$, and let $A$ be any packing bound
function. Then the limit
\[
\lim_{r \to \infty} \frac{A(rC)}{\mathcal{H}_n(rC)}
\]
exists and depends only on $A$ and $n$, rather than the choice of $C$
or $d$.
\end{theorem}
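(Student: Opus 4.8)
The plan is to prove that $\lim_{r\to\infty}A(rC)/r^n=\delta_A(n)\,\mathcal H_n(C)$, where $\delta_A(n):=\lim_{r\to\infty}A(rQ_n)/r^n$ and $Q_n:=[0,1]^n\subset\R^n$; since $\mathcal H_n(rC)=r^n\mathcal H_n(C)$ this is exactly the claimed limit, and $\delta_A(n)$ visibly depends only on $A$ and $n$. The first task is to show the cube limit exists, and here the key idea is that the union axiom can be \emph{upgraded to subadditivity} across a separating hyperplane. Indeed, if $C_1\subseteq\{x\cdot u\le c\}$ and $C_2\subseteq\{x\cdot u>c\}$ for a unit vector $u$, then the map fixing $C_1$ and translating $C_2$ by $tu$ is distance-increasing for every $t\ge0$ (since $|x-(y+tu)|^2=|x-y|^2-2t(x-y)\cdot u+t^2\ge|x-y|^2$ when $(x-y)\cdot u\le0$), and for $t$ large it moves $C_2$ to distance $\ge2$ from $C_1$; so by the Lipschitz inequality, the union axiom, and translation invariance of $A$ (itself immediate from the Lipschitz inequality), $A(C_1\cup C_2)\le A(C_1)+A(C_2)$.

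Slicing $[0,r]^n$ by the hyperplanes $\{x_i=js\}$ then writes $rQ_n$ as a union of $\lceil r/s\rceil^n$ boxes, each with all sides $\le s$ and hence contained in a translate of $sQ_n$, so $A(rQ_n)\le\lceil r/s\rceil^nA(sQ_n)$; in particular $A(sQ_n)/s^n$ is bounded, and dividing by $r^n$ and letting $r\to\infty$ gives $\limsup_rA(rQ_n)/r^n\le s^{-n}A(sQ_n)$ for every $s$, hence $\le\liminf_sA(sQ_n)/s^n$, so the limit $\delta_A(n)=\inf_sA(sQ_n)/s^n$ exists. Combining this slicing bound with the lower bound obtained by packing $\approx r^n\mathcal L_n(B)/(s+2)^n$ pairwise $2$-separated copies of $sQ_n$ into $rB$ and invoking the union axiom yields $\lim_rA(rB)/r^n=\delta_A(n)\,\mathcal L_n(B)$ for every axis-parallel box $B$; and feeding the maps $y\mapsto rg\bigl(y/((1\mp\eta)r)\bigr)$ into the Lipschitz inequality in both directions shows that for any $(1\pm\eta)$-bi-Lipschitz $g$ defined on such a box $B$ one has $A(rg(B))/r^n\to\delta_A(n)\,\mathcal L_n(B)$ up to a factor of $(1\pm\eta)^{O(n)}$.

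For a general compact $n$-rectifiable $C\subset\R^d$ with $\mathcal H_n(C)>0$ and a fixed small $\eta>0$, I would use the structure theory of rectifiable sets, as in Borodachov, Hardin, and Saff \cite{borodachov2007asymptotics}, to write $C$ up to an $\mathcal H_n$-null set as a disjoint at most countable union of sets $g_l(Q^{(l)})$ with $Q^{(l)}\subset\R^n$ a cube and $g_l$ a $(1\pm\eta)$-bi-Lipschitz map, keep finitely many indices $l=1,\dots,K$ with $\sum_{l\le K}\mathcal H_n(g_l(Q^{(l)}))\ge(1-\eta)\mathcal H_n(C)$, and set $C_{\mathrm{bad}}:=C\setminus\bigcup_{l\le K}g_l(Q^{(l)})$, so $\mathcal H_n(C_{\mathrm{bad}})\le\eta\mathcal H_n(C)$. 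For the lower bound the sets $rg_l(Q^{(l)})$ are pairwise disjoint compacta, hence pairwise $2$-separated once $r$ is large, and the union axiom together with the box case gives $A(rC)\ge\sum_{l\le K}A(rg_l(Q^{(l)}))\ge(1-o(1))(1-\eta)^{O(n)}\delta_A(n)r^n\mathcal H_n(C)$, whence $\liminf_rA(rC)/r^n\ge\delta_A(n)\mathcal H_n(C)$ after letting $r\to\infty$ and then $\eta\to0$. For the upper bound I would use a ``spreading'' version of the hyperplane trick: the map on $rC$ that sends each piece $rg_l(Q^{(l)})$ by $y\mapsto(1+\eta)r\,g_l^{-1}(y/r)$ followed by a translation into a region $\Omega_l$, with the $\Omega_l$ chosen mutually far apart and far from $rC$, and that is the identity on $rC_{\mathrm{bad}}$, is distance-increasing (within a piece by the bi-Lipschitz estimate, and across any two of the $K+1$ parts because the images lie at distance $\gg\operatorname{diam}(rC)$), so the Lipschitz and union axioms give $A(rC)\le\sum_{l\le K}A\bigl((1+\eta)rQ^{(l)}\bigr)+A(rC_{\mathrm{bad}})$. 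The sum is $\le(1+o(1))(1+\eta)^{O(n)}\delta_A(n)r^n\mathcal H_n(C)$ by the box case, and $A(rC_{\mathrm{bad}})\le\cov(rC_{\mathrm{bad}})\le(\Theta_n+o(1))\,\eta\,r^n\mathcal H_n(C)$ by a standard covering estimate for rectifiable sets (a consequence of \cite{borodachov2007asymptotics} or of Kolmogorov and Tikhomirov's theorem); letting $r\to\infty$ and then $\eta\to0$ gives $\limsup_rA(rC)/r^n\le\delta_A(n)\mathcal H_n(C)$.

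The crux, and the step I expect to be the main obstacle, is obtaining \emph{any} subadditive control on $A$: the union axiom only provides additivity for sets at distance $\ge2$, whereas every natural way of cutting a region into cube-like pieces — and, for curved $C$, of replacing pieces by flat ones — produces pieces that touch, and the mesh axiom's tolerance is far too weak to absorb the resulting overlaps. Everything hinges on the observation that translating two hyperplane-separated pieces apart is distance-increasing, so the Lipschitz inequality converts ``hyperplane-separated'' into the hypothesis of the union axiom; once this and its spreading variant are available, the remaining ingredients — the box case, the bi-Lipschitz comparison, and the reduction of a rectifiable set to flat cubes plus a negligible remainder — are routine modulo the structure theory of rectifiable sets, which can be quoted from \cite{borodachov2007asymptotics}.
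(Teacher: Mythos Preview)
Your overall architecture matches the paper's: establish the cube limit $\delta_A(n)$, push it to nice subsets of $\R^n$, then transfer to rectifiable $C\subset\R^d$ via a bi-Lipschitz decomposition. Two remarks and one genuine gap.

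First, the ``crux'' you isolate---subadditivity across a separating hyperplane---is already the paper's \emph{union bound} $A(C\cup C')\le A(C)+A(C')$, proved there for arbitrary $C,C'$ (reduce to disjoint by monotonicity, then send $C$ and $C'$ to far-apart isometric copies; the resulting map is distance-increasing since cross-distances jump above $\operatorname{diam}(C\cup C')$). So your hyperplane argument is correct but is a special case of a simpler general statement. Your ``spreading map'' for the rectifiable upper bound is a pleasant variant: it replaces the paper's chain of mesh/density/nested-union steps by a single distance-increasing map, and in particular never touches the mesh axiom.

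The gap is in the decomposition you quote. Federer's Lemma~3.2.18 (the paper's Lemma~\ref{epsiloncharts}, and what Borodachov--Hardin--Saff actually use) produces bi-Lipschitz images of \emph{compact subsets} $C_l\subset\R^n$, not cubes. This matters in two places. For the lower bound you invoke ``the box case'' on the pieces, but for a compact $C_l$ with empty interior you have no box inside it, and you have not proved $\liminf_rA(rC_l)/r^n\ge\delta_A(n)\,\mathcal L_n(C_l)$; the paper obtains this via the Besicovitch covering lemma together with Proposition~\ref{minkzero}, and that proposition uses the mesh axiom. For the bad-set estimate you need $\cov(rC_{\mathrm{bad}})\lesssim\eta\,r^n$, which is a Minkowski-content bound on $\overline{C_{\mathrm{bad}}}$; but if the good pieces $\psi_l(C_l)$ have empty relative interior in $C$ (as happens when $C_l$ is, say, a fat Cantor set), then $C_{\mathrm{bad}}$ is dense in $C$ and $\mathcal H_n(\overline{C_{\mathrm{bad}}})=\mathcal H_n(C)$, so the estimate fails outright. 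With genuine cubes both problems disappear (boundaries are $\mathcal H_n$-null and the box case applies), but obtaining a cube decomposition requires more than you cite---for instance passing through the $C^1$-submanifold characterization of rectifiability and chart neighborhoods---and is not ``as in'' \cite{borodachov2007asymptotics}. The paper sidesteps all of this by keeping the compact pieces from Federer and using the mesh axiom, via Proposition~\ref{minkzero}, to control the difference between $C$ and a compact subset of nearly equal Minkowski content; your mesh-free route would need the stronger structure theorem made explicit.
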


In fact, we will prove a somewhat more general result in
Theorem~\ref{poppypackingbound}, with the same hypotheses as
Theorem~2.2 in \cite{borodachov2007asymptotics}. For the special case
of the function $\cov$, we learned after completing this work that
convergence was independently proved at the same time by Anderson,
Reznikov, Vlasiuk, and White \cite{ARVW}.

Because $\pack$ is the smallest packing bound function, the limit
\[
\lim_{r \to \infty} \frac{A(rC)}{\mathcal{H}_n(rC)}
\mathcal{H}_n(B_1^n)
\]
is always an upper bound for the sphere packing density in $\R^n$. One
example is the linear programming bound of Cohn and Elkies
\cite{cohn2003new}, which we obtain as a limit of a Delsarte linear
program for bounded regions.  Our formulation of this limit appears to
be new, although some of the techniques are related to work of Cohn,
de Courcy-Ireland, and Zhao \cite{cohn2014sphere,cohn2018gaussian}.

Semidefinite programming hierarchies are an important generalization
of linear programming bounds, which give upper bounds for the size of
the largest packing in any compact set by the work of de Laat and
Vallentin \cite{de2015semidefinite}.  Many of the best bounds known
for spherical codes \cite{bachoc2008new} and sphere packing
\cite{cohn2003new,cohn2002new} come from linear and semidefinite
programming relaxations along these lines.  To formulate these
semidefinite programming hierarchies as packing bound functions, we
will define a discrete Lasserre hierarchy. Unlike the topological
Lasserre hierarchy from \cite{de2015semidefinite}, the discrete
hierarchy uses only the graph structure of distances in the space $C
\subseteq \R^d$ and not its topology.  This construction yields a
packing bound function, and thus abstractly, there must exist a
Euclidean limit.  We show that this Euclidean limit agrees with the
limit of the topological approach from \cite{de2015semidefinite}, and
we formulate it as an optimization problem on Euclidean space.  We
also prove that as $t \to \infty$, the $t$-th level of the hierarchy
approaches the optimal density of sphere packing in Euclidean space.

Although $3$-point bounds for spherical codes have been known since
the work of Bachoc and Vallentin \cite{bachoc2008new}, obtaining
$3$-point bounds or other semidefinite programming bounds that refine
the linear programming bound in $\R^d$ has been a longstanding open
problem.  Formulating these refinements was one of the main
motivations of the present work and will be used in subsequent work
with David de Laat \cite{threepointbounds2021} to give new upper
bounds on sphere packing in all dimensions up through $12$ in which
the exact answer is not known (i.e., not $1$, $2$, $3$, or $8$
dimensions).

\section{Packing bound functions}
\label{sec:pbf}

In this section, $A$ will denote an arbitrary packing bound function,
and $C$ and $C'$ will be elements of $\mathcal{B}$.  We begin by
examining some of the consequences of
Definition~\ref{defpackingbound}.  The sphere bound, Lipschitz, and
union axioms are quite natural, while the mesh axiom is a little more
subtle. Intuitively, it says that a packing can be modified slightly
so that the sphere centers are forced to live on a mesh if that mesh
is sufficiently fine.  Later, we will give some consequences of this
axiom to packings when $C'$ is a dense subset as well as packings on a
countable nested union.  These applications use the following special
case of the mesh axiom, which we refer to as the continuity property:

\begin{proposition}[Uniform continuity]\label{proposition:continuity}
For every $\varepsilon>0$, there exists some $\delta > 0$ such that
for all $C$ and $C'$, if $C \subseteq C'(\delta)$, then $A(C) \le
A((1+\varepsilon)C')$.
\end{proposition}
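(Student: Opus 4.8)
The plan is to extract the statement directly from the mesh axiom by choosing $\delta$ in terms of $\varepsilon$. Starting from $C \subseteq C'(\delta)$, the mesh axiom (applied with $C'$ in the role of its $C'$ and $\delta$ in the role of its $\varepsilon$) gives $A(\frac{1}{1+\delta}C) \le A(C')$. This is not quite what we want: we have scaled down $C$, whereas the conclusion scales up $C'$. So first I would rescale. Writing $D = \frac{1}{1+\delta}C$, the identity map exhibits a distance-increasing function from $D$ into $C$ is the wrong direction; instead, multiplication by $1+\delta$ is a distance-increasing map from $D$ onto $C$, so by the Lipschitz inequality $A(D) \le A(C)$, which is also the wrong direction. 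The clean route is to apply the mesh axiom to the dilate $(1+\varepsilon)C'$ rather than to $C'$ itself.

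Concretely, I would argue as follows. Suppose $C \subseteq C'(\delta)$. Then $(1+\varepsilon)C \subseteq \bigl((1+\varepsilon)C'\bigr)\bigl((1+\varepsilon)\delta\bigr)$, since dilation by $1+\varepsilon$ scales the neighborhood radius by the same factor. Apply the mesh axiom with the set $(1+\varepsilon)C'$ and the parameter $(1+\varepsilon)\delta$: this yields
\[
A\!\left(\tfrac{1}{1+(1+\varepsilon)\delta}\,(1+\varepsilon)C\right) \le A\bigl((1+\varepsilon)C'\bigr).
\]
Now I want the left-hand side to equal $A(C)$, i.e.\ I want $\tfrac{1+\varepsilon}{1+(1+\varepsilon)\delta} = 1$, which forces $\delta = \tfrac{\varepsilon}{1+\varepsilon}$. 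With that choice of $\delta$ (which is positive whenever $\varepsilon > 0$), the left-hand side is exactly $A(C)$, and we conclude $A(C) \le A((1+\varepsilon)C')$, as desired. Note $\delta$ depends only on $\varepsilon$, not on $C$ or $C'$, which is the uniformity asserted.

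The only subtlety — and it is minor — is the first inclusion $(1+\varepsilon)C \subseteq \bigl((1+\varepsilon)C'\bigr)\bigl((1+\varepsilon)\delta\bigr)$: one checks that if $d(x, C') < \delta$ then $d\bigl((1+\varepsilon)x, (1+\varepsilon)C'\bigr) < (1+\varepsilon)\delta$, which is immediate since scaling $\R^d$ by a positive constant scales all distances by that constant. I expect no real obstacle here; the entire argument is a single application of the mesh axiom after arranging the dilation factors so that the scaling constant on the left collapses to $1$. The one thing to double-check is that $C, C' \in \mathcal{B}$ implies their dilates are in $\mathcal{B}$, which holds because dilation preserves boundedness and Borel measurability.
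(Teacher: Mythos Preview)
Your argument is correct and matches the paper's approach exactly: the paper simply asserts that $\delta = \varepsilon/(1+\varepsilon)$ works by the mesh axiom, and your computation is precisely the verification of that claim---rescale by $1+\varepsilon$ so that the hypothesis becomes $(1+\varepsilon)C \subseteq \bigl((1+\varepsilon)C'\bigr)(\varepsilon)$, then apply the mesh axiom with parameter $\varepsilon$ to obtain $A(C) \le A((1+\varepsilon)C')$.
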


Specifically, it follows from the mesh axiom that we can take $\delta
= \varepsilon/(1+\varepsilon)$, but the specific choice of $\delta$ is
not important in many applications.

Another important special case of the defining properties is
monotonicity, which we obtain by applying the Lipschitz inequality to
the identity function:

\begin{proposition}[Monotonicity] If $C \subseteq C'$, then $A(C) \le A(C')$.
\end{proposition}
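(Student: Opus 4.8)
The plan is to verify that the inclusion map witnesses the hypothesis of the Lipschitz inequality, so that monotonicity falls out immediately. Concretely, suppose $C \subseteq C'$. Define $\psi \colon C \to C'$ by $\psi(x) = x$; this makes sense precisely because every point of $C$ lies in $C'$. For any $x, y \in C$ we have $d(\psi(x), \psi(y)) = d(x, y) \ge d(x, y)$, so $\psi$ is distance-increasing (indeed distance-preserving).

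Having produced a distance-increasing map $\psi \colon C \to C'$, the second axiom of Definition~\ref{defpackingbound} applies and yields $A(C) \le A(C')$, which is the claim. One should check the mild bookkeeping point that $C$ and $C'$ need not a priori lie in a common ambient $\R^d$ for the Lipschitz inequality to be invoked, but since the statement of monotonicity already posits $C \subseteq C'$, they do; and the inclusion $C \subseteq C'$ also guarantees $C \in \mathcal{B}$ whenever $C' \in \mathcal{B}$, since a subset of a bounded Borel set is bounded, and $C$ is assumed Borel as an element of $\mathcal{B}$.

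There is essentially no obstacle here: the entire content is recognizing that the identity on a subset is a distance-increasing map, so monotonicity is a one-line corollary of the Lipschitz axiom. If anything, the only thing worth flagging is that this argument does not use the sphere bound, union, or mesh axioms at all, so monotonicity is a consequence of the Lipschitz inequality in isolation.
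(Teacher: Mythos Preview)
Your proof is correct and matches the paper's approach exactly: the paper simply notes that monotonicity is obtained ``by applying the Lipschitz inequality to the identity function,'' which is precisely what you do with the inclusion map $\psi(x)=x$. Your additional bookkeeping remarks are fine but unnecessary for this one-line observation.
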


It follows that every packing bound function is an upper bound for
$\pack$:

\begin{corollary} \label{cor:packlowerbd}
For every $C$, $\pack(C) \le A(C)$.
\end{corollary}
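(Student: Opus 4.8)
The plan is to deduce $\pack(C) \le A(C)$ directly from the four axioms, using the sphere bound and the union axiom together with an induction on the size of a packing in $C$. First I would fix a maximal configuration $X \subseteq C$ with $|X| = \pack(C)$ whose points are pairwise at distance at least $2$; it suffices to show $A(C) \ge |X|$. Since $A$ is monotone (the Proposition just proved), $A(C) \ge A(X)$, so I may replace $C$ by the finite set $X$ and it suffices to prove $A(X) \ge |X|$ whenever $X$ is a finite set of points pairwise at distance at least $2$.

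Now I would argue by induction on $|X|$. If $|X| = 0$ then $A(X) \ge 0$ trivially, and in fact one should note $A(\emptyset)=0$ follows from the union axiom applied to $C = C' = \emptyset$ (so this degenerate case is fine; for $|X|\ge 1$ the bound $A(X)\ge 1$ will come from the sphere bound). If $|X| = 1$, then $X = \{x\}$ lies in the interior of the ball $B_1^d(x)$ of radius $1$, so the sphere bound gives $A(X) = 1$. For the inductive step, pick any point $x \in X$, set $X' = X \setminus \{x\}$, and observe that because every point of $X'$ is at distance at least $2$ from $x$, we have $d(\{x\}, X') \ge 2$. The union axiom then yields $A(X) = A(\{x\} \cup X') = A(\{x\}) + A(X') = 1 + A(X')$, and by the inductive hypothesis $A(X') \ge |X'| = |X| - 1$, so $A(X) \ge |X|$, completing the induction. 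Combining this with $A(C) \ge A(X) = A(\pack\text{-optimal set}) \ge \pack(C)$ gives the corollary.

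There is essentially no serious obstacle here: the only mild subtlety is handling the empty set and the singleton base case cleanly, and making sure the union axiom is being applied with sets in a common ambient space $\R^d$ (which holds automatically since $X, X' \subseteq C \subseteq \R^d$). One could alternatively phrase the whole argument as a single application of the union axiom by splitting $X$ into two pieces — but since any two distinct points of $X$ are at distance $\ge 2$, an arbitrary bipartition $X = X_1 \sqcup X_2$ also satisfies $d(X_1, X_2) \ge 2$, so induction on cardinality via removing one point is the simplest route and avoids any bookkeeping. I expect the author's proof to be just a sentence or two noting that the union and sphere axioms immediately force $A$ on a $2$-separated finite set to be at least its cardinality, hence $A \ge \pack$ by monotonicity.
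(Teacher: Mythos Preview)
Your proof is correct and matches the paper's approach: the paper's one-sentence argument (``$A(X)=|X|$ by the union and sphere bound properties, and $A(X) \le A(C)$ by monotonicity'') is exactly the induction you spell out explicitly. Your inductive step in fact yields $A(X)=|X|$ exactly, since the union axiom gives an equality and $A(\{x\})=1$ exactly.
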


\begin{proof}
If $X$ is a subset of $C$ such that all points in $X$ are at distance
at least two from each other, then $A(X)=|X|$ by the union and sphere
bound properties, and $A(X) \le A(C)$ by monotonicity.
\end{proof}

Invariance of packing bound functions under isometry also follows
immediately from the Lipschitz inequality:

\begin{proposition}[Isometry invariance]
If $C$ and $C'$ are isometric, then $A(C) = A(C')$.
\end{proposition}

We also observe a useful and basic inequality, which we call the union
bound:

\begin{proposition}[Union bound]
If $C$ and $C'$ are subsets of the same ambient space, then $A(C \cup
C') \le A(C) + A(C')$.
\end{proposition}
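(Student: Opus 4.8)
The plan is to reduce the general case to the disjoint case handled by the union axiom, by artificially separating $C$ and $C'$ inside a larger ambient space. The idea is standard: if $C \cup C'$ lives in $\R^d$, embed two copies of $\R^d$ as parallel affine subspaces of $\R^{d+1}$ that are more than distance $2$ apart, place $C$ in one copy and $C'$ in the other, and apply the union axiom to the resulting disjoint union. The Lipschitz inequality (applied to the obvious distance-increasing inclusion maps) will then let me compare $A$ of the original sets with $A$ of their shifted copies.

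Concretely, I would work in $\R^{d+1} = \R^d \times \R$ and set $C_0 = C \times \{0\}$ and $C_1' = C' \times \{h\}$ for some height $h > 2$ (any such $h$ works, so one may as well take $h = 3$). Then $d(C_0, C_1') \ge h > 2$, so the union axiom gives $A(C_0 \cup C_1') = A(C_0) + A(C_1')$. The inclusion $x \mapsto (x,0)$ is an isometry of $C$ onto $C_0$, and likewise $x \mapsto (x,h)$ is an isometry of $C'$ onto $C_1'$, so by isometry invariance $A(C_0) = A(C)$ and $A(C_1') = A(C')$. Finally, the map $C \cup C' \to C_0 \cup C_1'$ sending $x \in C$ to $(x,0)$ and (for points of $C'$ not already handled) $x \in C'$ to $(x,h)$ is distance-increasing: adding a coordinate to each point can only increase pairwise distances. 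Hence the Lipschitz inequality yields $A(C \cup C') \le A(C_0 \cup C_1')$, and combining the three relations gives $A(C\cup C') \le A(C) + A(C')$.

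The one point requiring a little care is that the map $C \cup C' \to C_0 \cup C_1'$ just described need not be well-defined as stated when $C$ and $C'$ overlap, since a point of $C \cap C'$ has two candidate images; one resolves this by simply declaring every point of $C \cup C'$ to go to its copy in $C_0$ if it lies in $C$ and otherwise to its copy in $C_1'$, i.e.\ breaking the tie arbitrarily in favor of $C$. One must then check that this piecewise definition is still distance-increasing, which holds because for any $x, y \in C \cup C'$ the two images agree with $x$ and $y$ in the first $d$ coordinates and differ (if at all) only in the last coordinate, so $d(\text{image of }x, \text{image of }y) \ge d(x,y)$ regardless of how the tie was broken. I do not expect any genuine obstacle here; the whole argument is a routine application of the union axiom together with Lipschitz and isometry invariance.
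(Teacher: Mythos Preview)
Your proof is correct and follows essentially the same strategy as the paper: construct a distance-increasing map from $C \cup C'$ into a configuration where the two pieces are separated by distance greater than $2$, then apply the Lipschitz inequality together with the union axiom and isometry invariance. The only cosmetic difference is that the paper first reduces to disjoint $C$, $C'$ via monotonicity and then translates one set far away within the same $\R^d$, whereas you pass to $\R^{d+1}$ and place the copies on parallel hyperplanes; your Pythagorean verification of the distance-increasing property is arguably a touch cleaner, but the two arguments are interchangeable.
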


\begin{proof}
First, by monotonicity we reduce to the case when $C$ and $C'$ are
disjoint. Now consider the union of copies of $C$ and $C'$ placed very
far apart (further than $2$ plus the diameter of $C \cup C'$), which
we call $D$.  Then the map from $C \cup C'$ to $D$ sending $C$ to the
copy of $C$ in $D$ and $C'$ to the copy of $C'$ in $D$ is
distance-increasing, and therefore $A(C \cup C') \le A(D) = A(C) +
A(C')$ by the Lipschitz and union properties.
\end{proof}

\begin{proposition}[Density] \label{proposition:density}
If $\varepsilon>0$ and $C' \subseteq C$ is dense, then
\[
A(C) \le A((1+\varepsilon)C').
\]
\end{proposition}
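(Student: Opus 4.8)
The plan is to deduce this from the mesh axiom (axiom (4) of Definition~\ref{defpackingbound}), or equivalently from the uniform continuity property in Proposition~\ref{proposition:continuity}, by exploiting that a dense subset is close to the whole set in the Hausdorff sense. First I would fix $\varepsilon > 0$ and set $\delta = \varepsilon/(1+\varepsilon)$, so that $\delta/(1-\delta) = \varepsilon$, matching the constant in the mesh axiom. The key observation is that since $C'$ is dense in $C$, every point of $C$ lies within distance $\delta'$ of a point of $C'$ for any $\delta' > 0$ we like; in particular $C \subseteq C'(\delta')$. I want to apply the mesh axiom with the roles played by a suitably scaled copy of $C$ and the set $C'$, so that the scaling factor $\frac{1}{1+\delta'}$ comes out to something I can absorb into $1+\varepsilon$.

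Concretely, here is the order of steps. (i) Observe $C \subseteq C'(\delta')$ for every $\delta' > 0$, since $C' \subseteq C$ is dense. (ii) Apply the mesh axiom to the pair consisting of $C$ (in the role of ``$C$'') and $C'$ (in the role of ``$C'$''): since $C \subseteq C'(\delta')$, we get $A(\frac{1}{1+\delta'}C) \le A(C')$. But this is the wrong direction — it bounds a \emph{shrunken} copy of $C$ by $A(C')$, whereas we want to bound $A(C)$ by $A$ of an \emph{enlarged} copy of $C'$. (iii) The fix is to rescale: replace $C'$ by $(1+\varepsilon)C'$ throughout. Note $(1+\varepsilon)C' \subseteq (1+\varepsilon)C$ and $(1+\varepsilon)C'$ is dense in $(1+\varepsilon)C$, so step (i) gives $(1+\varepsilon)C \subseteq \big((1+\varepsilon)C'\big)(\delta')$ for every $\delta' > 0$. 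Now apply the mesh axiom to the pair $(1+\varepsilon)C$ and $(1+\varepsilon)C'$: we obtain $A\!\left(\frac{1}{1+\delta'}(1+\varepsilon)C\right) \le A\big((1+\varepsilon)C'\big)$. (iv) Since $\delta'$ is arbitrary, choose it small enough that $\frac{1+\varepsilon}{1+\delta'} \ge 1$; then by monotonicity (applied via the inclusion $C \subseteq \frac{1+\varepsilon}{1+\delta'}C$, which is a genuine inclusion because $\frac{1+\varepsilon}{1+\delta'} \ge 1$ and $0 \in \overline{C}$ — actually one should be careful here) we can't quite conclude directly. Let me instead choose $\delta'$ so that $\frac{1+\varepsilon}{1+\delta'}$ is as close to $1+\varepsilon$ as we like while staying below it, and then pass to a limit, or — cleaner — simply note that monotonicity gives $A(C) \le A\!\left(\frac{1+\varepsilon}{1+\delta'}C\right)$ provided $1 \le \frac{1+\varepsilon}{1+\delta'}$, which holds for $\delta' \le \varepsilon$. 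Chaining: $A(C) \le A\!\left(\frac{1+\varepsilon}{1+\delta'}C\right) = A\!\left(\frac{1}{1+\delta'}(1+\varepsilon)C\right) \le A\big((1+\varepsilon)C'\big)$.

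The one genuine subtlety — and the step I expect to need the most care — is the use of monotonicity in the form $A(C) \le A(\lambda C)$ for $\lambda \ge 1$. This requires that $C \subseteq \lambda C$, which is \emph{not} automatic for an arbitrary Borel set; it holds when $C$ is star-shaped about the origin, but in general we only know $C \in \mathcal{B}$. However, monotonicity combined with the Lipschitz inequality does give what we need: the scaling map $x \mapsto \lambda x$ is distance-increasing when $\lambda \ge 1$, hence the Lipschitz inequality directly yields $A(C) \le A(\lambda C)$ with no hypothesis on $C$ at all. So I would invoke the Lipschitz inequality (with $\psi(x) = \frac{1+\varepsilon}{1+\delta'}x$) rather than monotonicity at that step. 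With that correction the argument is clean: fix $\varepsilon > 0$, pick $\delta' \in (0, \varepsilon]$, use density to get $(1+\varepsilon)C \subseteq \big((1+\varepsilon)C'\big)(\delta')$, apply the mesh axiom, and finish with the Lipschitz inequality for the dilation $\frac{1+\varepsilon}{1+\delta'} \ge 1$.
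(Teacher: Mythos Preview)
Your proof is correct and follows essentially the same route as the paper: the paper invokes the uniform continuity property (Proposition~\ref{proposition:continuity}) directly, which is just the mesh axiom repackaged, while you unpack the mesh axiom by hand. One simplification: if you take $\delta' = \varepsilon$ in your final chain, the factor $\frac{1+\varepsilon}{1+\delta'}$ becomes $1$ and the Lipschitz step is unnecessary---this is exactly the choice $\delta = \varepsilon/(1+\varepsilon)$ the paper makes in Proposition~\ref{proposition:continuity}.
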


\begin{proof}
Every dense subset $C'$ satisfies $C \subseteq C'(\delta)$ for each
$\delta>0$.  By continuity, we can choose $\delta > 0$ so that $C
\subseteq C'(\delta)$ implies $A(C) \le A((1+\varepsilon)C')$.
\end{proof}

Consider the $n$-cube $I^n = [0,1]^n$.  By the union bound, $A(2I^n)
\le 2^n A(I^n)$, and so $A(2^{k}I^n) / 2^{kn}$ is a weakly decreasing
sequence in $k$.  On the other hand, it is bounded below by $1/2^n$
because $A(2^kI^n) \ge \pack(2^kI^n) \ge (1+2^{k-1})^n$ when $k \ge 1$
from using the subset $X = 2^kI^n \cap 2\Z^n$. Therefore, $A(2^{k}I^n)
/ 2^{kn}$ must converge to some positive number.

\begin{definition} \label{def:Euclidean}
For a packing bound function $A$, let $\delta_{A,n} = \lim_{k \to
  \infty} A(2^k I^n) / 2^{kn}$.  We say that a packing bound function
satisfies the \emph{Euclidean bound} if every bounded Borel subset $C
\subseteq \R^n$ satisfies $A(C) \ge \delta_{A,n} \mathcal{L}_n(C)$.
\end{definition}

Because $A \ge \pack$, the quantity $\delta_{A,n}
\mathcal{L}_n(B_1^n)$ is always an upper bound for the sphere packing
density $\delta_{\pack,n} \mathcal{L}_n(B_1^n)$ in $\R^n$. Similarly,
the inequality $A \le \cov$ from Proposition~\ref{prop:covering} will
imply that $\delta_{A,n} \mathcal{L}_n(B_1^n)$ is a lower bound for
the sphere covering density $\delta_{\cov,n} \mathcal{L}_n(B_1^n)$ in
$\R^n$.

\begin{remark} \label{rem:Euclidean}
By Proposition~\ref{proposition:density}, it suffices to prove the
Euclidean bound when $C$ is compact. Specifically, if the Euclidean
bound holds for $\overline{C}$, then for every $\varepsilon>0$,
\[
A(C) \ge
A\mathopen{}\left(\frac{\overline{C}}{1+\varepsilon}\right)\mathclose{}
\ge \delta_{A,n}
\mathcal{L}_n\mathopen{}\left(\frac{\overline{C}}{1+\varepsilon}\right)\mathclose{}
\ge \delta_{A,n} \frac{\mathcal{L}_n(C)}{(1+\varepsilon)^n}.
\]
Because neighborhoods $C(\varepsilon)$ of bounded Borel sets $C$ are
Jordan-measurable,\footnote{To see why, notice that this amounts to
the claim that the boundary of $C(\varepsilon)$ has Lebesgue measure
zero. The function $x \mapsto d(x,C)$ is a Lipschitz function, and so
it is differentiable almost everywhere by Rademacher’s theorem.  Thus,
it suffices to show that the set of points $x$ where $x \mapsto
d(x,C)$ is differentiable and $d(x,C) = \varepsilon$ has measure
zero. For such points, $\lim_{\delta \to 0}
\mathcal{L}_n(C(\varepsilon) \cap
B_\delta^n(x))/\mathcal{L}_n(B_\delta^n(x)) = 1/2$, and the result
follows by the Lebesgue density theorem.} it further suffices to prove
the Euclidean bound for Jordan-measurable sets.  Specifically,
\[
A(C) \ge
A\mathopen{}\left(\frac{C(\varepsilon)}{1+\varepsilon}\right)\mathclose{}
\]
by the mesh axiom, and
\[
\lim_{\varepsilon \to 0} \mathcal{L}_n(C(\varepsilon)) =
\mathcal{L}_n(C)
\]
when $C$ is compact, by Theorem~3.2.39 in \cite{federer2014geometric}.
Moreover, one can show that the Euclidean bound for $C$ is
automatically satisfied if $C$ is capable of tiling Euclidean space
with overlap of measure $0$.
\end{remark}

\subsection{Packing and covering as packing bound functions}

We say a \emph{packing} in $C$ is a subset $X$ of $C$ such that all
points in $X$ are at distance at least $2$ from each other. Recall
that $\pack(C)$ is the size of the largest packing in $C$.

\begin{proposition} \label{prop:pbfcheck}
The function $\pack$ is a packing bound function that satisfies the
Euclidean bound.
\end{proposition}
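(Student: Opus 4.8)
The plan is to verify the four axioms of Definition~\ref{defpackingbound} for $\pack$ in turn and then to deduce the Euclidean bound by averaging a periodic packing over translations. A preliminary observation is that $\pack(C)<\infty$ for every bounded $C$: the pairwise-disjoint open unit balls centered at the points of a packing in $C$ all lie in the bounded set $C(1)$, so there are at most $\mathcal{L}_d(C(1))/\mathcal{L}_d(B_1^d)$ of them, and in particular the supremum defining $\pack(C)$ is attained. The \emph{sphere bound} then holds because any two points of an open ball of radius~$1$ are at distance strictly less than~$2$, so a nonempty set inside such a ball has $\pack=1$. The \emph{Lipschitz inequality} holds because a distance-increasing $\psi\colon C\to C'$ carries a packing $X\subseteq C$ injectively (distinct points map at least $2$ apart) onto a packing $\psi(X)\subseteq C'$ of the same size. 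For the \emph{union axiom}, if $d(C,C')\ge 2$ then $C\cap C'=\emptyset$; juxtaposing maximum packings of $C$ and $C'$ gives $\pack(C\cup C')\ge\pack(C)+\pack(C')$, and splitting any packing $Y$ in $C\cup C'$ as $(Y\cap C)\sqcup(Y\setminus C)$ with $Y\setminus C\subseteq C'$ gives the reverse inequality.

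The \emph{mesh axiom} is the first point requiring a short computation. Assume $C\subseteq C'(\varepsilon)$ and let $X\subseteq\frac1{1+\varepsilon}C$ be a maximum packing, so $(1+\varepsilon)X\subseteq C\subseteq C'(\varepsilon)$. Choosing, for each $x\in X$, a point $\phi(x)\in C'$ with $d\bigl((1+\varepsilon)x,\phi(x)\bigr)<\varepsilon$, the triangle inequality yields, for distinct $x,y\in X$,
\[
d\bigl(\phi(x),\phi(y)\bigr)>(1+\varepsilon)\,d(x,y)-2\varepsilon\ge 2(1+\varepsilon)-2\varepsilon=2,
\]
so $\phi(X)$ is a packing in $C'$ with $|\phi(X)|=|X|$, whence $\pack\bigl(\frac1{1+\varepsilon}C\bigr)\le\pack(C')$.

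For the Euclidean bound, I would first build, for each $k$, a periodic packing of $\R^n$ of density close to $\delta_{\pack,n}$. Since $k\mapsto\pack(2^kI^n)/2^{kn}$ is weakly decreasing with limit $\delta_{\pack,n}$, we have $\pack(2^kI^n)\ge\delta_{\pack,n}\,2^{kn}$ for all $k$. Fixing a maximum packing $X_k\subseteq 2^kI^n$ and setting $\Lambda_k=\bigcup_{\ell\in L_k}(X_k+\ell)$ with $L_k=(2^k+2)\Z^n$, the translated boxes $2^kI^n+\ell$ are pairwise at distance $\ge 2$, so $\Lambda_k$ is an $L_k$-periodic packing of $\R^n$ with $|X_k|$ points per fundamental cell. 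For a bounded Borel set $C\subseteq\R^n$ and a fundamental domain $F$ of $L_k$, unfolding the sum over $\Lambda_k$ and using translation invariance of $\mathcal{L}_n$ gives
\[
\frac1{\mathcal{L}_n(F)}\int_F\bigl|(\Lambda_k+t)\cap C\bigr|\,dt=\frac{|X_k|\,\mathcal{L}_n(C)}{(2^k+2)^n}\ge\delta_{\pack,n}\Bigl(\frac{2^k}{2^k+2}\Bigr)^{\!n}\mathcal{L}_n(C),
\]
so for some $t$ the packing $(\Lambda_k+t)\cap C$ in $C$ has at least $\delta_{\pack,n}\bigl(2^k/(2^k+2)\bigr)^n\mathcal{L}_n(C)$ points; hence $\pack(C)\ge\delta_{\pack,n}\bigl(2^k/(2^k+2)\bigr)^n\mathcal{L}_n(C)$, and letting $k\to\infty$ gives the Euclidean bound.

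The axiom checks are routine. The step I expect to need the most care is the averaging identity for the Euclidean bound — both choosing the period lattice with a width-$2$ safety gap so that the juxtaposition is genuinely a packing, and carrying out the unfolding $\int_F\sum_{\lambda\in\Lambda_k}\mathbf 1_C(\lambda+t)\,dt=\sum_{x\in X_k}\int_{\R^n}\mathbf 1_C(x+s)\,ds=|X_k|\,\mathcal{L}_n(C)$ — though nothing here is deep. Alternatively, Remark~\ref{rem:Euclidean} reduces the Euclidean bound to compact (even Jordan-measurable) $C$, which one could handle by exhausting $C$ from inside with small translated cubes; I prefer the averaging route since it applies directly to every bounded Borel set.
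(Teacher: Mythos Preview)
Your proposal is correct and essentially matches the paper's proof: the axiom checks are the same (including the identical mesh-axiom computation), and the Euclidean bound is obtained by the same averaging-over-translations argument. The only cosmetic difference is that you first periodize $X_k$ with a width-$2$ gap and average over a fundamental domain, whereas the paper keeps a single copy of $X_k$ and integrates $\#(X_k\cap(C+t))$ over the bounding box $[-r,2^k+r]^n$; both unfoldings yield $|X_k|\,\mathcal{L}_n(C)$ and the same pigeonhole conclusion.
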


\begin{proof}
Among the axioms, the sphere bound, Lipschitz inequality, and union
axiom are immediate. For the mesh axiom, suppose $C \subseteq
C'(\varepsilon)$, and let $X$ be a packing in
$\frac{1}{1+\varepsilon}C$. Then $(1+\varepsilon)X$ is a subset of $C$
with all distances distance at least $2 + 2 \varepsilon$. Each $x \in
X$ is within $\varepsilon$ of some point $y(x)$ in $C'$ because $C
\subseteq C'(\varepsilon)$, and so $Y = \{ y(x) : x \in X\}$ is a
packing in $C'$. It now follows that $A(C') \ge |Y|$ by
Corollary~\ref{cor:packlowerbd}, and thus $A(C') \ge
A(\frac{1}{1+\varepsilon}C)$, as desired.

All that remains is to prove the Euclidean bound, for which we use an
averaging argument. Let $C \subseteq \R^n$ be a Borel set with $C
\subseteq [-r,r]^n$, and choose a packing $X_k$ in $2^k I^n$ for each
$k$ such that
\[
\lim_{k \to \infty} \frac{|X_k|}{2^{kn}} = \delta_{\pack,n}.
\]
We will obtain a lower bound for $\pack(C)$ by averaging over
intersections of translates of $C$ with $X_k$. To do so, consider the
cube $R = [-r,2^k+r]^n$ containing $2^k I^n$. Because $C \subseteq
[-r,r]^n$,
\[
\int_R \#(X_k \cap (C+t)) \, dt = \int_{\R^n} \#(X_k \cap (C+t)) \, dt
= |X_k| \mathcal{L}_d(C).
\]
It follows that for some $t \in R$,
\[
\#(X_k \cap (C+t)) \ge \frac{|X_k| \mathcal{L}_d(C)}{\mathcal{L}_d(R)}
= \frac{|X_k| \mathcal{L}_d(C)}{(2r+2^k)^n},
\]
and therefore
\[
\pack(C) \ge \#((X_k-t) \cap C) \ge \frac{|X_k| }{(2r+2^k)^n}
\mathcal{L}_n(C).
\]
Taking the limit as $k \to \infty$ completes the proof.
\end{proof}

Recall that $\cov(C)$ is the smallest covering of $C$ by open balls of
radius $1$, with sphere centers not necessarily in $C$.  That is, for
$C$ in the ambient space $\R^d$, $\cov(C)$ is the infimum of $|X|$
over subsets $X \subseteq \R^d$ such that $C \subseteq X(1)$.  This
infimum is independent of the ambient space chosen: if $C$ is
contained in a proper subspace $\R^{d'}$ of $\R^d$, then the points in
$X$ can be orthogonally projected to $\R^{d'}$. We call such sets $X$
\emph{coverings} of $C$ with open balls of radius $1$.

\begin{proposition} \label{prop:covering}
The function $\cov$ is a packing bound function.  It is the maximal
packing bound function among all packing bound functions, and it does
not satisfy the Euclidean bound.
\end{proposition}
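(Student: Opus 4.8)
The plan is to verify the four axioms of Definition~\ref{defpackingbound} for $\cov$, then establish maximality, and finally exhibit a set where the Euclidean bound fails.

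For the axioms: the sphere bound is clear since a single ball of radius $1$ covers any set inside an open ball of radius $1$, and a nonempty set cannot be covered by zero balls. For the Lipschitz inequality, suppose $\psi\colon C\to C'$ is distance-increasing and $X'$ is a covering of $C'$; I want to pull it back to a covering of $C$ of size at most $|X'|$. The natural move is, for each $x'\in X'$, to pick a point of $C$ (if any) mapping into $B_1^d(x')$ and cover it—but that does not obviously work, since $\psi$ need not be surjective and a single ball could catch many preimages spread far apart. The cleaner approach: since $\psi$ is distance-increasing, it is injective, so $\psi^{-1}\colon \psi(C)\to C$ is a well-defined function that is \emph{distance-decreasing}, i.e.\ $1$-Lipschitz. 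Extend $\psi^{-1}$ to a $1$-Lipschitz map $\Psi\colon \overline{C'}\to\R^{d}$ (for instance coordinatewise via the classical Lipschitz extension theorem, accepting a harmless constant, or more carefully using Kirszbraun to keep the constant $1$). Then for a covering $X'$ of $C'$ by unit balls, $\{\Psi(x') : x'\in X'\}$ covers $C$: any $y\in C$ has $\psi(y)\in B_1^d(x')$ for some $x'$, whence $|\Psi(x')-y| = |\Psi(x')-\Psi(\psi(y))|\le |x'-\psi(y)|<1$. Hence $\cov(C)\le|X'|$, giving $\cov(C)\le\cov(C')$. For the union axiom with $d(C,C')\ge 2$: any covering of $C\cup C'$ by open unit balls splits into balls meeting $C$ and balls meeting $C'$ (no ball can meet both, since two points within a common open unit ball are at distance $<2$), so $\cov(C\cup C')\ge\cov(C)+\cov(C')$; the reverse inequality is the union bound, which holds for every packing bound function once the other axioms are in place, or directly by concatenating optimal coverings. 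For the mesh axiom, if $C\subseteq C'(\varepsilon)$ and $X'$ is an optimal covering of $C'$, then rescaling $X'$ by $1/(1+\varepsilon)$ covers $\tfrac{1}{1+\varepsilon}C'(\varepsilon)$ by balls of radius $1/(1+\varepsilon)$; since every point of $C'(\varepsilon)$ is within $\varepsilon$ of $C'$ and then within $1$ of some $x'\in X'$, the point of $\tfrac{1}{1+\varepsilon}C'(\varepsilon)$ is within $(1+\varepsilon)/(1+\varepsilon)=1$ of the rescaled center, so $\tfrac{1}{1+\varepsilon}X'$ is a covering of $\tfrac{1}{1+\varepsilon}C$; thus $\cov(\tfrac{1}{1+\varepsilon}C)\le|X'|=\cov(C')$.

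For maximality, let $A$ be any packing bound function and $C\in\mathcal{B}_d$; I must show $A(C)\le\cov(C)$. Take an optimal covering $X=\{x_1,\dots,x_m\}$ of $C$ by open unit balls, so $C=\bigcup_i (C\cap B_1^d(x_i))$ and $m=\cov(C)$. By the union bound (valid for every packing bound function), $A(C)\le\sum_i A(C\cap B_1^d(x_i))$, and each set $C\cap B_1^d(x_i)$ is contained in an open ball of radius $1$, so it is either empty—contributing $A(\emptyset)$, which is $0$ since the sphere bound combined with the union axiom forces $A(\emptyset)=0$—or nonempty, contributing exactly $1$ by the sphere bound. Hence $A(C)\le m=\cov(C)$.

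For the failure of the Euclidean bound: by definition $\delta_{\cov,n}=\lim_k \cov(2^kI^n)/2^{kn}$, which is the sphere \emph{covering} density divided by $\mathcal{L}_n(B_1^n)$, a quantity strictly greater than $1/\mathcal{L}_n(B_1^n)$ in every dimension (unit balls cannot tile, so covering is strictly wasteful; already $n=1$ gives $\delta_{\cov,1}=1$, but the gap is strict for $n\ge 2$, and it suffices to take any such $n$). The Euclidean bound would demand $\cov(C)\ge\delta_{\cov,n}\mathcal{L}_n(C)$ for all bounded Borel $C\subseteq\R^n$. To break it, take $C=B_1^n(0)$, a single open unit ball: then $\cov(C)=1$, while $\delta_{\cov,n}\mathcal{L}_n(C)=\delta_{\cov,n}\mathcal{L}_n(B_1^n)$ equals the sphere covering density, which is $>1$ for $n\ge 2$. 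So $\cov$ violates the Euclidean bound, completing the proof. (The main obstacle in all of this is the Lipschitz inequality—verifying that a distance-increasing map pulls coverings back to coverings requires the Lipschitz-extension step rather than a naive preimage argument; the rest is bookkeeping with the union bound and the sphere bound.)
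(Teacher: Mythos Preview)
Your argument follows the same approach as the paper's proof, and most of the pieces are correct, but the Lipschitz step has a small gap. You extend $\psi^{-1}$ to a $1$-Lipschitz map $\Psi\colon \overline{C'}\to\R^d$ and then evaluate $\Psi(x')$ for each covering center $x'\in X'$. However, the definition of $\cov$ allows the centers $x'$ to lie anywhere in the ambient space, not just in $\overline{C'}$, so $\Psi(x')$ may be undefined. The fix is exactly what the paper does: invoke Kirszbraun to extend $\psi^{-1}$ to all of $\R^{d'}$, not merely to $\overline{C'}$. Relatedly, the parenthetical suggestion of extending coordinatewise and ``accepting a harmless constant'' is not harmless here: if the Lipschitz constant of $\Psi$ exceeds~$1$, then $|\Psi(x')-y|\le L\,|x'-\psi(y)|$ no longer forces $|\Psi(x')-y|<1$, and the pulled-back set need not be a covering. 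Kirszbraun (or another extension preserving the constant~$1$) is genuinely required.

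Two smaller remarks: in the mesh-axiom verification the phrase ``by balls of radius $1/(1+\varepsilon)$'' is a slip (the rescaled centers give unit balls, as your subsequent computation correctly shows), and the aside ``$\delta_{\cov,1}=1$'' should read $\delta_{\cov,1}\mathcal{L}_1(B_1^1)=1$; neither affects the substance, since you correctly locate the failure of the Euclidean bound in dimensions $n\ge 2$ using $C=B_1^n(0)$, just as the paper does.
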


\begin{proof}
First, we check the sphere bound, union axiom, and mesh axiom.  The
function $\cov$ trivially satisfies the sphere bound. For the union
axiom, if $C$ and $C'$ are separated by distance at least $2$, then
there is no open ball of radius $1$ that intersects both $C$ and $C'$,
and so covering $C \cup C'$ is the same as covering $C$ and $C'$
separately.  For the mesh axiom, suppose $C \subseteq
C'(\varepsilon)$, and let $X$ be a covering of $C'$ with open balls of
radius $1$.  Then $(1+\varepsilon)$-balls centered at $X$ cover
$C'(\varepsilon)$, and so these $(1+\varepsilon)$-balls cover $C$.
Therefore $\frac{1}{1+\varepsilon} X$ gives a covering of
$\frac{1}{1+\varepsilon} C$ by balls of radius $1$.

The Lipschitz inequality requires a little more argument. Let $\psi
\colon C \to C'$ be a distance-increasing map, with $C' \subseteq
\R^{d'}$ and $C \subseteq \R^{d}$, and suppose $X'$ is a covering of
$C'$ with open balls of radius $1$. We would like to obtain a covering
of $C$ of the same size. To do so, note that the function $\psi$ is
injective, and its inverse $\psi^{-1}$ on $\psi(C)$ is a Lipschitz
function with Lipschitz constant $1$. By the Kirszbraun theorem, we
can extend $\psi^{-1}$ to a Lipschitz function $\varphi \colon \R^{d'}
\to \R^d$, again with Lipschitz constant $1$. If $x$ is a point in
$C$, then $d(\psi(x),x')<1$ for some $x' \in X'$, from which it
follows that $d(x,\varphi(x'))<1$. Thus, $\{ \varphi(x') : x' \in
X'\}$ is a covering of $C$, as desired.

To see that $\cov$ is the largest packing bound function, suppose $A$
is some other packing bound function, and write $C \subseteq
\bigcup_{i=1}^N B_i$ where the sets $B_i$ are open balls of radius
$1$.  Then $A(C) \le \sum_{i=1}^N A(B_i) = N$ by monotonicity and the
union bound.

The Euclidean bound fails for $d>1$, because $\mathcal{L}_d(B^d_{1})
\delta_{\cov,d} > 1$, while $\cov(B^d_1) = 1$. Note that the
inequality $\mathcal{L}_d(B^d_{1}) \delta_{\cov,d} > 1$ simply says
that the sphere covering density in $\R^d$ is strictly greater than
$1$, which holds because spheres cannot tile space when $d>1$. (If the
covering density were $1$, then a compactness argument would show that
closed unit balls cover space with only measure-zero overlap. However,
a point in space can be on the boundary of at most two such balls,
which does not yield a covering locally.)
\end{proof}

\subsection{Consequences of uniform continuity}

The idea of Proposition~\ref{proposition:continuity} can be restated
as follows.  If $C$ is an arbitrary set, we can choose any
sufficiently fine mesh $C'$ and stipulate that our sphere centers must
live on $C'$ as long as we dilate the mesh by a small amount.  In the
remainder of this section, we give some consequences of this
continuity property.

\begin{proposition}[Nested union]
Let $C_1 \subseteq C_2 \subseteq \dots$ be Borel sets such that
$\bigcup_i C_i$ is bounded.  Then for each $\varepsilon > 0$, there
exists a $j$ such that
\[
A\mathopen{}\left(\bigcup_i C_i\right)\mathclose{} \le
A((1+\varepsilon)C_j).
\]
\end{proposition}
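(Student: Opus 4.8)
The plan is to exploit uniform continuity (Proposition~\ref{proposition:continuity}) together with the fact that $\bigcup_i C_i$ is a bounded set that the $C_j$ approximate from inside in the Hausdorff-neighborhood sense. Write $C = \bigcup_i C_i$. Given $\varepsilon > 0$, uniform continuity supplies a $\delta > 0$ (explicitly $\delta = \varepsilon/(1+\varepsilon)$ from the mesh axiom) such that whenever $C \subseteq C_j(\delta)$, we get $A(C) \le A((1+\varepsilon)C_j)$. So the entire proposition reduces to the purely set-theoretic claim: for every $\delta > 0$, there exists $j$ with $C \subseteq C_j(\delta)$.

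First I would cover the bounded set $C$ by finitely many balls of radius $\delta/2$, say with centers $p_1, \dots, p_N$, using total boundedness of $\overline{C}$ (which is compact since $C$ is bounded and we may work in its closure — though note $A$ is only applied to the $C_j$ and to $C$ itself, so no closure issues arise for $A$). Discard any ball $B_{\delta/2}(p_k)$ that does not meet $C$; for each surviving index $k$, pick a point $q_k \in C \cap B_{\delta/2}(p_k)$. Since $q_k \in C = \bigcup_i C_i$ and the $C_i$ are nested increasing, there is an index $i_k$ with $q_k \in C_{i_k}$; let $j = \max_k i_k$, so that all the finitely many points $q_1, \dots, q_M$ lie in $C_j$.

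Next I would verify $C \subseteq C_j(\delta)$. Any $x \in C$ lies in some ball $B_{\delta/2}(p_k)$ that meets $C$, hence is one of the surviving balls with a chosen point $q_k \in C_j \cap B_{\delta/2}(p_k)$; then $d(x, q_k) \le d(x, p_k) + d(p_k, q_k) < \delta/2 + \delta/2 = \delta$, so $x \in C_j(\delta)$. Applying uniform continuity with this $j$ finishes the argument.

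The only mild subtlety — not really an obstacle — is the interaction between "$C$ is bounded" and total boundedness: one wants finitely many $\delta/2$-balls covering $C$, which follows because a bounded subset of $\R^d$ has compact closure and compact sets are totally bounded. Everything else is a direct application of the already-established uniform continuity property, so I expect the write-up to be short.
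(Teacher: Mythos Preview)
Your proposal is correct and follows essentially the same approach as the paper: both reduce the statement, via uniform continuity, to the set-theoretic claim that $C \subseteq C_j(\delta)$ for some $j$, and both establish this claim by a compactness argument. The paper does it in one line by noting that $\overline{C}$ is compact and covered by the nested open sets $C_j(\delta)$, while you unwind this into an explicit total-boundedness argument with a finite $\delta/2$-net; the content is the same.
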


\begin{proof}
Choose $\delta$ as in Proposition~\ref{proposition:continuity}. Then
\[
\overline{\bigcup_i C_i} \subseteq \mathopen{}\left( \bigcup_i
C_i\right)\mathclose{}(\delta) \subseteq \bigcup_j C_j(\delta).
\]
Because $\overline{\bigcup_i C_i}$ is compact, it must be contained in
$C_j(\delta)$ for some $j$, which completes the proof.
\end{proof}

We will use the normalized Hausdorff measure, by which we mean
\[
\mathcal{H}_n(C) = \lim_{\varepsilon \to 0^+}
\mathcal{H}_{n,\varepsilon}(C),
\]
where
\[
\mathcal{H}_{n, \varepsilon}(C) = \frac{\mathcal{L}_n(B_1^n)}{2^n}
\inf \left\{ \sum_{i \in I} \mathrm{diam}(C_i)^n : C \subseteq
\bigcup_{i \in I} C_i \text{ with } \mathrm{diam}(C_i) <
\varepsilon\right\}.
\]

Recall that the \emph{$n$-dimensional Minkowski content}
$\mathcal{M}_n(C)$ of a set $C \subseteq \R^d$ with $d \ge n$ is
defined by
\[
\mathcal{M}_n(C) = \lim_{\varepsilon \to 0+}
\frac{\mathcal{L}_d(C(\varepsilon))}{\mathcal{L}_{d-n}(B_\varepsilon^{d-n})},
\]
when this limit exists (we take $\mathcal{L}_{0}(B_\varepsilon^{0})=1$
when $n=d$). The \emph{upper} or \emph{lower} Minkowski content,
denoted $\overline{\mathcal{M}_n}$ or $\underline{\mathcal{M}_n}$, is
given by the $\limsup$ or $\liminf$, respectively.  For a compact
$n$-rectifiable set $C$, \cite[Theorem 3.2.39]{federer2014geometric}
tells us that $\mathcal{H}_n(C) = \mathcal{M}_n(C)$.

\begin{proposition}
Let $C \subseteq \R^d$ be a bounded Borel set, and suppose
$\overline{\mathcal{M}_n}(C) < \infty$.  Then
\[\limsup_{r \to \infty} \frac{A(rC)}{r^n} < \infty.\]
\end{proposition}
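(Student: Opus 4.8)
The plan is to bound $A$ by $\cov$ and then control the covering number of $C$ by balls of vanishing radius using the Minkowski content hypothesis. Since $\cov$ is the maximal packing bound function (Proposition~\ref{prop:covering}), we have $A(rC) \le \cov(rC)$ for every $r$, so it suffices to show $\limsup_{r\to\infty}\cov(rC)/r^n < \infty$. Rescaling by $1/r$, observe that $\cov(rC)$ equals the least number of open balls of radius $1/r$ needed to cover $C$: if $C \subseteq X(1/r)$ with $X \subseteq \R^d$, then $rC \subseteq (rX)(1)$, and conversely.

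To estimate this quantity I would run a standard maximal-separated-set argument. Fix a large $r$ and let $X \subseteq C$ be a subset of maximum cardinality among those whose pairwise distances are all at least $1/r$; such a set exists because any $1/r$-separated subset of the bounded set $C$ has cardinality bounded by a volume comparison. By maximality, every point of $C$ lies within $1/r$ of some point of $X$, so the open balls of radius $1/r$ about the points of $X$ cover $C$, giving $\cov(rC) \le |X|$. On the other hand, since the points of $X$ are $1/r$-separated, the open balls of radius $1/(2r)$ about them are pairwise disjoint, and they are contained in $C(1/(2r))$ because their centers lie in $C$. Hence
\[
|X|\,\mathcal{L}_d\bigl(B^d_{1/(2r)}\bigr) \le \mathcal{L}_d\bigl(C(1/(2r))\bigr),
\]
so $\cov(rC) \le \mathcal{L}_d(C(1/(2r)))/\mathcal{L}_d(B^d_{1/(2r)})$.

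Finally I would feed in the hypothesis $\overline{\mathcal{M}_n}(C) < \infty$. Fix any $M > \overline{\mathcal{M}_n}(C)$; then there is an $\varepsilon_0 > 0$ with $\mathcal{L}_d(C(\varepsilon)) \le M\,\mathcal{L}_{d-n}(B^{d-n}_\varepsilon)$ for all $\varepsilon < \varepsilon_0$. Taking $\varepsilon = 1/(2r)$ for $r$ large enough that $1/(2r) < \varepsilon_0$, and writing $c_k = \mathcal{L}_k(B^k_1)$ so that $\mathcal{L}_k(B^k_\varepsilon) = c_k \varepsilon^k$ (with $c_0 = 1$ in the boundary case $n = d$), we obtain
\[
\cov(rC) \le \frac{M c_{d-n}\,(1/(2r))^{d-n}}{c_d\,(1/(2r))^{d}} = \frac{M c_{d-n}}{c_d}\,(2r)^{n}.
\]
Thus $A(rC)/r^n \le \cov(rC)/r^n \le 2^n M c_{d-n}/c_d$ for all large $r$, and the $\limsup$ is finite.

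There is no serious obstacle here; the approach is essentially the classical packing/covering volume estimate. The only points that need a little care are verifying that a maximum-cardinality $1/r$-separated subset of $C$ exists and can be used (disjointness of the half-radius balls when centers are $1/r$ apart, and their containment in $C(1/(2r))$), and keeping track of the normalizing constants $c_d$ and $c_{d-n}$ so that the final ratio comes out proportional to $r^n$.
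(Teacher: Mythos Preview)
Your proof is correct and follows essentially the same route as the paper: both bound the number of small balls needed to cover $C$ via a disjoint-balls volume argument fed by the Minkowski content hypothesis, then convert this into a bound on $A(rC)$. The paper invokes the Vitali covering lemma and applies the union and sphere bounds directly, whereas you use a maximal $1/r$-separated set and factor through the inequality $A \le \cov$ from Proposition~\ref{prop:covering}; these are minor organizational differences rather than a genuinely different approach.
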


\begin{proof}
Because $\overline{\mathcal{M}_n}(C) < \infty$, there is some constant
$M$ such that for $\delta$ sufficiently small,
\[\mathcal{L}_d(C(\delta)) \le M \delta^{d-n}.\]
Consider the collection $\mathcal{F}$ of balls of radius $\delta$ at
each point of $C$.  By the Vitali covering lemma
\cite[Theorem~2.1]{mattila1995geometric}, there is a subset
$\mathcal{F}'$ consisting of disjoint balls such that $\bigcup_{B \in
  \mathcal{F}'} 5B$ contains $\bigcup_{B \in \mathcal{F}} B =
C(\delta)$.  Because the balls in $\mathcal{F}'$ are disjoint,
\[
|\mathcal{F}'| \le
\frac{\mathcal{L}_d(C(\delta))}{\mathcal{L}_d(B^d_\delta)} \le \frac{M
  \delta^{d-n}}{\mathcal{L}_d(B^d_1) \delta^d} \le
\frac{M'}{\delta^n},
\]
where $M' = M/\mathcal{L}_d(B^d_1)$. Now we let $r =
1/(6\delta)$. Because $\bigcup_{B \in \mathcal{F}'} 5rB$ covers $rC$
and the spheres have radius $r5\delta<1$, monotonicity and the union
and sphere bounds imply that
\[
A(rC) \le \sum_{B \in \mathcal{F}'} A(5rB) \le M'/\delta^n.
\]
In particular,
\[
A(rC)/r^n \le M'/(r\delta)^n = M' 6^n,
\]
and we conclude that $\limsup_{r \to \infty} A(rC)/r^n < \infty$
because this bound holds whenever $r=1/(6\delta)$ with $\delta$
sufficiently small.
\end{proof}

We isolate a geometric lemma in preparation for
Proposition~\ref{minkzero}.

\begin{lemma}\label{geomlemma}
If $C'$ is any compact subset of $C \subseteq \R^d$ with
$\mathcal{M}_n(C)$ and $\mathcal{M}_n(C')$ defined and $\infty >
\mathcal{M}_n(C') > \mathcal{M}_n(C) - \varepsilon$, then there is a
$\delta^*$ such that for all $\delta \le \delta^*$, there is a packing
$\mathcal{B}$ of disjoint balls of radius $\delta/5$ in $C(\delta)
\setminus C'(\delta)$ containing at most $M \varepsilon / \delta^n$
balls for some universal constant $M$ depending on the dimensions $n$
and $d$, such that $\bigcup_{B \in \mathcal{B}} 5B$ is a covering of
$C \setminus C'(\delta)$.
\end{lemma}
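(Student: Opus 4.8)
The plan is a Vitali-type maximal-packing argument, where the whole point is to place the balls inside a set whose Lebesgue measure the Minkowski-content hypothesis forces to be of order $\varepsilon\,\delta^{d-n}$.

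Write $D = C \setminus C'(\delta)$; since $C'(\delta)$ is open and $C$ is compact, $D$ is compact (possibly empty, in which case everything is trivial). Let $\mathcal{B}$ be a maximal family of pairwise disjoint open balls of radius $\delta/5$ whose centers lie in $D$ (such a family exists by Zorn's lemma, and the count below shows it is finite). For the covering assertion, fix $p \in D = C \setminus C'(\delta)$: by maximality the ball $B_{\delta/5}^d(p)$ meets some $B_{\delta/5}^d(c) \in \mathcal{B}$, for otherwise we could adjoin it; hence $|p - c| < 2\delta/5 < \delta$, so $p \in B_\delta^d(c) = 5\,B_{\delta/5}^d(c)$, and therefore $C \setminus C'(\delta) \subseteq \bigcup_{B \in \mathcal{B}} 5B$ as required. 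Each ball of $\mathcal{B}$ has its center in $C \setminus C'(\delta) \subseteq C(\delta) \setminus C'(\delta)$ and is contained in $C(\delta/5)\setminus C'(\delta/5)$ (see below); I read this as the intended meaning of the balls lying ``in $C(\delta) \setminus C'(\delta)$.''

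For the count, the key observation is that the $\delta/5$-neighborhood $D(\delta/5)$ — which contains every ball of $\mathcal{B}$ — is disjoint from $C'(\delta/5)$: a point within $\delta/5$ of $D$ lies at distance $> \delta - \delta/5 = 4\delta/5 > \delta/5$ from $C'$. Since $D \subseteq C$ and $C' \subseteq C$, both $D(\delta/5)$ and $C'(\delta/5)$ sit inside $C(\delta/5)$, so
\[
\mathcal{L}_d(D(\delta/5)) \le \mathcal{L}_d(C(\delta/5)) - \mathcal{L}_d(C'(\delta/5)).
\]
Disjointness of the balls then gives $|\mathcal{B}|\,\mathcal{L}_d(B_1^d)(\delta/5)^d = \mathcal{L}_d\big(\bigcup \mathcal{B}\big) \le \mathcal{L}_d(C(\delta/5)) - \mathcal{L}_d(C'(\delta/5))$. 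Because $\mathcal{M}_n(C)$ and $\mathcal{M}_n(C')$ are defined (hence finite), the ratio $\big[\mathcal{L}_d(C(\rho)) - \mathcal{L}_d(C'(\rho))\big] / \big(\mathcal{L}_{d-n}(B_1^{d-n})\rho^{d-n}\big)$ tends to $\mathcal{M}_n(C) - \mathcal{M}_n(C') < \varepsilon$ as $\rho \to 0^+$, so there is $\rho_0 > 0$ with this ratio below $\varepsilon$ for all $\rho \le \rho_0$; setting $\delta^* = 5\rho_0$ and taking $\rho = \delta/5$, we get for every $\delta \le \delta^*$ that
\[
|\mathcal{B}| \le \frac{\mathcal{L}_d(C(\delta/5)) - \mathcal{L}_d(C'(\delta/5))}{\mathcal{L}_d(B_1^d)(\delta/5)^d} < \varepsilon\,\frac{5^n\,\mathcal{L}_{d-n}(B_1^{d-n})}{\mathcal{L}_d(B_1^d)}\cdot\frac{1}{\delta^n},
\]
which is the claimed bound with $M = 5^n\,\mathcal{L}_{d-n}(B_1^{d-n})/\mathcal{L}_d(B_1^d)$, depending only on $n$ and $d$ (with $\mathcal{L}_0(B_1^0) = 1$ when $n = d$).

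I expect the only real obstacle to be getting the location of the balls right. The naive choice — balls centered anywhere in $C(\delta)\setminus C'(\delta)$, or a plain Vitali covering of $C(\delta)$ thrown away near $C'(\delta)$ — puts the balls in a neighborhood of measure comparable to $\mathcal{M}_n(C)\,\delta^{d-n}$, which yields only a bound of shape $(\mathrm{const})/\delta^n$, with no $\varepsilon$ and hence useless for the intended application (Proposition~\ref{minkzero}). Restricting the centers to $D = C \setminus C'(\delta)$ and using that $D(\delta/5)$ is separated from $C'$ — so its measure fits inside the ``gap'' $\mathcal{L}_d(C(\delta/5)) - \mathcal{L}_d(C'(\delta/5))$ — is exactly what brings the hypothesis $\mathcal{M}_n(C') > \mathcal{M}_n(C) - \varepsilon$ to bear. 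Everything else (compactness of $D$, existence of the maximal family, the $5r$-covering step) is routine, and one could equally invoke the Vitali covering lemma in place of the maximality argument.
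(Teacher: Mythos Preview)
Your proof is correct and follows essentially the same Vitali-type argument as the paper: center $\delta/5$-balls on $C \setminus C'(\delta)$, use maximality (equivalently, the Vitali covering lemma) for the $5B$-covering, and bound the count by comparing the packed volume to a neighborhood whose Lebesgue measure the Minkowski-content hypothesis forces to be $O(\varepsilon\,\delta^{d-n})$. One small slip: you assert that $D$ is compact because ``$C$ is compact,'' but the hypothesis only gives compactness of $C'$, not of $C$; fortunately your argument never actually uses compactness of $D$, since Zorn supplies the maximal family and the volume bound then shows it is finite. Your observation that the balls sit in $C(\delta/5)\setminus C'(\delta/5)$ rather than literally in $C(\delta)\setminus C'(\delta)$ is well taken---the paper's own proof is slightly loose on exactly this point---and as you note, only the covering property and the cardinality bound are needed downstream.
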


\begin{proof}
Since the Minkowski contents of $C$ and $C'$ are within $\varepsilon$,
for sufficiently small $\delta^* \ge \delta > 0$,
\[\mathcal{L}_d(C(\delta) \setminus C'(\delta)) \le \varepsilon\delta^{d-n}
\cdot 2\mathcal{L}_{d-n}(B_1^{d-n}).\]

For every $\delta$, consider the collection of balls $\mathcal{F}$ of
radius exactly $\delta / 5$ at each point of $C \setminus C'(\delta)$.
By the Vitali covering lemma, there are disjoint balls $\mathcal{B}$
of radius $\delta / 5$ such that $5B$ contains the union of the balls
in $\mathcal{F}$.  These balls are a sphere packing and are contained
in $C(\delta) \setminus C'(\delta)$, and therefore
\[
\mathcal{L}_{d}(B_{\delta/5}^{d}) |\mathcal{B}| \le
\mathcal{L}_d(C(\delta) \setminus C'(\delta)) \le
\varepsilon\delta^{d-n} \cdot 2\mathcal{L}_{d-n}(B_1^{d-n}).
\]
It follows that $|\mathcal{B}| \le M \varepsilon / \delta^n$, where
$M$ depends only on $n$ and $d$.
\end{proof}

In the process of working with an arbitrary rectifiable set, we will
need to pass from a set to a subset of the same Minkowski content that
cuts out a bad subset.  We need to show that this removal does not
affect the first order asymptotics of packing.  This result will also
be used in working with compact sets without appealing to the
Euclidean bound.  The following proposition is an analogue of the
``regularity lemma'' in the Riesz energy setting
\cite[Lemma~8.6.9]{borodachov2019discrete}.

\begin{proposition}\label{minkzero}
For each $n$, bounded Borel set $C \subseteq \R^d$ with
$\mathcal{M}_n(C)$ defined, and $k \ge 1$, there is an
$\varepsilon^*>0$ such that for any $\varepsilon \le \varepsilon^*$,
if $C'$ is any compact subset of $C$ with $\mathcal{M}_n(C')$ defined
and $\infty > \mathcal{M}_n(C') > \mathcal{M}_n(C) - \varepsilon$,
then
\[\left(\frac{k}{k+1}\right)^n \limsup_{r \to \infty} \frac{A(rC)}{r^n}
\le \limsup_{r \to \infty} \frac{A(rC')}{r^n} + M\varepsilon k^n \]
for a universal constant $M$ depending on $n$ and $d$.  The same
statement also holds with $\limsup$ replaced by $\liminf$.
\end{proposition}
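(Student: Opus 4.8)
The idea is to cover $C$ by $C'(\delta)$ together with a controlled number of small balls, dilate slightly so the union axiom's separation constraint can be emulated, and then use the union bound to split $A(rC)$ into a contribution from $A(rC')$ (up to the dilation factor $k/(k+1)$) and a small error term coming from the extra balls supplied by Lemma~\ref{geomlemma}.

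First I would apply Lemma~\ref{geomlemma}: choosing $\varepsilon^* = \delta^*$ from that lemma (shrinking it if necessary so the Minkowski-content estimates are valid), for every $\delta \le \varepsilon^*$ there is a collection $\mathcal{B}$ of at most $M\varepsilon/\delta^n$ disjoint balls of radius $\delta/5$ such that $\bigcup_{B \in \mathcal{B}} 5B$ covers $C \setminus C'(\delta)$. Hence $C \subseteq C'(\delta) \cup \bigcup_{B \in \mathcal{B}} 5B$, so certainly $C \subseteq \big(C' \cup \bigcup_{B \in \mathcal{B}} 5B\big)(\delta)$. Now scale by $r$ and choose $\delta$ as a function of $r$ tending to $0$; specifically I would set $\delta = 1/(6r)$ so that the dilated balls $5rB$ have radius $5r\delta < 1$, whence $A(5rB) = 1$ by the sphere bound. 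Applying the mesh axiom with this $\delta$ and then the union bound gives
\[
A\mathopen{}\left(\tfrac{r}{1+\delta}\big(C' \cup \textstyle\bigcup_{B} 5B\big)\right)\mathclose{}
\le A(rC') + \sum_{B \in \mathcal{B}} A(5rB) \le A(rC') + \tfrac{M\varepsilon}{\delta^n},
\]
where on the left I have used $A(rC) \le A\big(\tfrac{r}{1+\delta}(\,C'\cup\bigcup 5B\,)(\delta)\,\big) \ge$… wait — more carefully: from $C \subseteq D(\delta)$ with $D = C' \cup \bigcup_B 5B$, the mesh axiom gives $A(\tfrac{1}{1+\delta}(rC)) \le A(rD)$ is the wrong direction; instead I apply the mesh axiom to $rC \subseteq (rD)(r\delta)$ to get $A(\tfrac{1}{1+r\delta} rC) \le A(rD) \le A(rC') + M\varepsilon/\delta^n$. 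With $r\delta = 1/6$ fixed, $\tfrac{1}{1+r\delta} = 6/7$; to get the flexible constant $k/(k+1)$ I instead set $\delta = 1/(r(k+1)\cdot\text{(const)})$ — the cleanest choice is to let $r\delta$ range so that $\tfrac{1}{1+r\delta}$ can be taken as close to $1$ as desired, at the cost of enlarging the balls: take $\delta$ with $r\delta = 1/k$, so the balls $5rB$ have radius $r\delta\cdot 5/... $, hmm, the ball radius is $r \cdot \delta/5 \cdot 5 = r\delta = 1/k$; for the sphere bound I need radius $< 1$, which holds for $k \ge 2$, and for $k=1$ I can first pass to a sub-mesh. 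Thus $A\big(\tfrac{k}{k+1} rC\big) \le A(rC') + M\varepsilon/\delta^n = A(rC') + M\varepsilon (r\delta/ (r\delta))\cdots$ — substituting $\delta = 1/(kr)$ yields $M\varepsilon/\delta^n = M\varepsilon k^n r^n$.

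Dividing through by $r^n$ and taking $\limsup_{r\to\infty}$ then gives
\[
\left(\tfrac{k}{k+1}\right)^n \limsup_{r\to\infty} \frac{A(rC)}{r^n}
= \limsup_{r\to\infty}\frac{A\big(\tfrac{k}{k+1}rC\big)}{\big(\tfrac{k}{k+1}r\big)^n}\cdot\left(\tfrac{k}{k+1}\right)^n
\cdot\left(\tfrac{k+1}{k}\right)^n\cdots
\]
— concretely, $\limsup_r A(\tfrac{k}{k+1}rC)/r^n = (\tfrac{k}{k+1})^n \limsup_r A(sC)/s^n$ after the substitution $s = \tfrac{k}{k+1}r$, so the inequality becomes $(\tfrac{k}{k+1})^n \limsup_r A(rC)/r^n \le \limsup_r A(rC')/r^n + M\varepsilon k^n$, which is exactly the claim. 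The $\liminf$ version is identical, using that the same inequality holds for every $r$ before taking limits, together with subadditivity of $\liminf$ under the error term, which is a genuine constant once $\varepsilon$ and $k$ are fixed.

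The main obstacle is bookkeeping the interplay between the three scaling parameters — the dilation factor in the mesh axiom, the ball radius needed for the sphere bound, and the scale $r$ — so that a single consistent choice of $\delta = \delta(r)$ makes the mesh dilation equal to $\tfrac{k+1}{k}$, keeps $r\delta$ bounded away from $1$ (or at least handles the boundary case $k=1$ by iterating or pre-refining), and produces exactly the error $M\varepsilon k^n r^n$. I expect that $\delta = 1/(kr)$ works for $k\ge 2$ and that the $k=1$ case follows by applying the $k=2$ estimate and absorbing constants, or by simply noting the statement for $k=1$ is weaker. The geometric input (Lemma~\ref{geomlemma}) and the axioms (mesh, union bound, sphere bound, monotonicity) do all the real work; no new ideas beyond careful substitution are needed.
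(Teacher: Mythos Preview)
Your approach is correct and essentially identical to the paper's: apply Lemma~\ref{geomlemma}, use the mesh axiom with $\delta = 1/(kr)$ so that $1/(1+r\delta) = k/(k+1)$, bound the contribution of the covering balls via the union bound and sphere bound to get $A\big(\tfrac{k}{k+1}rC\big) \le A(rC') + M\varepsilon k^n r^n$, then divide by $r^n$ and pass to the $\limsup$ (or $\liminf$). Your worry about $k=1$ is unnecessary: the scaled covering balls $r\cdot 5B$ have radius $r\delta = 1/k \le 1$, and an open ball of radius~$1$ is itself the interior of a closed unit ball, so the sphere bound applies directly and no separate treatment of $k=1$ is needed; also note that $\varepsilon^*$ plays no real role in the argument (any $\varepsilon>0$ works, since one simply takes $r$ large enough that $1/(kr) \le \delta^*$), so your attempt to set $\varepsilon^* = \delta^*$ is a harmless confusion rather than a gap.
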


\begin{proof}
Choose $\delta^*$ and $M$ as in Lemma~\ref{geomlemma}, and let $\delta
= 1 / (k r) \le \delta^*$ for sufficiently large $r$. By
Lemma~\ref{geomlemma}, there is a collection $5\mathcal{B}$ of at most
$M \varepsilon/\delta^n$ open balls of radius $\delta$ that cover $C
\setminus C'(\delta)$.

Because $C \subseteq (C' \cup (C \setminus C'(\delta)))(\delta)$,
scaling by a factor of $r$ shows that
\[
A\mathopen{}\left(\frac{1}{1+r\delta} rC\right)\mathclose{} \le A(rC')
+ A(rC \setminus rC'(r\delta)).
\]
Now since $1/(1+r\delta) = k/(k+1)$,
\[
\limsup_{r \to \infty} \frac{A\mathopen{}\left(\frac{k}{k+1}
  rC\right)\mathclose{}}{r^n} \le \limsup_{r \to \infty}
\frac{A(rC')}{r^n} + \limsup_{r \to \infty} \frac{A(rC \setminus
  rC'(r\delta))}{r^n}.
\]
Finally, because $rC \setminus rC'(r\delta)$ is covered with at most
$M \varepsilon/\delta^n$ open balls of radius $\delta r = 1/k \le 1$,
the last term on the right side can be bounded by $M\varepsilon k^n$,
while the left side is
\[ \left(\frac{k}{k+1}\right)^n\limsup_{r \to \infty} \frac{A(rC)}{r^n}, \]
which is what we wanted to show. For the $\liminf$ version, we instead
use the inequality
\[
\liminf_{r \to \infty} \frac{A\mathopen{}\left(\frac{k}{k+1}
  rC\right)\mathclose{}}{r^n} \le \liminf_{r \to \infty}
\frac{A(rC')}{r^n} + \limsup_{r \to \infty} \frac{A(rC \setminus
  rC'(r\delta))}{r^n}
\]
and conclude in the same way.
\end{proof}

\section{Packing bound functions via graphs}

Consider the category $\Graph$ whose objects are (loopless,
undirected) graphs $G = (V, E)$ with finite chromatic number, and
whose morphisms are graph homomorphisms.  There is a natural operation
of join, where the join of two graphs $G$ and $H$, denoted $G * H$, is
the graph with vertex set the disjoint union of the vertex sets $V(G)$
and $V(H)$ and edge set all edges in $G$ and $H$ together with all
pairs $(x,y)$ with $x \in V(G)$ and $y \in V(H)$.  For simplicity, we
denote the edge containing vertices $x$ and $y$ by $xy$, and the empty
graph by $\emptyset$. The complement $\overline{G}$ of a graph $G$ has
the same vertex set and the complementary edge set.

By passing to complements, we can consider a related category which
shows the connection to packing problems in discrete geometry more
clearly.  This category $\overline{\Graph}$ has as its objects graphs
with finite clique covering number, meaning there is a finite
collection of cliques $C_1, \dots, C_n$ such that $V \subseteq
\bigcup_{i=1}^n C_i$.  Morphisms from $G$ to $G'$ are maps $f \colon V
\to V'$ with the following two properties:
\begin{enumerate}
    \item If $(f(x),f(y)) \in E'$, then $(x,y) \in E$.  That is, the
      preimage of any edge is an edge.
    \item The preimage of each $x' \in V'$ is a clique, possibly
      empty.
\end{enumerate}
There is a natural operation of disjoint union graphs, which we denote
$\sqcup$.  This is \emph{not} a coproduct on the category.  There is
an isomorphism of categories $\Graph \to \overline{\Graph}$ sending a
graph to its complement, which sends disjoint unions to joins and vice
versa.

In geometric terms, we think of graphs in $\overline{\Graph}$ as
follows: the vertices are points in a bounded Borel subset of $\R^d$,
and the edges indicate which points are at distance less than $2$ from
each other. Equivalently, edges in $\Graph$ indicate which pairs of
points are at distance at least $2$, i.e., far enough away that they
do not interact. We will use this category to construct sandwich
functions (named after the Lov\'asz sandwich theorem \cite{knuth}),
which describe the packing bound functions that depend only on the
underlying graph structure, rather than the additional information
supplied by the metric.

A \emph{sandwich function} is a function $\Psi$ from objects of
$\Graph$ to nonnegative real numbers with the following properties:
\begin{enumerate}
    \item (Sphere bound) $\Psi(\emptyset) = 0$ and $\Psi(G) = 1$ if
      $G$ consists of a single vertex.
    \item (Functoriality) If $G \to G'$ is a morphism, then $\Psi(G)
      \le \Psi(G')$.  That is, $\Psi$ is a functor from $\Graph$ to
      the poset of nonnegative real numbers.
    \item (Additivity of join) For all $G$ and $G'$, $\Psi(G * G') =
      \Psi(G) + \Psi(G')$.
\end{enumerate}

\begin{theorem} \label{theorem:pbffromsandwich}
Every sandwich function $\Psi$ gives a packing bound function $A$ on
bounded subsets of Euclidean spaces as follows. Given a bounded Borel
subset $C$ of $\R^n$, let $G(C)$ be the graph whose vertex set is $C$
and whose edge set is pairs $x,y \in C$ such that $| x - y | \ge
2$. Then $A(C) = \Phi(G(C)).$
\end{theorem}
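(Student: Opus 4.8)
The plan is to verify that $A$, defined by $A(C)=\Psi(G(C))$, satisfies each of the four axioms of Definition~\ref{defpackingbound}, using the three defining properties of a sandwich function. First, though, one must check that $A$ actually lands in $[0,\infty)$, that is, that $G(C)$ is an object of $\Graph$: since $C$ is bounded it lies in some cube, which can be cut into finitely many boxes of diameter less than $2$, and $C$ meets each box in an independent set of $G(C)$, so $G(C)$ has finite chromatic number and $\Psi(G(C))$ is a well-defined nonnegative real number.

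For the sphere bound, if $C$ is nonempty and contained in an open ball of radius $1$, then all pairwise distances in $C$ are less than $2$, so $G(C)$ is edgeless. Including a single vertex into $G(C)$ (possible since $C$ is nonempty) gives $1\le\Psi(G(C))$ by functoriality, and collapsing $G(C)$ to a single vertex, which is a graph homomorphism because there are no edges to preserve, gives $\Psi(G(C))\le 1$; hence $A(C)=1$. For the Lipschitz inequality, a distance-increasing map $\psi\colon C\to C'$ is itself a graph homomorphism $G(C)\to G(C')$: if $d(x,y)\ge 2$, then $d(\psi(x),\psi(y))\ge d(x,y)\ge 2$, so that $\psi(x)\ne\psi(y)$ and $\psi(x)\psi(y)$ is an edge of $G(C')$; hence $\psi$ carries edges to edges, and functoriality gives $A(C)\le A(C')$.

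For the union axiom, $d(C,C')\ge 2$ forces $C$ and $C'$ to be disjoint and puts every cross-pair $x\in C$, $y\in C'$ at distance at least $2$, so comparing edge sets shows that $G(C\cup C')$ is isomorphic to the join $G(C) * G(C')$; since an isomorphism supplies morphisms in both directions, $\Psi$ is isomorphism-invariant, and additivity of join yields $A(C\cup C')=A(C)+A(C')$. For the mesh axiom, assume $C\subseteq C'(\varepsilon)$, choose for each $x\in C$ a point $g(x)\in C'$ with $d(x,g(x))<\varepsilon$, and define $f$ on the vertex set of $G\bigl(\frac{1}{1+\varepsilon}C\bigr)$ by $f\bigl(\frac{x}{1+\varepsilon}\bigr)=g(x)$, which is well defined since $x$ is recovered from $\frac{x}{1+\varepsilon}$. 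If $\frac{x}{1+\varepsilon}$ and $\frac{y}{1+\varepsilon}$ span an edge, then $d(x,y)\ge 2(1+\varepsilon)$, whence the triangle inequality gives $d(g(x),g(y))\ge d(x,y)-d(x,g(x))-d(y,g(y))>2(1+\varepsilon)-2\varepsilon=2$; thus $f$ is a graph homomorphism $G\bigl(\frac{1}{1+\varepsilon}C\bigr)\to G(C')$, and functoriality gives $A\bigl(\frac{1}{1+\varepsilon}C\bigr)\le A(C')$. The degenerate cases in which $C$ or $C'$ is empty follow from $\Psi(\emptyset)=0$ together with the nonnegativity of $\Psi$.

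I do not expect a genuine obstacle: the whole argument is a routine translation into graph-theoretic language, with the real content carried entirely by the three sandwich axioms. The two points that demand care are the well-definedness check, since one must know $G(C)$ has finite chromatic number before $\Psi$ can be applied to it, and the distance bookkeeping in the mesh axiom, where the dilation factor $1+\varepsilon$ is exactly what is needed to absorb the two $\varepsilon$-sized perturbations introduced by replacing each point of $C$ with a nearby point of $C'$.
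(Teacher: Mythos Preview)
Your proof is correct and follows essentially the same route as the paper's: verify each of the four axioms by exhibiting the appropriate graph homomorphism (or join decomposition) and invoking functoriality or additivity of $\Psi$. Your argument is in fact slightly more complete than the paper's, since you explicitly check that $G(C)$ has finite chromatic number and hence lies in $\Graph$, a point the paper leaves implicit.
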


\begin{proof}
To prove that $A$ is a packing bound function, we check the axioms:

\begin{enumerate}
    \item (Sphere bound) If $C$ is empty, then so is $G(C)$, and
      therefore $A(C) = \Phi(G(C)) = 0$.  If $C$ is contained in an
      open ball of radius $1$, then $G(C)$ has no edges and thus
      admits a morphism in $\Graph$ to a single point, so $A(C) =
      \Phi(G(C)) \le 1$.  On the other hand, a single point admits a
      morphism to any nonempty graph by inclusion, and so $\Phi(G(C))
      \ge 1$ if $C \ne \emptyset$.
    \item (Lipschitz) If $f \colon C \to C'$ is a map satisfying
      $d(x,y) \le d(f(x),f(y))$, then it induces a morphism $f \colon
      G(C) \to G(C')$ on graphs.
    \item (Union) If $C$ and $C'$ are separated by distance at least
      $2$, then $G(C \cup C') = G(C) * G(C')$, and
    \[A(C \cup C') = \Phi(G(C) * G(C')) = \Phi(G(C)) + \Phi(G(C')) = A(C) + A(C').\]
    \item (Mesh) If $C \subseteq C'(\varepsilon)$, then there exists a
      map $f \colon C \to C'$ sending each point in $C$ to a point in
      $C'$ such that $|x- f(x)| < \varepsilon$.  We claim the
      composite map $\widetilde{f} \colon \frac{1}{1+\varepsilon} C
      \to C \to C'$ defined by $\widetilde{f}(x) =
      f((1+\varepsilon)x)$ yields a $\Graph$ morphism $\widetilde{f}
      \colon G(\frac{1}{1+\varepsilon} C) \to G(C')$.  To see why,
      note that if $x,y \in \frac{1}{1+\varepsilon} C$ with $|x-y| \ge
      2$, then $|(1+\varepsilon)x - (1+\varepsilon)y| \ge
      2(1+\varepsilon)$ and thus
    \[\begin{split}
    |\widetilde{f}(x) - \widetilde{f}(y)| &= |f((1+\varepsilon)x) -
    f((1+\varepsilon)y)|\\ &\ge |(1+\varepsilon)x -
    (1+\varepsilon)y|\\ & \quad - |f((1+\varepsilon)x) -
    (1+\varepsilon)x| - |f((1+\varepsilon)y) - (1+\varepsilon)y| \\ &
    \ge 2(1+\varepsilon) - \varepsilon - \varepsilon = 2. \qedhere
    \end{split}\]
\end{enumerate}
\end{proof}

\begin{remark}
Many packing bound functions, including packing and the discrete
Lasserre and $k$-point bound hierarchies, come from sandwich functions
on graphs using this theorem.  However, sphere covering does not, as
we will see below.
\end{remark}

\begin{proposition}
The clique number $\omega$ and chromatic number $\chi$ are sandwich
functions, and every sandwich function $\Phi$ satisfies the following
Lov\'asz sandwich theorem:
\[\omega(G) \le \Phi(G) \le \chi(G).\]
\end{proposition}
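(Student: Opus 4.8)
The plan is to verify the three sandwich-function axioms for $\omega$ and $\chi$, and then to deduce the Lov\'asz sandwich inequality from functoriality alone. First I would handle $\omega$. The sphere bound is immediate: $\omega(\emptyset)=0$ and a single vertex has a clique of size $1$. For functoriality, suppose $f\colon G\to G'$ is a graph homomorphism and $K\subseteq V(G)$ is a clique of size $\omega(G)$; since $f$ maps edges to edges and distinct adjacent vertices cannot collapse (an edge would become a loop, which is disallowed), $f(K)$ is a clique in $G'$ of the same size, so $\omega(G)\le\omega(G')$. For additivity of join, a clique in $G*G'$ splits as a clique of $G$ joined to a clique of $G'$ and every such pair is again a clique, so $\omega(G*G')=\omega(G)+\omega(G')$.

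Next I would handle $\chi$. The sphere bound is again immediate, and finite chromatic number is exactly the defining condition on objects of $\Graph$. For functoriality, a proper $k$-coloring of $G'$ pulls back along a homomorphism $f\colon G\to G'$ to a proper $k$-coloring of $G$ (adjacent vertices of $G$ map to adjacent, hence differently colored, vertices of $G'$), so $\chi(G)\le\chi(G')$. For additivity of join, any proper coloring of $G*G'$ must use disjoint color sets on the $G$ part and the $G'$ part, since every vertex of $G$ is adjacent to every vertex of $G'$; conversely, disjoint proper colorings of $G$ and $G'$ combine to a proper coloring of $G*G'$. Hence $\chi(G*G')=\chi(G)+\chi(G')$.

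Finally, for an arbitrary sandwich function $\Phi$ I would prove $\omega(G)\le\Phi(G)\le\chi(G)$ using functoriality together with additivity and the sphere bound. A clique of size $m=\omega(G)$ is itself isomorphic to the join of $m$ single vertices, which by additivity and the sphere bound has $\Phi$-value $m$; the inclusion of this clique into $G$ is a morphism, so $m=\Phi(K_m)\le\Phi(G)$. For the upper bound, let $m=\chi(G)$ and fix a proper $m$-coloring; then $G$ admits a morphism to $K_m$ (send each vertex to its color class, which is an independent set in $G$, hence the color map is a homomorphism), so $\Phi(G)\le\Phi(K_m)=m$ by functoriality and the computation just made.

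The one genuinely subtle point is that morphisms in $\Graph$ are ordinary graph homomorphisms while the objects are \emph{loopless} graphs, so a homomorphism cannot identify two adjacent vertices; this is what makes the clique-image argument work and what makes $\chi$ monotone under morphisms, and it is the only place where care is needed. Everything else is a routine unwinding of the definitions of clique number, chromatic number, and join.
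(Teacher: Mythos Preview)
Your proposal is correct and follows essentially the same approach as the paper: verify the axioms for $\omega$ and $\chi$ directly, then use the inclusion $K_m\hookrightarrow G$ for the lower bound and the coloring map $G\to K_m$ for the upper bound, together with $\Phi(K_m)=m$ from iterated join of points. Your additional remark that homomorphisms of loopless graphs cannot identify adjacent vertices makes explicit a point the paper leaves implicit.
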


\begin{proof}
To check that $\omega$ is a sandwich function, note that the image of
a clique under a graph homomorphism remains a clique, while the union
of two cliques in $G_1$ and $G_2$ becomes a clique in their join.  For
chromatic number, the preimage of an independent set under a graph
homomorphism is independent, so an $n$-coloring pulls back to an
$n$-coloring under a graph homomorphism.  Moreover, the only way to
color the join of two graphs is to color each part separately.

Now suppose $\Phi$ is a sandwich function.  Any clique is the iterated
join of one-point graphs, and its inclusion into the whole graph is a
graph homomorphism, thus showing that if $K_n \to G$ is the inclusion
of a clique then $n = \Phi(K_n) \le \Phi(G)$.  By considering all such
cliques, this shows $\omega(G) \le \Phi(G)$.

On the other hand, $n$-colorings of $G$ correspond to graph
homomorphisms $G \to K_n$, with the color sets being the preimages of
the vertices of $K_n$. Thus, if $G$ has an $n$-coloring, then $\Phi(G)
\le \Phi(K_n) = n$.  By considering all possible colorings, this shows
$\Phi(G) \le \chi(G)$.
\end{proof}

\begin{corollary}
The packing bound function $\cov$ does not come from a sandwich
function.
\end{corollary}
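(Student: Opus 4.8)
The plan is to exploit the one feature that distinguishes sandwich functions: if a packing bound function $A$ arises from a sandwich function $\Phi$ via Theorem~\ref{theorem:pbffromsandwich}, then $A(C) = \Phi(G(C))$, so $A(C)$ depends only on the isomorphism type of the distance-$\ge 2$ graph $G(C)$, and moreover the Lov\'asz sandwich theorem gives $\omega(G(C)) \le A(C) \le \chi(G(C))$. So it suffices to exhibit a bounded set $C \subseteq \R^d$ whose graph $G(C)$ is edgeless — forcing $\chi(G(C)) = 1$ — yet for which $\cov(C) \ge 2$. Any such $C$ contradicts the equality $\cov(C) = \Phi(G(C)) \le \chi(G(C)) = 1$, showing $\cov$ cannot come from a sandwich function.

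For the example I would take $C$ to consist of the three vertices of an equilateral triangle in $\R^2$ with side length $s$ satisfying $\sqrt 3 < s < 2$ (such $s$ exist, e.g.\ $s = 1.9$). Since every pairwise distance equals $s < 2$, the graph $G(C)$ has no edges, i.e.\ it is the edgeless graph on three vertices, so $\omega(G(C)) = \chi(G(C)) = 1$. On the other hand, the minimal enclosing ball of an equilateral triangle is its circumscribed disk, of radius $s/\sqrt 3 > 1$; hence for every center $p$ we have $\max_i |p - v_i| \ge s/\sqrt 3 > 1$, so no open ball of radius $1$ contains all of $C$, and therefore $\cov(C) \ge 2$ (in fact $\cov(C) = 2$, since $v_1$ takes one ball and $v_2, v_3$ fit in a ball of radius $s/2 < 1$, but $\ge 2$ is all we need).

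Putting these together: if $\cov$ came from a sandwich function $\Phi$, then $\cov(C) = \Phi(G(C))$, and since $G(C)$ lies in $\Graph$ (trivially, as $|C| = 3$) the Lov\'asz sandwich theorem forces $1 = \omega(G(C)) \le \Phi(G(C)) \le \chi(G(C)) = 1$, hence $\cov(C) = 1$, contradicting $\cov(C) \ge 2$. This is exactly the intuition from the preceding remark made precise: $\cov$ is sensitive to the metric embedding of $C$ and not merely to which pairs lie at distance $\ge 2$.

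I do not expect a genuine obstacle here. The only point requiring a little care is the elementary geometric claim that three points can simultaneously be pairwise closer than $2$ and yet fail to fit inside a single open unit ball, which is precisely why the interval $\sqrt 3 < s < 2$ is nonempty; everything else is a direct invocation of Theorem~\ref{theorem:pbffromsandwich} and the Lov\'asz sandwich theorem.
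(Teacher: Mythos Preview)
Your proof is correct and follows essentially the same approach as the paper: both use the vertices of an equilateral triangle with side length strictly between $\sqrt{3}$ and $2$, observe that $G(C)$ is edgeless so $\chi(G(C))=1$, yet $\cov(C)\ge 2$, contradicting the Lov\'asz sandwich theorem. You simply supply a bit more detail (the explicit circumradius $s/\sqrt{3}$ and the observation that $\cov(C)=2$), but the argument is the same.
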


\begin{proof}
Let $C$ be the vertices of an equilateral triangle of side length
strictly between $\sqrt{3}$ and $2$.  This set cannot be covered by a
single sphere, but its underlying graph $G(C)$ is independent.  Thus,
$\chi(G(C))$ is strictly smaller than the covering number, which would
be impossible if covering came from a sandwich function.
\end{proof}

Given a sandwich function $\Phi$, we often identify $\Phi$ with the
corresponding packing bound function notationally. For example, we
write $\delta_{\Phi,n}$ in Definition~\ref{def:Euclidean}.

\begin{remark}
The quantity $\delta_{\chi,n} / \mathcal{L}_n(B_1^n)$ appeared in
\cite[Theorem~IX]{kolmogorovtikhomirov} as a limit of their notion of
$\varepsilon$-covering of Jordan-measurable sets by bodies of diameter
at most $2\varepsilon$ (see \cite[Definition~1]{kolmogorovtikhomirov})
as $\varepsilon \to 0^+$.

It would be interesting to compare $\delta_{\chi,n}$, the ``clique
cover'' or ``diameter cover'' density of Euclidean space $\R^n$, with
the density of the best sphere covering $\delta_{\cov,n}$.  Do these
quantities coincide?  This question was posed by Lenz and Heppes and
is open even in the plane \cite[Section~1.3,
  Conjecture~4]{brassmoserpach}.
\end{remark}

The original Lov\'asz sandwich theorem was proved for a graph
invariant $\vartheta$ and used to give an upper bound for Shannon
capacity.  We will see that both of these quantities, as well as some
generalizations, are sandwich functions.

Shannon capacity is defined in terms of the strong graph product.
Recall that the strong graph product $G \cdot H$ is the graph whose
vertex set is $V(G) \times V(H)$ and whose edge set consists of
distinct pairs $(u_1, v_1), (u_2, v_2) \in V(G) \times V(H)$ such that
$u_1 u_2 \in E(G)$ or $u_1 = u_2$ and likewise $v_1 v_2 \in E(G)$ or
$v_1 = v_2$.  Let $G^n$ denote the $n$-fold strong product of $G$ with
itself.  Shannon capacity, denoted $\Theta(G)$, is $\lim_{n \to
  \infty} \alpha(G^n)^{1/n}$, which exists for graphs whose
complements have finite chromatic number; here $\alpha(G)$ denotes the
independence number of $G$. Shannon capacity is not a sandwich
function, because it is not additive under join, as shown by Alon
\cite{alon}.

We now define the Lov\'asz theta number $\vartheta$ and its variants
$\vartheta'$ and $\vartheta^+$, first for finite graphs
\cite[Sections~4.2 and 4.4]{LaurentRendl}, and then in general.  For
finite graphs $G$, consider positive semidefinite $V(G) \times V(G)$
matrices $M$ (i.e., $|V(G)| \times |V(G)|$ matrices indexed by $V(G)$)
such that $\Tr(M) = 1$.

\begin{definition} \label{def:theta}
For $G$ finite, we define
\begin{enumerate}
\item $\vartheta^+(G)$ to be the maximum of $\sum_{i,j} M_{ij}$ over
  all positive semidefinite $V(G) \times V(G)$ matrices $M$ with trace
  $1$ such that $M_{ij} \le 0$ for $ij \in E(G)$,

\item $\vartheta(G)$ to be the maximum of $\sum_{i,j} M_{ij}$ over all
  positive semidefinite $V(G) \times V(G)$ matrices $M$ with trace $1$
  such that $M_{ij} = 0$ for $ij \in E(G)$, and

\item $\vartheta'(G)$ to be the maximum of $\sum_{i,j} M_{ij}$ over
  all positive semidefinite $V(G) \times V(G)$ matrices $M$ with trace
  $1$ such that $M_{ij} = 0$ for $ij \in E(G)$ and $M_{ij} \ge 0$ for
  all $i,j$.
\end{enumerate}
For $G$ infinite, we define $\vartheta(G) = \sup_{H \subseteq G}
\vartheta(H)$, where the supremum ranges over all induced finite
subgraphs of $G$, and similarly for $\vartheta'$ and $\vartheta^+$.
\end{definition}

Because we can extend a matrix by $0$, $\vartheta(G) = \sup_{H
  \subseteq G} \vartheta(H)$ holds for finite graphs $G$, showing that
our definition is internally consistent.  Moreover, there is no harm
in requiring that our matrices $M$ are symmetric, because we can
average $M$ with its transpose. We will assume symmetry as part of the
definition of positive semidefiniteness.

It is known \cite[Equation 69]{LaurentRendl} that
\[\alpha(G) \le \vartheta'(G) \le \vartheta(G) \le \vartheta^+(G) \le \chi(\overline{G}).\]
Moreover, Lov\'asz showed that $\Theta(G) \le \vartheta(G)$.

For later use, it will be helpful to formulate the dual semidefinite
programs defining $\vartheta$, $\vartheta'$, and $\vartheta^+$.  We
will state the duals in terms of positive semidefinite kernels on $G$,
that is, functions $K \colon V(G) \times V(G) \to \R$ such that
$K(x,y)=K(y,x)$ and $\sum_{x,y \in V(G)} w_x w_y K(x,y) \ge 0$ for all
choices of weights $w\colon V(G) \to \R$. This is of course equivalent
to positive semidefinite $V(G) \times V(G)$ matrices, but the language
of kernels will be convenient.

\begin{proposition}\label{discretetheta}
Let $G$ be a finite graph.  Then
\begin{enumerate}
\item $\vartheta'(G)$ is the minimum of $t$ over all positive
  semidefinite kernels $K$ on $G$ such that $K(x,x)=t-1$ for all $x\in
  V(G)$ and $K(x,y) \le -1$ for $xy \not \in E(G)$,

\item $\vartheta(G)$ is the minimum of $t$ over all feasible $K$ and
  $t$ for $\vartheta'(G)$ that additionally satisfy $K(x,y) = -1$ for
  $xy \not \in E(G)$, and

\item $\vartheta^+(G)$ is the minimum of $t$ over all feasible $K$ and
  $t$ for $\vartheta'(G)$ that additionally satisfy $K(x,y) \ge -1$
  for $xy \in E(G)$.
\end{enumerate}
\end{proposition}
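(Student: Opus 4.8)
The plan is to realize each of $\vartheta(G)$, $\vartheta'(G)$, and $\vartheta^+(G)$ as the value of an explicit semidefinite program in a matrix variable $M$ and to identify the three kernel formulations in the statement as the corresponding Lagrangian duals. Write $\langle P,Q\rangle=\sum_{x,y\in V(G)}P(x,y)Q(x,y)$, let $\mathbf{J}$ be the all-ones $V(G)\times V(G)$ matrix, and let $\mathbf{I}$ be the identity. The primal for $\vartheta(G)$ maximizes $\langle\mathbf{J},M\rangle$ over symmetric $M\succeq 0$ with $\langle\mathbf{I},M\rangle=1$ and $M(x,y)=0$ for $xy\in E(G)$; the primal for $\vartheta^+(G)$ relaxes these edge equalities to the inequalities $M(x,y)\le 0$; and the primal for $\vartheta'(G)$ keeps the edge equalities and adjoins $M(x,y)\ge 0$ for all $x\ne y$. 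These feasible regions are compact and nonempty, so the maxima are attained.

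I would then compute the Lagrangian dual of each program. A scalar $t$ is dual to the trace constraint; the multipliers attached to the edge constraints are unconstrained for $\vartheta$, nonnegative for $\vartheta^+$, and for $\vartheta'$ one has in addition a family of nonnegative multipliers dual to the constraints $M(x,y)\ge 0$ on non-edges. In every case dual feasibility says precisely that the matrix obtained from $t\mathbf{I}-\mathbf{J}$ by adding the symmetric matrix of multipliers is positive semidefinite; call it $K$. Then $K(x,x)=t-1$ always, while for $\vartheta$ the off-diagonal entries are unconstrained on edges and equal to $-1$ on non-edges, for $\vartheta^+$ they are at least $-1$ on edges and equal to $-1$ on non-edges, and for $\vartheta'$ they are unconstrained on edges and at most $-1$ on non-edges. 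This already matches clause (1); intersecting the $\vartheta'$ feasible set with the condition $K(x,y)=-1$ on non-edges yields clause (2), and intersecting it with $K(x,y)\ge -1$ on edges yields clause (3), the latter once one checks that the extra relaxation of the non-edge equalities $K(x,y)=-1$ to inequalities $K(x,y)\le -1$ leaves the optimal value unchanged.

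To pass from weak to strong duality I would invoke Slater's condition, exhibiting a strictly feasible primal point in each case. The matrix $\frac{1}{|V(G)|}\mathbf{I}$ is positive definite with trace one and vanishes off the diagonal, hence is strictly feasible for the $\vartheta$ program; $\frac{1}{|V(G)|}\mathbf{I}-\varepsilon A$, with $A$ the adjacency matrix of $G$ and $\varepsilon>0$ small, is positive definite with trace one and strictly negative on edges, hence strictly feasible for the $\vartheta^+$ program; and $\frac{1}{|V(G)|}\mathbf{I}+\varepsilon A'$, with $A'$ the adjacency matrix of $\overline{G}$, is positive definite with trace one, vanishes on edges, and is strictly positive on non-edge off-diagonal positions, hence strictly feasible for the $\vartheta'$ program. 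Strong duality then gives that each dual minimum equals the corresponding primal maximum and is attained. As a cross-check and to see the easy inequality directly, weak duality is a one-line computation: for primal-feasible $M$ and dual-feasible $(K,t)$, expanding $0\le\langle K,M\rangle$ and using $K(x,x)=t-1$, $\langle\mathbf{I},M\rangle=1$, the sign conditions on the off-diagonal entries of $K$, and the edge conditions on $M$ yields $\langle\mathbf{J},M\rangle\le t$.

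I expect the main work to be the bookkeeping in the dualization step — keeping the three programs and their duals aligned and getting every sign in the Lagrangian correct — together with the one point that is not purely formal: for $\vartheta^+$, verifying that allowing $K(x,y)\le -1$ rather than $K(x,y)=-1$ on non-edges does not lower the minimum. I would handle this either by showing that the $\vartheta^+$ semidefinite program has an optimal solution that is nonnegative on non-edge off-diagonal positions (so that the weak-duality computation applies against it), or by arguing directly from the structure of the optimal dual matrix. The remaining ingredients — Slater's condition, attainment of the optima, and weak duality — are routine once the programs are written out carefully.
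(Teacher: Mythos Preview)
The paper does not give its own proof of this proposition; it simply cites \cite[Sections~4.2 and 4.4]{LaurentRendl}. Your outline—writing the three primals, computing their Lagrangian duals, and invoking Slater's condition for strong duality—is exactly the standard route, and for clauses (1) and (2) it goes through cleanly: the direct SDP duals of the $\vartheta'$ and $\vartheta$ primals are precisely the kernel programs stated, your Slater witnesses are correct, and the weak-duality computation is as you describe.

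For clause (3), however, you have correctly put your finger on the one step that is not purely formal, and you should not underestimate it. The direct SDP dual of the $\vartheta^+$ primal forces $K(x,y)=-1$ on non-edges, whereas clause (3) only requires $K(x,y)\le -1$ there; so clause (3) is a genuine relaxation of the natural dual, and one must show it does not lower the minimum. Your proposed remedy is to exhibit a $\vartheta^+$-optimal $M$ with $M_{xy}\ge 0$ on non-edge positions, which is exactly equivalent (by dualizing clause (3) itself) to showing that adding the constraints $M_{xy}\ge 0$ on non-edges to the $\vartheta^+$ primal leaves its value unchanged. This is the substantive content of the equivalence, not bookkeeping: the obvious tricks (taking entrywise absolute values in a spectral decomposition, sign-flipping coordinates, or reading it off from complementary slackness with the natural dual) all fail, because they either destroy the edge sign constraints or give no information on non-edge entries. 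Your alternative suggestion, ``arguing directly from the structure of the optimal dual matrix,'' is too vague to assess. So your plan is sound, but the write-up needs an actual argument for this lemma—either extracted from the cited reference or supplied independently—before the $\vartheta^+$ case is complete.
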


For a proof, see \cite[Sections~4.2 and 4.4]{LaurentRendl}.

We now are ready to prove that these give examples of sandwich
functions.

\begin{theorem}\label{thetasandwich}
The functions $G \mapsto \vartheta(\overline{G})$, $G \mapsto
\vartheta'(\overline{G})$, and $G \mapsto \vartheta^+(\overline{G})$
are sandwich functions.
\end{theorem}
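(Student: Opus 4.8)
The plan is to verify the three defining properties of a sandwich function — sphere bound, functoriality, and additivity of join — for each of the three functions $G \mapsto \vartheta(\overline G)$, $G \mapsto \vartheta'(\overline G)$, and $G \mapsto \vartheta^+(\overline G)$. The sphere bound is a triviality: if $G$ is empty then $\overline G$ is empty and the optimization is over the empty matrix, giving value $0$; if $G$ is a single vertex then $\overline G$ is a single vertex, the only trace-$1$ PSD matrix is $M = (1)$, and $\sum_{i,j} M_{ij} = 1$. So the work is in functoriality and join-additivity. Throughout I will pass to complements and phrase everything in $\overline\Graph$: a morphism $G \to G'$ in $\Graph$ is the same as a morphism $\overline G \to \overline{G'}$ in $\overline\Graph$, i.e., a map $f \colon V(\overline G) \to V(\overline{G'})$ whose fibers are cliques of $\overline G$ and such that the preimage of any edge of $\overline{G'}$ is an edge of $\overline G$ (equivalently, $f$ sends edges of $G$ to edges of $G'$ and non-edges to non-edges-or-identifications).

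For functoriality, suppose $f \colon \overline G \to \overline{G'}$ is such a morphism and $M$ is a feasible matrix for $\vartheta(\overline G)$ (resp.\ $\vartheta'$, $\vartheta^+$). I would push $M$ forward along $f$ by setting $M'_{x',y'} = \sum_{x \in f^{-1}(x')} \sum_{y \in f^{-1}(y')} M_{x,y}$; this is PSD because it is $P^{\mathsf T} M P$ where $P$ is the $0$–$1$ incidence matrix of $f$, it has the same trace $\sum M_{x,y} \cdot [\text{fibers}]$ — wait, one must check $\Tr(M') = \sum_{x'} M'_{x',x'} = \sum_{x' } \sum_{x,y \in f^{-1}(x')} M_{x,y}$, which is $\ge \Tr(M) = 1$ but need not equal $1$; however since the diagonal blocks are indexed by cliques of $\overline G$, and we only require $M_{x,y} = 0$ (resp.\ $\le 0$, with sign conditions) on \emph{non-edges} of $\overline G$, the off-diagonal within-fiber entries are unconstrained, so $\Tr(M') \ge 1$ and I normalize by dividing by $\Tr(M')$, which only decreases $\sum M'_{x',y'}$ — so I instead argue the reverse direction or note that the value $\sum_{x',y'} M'_{x',y'} = \sum_{x,y} M_{x,y}$ is preserved exactly while the trace may grow, hence after renormalizing, $\vartheta(\overline{G'}) \ge (\sum M_{x,y})/\Tr(M')$; this is the wrong inequality. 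The clean fix is to work with the \emph{dual} formulation from Proposition~\ref{discretetheta}: given an optimal kernel $K'$ on $\overline{G'}$ with $K'(x',x') = t-1$ and $K'(x',y') \le -1$ for non-edges of $\overline{G'}$, pull it back by $K(x,y) = K'(f(x),f(y))$ for $x \ne y$ and $K(x,x) = t-1$. This $K$ is PSD (pullback of a PSD kernel, adjusting the diagonal only upward in the sense needed — actually $K'(f(x),f(x)) = t-1$ already, so no adjustment), and for a non-edge $xy$ of $\overline G$: either $f(x) = f(y)$, impossible since fibers are cliques of $\overline G$; or $f(x)f(y)$ is a non-edge of $\overline{G'}$ (because $f$ reflects edges), so $K(x,y) = K'(f(x),f(y)) \le -1$. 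The extra sign conditions for $\vartheta$ ($=-1$) and $\vartheta^+$ ($\ge -1$ on edges) are likewise inherited. Hence $\vartheta(\overline G) \le t = \vartheta(\overline{G'})$, and similarly for the variants.

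For additivity of join, note $\overline{G * G'} = \overline G \sqcup \overline{G'}$, so I must show $\vartheta$ (resp.\ $\vartheta'$, $\vartheta^+$) is additive over disjoint unions of graphs. Again the dual is cleanest: given optimal kernels $K$ on $\overline G$ with parameter $t$ and $K''$ on $\overline{G'}$ with parameter $t'$, I cannot simply take a block-diagonal kernel because across the two components every pair is a non-edge of $\overline G \sqcup \overline{G'}$ and the diagonals $t-1$, $t'-1$ differ. Instead, for the inequality $\vartheta(\overline{G}\sqcup\overline{G'}) \le t + t'$, I build a kernel with constant diagonal $t + t' - 1$: on the $\overline G$-block use $K + (t' ) J$-type correction, i.e.\ add $t'$ to each diagonal entry of the $\overline G$ block and $t$ to each diagonal entry of the $\overline{G'}$ block, and on the cross block put the constant $-1$. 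PSD-ness of the resulting kernel, and the fact that cross entries equal $-1 \le -1$ (with equality, as needed for $\vartheta$, and $\le -1$, $\ge -1$ as needed for $\vartheta'$, $\vartheta^+$ since cross pairs are non-edges), then gives $\le$. For the reverse inequality $\ge$, I pass to the \emph{primal}: given an optimal trace-$1$ PSD matrix $M$ on $\overline G \sqcup \overline{G'}$, the diagonal blocks $M_G$, $M_{G'}$ are PSD with traces $\tau$, $1-\tau$ summing to $1$, and the constraints on non-edges within each block are inherited (plus the cross entries are $\le 0$ / $= 0$ / with sign, but those are discarded for the lower bound); so $\sum_{i,j} M_{ij} \le \sum M_G + \sum M_{G'} + 2\sum_{\text{cross}} M_{ij} \le \tau\,\vartheta(\overline G) + (1-\tau)\vartheta(\overline{G'})$ using that the cross sum is $\le 0$ (for $\vartheta, \vartheta'$) — and for $\vartheta^+$ the cross entries can be positive but are still constrained; here one needs a slightly more careful argument, bounding $2\sum_{\text{cross}}M_{ij}$ by the geometric-mean trick $M_{G},M_{G'} \succeq 0$ implies the cross block has bounded entries — in fact the standard fact is $\sum_{i,j}M_{ij} = \mathbf{1}^{\mathsf T} M \mathbf 1 = (\mathbf 1_G + \mathbf 1_{G'})^{\mathsf T} M (\cdots)$ and one splits $\mathbf 1 = \mathbf 1_G + \mathbf 1_{G'}$, getting $\mathbf 1_G^{\mathsf T} M \mathbf 1_G + \mathbf 1_{G'}^{\mathsf T} M \mathbf 1_{G'} + 2 \mathbf 1_G^{\mathsf T} M \mathbf 1_{G'}$, and the cross term is $\le 0$ precisely because all cross entries of $M$ are $\le 0$ (true for all three variants, since for $\vartheta^+$ we only impose $M_{ij} \le 0$ on edges of $\overline{G}\sqcup\overline{G'}$ — but cross pairs ARE edges of the join's complement? no: in $\overline G \sqcup \overline{G'}$ the cross pairs are non-edges, so for $\vartheta^+$ they satisfy $M_{ij} \ge 0$, not $\le 0$). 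This sign subtlety for $\vartheta^+$ is the one genuinely delicate point: there one must instead use that $M_G$ and $M_{G'}$ are PSD to conclude the cross block is dominated, or argue via the dual throughout for $\vartheta^+$. I expect this $\vartheta^+$ join-additivity to be the main obstacle; the resolution is likely to prove both inequalities for $\vartheta^+$ using only the dual characterization (Proposition~\ref{discretetheta}(3)), where the sign conditions are all of the form $K(x,y) \ge -1$ and behave monotonically under the block construction above. Everything else is routine linear algebra with block matrices.
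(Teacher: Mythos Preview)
Your proposal has several genuine gaps.

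\textbf{Functoriality.} Your primal push-forward actually works, and your reason for abandoning it rests on a misreading of the constraints. For $\vartheta(\overline G)$ (resp.\ $\vartheta'$, $\vartheta^+$) the constraint is $M_{xy}=0$ (resp.\ $=0$, $\le 0$) for $xy\in E(\overline G)$, not for non-edges of $\overline G$. Since the fibers of a $\overline\Graph$-morphism are cliques of $\overline G$, the within-fiber off-diagonal entries are precisely the constrained ones, and one gets $\Tr(f_*M)\le 1$ (with equality for $\vartheta,\vartheta'$), so normalizing by the trace can only \emph{increase} the objective. This is exactly the paper's argument. Your fallback dual pullback is fine for $\vartheta$ and $\vartheta'$, but it fails for $\vartheta^+$: a $\overline\Graph$-morphism only reflects edges, so an edge $xy$ of $\overline G$ can map to a non-edge $f(x)f(y)$ of $\overline{G'}$, where $K'(f(x),f(y))$ is only known to be $\le -1$, not $\ge -1$.

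\textbf{Subadditivity under disjoint union.} Your kernel (add $t'I$ on the $\overline G$-block, $tI$ on the $\overline{G'}$-block, constant $-1$ on the cross block) is not positive semidefinite in general. Take $\overline G=\overline{G'}=K_n$ with $n\ge 2$: the optimal kernel is $K=0$ with $t=1$, and your construction yields $\begin{pmatrix} I_n & -J\\ -J & I_n\end{pmatrix}$, which has $\begin{pmatrix}\mathbf 1\\ \mathbf 1\end{pmatrix}^{\!\mathsf T}\!\begin{pmatrix} I_n & -J\\ -J & I_n\end{pmatrix}\!\begin{pmatrix}\mathbf 1\\ \mathbf 1\end{pmatrix}=2n-2n^2<0$. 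The paper's construction is more delicate: it scales $K_G,K_H$ by $(\alpha^2+1),(\alpha^{-2}+1)$ with $\alpha^2=t_H/t_G$ and adds the rank-one positive semidefinite correction $L_\alpha=\alpha^2 1_{G\times G}+\alpha^{-2}1_{H\times H}-1_{G\times H}-1_{H\times G}$, which supplies the cross entries while preserving positive semidefiniteness.

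\textbf{Superadditivity under disjoint union.} You never prove it. The argument you label ``reverse inequality $\ge$'' takes an optimal $M$ for the union and bounds its objective from above, which is another attempt at $\le$, not $\ge$. Moreover the bound you write, $\sum M_{ij}\le \tau\,\vartheta(\overline G)+(1-\tau)\vartheta(\overline{G'})$, would give $\vartheta(\overline G\sqcup\overline{G'})\le\max(\vartheta(\overline G),\vartheta(\overline{G'}))$, which is false; and the premise ``cross sum $\le 0$'' is wrong, since cross pairs are non-edges of the disjoint union and hence unconstrained for $\vartheta$ and $\vartheta^+$, and constrained to be $\ge 0$ for $\vartheta'$. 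The paper proves $\ge$ by an explicit primal construction: from feasible $M(G),M(H)$ it builds a feasible $M(G\sqcup H)$ with a specific cross block $\sigma(G,H)_{xy}=\big(\sum_i M(G)_{ix}\big)\big(\sum_j M(H)_{jy}\big)$, normalized by $\Sigma(G)+\Sigma(H)$, and verifies positive semidefiniteness via Cauchy--Schwarz for the semi-inner products defined by $M(G)$ and $M(H)$.
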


To aid with the proof, we have the following general lemma.

\begin{lemma}\label{finitecheck}
Suppose $\Phi(G) = \sup_{H \subseteq G} \Phi(H)$, where the supremum
is over all induced finite subgraphs.  If $\Phi$ satisfies the
sandwich function axioms for all finite graphs, then $\Phi$ satisfies
them for all graphs in $\Graph$.
\end{lemma}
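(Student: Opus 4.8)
The plan is to verify the three sandwich function axioms for $\Phi$ on an arbitrary graph $G \in \Graph$, reducing each to the corresponding statement for finite induced subgraphs, where the hypothesis supplies it. For the sphere bound, $\emptyset$ has no nonempty induced subgraphs, so $\Phi(\emptyset) = \sup \emptyset = 0$ (or we adopt the convention that the only induced subgraph considered is $\emptyset$ itself, with $\Phi(\emptyset)=0$); and a single-vertex graph $G$ has only itself and $\emptyset$ as induced subgraphs, so $\Phi(G) = \Phi(K_1) = 1$ by the finite case.

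For functoriality, let $f \colon G \to G'$ be a morphism in $\Graph$ (a graph homomorphism). I would show $\Phi(G) \le \Phi(G')$ by showing that every finite induced subgraph $H \subseteq G$ satisfies $\Phi(H) \le \Phi(G')$, then taking the supremum over $H$. Given such $H$, the restriction $f|_{V(H)}$ maps $V(H)$ into $V(G')$; let $H'$ be the induced subgraph of $G'$ on the (finite) image $f(V(H))$. Then $f|_{V(H)}$, viewed as a map $H \to H'$, is still a graph homomorphism, because if $xy \in E(H)$ then $xy \in E(G)$, hence $f(x)f(y) \in E(G')$, and since $f(x),f(y) \in V(H')$ this is an edge of the induced subgraph $H'$. (Note $f(x) \neq f(y)$ since $f$ maps the edge $xy$ to an edge, and $H'$ is loopless.) By the finite sandwich axioms for $\Phi$, $\Phi(H) \le \Phi(H') \le \sup_{K \subseteq G'} \Phi(K) = \Phi(G')$. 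Taking the supremum over all finite $H \subseteq G$ gives $\Phi(G) \le \Phi(G')$.

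For additivity of join, I must show $\Phi(G * G') = \Phi(G) + \Phi(G')$. For the inequality $\le$: any finite induced subgraph $L$ of $G * G'$ has the form $H * H'$ where $H = L \cap G$ and $H' = L \cap G'$ are finite induced subgraphs of $G$ and $G'$ respectively (the crossing edges are all present by definition of the join), so $\Phi(L) = \Phi(H) + \Phi(H') \le \Phi(G) + \Phi(G')$ by the finite case; taking the supremum over $L$ gives $\Phi(G * G') \le \Phi(G) + \Phi(G')$. For the reverse inequality, given finite $H \subseteq G$ and $H' \subseteq G'$, the graph $H * H'$ is a finite induced subgraph of $G * G'$, so $\Phi(H) + \Phi(H') = \Phi(H * H') \le \Phi(G * G')$; taking the supremum over $H$ and then over $H'$ (using that the supremum of a sum separates when the terms are independent) yields $\Phi(G) + \Phi(G') \le \Phi(G * G')$.

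The only genuine subtlety — and the step I would be most careful about — is the functoriality argument: one must check that restricting a homomorphism to a finite induced subgraph and corestricting to the induced subgraph on the image again lands in $\Graph$ and is again a morphism, so that the finite axiom applies. Everything else is bookkeeping with suprema. I would also remark that the same reduction proves the Lovász sandwich inequality $\omega(G) \le \Phi(G) \le \chi(G)$ for infinite $G$ whenever it holds finitely, since $\omega$ and $\chi$ are themselves suprema of their finite-subgraph values for graphs in $\Graph$, but that is not needed here.
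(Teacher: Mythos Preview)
Your proof is correct and follows essentially the same approach as the paper's: both verify functoriality by restricting a homomorphism to a finite induced subgraph and using the finite axiom on the image, and both handle join additivity by decomposing finite induced subgraphs of $G * G'$ into joins of finite pieces of $G$ and $G'$. You are slightly more explicit than the paper in justifying that the restriction of $f$ to $H$, corestricted to the induced subgraph on $f(V(H))$, is again a graph homomorphism, and you observe (correctly) that a finite induced subgraph of $G*G'$ actually \emph{equals} $H*H'$ rather than merely being contained in one; the paper uses containment and monotonicity there, but the conclusion is the same.
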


\begin{proof}
For a graph homomorphism $f \colon G \to H$,
\[
\Phi(G) = \sup_{F \subseteq G} \Phi(F) \le \sup_{F \subseteq G}
\Phi(f(F)) \le \sup_{F' \subseteq H} \Phi(F') = \Phi(H),
\]
where the suprema are over induced finite subgraphs $F$ and $F'$ of
$G$ and $H$, respectively.

Every induced finite subgraph $F$ of the join $G * H$ is contained in
the join $F_G * F_H$ of some induced finite subgraphs $F_G \subseteq
G$ and $F_H \subseteq H$.  Thus, $\Phi(F) \le \Phi(F_G) + \Phi(F_H)$
and so $\Phi(G * H) \le \Phi(G) + \Phi(H)$.  Conversely, for any
induced finite subgraphs $F_G \subseteq G$ and $F_H \subseteq H$,
$\Phi(F_G) + \Phi(F_H) = \Phi(F_G * F_H) \le \Phi(G * H)$, so taking
the supremum over such finite subgraphs gives the other inequality.
Therefore $\Phi(G * H) = \Phi(G) + \Phi(H)$.
\end{proof}

\begin{proof}[Proof of Theorem~\ref{thetasandwich}]
By the lemma, we reduce to working only with finite graphs.  First, we
will prove functoriality. Let $f \colon \overline{G} \to \overline{H}$
be a graph homomorphism.  For a $V(G) \times V(G)$ matrix $M$, we
define a $V(H) \times V(H)$ matrix $f_* M$ by
\[(f_* M)_{xy} = \sum_{\substack{i \in f^{-1}(x) \\ j \in f^{-1}(y)}} M_{ij}.\]

We claim that if $M$ is positive semidefinite, then $f_* M$ is
positive semidefinite.  We want to show that for arbitrary weights
$w_x$ for $x \in V(H)$,
\[\sum_{x,y \in V(H)} w_x w_y (f_* M)_{xy} \ge 0.\]
That holds because the sum equals
\[\sum_{i,j \in V(G)} w_{f(i)} w_{f(j)} M_{ij},\]
which is nonnegative by the positive semidefiniteness of $M$.

To prove functoriality for $\vartheta^+$, we check as follows that if
$M$ is feasible for $\vartheta^+(G)$, then $(\Tr(f_* M))^{-1} f_* M$
is feasible for $\vartheta^+(H)$ and $0 < \Tr(f_* M) \le 1$.  The
preimage of any point is a clique in $G$, so $(f_* M)_{xx} \le \sum_{i
  \in f^{-1}(x)} M_{ii}$ and hence $\Tr(f_*M) \le \Tr(M)=1$.  Since
$f_* M$ is nontrivial and positive semidefinite, its trace must be
strictly positive.  If $xy \in E(H)$, and if $f(i) = x$ and $f(j) =
y$, then $ij \in E(G)$, so $M_{ij} \le 0$.  Taking the sum over the
preimages shows that $(f_* M)_{xy} \le 0$.  This shows that the matrix
is indeed feasible.  Now
\[
\begin{split}
\vartheta^+(G) &= \max_{M} \sum_{i,j \in V(G)} M_{ij}\\ &= \max_{M}
\sum_{x,y \in V(H)} (f_*M)_{xy}\\ & \le \max_{M} \sum_{x,y \in V(H)}
(\Tr(f_* M))^{-1} (f_*M)_{xy} \le \vartheta^+(H),
\end{split}
\]
as desired.

The semidefinite program defining $\vartheta(G)$ is slightly more
stringent, requiring $M_{ij} = 0$ rather than just $M_{ij} \le 0$ for
$ij \in E(G)$.  Suppose $M$ is feasible for $\vartheta(G)$.  As a
result $\Tr(f_* M) = 1$, so we can ignore the normalizing factor
$(\Tr(f_* M))^{-1}$.  By the same reasoning as before, $(f_* M)_{xy} =
\sum_{i \in f^{-1}(x),j \in f^{-1}(y)} M_{ij} = 0$ for $xy \in E(H)$,
since the sum over $i$ and $j$ ranges over pairs such that $ij \in
E(G)$.  This shows that $f_* M$ must be feasible for $\vartheta(H)$,
and so this construction shows that $\vartheta(G) \le \vartheta(H)$.

Finally, suppose $M$ is feasible for $\vartheta'(G)$.  This imposes
nonnegativity on its entries, and so $f_* M$ also has nonnegative
entries and is feasible for $\vartheta'(H)$.  This construction
therefore shows that $\vartheta'(G) \le \vartheta'(H)$, and
functoriality is proved for finite graphs in all cases.

What remains is to show the additivity under join, or from the
perspective of the category $\overline{\Graph}$, additivity under
taking disjoint unions of graphs.  We break up this statement into two
inequalities, which we prove separately.

First, we prove subadditivity, i.e., $\Psi(G \sqcup H) \le \Psi(G) +
\Psi(H)$ when $\Psi$ is $\vartheta$, $\vartheta'$, or
$\vartheta^+$. We prove the inequality using the dual formulation in
terms of kernels $K$. Let $K_G$ and $K_H$ be positive semidefinite
kernels that certify the values $\Psi(G)$ and $\Psi(H)$ in
Proposition~\ref{discretetheta}, with $K_G(x,x) = t_G-1$ and $K_H(y,y)
= t_H-1$.

First, we let $1_{S \times T}(x,y)$ be the function which is $1$ for
$(x,y) \in S \times T$ and $0$ otherwise.  We claim that for each
$\alpha > 0$, the kernel
\[
L_{\alpha} = \alpha^2 1_{V(G)\times V(G)} + \alpha^{-2} 1_{V(H)\times
  V(H)} - 1_{V(G)\times V(H)} - 1_{V(H)\times V(G)}
\]
on $G \sqcup H$ is positive semidefinite.  To see why, note that for
weights $w_x$,
\[ 
\begin{split}
&\sum_{x_1, x_2 \in V(G)} w_{x_1} w_{x_2} \alpha^2 + \sum_{y_1, y_2
    \in V(H)} w_{y_1} w_{y_2} \alpha^{-2} - 2 \sum_{\substack{x \in
      V(G) \\ y \in V(H)}} w_x w_y\\ &\qquad = \left( \alpha \sum_{x
    \in V(G)} w_x - \alpha^{-1} \sum_{y \in V(H)} w_y \right)^2 \ge 0.
\end{split}\]

We combine the kernels $K_G$ and $K_H$ by setting
\[
K = (\alpha^{2} + 1) K_G + (\alpha^{-2} + 1) K_H + L_{\alpha},
\]
which is again positive semidefinite on $G \sqcup H$. In particular,
we take $\alpha^2 = t_H/t_G$. Then for $x \in V(G)$,
\[
K(x,x) = (\alpha^2+1)t_G - 1 = t_G+t_H-1,
\]
and for $y \in V(H)$,
\[
K(y,y) = (\alpha^{-2}+1)t_H - 1 = t_G+t_H-1,
\]
so $K(z,z) = t_G+t_H-1$ for all $z$. All that remains is to show that
$K$ is feasible for $\Psi(G \sqcup H)$.

If $K_G$ is feasible for $\vartheta'(G)$ and $K_H$ is feasible for
$\vartheta'(H)$, then $K$ satisfies $K(z_1, z_2) \le -1$ for $z_1 z_2
\not \in E(G) \cup E(H)$ and is thus feasible for $\vartheta'(G \sqcup
H)$.  By direct computation, we can check that if $K_G(x,y) = -1$ for
$xy \not \in E(G)$ and $K_H(x,y) = -1$ for $xy \not \in E(H)$, then
$K(z_1, z_2) = -1$ for $z_1 z_2 \not\in E(G) \cup E(H)$.  This shows
that feasibility for $\vartheta(G)$ and $\vartheta(H)$ implies
feasibility of $K$ for $\vartheta(G \sqcup H)$.  Finally, if $K_G(x,y)
\ge -1$ for $xy \in E(G)$, then $(\alpha^2 + 1) K_G(x,y) \ge -\alpha^2
- 1$, so $K(x,y) \ge -1$ for $x,y \in V(G)$ and $xy \in E(G)$.
Assuming feasibility of $K_H$ for $\vartheta^+(H)$ as well shows that
$K$ is feasible for $\vartheta^+(G \sqcup H)$.

We now prove the other direction of the inequality.  Let $M(G)$ and
$M(H)$ be symmetric positive semidefinite $V(G) \times V(G)$ and $V(H)
\times V(H)$ matrices, respectively, used to establish lower bounds
for $\Psi(G)$ and $\Psi(H)$.  Define a $V(G) \times V(H)$ matrix
$\sigma(G,H)$ so that
\[\sigma(G,H)_{xy} = \left( \sum_{i \in V(G)} M(G)_{ix} \right)
\left( \sum_{j \in V(H)} M(H)_{jy} \right),\]
and similarly define $\sigma(H,G)$ as the $V(H) \times V(G)$ transpose
matrix.  We define the total mass of the matrices $M(G)$ and $M(H)$ by
$\Sigma(G) = \sum_{ij \in V(G) \times V(G)} M(G)_{ij}$ and $\Sigma(H)
= \sum_{ij \in V(H) \times V(H)} M(H)_{ij}$.  These are objectives for
our semidefinite program that we may assume to be positive.  Define a
$(V(G) \sqcup V(H)) \times (V(G) \sqcup V(H))$ matrix $M(G \sqcup H)$
by
\[M(G \sqcup H)_{xy} = \begin{cases} \frac{\Sigma(G) M(G)_{xy}}{\Sigma(G) + \Sigma(H)}
  & \text{for }(x,y) \in V(G) \times V(G), \\
\frac{\Sigma(H) M(H)_{xy}}{\Sigma(G) + \Sigma(H)} & \text{for }(x,y)
\in V(H) \times V(H), \\ \frac{\sigma(G,H)_{xy}}{\Sigma(G) +
  \Sigma(H)} & \text{for }(x,y) \in V(G) \times V(H), \text{ and}
\\ \frac{\sigma(H,G)_{xy}}{\Sigma(G) + \Sigma(H)} & \text{for }(x,y)
\in V(H) \times V(G).
\end{cases}\]
We claim, first, that $M(G \sqcup H)$ is positive semidefinite.  Let
$w_x$ be arbitrary weights for $x \in V(G) \sqcup V(H)$.  Then we want
to show that
\[(\Sigma(G) + \Sigma(H)) \sum_{x,y} w_x w_y M(G \sqcup H)_{xy} \ge 0.\]
The left side is equal to
\[\begin{split}
&\Sigma(G) \sum_{x,y \in V(G)} w_x w_y M(G)_{xy} + \Sigma(H) \sum_{x,y
  \in V(H)} w_x w_y M(H)_{xy}\\ & \quad + 2 \sum_{\substack{i,x \in
    V(G) \\ j,y \in V(H)}} w_x w_y M(G)_{ix} M(H)_{jy}.
\end{split}\]
By Cauchy-Schwarz inequality,
\[
\Sigma(G) \sum_{x,y \in V(G)} w_x w_y M(G)_{xy} \ge \left( \sum_{i,x
  \in V(G)} w_x M(G)_{ix} \right)^2;
\]
specifically, this is the Cauchy-Schwartz inequality $\langle
w,1\rangle^2 \le \langle 1,1 \rangle \langle w,w \rangle$ for the
inner product on $\R^{V(G)}$ defined by $\langle a,b \rangle =
\sum_{x,y} a_x b_y M_{xy}$.  Similarly,
\[
    \Sigma(H) \sum_{x,y \in V(H)} w_x w_y M(H)_{xy} \ge \left(
    \sum_{j,y \in V(H)} w_y M(H)_{jy} \right)^2.
\]
Applying these inequalities shows that the left side is at least
\[\left( \sum_{i,x \in V(G)} w_x M(G)_{ix} + \sum_{j,y \in V(H)} w_y M(H)_{jy} \right)^2,\]
which is nonnegative.

Next, suppose that $M(G)$ and $M(H)$ have trace $1$.  Then $M(G \sqcup
H)$ has trace $1$ by direct computation.  Additionally,
\[\sum_{x,y \in V(G) \sqcup V(H)} M(G \sqcup H)_{xy} = \Sigma(G) + \Sigma(H)\]
by a straightforward computation.

If $xy$ is an edge in the disjoint union, it is an edge in one of $G$
or $H$, and $M(G \sqcup H)_{xy}$ has the same sign as $M(G)_{xy}$ or
$M(H)_{xy}$, depending on whether the edge is in $G$ or $H$.  Thus, if
$M(G)$ and $M(H)$ are feasible for $\vartheta^+(G)$ and
$\vartheta^+(H)$, respectively, then $M(G \sqcup H)$ is feasible for
$\vartheta^+(G \sqcup H)$.  If $M(G)$ and $M(H)$ are feasible for
$\vartheta$, then $M(G \sqcup H)$ is feasible for $\vartheta(G \sqcup
H)$.  Finally, if $M(G)$ and $M(H)$ have nonnegative entries, then
$M(G \sqcup H)$ has nonnegative entries, so $M(G \sqcup H)$ is
feasible for $\vartheta'(G \sqcup H)$ in this case.  Thus, this
construction completes the proof of additivity under disjoint union.
\end{proof}

\section{Asymptotics of packing bound functions on rectifiable sets}

Our main result is a convergence theorem for all packing bound
functions and all $(\mathcal{H}_n, n)$-rectifiable sets $C$ satisfying
$\mathcal{M}_n(C) = \mathcal{H}_n(C)$.  This recovers a statement
similar to best packing on rectifiable sets
\cite{borodachov2007asymptotics}, which is in fact a little stronger
than Theorem~\ref{theorem:packingboundlimit}.  The structure of the
proof will mimic in several places the proof of the Poppy Seed Bagel
Theorem \cite{hardin2005minimal}.  First, we will show the results for
Jordan-measurable subsets of $\R^n$. As in Section~\ref{sec:pbf}, $A$
will denote an arbitrary packing bound function, $C$ and $C'$ will be
elements of $\mathcal{B}$, and $I^n$ will be the unit cube in $\R^n$.

\begin{proposition}
If $C$ is a product of intervals, then \[\lim_{r\to \infty} A(rC) /
r^n = \delta_{A,n} \mathcal{L}_n(C).\]
\end{proposition}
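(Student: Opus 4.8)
The plan is to prove the result in two stages: first for the unit cube $I^n$, then for arbitrary products of intervals by a volume-comparison argument. For the cube, the key is already established: we have defined $\delta_{A,n} = \lim_{k\to\infty} A(2^kI^n)/2^{kn}$, so we need to upgrade this convergence along the sparse sequence $r = 2^k$ to convergence as $r\to\infty$ through all real values. First I would show that $r\mapsto A(rI^n)$ is monotone (immediate from monotonicity of $A$, since $rI^n \subseteq r'I^n$ for $r\le r'$) and that $A(rI^n) \le c^n A((r/c)I^n)$ for integers $c\ge 1$, by tiling $rI^n$ with $c^n$ translates of $(r/c)I^n$ and applying the union bound. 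Combining these two facts, for any $r$ choose $k$ with $2^k \le r < 2^{k+1}$; then
\[
A(2^kI^n) \le A(rI^n) \le A(2^{k+1}I^n) \le 2^n A(2^kI^n),
\]
which is not quite tight enough on its own, so instead I would use a finer squeeze: bracket $r$ between $2^{-m}\lceil 2^m r\rceil$ from above and $2^{-m}\lfloor 2^m r \rfloor$ from below for large $m$, tile accordingly, and let $m$ and then $r$ grow, using that $A(2^kI^n)/2^{kn} \to \delta_{A,n}$. This shows $\lim_{r\to\infty} A(rI^n)/r^n = \delta_{A,n} = \delta_{A,n}\mathcal{L}_n(I^n)$.

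Next I would pass from $I^n$ to an arbitrary product of intervals $C = \prod_{i=1}^n [a_i,b_i]$ with $\ell_i = b_i - a_i > 0$. By isometry invariance we may assume $a_i = 0$, so $C = \prod_i[0,\ell_i]$. The idea is a two-sided sandwiching of $rC$ between grids of small cubes. Fix a small mesh size $h > 0$. On one side, $rC$ contains a disjoint union of roughly $\prod_i \lfloor r\ell_i/(rh)\rfloor = \prod_i\lfloor \ell_i/h\rfloor$ translated copies of the cube $(rh)I^n$, suitably spaced; by the union axiom (once the copies are at distance $\ge 2$, which costs only a bounded correction absorbed in the limit) together with isometry invariance,
\[
A(rC) \ge \Bigl(\prod_{i=1}^n \bigl\lfloor \tfrac{\ell_i}{h}\bigr\rfloor\Bigr)\, A((rh)I^n).
\]
Dividing by $r^n$, sending $r\to\infty$ using the cube case, and then $h\to 0$ gives $\liminf_{r\to\infty} A(rC)/r^n \ge \delta_{A,n}\prod_i\ell_i = \delta_{A,n}\mathcal{L}_n(C)$. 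On the other side, $rC$ is contained in a union of about $\prod_i\lceil \ell_i/h\rceil$ translated copies of $(rh)I^n$, so the union bound gives the matching upper estimate
\[
A(rC) \le \Bigl(\prod_{i=1}^n \bigl\lceil \tfrac{\ell_i}{h}\bigr\rceil\Bigr)\, A((rh)I^n),
\]
and the same limiting procedure yields $\limsup_{r\to\infty} A(rC)/r^n \le \delta_{A,n}\mathcal{L}_n(C)$. Combining the two bounds proves the proposition.

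The main obstacle is the spacing issue in the union axiom: the union axiom requires the pieces to be at distance at least $2$, but the natural tiling of $rC$ by small cubes has adjacent cubes touching. For the lower bound this is handled by deleting a thin slab between consecutive cubes of each grid — keeping, say, every cube and discarding a margin of width $2/r \to 0$ on each side — which changes the side length of each kept cube from $rh$ to $rh - 2$ and leaves $\prod_i\lfloor\ell_i/h\rfloor$ well-separated copies of $(rh-2)I^n$; since $(rh-2)/r \to h$, the cube case still applies in the limit. For the upper bound the spacing is a non-issue, since the union bound (Proposition in Section~\ref{sec:pbf}) holds with no separation hypothesis. Aside from this bookkeeping, the only care needed is that the floor/ceiling discrepancies $\prod_i\lfloor\ell_i/h\rfloor$ versus $\prod_i\lceil\ell_i/h\rceil$ both converge to $\prod_i\ell_i/h^n$ after multiplying by $h^n$ and letting $h\to 0$, which is routine.
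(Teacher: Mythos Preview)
Your two-stage plan is correct in outline and uses the same ingredients as the paper (the union axiom for well-separated pieces, the union bound for coverings, and the known dyadic limit $A(2^kI^n)/2^{kn}\to\delta_{A,n}$), but it is organized less efficiently. The paper handles an arbitrary box $C$ in one step: for large $r$ it packs $rC$ with unit cubes separated by some fixed $\varepsilon'>0$ (and covers it by unit cubes), then rescales the whole picture by a factor $2^k$ chosen so that $2^k\varepsilon'\ge 2$. This turns the unit cubes into $2^k$-cubes at separation $\ge 2$ and lets one invoke the dyadic limit directly, so the full real-$r$ limit for $I^n$ is never needed as a separate lemma. Your stage~2, carried out with tile side $2^k$ (equivalently $h=2^k/r$) instead of a fixed $h$, would accomplish exactly this and make stage~1 redundant.

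As written, stage~1 has a genuine slip. Bracketing $r$ between $2^{-m}\lfloor 2^m r\rfloor$ and $2^{-m}\lceil 2^m r\rceil$ and ``tiling accordingly'' yields cubes of side $2^{-m}$, which for large $m$ satisfy $A(2^{-m}I^n)=1$ by the sphere bound and carry no information about $\delta_{A,n}$. You need the opposite scale: cover $rI^n$ by $\lceil r/2^m\rceil^n$ translates of $2^mI^n$ (union bound), and pack it with $\lfloor r/(2^m+2)\rfloor^n$ copies of $2^mI^n$ at pairwise distance $\ge 2$ (union axiom); then send $r\to\infty$ first and $m\to\infty$ afterwards. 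With that correction your argument goes through, and stage~2 is fine as you wrote it.
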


\begin{proof}
This follows from the union bounds together with the following
geometric fact: for each $\varepsilon>0$, there exist $R$ and
$\varepsilon'>0$ such that for all $r > R$, $rC$ contains disjoint
copies of the unit cube occupying at least a $1 - \varepsilon$
fraction of the volume of $rC$ and separated by distance at least
$\varepsilon'$, and is contained in a cover by copies of the unit cube
of total volume fraction at most $1 + \varepsilon$.

Choosing $k$ such that $A(2^k I^n) 2^{-kn}$ is within $\varepsilon$
the limit $\delta_{A,n}$ and such that $2^k \varepsilon' \ge 2$, and
applying both union bounds, we find that
\begin{align*}
(1-\varepsilon)^2 \delta_{A,n} \mathcal{L}_n(C) &\le (1-\varepsilon)
  \frac{A(2^k I^n)}{2^{kn}} \mathcal{L}_n(C) \\ &\le \frac{A(2^k
    rC)}{2^{kn} r^n} \\ &\le (1+\varepsilon) \frac{A(2^k I^n)}{2^{kn}}
  \mathcal{L}_n(C) \\ &\le (1+\varepsilon)^2 \delta_{A,n}
  \mathcal{L}_n(C). \qedhere
\end{align*}
\end{proof}

\begin{proposition}
If $C$ is Jordan-measurable, then $\lim_{r\to \infty} A(rC) / r^n =
\delta_{A,n} \mathcal{L}_n(C)$.
\end{proposition}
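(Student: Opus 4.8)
**

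The plan is to approximate a Jordan-measurable set $C$ from inside and outside by finite unions of dyadic cubes, and then squeeze the limit using the union bounds and the previous proposition (which handles products of intervals, in particular cubes).

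First I would recall that since $C$ is Jordan-measurable, for every $\varepsilon > 0$ there exist finite collections of closed dyadic cubes $\mathcal{Q}^-$ and $\mathcal{Q}^+$ (all of some common small side length) such that the cubes in $\mathcal{Q}^-$ have pairwise disjoint interiors and are contained in the interior of $C$, the cubes in $\mathcal{Q}^+$ cover $\overline{C}$, and
\[
\mathcal{L}_n(C) - \varepsilon \le \sum_{Q \in \mathcal{Q}^-} \mathcal{L}_n(Q) \quad\text{and}\quad \sum_{Q \in \mathcal{Q}^+} \mathcal{L}_n(Q) \le \mathcal{L}_n(C) + \varepsilon.
\]
For the lower bound, I would shrink each cube in $\mathcal{Q}^-$ by a tiny factor so that the shrunken cubes are separated by some distance $\varepsilon' > 0$; after scaling by $r$, once $r \varepsilon' \ge 2$ the iterated union axiom gives $A(rC) \ge \sum_{Q} A(r Q')$ where $Q'$ is the shrunken cube (here I use monotonicity to pass from $rC$ to the disjoint union of the $rQ'$ sitting inside it). Dividing by $r^n$, letting $r \to \infty$, and applying the product-of-intervals proposition to each $Q'$ yields $\liminf_r A(rC)/r^n \ge (1-o(1))\sum_Q \delta_{A,n}\mathcal{L}_n(Q')$, which is at least $\delta_{A,n}(\mathcal{L}_n(C) - \varepsilon)$ up to the shrinking factor; since $\varepsilon$ and the shrinking factor are arbitrary, $\liminf_r A(rC)/r^n \ge \delta_{A,n}\mathcal{L}_n(C)$.

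For the upper bound, I would use that $\overline{C} \subseteq \bigcup_{Q \in \mathcal{Q}^+} Q$, so by monotonicity $A(rC) \le A\bigl(\bigcup_Q rQ\bigr)$, and then the union bound (which holds for arbitrary finite unions, not just separated ones) gives $A\bigl(\bigcup_Q rQ\bigr) \le \sum_Q A(rQ)$. Dividing by $r^n$ and applying the product-of-intervals proposition to each $Q$ gives $\limsup_r A(rC)/r^n \le \sum_Q \delta_{A,n}\mathcal{L}_n(Q) \le \delta_{A,n}(\mathcal{L}_n(C) + \varepsilon)$, and letting $\varepsilon \to 0$ finishes it. Combining the two bounds gives $\lim_r A(rC)/r^n = \delta_{A,n}\mathcal{L}_n(C)$.

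The only mild subtlety — and the step I would be most careful about — is the lower bound: one needs genuinely disjoint interior cubes that are \emph{separated} (distance $\ge 2$ after scaling) to invoke the union axiom rather than just the union bound, so the small-shrinkage trick is essential, and one must track that the shrinkage factors can be sent to $1$ independently of the number of cubes. Everything else is bookkeeping with the elementary properties already established. In fact this argument is just the Jordan-measurable case of the more general mechanism; I expect the writeup to be only a few lines, largely delegating to the preceding proposition.
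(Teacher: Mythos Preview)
Your proposal is correct and follows essentially the same approach as the paper: approximate $C$ from outside by a finite cube cover and apply the union bound, and from inside by finitely many separated cubes and apply the union axiom, invoking the previous proposition on each cube. Your explicit shrinking step to guarantee positive pairwise distance is exactly the right way to justify what the paper states a bit more tersely (it simply asserts $\min_{i\ne j} d(C_i,C_j)>0$ for its inner cubes).
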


\begin{proof}
Let $\{ D_i \}$ be a finite covering of $C$ by small cubes such that
$\sum_i \mathcal{L}_n(D_i) < \mathcal{L}_n(C) + \varepsilon$.  Then
\begin{align*}
\limsup_{r\to\infty} \frac{A(rC)}{r^n} &\le \sum_i \lim_{r \to \infty}
\frac{A(rD_i)}{r^n} \\ &\le \delta_{A,n} (\mathcal{L}_n(C) +
\varepsilon)
\end{align*}

Let $\{ C_i \}$ be a finite collection of disjoint, closed cubes
contained in $C$ such that $\sum_i \mathcal{L}_n(C_i) >
\mathcal{L}_n(C) - \varepsilon$.  Then $\delta := \min_{i,j} d(C_i,
C_j) > 0$.  For $r > 2 / \delta$,
\[
A(rC) \ge A\mathopen{}\left(r \bigcup_i C_i \right)\mathclose{} =
\sum_i A(r C_i).
\]
Dividing by $r^n$ and passing to the limit infimum as $r \to \infty$,
we conclude that
\[\liminf_{r\to\infty} \frac{A(rC)}{r^n} \ge \delta_{A,n}
(\mathcal{L}_n(C) - \varepsilon),\]
which completes the proof.
\end{proof}

Next we prove the result for all compact sets.

\begin{lemma}\label{compactlim}
For every compact set $D$ in $\R^n$,
\[\limsup_{r \to \infty} \frac{A(rD)}{r^n} \le \delta_{A, n}
\mathcal{L}_n(D).\]
\end{lemma}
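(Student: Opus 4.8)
The plan is to approximate the compact set $D$ from the outside by a Jordan-measurable set whose volume is only slightly larger, and then transfer the already-established Jordan-measurable case. Concretely, given $\varepsilon > 0$, I would use the fact that for a compact set $D$ the neighborhoods $D(\eta)$ are Jordan-measurable (as noted in Remark~\ref{rem:Euclidean}) and that $\mathcal{L}_n(D(\eta)) \to \mathcal{L}_n(D)$ as $\eta \to 0^+$ (by Theorem~3.2.39 in \cite{federer2014geometric}). So I may fix $\eta > 0$ small enough that $\mathcal{L}_n(D(\eta)) < \mathcal{L}_n(D) + \varepsilon$.

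The key step is to relate $A(rD)$ to $A$ of a dilate of $D(\eta)$. The natural tool is the mesh axiom: since $D \subseteq D(\eta)$ trivially, but more usefully, for any fixed $\delta > 0$ we have $rD \subseteq (rD(\delta/r))(\text{something})$; the cleaner route is to scale first. Observe that $rD \subseteq r D(\eta)$, but this only gives monotonicity, not the volume bound I want directly. Instead I would apply the mesh axiom in the form: $rD \subseteq (rD(\eta))$-neighborhood statements. Precisely, for $r$ large, write $rD(\eta) = (rD)(r\eta)$, so $rD \subseteq (rD)(r\eta) = rD(\eta)$, and the mesh axiom with $C' = rD(\eta)$ and the neighborhood parameter $r\eta$ would give $A\bigl(\frac{1}{1+r\eta} rD\bigr) \le A(rD(\eta))$ — but $r\eta \to \infty$, which is the wrong regime. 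The correct approach is the reverse: I want an \emph{upper} bound on $A(rD)$, so I should cover $rD$ by a slightly dilated Jordan-measurable set. Since $rD \subseteq rD(\eta)$ and $rD(\eta)$ is Jordan-measurable, monotonicity gives $A(rD) \le A(rD(\eta))$. Now apply the Jordan-measurable case to the \emph{fixed} set $D(\eta)$:
\[
\limsup_{r \to \infty} \frac{A(rD)}{r^n} \le \lim_{r \to \infty} \frac{A(rD(\eta))}{r^n} = \delta_{A,n} \mathcal{L}_n(D(\eta)) < \delta_{A,n}\bigl(\mathcal{L}_n(D) + \varepsilon\bigr).
\]
Letting $\varepsilon \to 0$ (equivalently $\eta \to 0$) yields the claim.

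The main obstacle I anticipate is purely the bookkeeping of verifying that $D(\eta)$ is genuinely Jordan-measurable when $D$ is merely compact (not convex or nice), but this is exactly the content of the footnote in Remark~\ref{rem:Euclidean}, using Rademacher's theorem applied to the Lipschitz distance function $x \mapsto d(x,D)$ together with the Lebesgue density theorem. A secondary point to be careful about is that the convergence $\mathcal{L}_n(D(\eta)) \to \mathcal{L}_n(D)$ for compact $D$ is needed in the form cited from \cite{federer2014geometric}; this is where compactness of $D$ is essential (it can fail for general bounded Borel sets). With those two facts in hand, the argument is a one-line application of monotonicity plus the Jordan-measurable case, so there is no real analytic difficulty beyond the cited results.
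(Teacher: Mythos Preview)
Your proposal is correct and follows essentially the same approach as the paper: approximate $D$ from outside by a Jordan-measurable set $C$ (the paper cites Remark~\ref{rem:Euclidean}, which is precisely your choice $C = D(\eta)$), apply monotonicity to get $A(rD) \le A(rC)$, invoke the Jordan-measurable case, and let the volume excess tend to zero. The detour through the mesh axiom in the middle of your write-up is unnecessary and can be deleted---monotonicity alone does the job, as you eventually note.
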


\begin{proof}
For every $\varepsilon$, there is a Jordan-measurable set $C$
containing $D$ such that $\mathcal{L}_n(C \setminus D) < \varepsilon$
(see Remark~\ref{rem:Euclidean}).  By the Lipschitz inequality, $A(rC)
\ge A(rD)$, and we conclude the result by dividing by $r^n$ and taking
the $\limsup$.
\end{proof}

For the other direction, we first prove a lemma on compact
Jordan-measurable sets:

\begin{lemma}\label{liminfineq}
For each compact Jordan-measurable $B \subseteq \R^n$ and $\varepsilon
> 0$ fixed, there is an $\varepsilon'>0$ such that for any compact
subset $C \subseteq B$ with $\mathcal{L}_n(C) \ge \mathcal{L}_n(B) -
\varepsilon'$,
\[(1 - \varepsilon) \liminf_{r \to \infty} \frac{A(rB)}{r^n} \le
\liminf_{r\to\infty} \frac{A(rC)}{r^n} + \varepsilon.\]
\end{lemma}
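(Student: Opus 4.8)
The plan is to deduce Lemma~\ref{liminfineq} from Proposition~\ref{minkzero} applied to the Jordan-measurable set $B$, whose Minkowski content is well defined and equal to its Lebesgue measure because $B$ is Jordan-measurable and compact (so $\mathcal{M}_n(B) = \mathcal{L}_n(B)$). The subtlety is that Proposition~\ref{minkzero} is stated for subsets $C'$ with $\mathcal{M}_n(C')$ defined and close to $\mathcal{M}_n(C)$, whereas here we are only told that $\mathcal{L}_n(C) \ge \mathcal{L}_n(B) - \varepsilon'$ for a compact $C \subseteq B$, with no a priori control on $\mathcal{M}_n(C)$. So the first step is to interpose an intermediate set: given such a $C$, I would cover $B \setminus C$ by finitely many small open cubes of total volume less than $2\varepsilon'$ (possible since $\mathcal{L}_n(B \setminus C) < \varepsilon'$ and outer Jordan/Lebesgue measure agree on the difference after a harmless regularization), remove their union from $B$, and let $C^*$ be the resulting compact set. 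Then $C^*$ is compact, Jordan-measurable (a cube minus finitely many cubes), contained in $B$, contains no more than the original $C$ was meant to, and has $\mathcal{M}_n(C^*) = \mathcal{L}_n(C^*) > \mathcal{L}_n(B) - 3\varepsilon'$; moreover $C \subseteq C^*$ need not hold, but we can instead arrange $C^* \subseteq C$ by only removing cubes disjoint from $C$, i.e. covering the open set $B \setminus C$ from inside — since $B\setminus C$ is relatively open in $B$ and has small measure, a finite subfamily of a cube decomposition of it works. This makes $C^* \subseteq C \subseteq B$ with all three Jordan-measurable up to the usual boundary-measure-zero caveat.

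With $C^*$ in hand, apply Proposition~\ref{minkzero} to the pair $C^* \subseteq B$ (playing the roles of $C' \subseteq C$ there): choosing $\varepsilon' $ small enough that $3\varepsilon' \le \varepsilon^*$ where $\varepsilon^*$ is the threshold furnished by that proposition for the set $B$ and a suitably chosen integer $k$, we get
\[
\left(\tfrac{k}{k+1}\right)^n \liminf_{r\to\infty}\frac{A(rB)}{r^n} \le \liminf_{r\to\infty}\frac{A(rC^*)}{r^n} + M(3\varepsilon')k^n.
\]
By monotonicity, $A(rC^*) \le A(rC)$ for all $r$, so the middle term is bounded by $\liminf_r A(rC)/r^n$. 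It remains to choose $k$ large enough that $(k/(k+1))^n \ge 1-\varepsilon$ — i.e. $k \gtrsim n/\varepsilon$ — and then, having fixed that $k$, choose $\varepsilon'$ small enough that both $3\varepsilon' \le \varepsilon^*(B,k)$ and $3 M \varepsilon' k^n \le \varepsilon$. Since $k$ depends only on $n$ and $\varepsilon$ (both fixed), and $M$ depends only on $n$ and $d$, this is a legitimate choice of $\varepsilon'$ depending only on $B$, $n$, and $\varepsilon$, and it yields exactly
\[
(1-\varepsilon)\liminf_{r\to\infty}\frac{A(rB)}{r^n} \le \liminf_{r\to\infty}\frac{A(rC)}{r^n} + \varepsilon,
\]
as desired.

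The main obstacle I anticipate is the bookkeeping around the intermediate set $C^*$: one must confirm that $C^*$ can simultaneously be taken compact, Jordan-measurable, sandwiched as $C^* \subseteq C$, and with $\mathcal{M}_n(C^*)$ within a controlled multiple of $\varepsilon'$ of $\mathcal{M}_n(B)$, all without circular dependence between the choice of $k$ (which fixes how small $\varepsilon'$ must be via $\varepsilon^*(B,k)$) and the choice of $\varepsilon'$. The resolution is the order of quantifiers used above: fix $\varepsilon$, then fix $k = k(n,\varepsilon)$, then invoke Proposition~\ref{minkzero} to obtain $\varepsilon^* = \varepsilon^*(B, k)$, and only then shrink $\varepsilon'$ below $\min\{\varepsilon^*/3,\ \varepsilon/(3Mk^n)\}$. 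A minor secondary point is that Jordan-measurability is only needed so that $\mathcal{M}_n = \mathcal{L}_n$ for $B$ and $C^*$; the compact set $C$ itself needs no such hypothesis because it enters only through the monotonicity inequality $A(rC^*) \le A(rC)$. Everything else is routine.
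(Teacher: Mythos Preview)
Your overall strategy---reduce to Proposition~\ref{minkzero} by first fixing $k$ so that $(k/(k+1))^n \ge 1-\varepsilon$ and then choosing $\varepsilon'$ small in terms of $\varepsilon^*(B,k)$ and $Mk^n$---is exactly what the paper does. However, the detour through an auxiliary set $C^*$ is both unnecessary and, as you wrote it, not quite right.

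It is unnecessary because for \emph{every} compact subset $C \subseteq \R^n$ the Minkowski content $\mathcal{M}_n(C)$ exists and equals $\mathcal{L}_n(C)$: such $C$ is trivially $n$-rectifiable (it is its own image under the identity), and \cite[Theorem~3.2.39]{federer2014geometric} gives $\mathcal{M}_n(C)=\mathcal{H}_n(C)=\mathcal{L}_n(C)$. This is precisely the fact the paper invokes. Hence the hypothesis $\mathcal{L}_n(C)\ge \mathcal{L}_n(B)-\varepsilon'$ is already the hypothesis $\mathcal{M}_n(C)>\mathcal{M}_n(B)-\varepsilon'$ needed in Proposition~\ref{minkzero}, and you can apply that proposition directly to the pair $C\subseteq B$ with no intermediate set.

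As for the construction of $C^*$: you want $C^*\subseteq C$ so that monotonicity gives $A(rC^*)\le A(rC)$, but removing from $B$ only cubes \emph{disjoint from} $C$ yields $C^* \supseteq C$, the wrong containment. Getting a Jordan-measurable compact $C^*\subseteq C$ with nearly full measure is genuinely delicate when $C$ has empty interior, so this route would need more care---but, per the previous paragraph, you don't need it at all.
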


\begin{proof}
This is a consequence of Proposition~\ref{minkzero}, using the fact
that Minkowski content is equal to Lebesgue measure for compact
subsets of $\R^n$ (see \cite[Theorem~3.2.39]{federer2014geometric}).
\end{proof}

We will need the Besicovitch covering lemma (as in
\cite[Theorem~8.6.10]{borodachov2019discrete}), which we state as
follows:

\begin{lemma}
Let $\mu$ be a Borel measure on $\R^p$, let $A \subseteq \R^p$ be a
set of finite $\mu$-measure, and let $\mathcal{F}$ be a set of balls
such that for all $x \in A$, the infimum of $r$ over the set of balls
$B(x, r)$ in $\mathcal{F}$ is $0$ (that is, $\mathcal{F}$ contains
balls of arbitrarily small radius centered at all points of $A$).
Then there is a countable subcollection of $\mathcal{F}$ that are
pairwise disjoint and cover $\mu$-almost all $A$.
\end{lemma}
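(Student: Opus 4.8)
The plan is to deduce this from the Besicovitch covering theorem together with a standard exhaustion argument. The essential point is that no doubling hypothesis is imposed on $\mu$, so the Vitali $5r$-covering lemma used elsewhere in the paper does not suffice by itself; one genuinely needs Besicovitch's dimension-dependent combinatorial result. First I would discard from $\mathcal{F}$ every ball of radius greater than $1$, which costs nothing since the infimum of radii at each point of $A$ is still $0$, and makes the radii bounded, as the covering theorem requires. We may also assume $\mu(A) > 0$ and, if one wants to stay within the bounded-set version of Besicovitch, partition $\R^p$ into countably many bounded pieces (say half-open unit cubes), handle $A$ intersected with each piece separately, and take the union of the resulting disjoint subcollections.

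The core step invokes the Besicovitch covering theorem in its \emph{finitely many disjoint subfamilies} form: there is an integer $N_p$ depending only on $p$ and subfamilies $\mathcal{G}_1,\dots,\mathcal{G}_{N_p} \subseteq \mathcal{F}$, each consisting of pairwise disjoint balls, with $A \subseteq \bigcup_{i=1}^{N_p} \bigcup_{B \in \mathcal{G}_i} B$. Each $\mathcal{G}_i$ may be taken countable, since a pairwise disjoint family of balls in $\R^p$ is automatically countable. Since the $\mathcal{G}_i$ jointly cover $A$, the pigeonhole principle yields an index $i$ with $\mu\bigl(A \cap \bigcup_{B \in \mathcal{G}_i} B\bigr) \ge \mu(A)/N_p$, and then countable additivity of $\mu$ lets us select a \emph{finite} pairwise disjoint subcollection $B_1,\dots,B_m$ of $\mathcal{G}_i$ with $\mu\bigl(A \cap \bigcup_{k=1}^m B_k\bigr) \ge \mu(A)/(2N_p)$.

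Now I iterate. Set $\theta = 1/(2N_p) \in (0,1)$ and take the balls of $\mathcal{F}$ to be closed (if they are open, replace each by its closure for the bookkeeping and output the open balls whose closures get chosen). Put $A_1 = A \setminus \bigcup_{k=1}^m B_k$, so that $\mu(A_1) \le (1-\theta)\mu(A)$. Every point of $A_1$ still admits balls of $\mathcal{F}$ of arbitrarily small radius centered at it, and since $A_1$ is disjoint from the finitely many closed balls $B_1,\dots,B_m$, each sufficiently small such ball is disjoint from all of them; restricting $\mathcal{F}$ to these small balls preserves the fine-cover hypothesis, so the same construction applies to $A_1$, then to $A_2$, and so on. At stage $j$ we adjoin finitely many pairwise disjoint balls, each disjoint from every previously chosen ball, and $\mu(A_j) \le (1-\theta)^j \mu(A) \to 0$. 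The union over all stages is a countable pairwise disjoint subcollection of $\mathcal{F}$, and any point of $A$ lying in none of these balls lies in $A_j$ for all $j$, hence in $\bigcap_j A_j$, which has $\mu$-measure $0$.

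The main obstacle is simply the correct invocation of the Besicovitch covering theorem: it is the one genuinely nontrivial ingredient, its constant $N_p$ and its proof depend essentially on the geometry of $\R^p$, and it does \emph{not} extend to general metric measure spaces, which is exactly why the weaker Vitali lemma cannot replace it here. Everything afterwards is routine: the pigeonhole argument, the reduction from countably to finitely many balls via continuity of measure, and the geometric decay of $\mu(A_j)$, with the only mild care being the use of closed balls so that the leftover sets stay at positive distance from the finitely many balls selected so far.
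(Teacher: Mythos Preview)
The paper does not prove this lemma at all: it is simply stated with a citation to \cite[Theorem~8.6.10]{borodachov2019discrete} and then used as a black box in the proof of the subsequent theorem. Your proposal supplies the standard derivation of this statement from the Besicovitch covering theorem (the version producing $N_p$ disjoint subfamilies), and the argument is correct: pigeonhole to pick one subfamily capturing a fixed fraction of the measure, truncate to finitely many balls, remove them, and iterate with geometric decay. This is exactly the textbook route, and your remarks about why Vitali's $5r$-lemma does not suffice (no doubling hypothesis on $\mu$) are on point.

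One small caution: your parenthetical suggestion to reduce to bounded $A$ by partitioning into half-open unit cubes and handling each piece separately does not quite work as written, since the pairwise disjoint collections obtained for different cubes need not be disjoint from one another. This is easily repaired (process the cubes sequentially, at each stage discarding balls that meet previously selected ones, which preserves the fine-cover property; or simply invoke the version of the Besicovitch covering theorem that only requires bounded radii, not bounded $A$), and in any case your main argument does not rely on this reduction.
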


We are now prepared to prove the complementary inequality to
Lemma~\ref{compactlim} for all compact sets.  This argument adapts the
proof of \cite[Theorem 8.6.11]{borodachov2019discrete}.

\begin{theorem}
Let $A$ be a packing bound function.  For any compact set $C$ in
$\R^n$,
\[\lim_{r \to \infty} \frac{A(rC)}{r^n} = \delta_{A,n} \mathcal{L}_n(C).\]
\end{theorem}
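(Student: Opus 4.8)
The plan is to establish the remaining inequality
\[
\liminf_{r\to\infty}\frac{A(rC)}{r^n}\ge \delta_{A,n}\,\mathcal{L}_n(C),
\]
since the reverse inequality is Lemma~\ref{compactlim}. The strategy mirrors the proof of the lower bound in the Poppy Seed Bagel Theorem: cover almost all of $C$ by well-separated scaled copies of a cube on which we already know the asymptotics, and use the union axiom. The difficulty is that an arbitrary compact $C$ need not be Jordan-measurable, so we cannot pack cubes into $C$ directly; instead we will pack small cubes into $C$ \emph{up to measure zero} using the Besicovitch covering lemma, then pass to a compact subset of each cube of nearly full measure via Lemma~\ref{liminfineq}, and only then invoke the cube asymptotics.

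First I would fix $\varepsilon>0$ and, using Lemma~\ref{liminfineq}, choose for the unit cube $I^n$ a tolerance $\varepsilon'>0$ so that any compact subset filling up all but $\varepsilon'$ of the volume of a cube has $\liminf$ of $A(r\,\cdot)/r^n$ at least $(1-\varepsilon)\delta_{A,n}$ times the cube's volume minus $\varepsilon$ (rescaling $I^n$ to cubes of arbitrary side length by isometry invariance and the dilation behavior). Next, apply the Besicovitch covering lemma with $\mu=\mathcal{L}_n$ restricted to $C$, using the family of all sufficiently small cubes (balls in the $\ell^\infty$ metric, or standard balls containing and contained in cubes) centered at points of $C$: this yields countably many pairwise disjoint cubes $Q_1,Q_2,\dots$ covering $\mathcal{L}_n$-almost all of $C$. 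Truncate to a finite subcollection $Q_1,\dots,Q_N$ with $\sum_i \mathcal{L}_n(C\cap Q_i)>\mathcal{L}_n(C)-\varepsilon$. The sets $C\cap Q_i$ are compact, pairwise contained in disjoint cubes, and fill up at least a $(1-\varepsilon'/\mathcal{L}_n(Q_i))$-fraction of each $Q_i$ once we further shrink each $Q_i$ slightly to a concentric closed subcube $Q_i'$ so that $C\cap Q_i$ fills all but $\varepsilon'$ of $Q_i'$ — here one uses that the $Q_i$ already cover $C$ up to measure zero, so the overlap of $C$ with a slightly shrunken cube loses only controlled measure; alternatively, simply choose the Besicovitch cubes small enough from the start that each carries measure at least $(1-\varepsilon')\mathcal{L}_n(Q_i)$ of $C$, which is automatic for a density point and holds for $\mathcal{L}_n$-almost every point of $C$ by the Lebesgue density theorem.

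Now set $\delta=\min_{i\ne j} d(Q_i',Q_j')>0$, which is positive since finitely many closed cubes are pairwise disjoint (shrink if necessary to make them so). For $r>2/\delta$ the scaled sets $r(C\cap Q_i')$ are pairwise at distance $\ge 2$, so by the union axiom and monotonicity,
\[
A(rC)\ \ge\ A\!\left(\textstyle\bigcup_i r(C\cap Q_i')\right)=\sum_i A\big(r(C\cap Q_i')\big).
\]
Dividing by $r^n$, taking $\liminf_{r\to\infty}$, using superadditivity of $\liminf$, and applying Lemma~\ref{liminfineq} to each compact set $C\cap Q_i'\subseteq Q_i'$ gives
\[
\liminf_{r\to\infty}\frac{A(rC)}{r^n}\ \ge\ \sum_i\Big((1-\varepsilon)\delta_{A,n}\,\mathcal{L}_n(Q_i')-\varepsilon\Big)\ \ge\ (1-\varepsilon)\delta_{A,n}\big(\mathcal{L}_n(C)-\varepsilon'\big)-N\varepsilon.
\]
The factor $N$ is harmless because we may re-run the argument with $\varepsilon$ replaced by $\varepsilon/N$ once $N$ is fixed — or, more cleanly, absorb the $N\varepsilon$ loss by demanding from the start that the per-cube additive error in Lemma~\ref{liminfineq} be $\varepsilon/N$; since $N$ depends only on $C$ and the first choice of $\varepsilon$, this is legitimate after a standard bookkeeping pass. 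Letting $\varepsilon,\varepsilon'\to 0$ yields $\liminf_{r\to\infty}A(rC)/r^n\ge \delta_{A,n}\mathcal{L}_n(C)$, and combined with Lemma~\ref{compactlim} this proves the theorem.

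The main obstacle I anticipate is the interaction between the Besicovitch covering (which only controls $C$ up to $\mathcal{L}_n$-measure zero) and the requirement in Lemma~\ref{liminfineq} that the subset $C\cap Q_i'$ fill a prescribed fraction of a \emph{Jordan-measurable} set $Q_i'$ whose boundary meets $C$ in measure zero — one must choose the cubes so that $C$ does not concentrate on their boundaries, which is where the Lebesgue density theorem enters, and one must keep the number $N$ of cubes fixed before optimizing the error terms so that the $N\varepsilon$ term does not blow up.
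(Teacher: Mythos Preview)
Your approach is essentially the paper's: Lebesgue density theorem plus the Besicovitch covering lemma to produce finitely many disjoint Jordan-measurable pieces carrying nearly full measure of $C$, then Lemma~\ref{liminfineq} together with the union axiom. The paper uses balls rather than cubes, but the real difference is that the paper applies Lemma~\ref{liminfineq} \emph{once}, to the Jordan-measurable union $\bigcup_{i=1}^N B_i$ against the compact subset $\bigcup_{i=1}^N (C\cap B_i)$, rather than $N$ times to the individual pieces. A single application yields a single additive $\varepsilon$ and sidesteps your $N\varepsilon$ bookkeeping entirely: the required measure deficit is $\mathcal{L}_n\big(\bigcup B_i\big)-\mathcal{L}_n\big(\bigcup C_i\big)\le \varepsilon'$, which the density condition on the Besicovitch balls controls uniformly in $N$.

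Your proposed fix for the $N\varepsilon$ term (replace $\varepsilon$ by $\varepsilon/N$ once $N$ is fixed) has a circularity you do not fully resolve: $N$ is determined by the Besicovitch covering, which depends on the density tolerance, which depends on the $\varepsilon'$ of Lemma~\ref{liminfineq}, which in turn depends on the target additive error you are trying to set to $\varepsilon/N$. This can be untangled by going back to Proposition~\ref{minkzero}, where the additive error is $M\varepsilon' k^n$ with $M,k$ depending only on $\varepsilon$ and the dimension, so that summing over the cubes gives an error proportional to the \emph{total} measure deficit rather than to $N$. But applying the lemma once to the union, as the paper does, is the cleaner way out.
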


\begin{proof}
Let $\varepsilon > 0$ be fixed.  It suffices to prove that
\[\liminf_{r \to \infty} \frac{A(rC)}{r^n} \ge (1-\varepsilon)^2
\delta_{A,n} \mathcal{L}_n(C) - \varepsilon.\]

Define the set
\[C^* := \left\{ x \in C : \lim_{r \to 0^+}
\frac{\mathcal{L}_n(B(x, r) \cap C)}{\mathcal{L}_n(B(x, r))} = 1 \right\}.\]
This set satisfies $\mathcal{L}_n(C^*) = \mathcal{L}_n(C)$ by the
Lebesgue density theorem.

Now to apply Besicovitch covering lemma, we let the set $\mathcal{F}$
consist of all closed balls $B(x,r)$ around points $x \in C^*$ such
that $r < 1$ and
\[\frac{\mathcal{L}_n(B(x, r) \cap C^*)}{\mathcal{L}_n(B(x, r))} \ge
1 - \frac{\varepsilon'}{\mathcal{L}_n(C(1))},\]
where $\varepsilon' > 0$ can be taken to be arbitrarily small.  By the
Besicovitch covering lemma, we can choose a countable disjoint
subcollection $B_i$ of closed balls whose union covers almost all of
$C^*$ and hence almost all of $C$.  Define $C_i = C \cap B_i$.

We can choose $N$ so that
\[
\mathcal{L}_n\mathopen{}\left(\bigcup_{i=1}^N B_i \right)\mathclose{}
\ge (1-\varepsilon)\mathcal{L}_n(C).
\]
Because $B_i$ is in $\mathcal{F}$ and $\bigcup_i B_i \subseteq C(1)$,
\[ \mathcal{L}_n\mathopen{}\left(\bigcup_{i=1}^N C_i \right)\mathclose{} \ge
\left(1-\frac{\varepsilon'}{\mathcal{L}_n(C(1))}\right)
\mathcal{L}_n\mathopen{}\left(\bigcup_{i=1}^N B_i \right)\mathclose{} \ge
\mathcal{L}_n\mathopen{}\left(\bigcup_{i = 1}^N B_i \right)\mathclose{} - \varepsilon'. \]
Then since the balls are compact sets, they are separated from each
other, and we may find a $\delta>0$ such that $d(B_i, B_j) \ge \delta$
for all distinct $i,j \le N$.  Now Lemma~\ref{liminfineq} tells us
that if $\varepsilon'$ is small enough relative to $\varepsilon$, then
\[
\liminf_{r \to \infty} \frac{A\mathopen{}\left( r \bigcup_{i=1}^N C_i
  \right) \mathclose{}}{r^n} \ge (1 - \varepsilon) \liminf_{r
  \to\infty} \frac{A\mathopen{}\left( r \bigcup_{i=1}^N B_i \right)
  \mathclose{}}{r^n} - \varepsilon.
\]

For $r \ge 2\delta^{-1}$, we can apply the union axiom in the
definition of a packing bound function to obtain
\[
A\mathopen{}\left( r \bigcup_{i=1}^N B_i \right) \mathclose{} =
\sum_{i=1}^N A(rB_i).
\]
Combining these inequalities yields
\[
\begin{split}
\liminf_{r \to \infty} \frac{A(rC)}{r^n} &\ge \liminf_{r \to \infty}
\frac{A\mathopen{}\left( r \bigcup_{i=1}^N C_i \right)
  \mathclose{}}{r^n}\\ &\ge (1 - \varepsilon) \liminf_{r \to\infty}
\frac{A\mathopen{}\left( r \bigcup_{i=1}^N B_i \right)
  \mathclose{}}{r^n} - \varepsilon\\ &= (1 - \varepsilon) \liminf_{r
  \to\infty} \sum_{i=1}^N \frac{A(rB_i)}{r^n} - \varepsilon\\ &=
(1-\varepsilon) \sum_{i=1}^N \delta_{A, n} \mathcal{L}_n(rB_i) -
\varepsilon,
\end{split}
\]
where the last equality holds because $B_i$ is
Jordan-measurable. Finally, we obtain a lower bound of
\[
(1-\varepsilon) \sum_{i=1}^N \delta_{A, n} \mathcal{L}_n(rB_i) -
\varepsilon \ge (1-\varepsilon)^2 \delta_{A,n} \mathcal{L}_n(C) -
\varepsilon,\] as desired.
\end{proof}

\begin{remark}
The above theorem can be proved much more easily if the Euclidean
bound is available for $A$.
\end{remark}

Proposition~\ref{proposition:density} automatically gives an extension
to arbitrary subsets of $\R^n$ that uses Minkowski content instead of
Lebesgue measure.

\begin{corollary}
For an arbitrary bounded Borel subset $C \subseteq \R^n$,
\[\lim_{r \to \infty} \frac{A(rC)}{r^n} = \delta_{A,n} \mathcal{M}_n(C).\]
\end{corollary}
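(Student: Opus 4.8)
The plan is to deduce this corollary from the theorem for compact sets by a sandwiching argument based on Proposition~\ref{proposition:density}. The key point is that an arbitrary bounded Borel set $C$ has the same Minkowski content as its closure $\overline{C}$ (since $C(\varepsilon) = \overline{C}(\varepsilon)$ for every $\varepsilon > 0$, the defining limit is unchanged), so we may use the compact set $\overline{C}$, for which the theorem applies and gives $\lim_{r\to\infty} A(r\overline{C})/r^n = \delta_{A,n}\mathcal{L}_n(\overline{C})$. However, $\mathcal{L}_n(\overline{C})$ need not equal $\mathcal{M}_n(C)$, so the closure alone does not close the gap; we instead need to relate $A(rC)$ directly to a compact set that captures the Minkowski content.

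First I would establish the upper bound. Since $C$ is contained in its closure $\overline{C}(\varepsilon)$ for each $\varepsilon$, and more usefully $C(\varepsilon)$ is compact and Jordan-measurable, the mesh axiom gives $A\bigl(\frac{1}{1+\varepsilon}rC\bigr) \le A(rC(\varepsilon))$; wait — it is cleaner to go the other direction: $rC \subseteq r\overline{C} \subseteq (r\overline{C}(\varepsilon r^{-1}))$ is awkward with the scaling, so instead I would argue that $C \subseteq \overline{C}$ gives $A(rC) \le A(r\overline{C})$ by monotonicity, hence $\limsup_r A(rC)/r^n \le \delta_{A,n}\mathcal{L}_n(\overline{C})$; but this overshoots. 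The honest approach: apply the mesh axiom to $C(\delta) \supseteq \overline{C}$, which after scaling by $r$ yields $A\bigl(\frac{1}{1+\delta}r\overline{C}\bigr) \le A(rC(\delta))$, and since $C(\delta)$ is compact Jordan-measurable, $\lim_r A(rC(\delta))/r^n = \delta_{A,n}\mathcal{L}_n(C(\delta))$. Combined with the compact-set theorem applied to $\overline{C}$ and letting $\delta \to 0$ using $\mathcal{L}_n(C(\delta)) \to \mathcal{M}_n(C)$ (recalling $\mathcal{M}_n(C) = \mathcal{M}_n(\overline{C}) \ge \mathcal{L}_n(\overline{C})$), this pins down both $\limsup$ and $\liminf$ of $A(rC)/r^n$.

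More precisely, for the \emph{lower} bound I would use Proposition~\ref{proposition:density} in the form already spelled out in Remark~\ref{rem:Euclidean}: since $\overline{C}$ is a dense subset of $\overline{C}$ — trivially — this does not directly help, so instead I note $C$ is dense in $\overline{C}$, giving $A(\overline{C}) \le A((1+\varepsilon)C)$ for all $\varepsilon > 0$, and after scaling, $A(r\overline{C}) \le A((1+\varepsilon)rC)$; dividing by $r^n$, taking $\liminf$, and using the compact-set theorem on the left, I get $\delta_{A,n}\mathcal{L}_n(\overline{C}) \le (1+\varepsilon)^n \liminf_r A(rC)/r^n$. For the \emph{upper} bound, the mesh-axiom argument above gives $\delta_{A,n}\mathcal{L}_n(C(\delta))/(1+\delta)^n \le \liminf_r$... — actually it gives the matching $\limsup_r A(rC)/r^n \le \delta_{A,n}\mathcal{L}_n(C(\delta))$ after the scaling $\frac{1}{1+\delta} = \frac{k}{k+1}$ trick with $\delta \to 0$, and $\mathcal{L}_n(C(\delta)) \to \mathcal{M}_n(C)$. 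It remains to observe that $\mathcal{L}_n(\overline{C})$ sandwiched below and $\mathcal{M}_n(C)$ above actually coincide when $\mathcal{M}_n(C)$ is defined as a limit and we are in codimension zero — indeed in $\R^n$ with $n = d$ the Minkowski content is $\lim_{\delta\to 0}\mathcal{L}_n(C(\delta))$, which dominates $\mathcal{L}_n(\overline{C})$, and the two lower/upper bounds force equality of the limit with $\delta_{A,n}\mathcal{M}_n(C)$.

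The main obstacle I anticipate is bookkeeping the two rescalings — the factor $1+\varepsilon$ from density and the factor $1/(1+\delta)$ from the mesh axiom — simultaneously without circular reasoning, and making sure the quantity being squeezed from both sides is genuinely $\mathcal{M}_n(C)$ rather than $\mathcal{L}_n(\overline{C})$; the resolution is that both the density bound and the mesh bound, after sending the auxiliary parameters to zero, produce the common value $\delta_{A,n}\mathcal{M}_n(C)$ because $\mathcal{L}_n(C(\delta)) \downarrow \mathcal{M}_n(C)$ and this limit also bounds $\mathcal{L}_n(\overline{C})$ from above while the compact-set theorem supplies the reverse, so all inequalities collapse to equalities.
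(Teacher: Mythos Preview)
Your overall plan—sandwich $A(rC)$ between two quantities controlled by the compact set $\overline{C}$ via Proposition~\ref{proposition:density} and monotonicity, then invoke the compact-set theorem—is exactly the paper's approach. The paper simply writes
\[
A(r\overline{C}) \le A\bigl((1+\varepsilon)rC\bigr) \le A\bigl((1+\varepsilon)r\overline{C}\bigr),
\]
divides by $r^n$, and applies the compact-set theorem to both ends.

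Where your write-up goes wrong is the claim that ``$\mathcal{L}_n(\overline{C})$ need not equal $\mathcal{M}_n(C)$.'' In fact it always does here, and that single identity is all you are missing. Since $C\subseteq\R^n$ with $n=d$, the Minkowski content is just $\mathcal{M}_n(C)=\lim_{\delta\to 0^+}\mathcal{L}_n(C(\delta))$; and because $C(\delta)=\overline{C}(\delta)$ decreases to $\overline{C}$ as $\delta\to 0$, continuity of Lebesgue measure from above gives $\mathcal{M}_n(C)=\mathcal{L}_n(\overline{C})$. (Equivalently, the paper cites Federer's Theorem~3.2.39: $\overline{C}$ is compact, hence $n$-rectifiable as a subset of $\R^n$, so $\mathcal{M}_n(\overline{C})=\mathcal{H}_n(\overline{C})=\mathcal{L}_n(\overline{C})$.)

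Because you did not recognize this equality, your squeeze never closes: you obtain $\liminf_r A(rC)/r^n \ge \delta_{A,n}\mathcal{L}_n(\overline{C})$ and $\limsup_r A(rC)/r^n \le \delta_{A,n}\mathcal{M}_n(C)$, but the gap between the two endpoints is precisely the quantity you claimed might be nonzero. Your phrase ``the compact-set theorem supplies the reverse'' does not supply it—nothing in the sandwich produces $\mathcal{L}_n(\overline{C})\ge\mathcal{M}_n(C)$. Once you insert the one-line identity $\mathcal{M}_n(C)=\mathcal{L}_n(\overline{C})$, the entire detour through $C(\delta)$ and the mesh axiom becomes unnecessary and the density-plus-monotonicity sandwich finishes the proof immediately.
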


Note that because $C \subseteq \R^n$, the Minkowski content
$\mathcal{M}_n(C)$ always exists.

\begin{proof}
Let $\overline{C}$ be the closure of $C$, which is compact and
$n$-rectifiable and hence satisfies $\mathcal{M}_n(C) =
\mathcal{M}_n(\overline{C}) = \mathcal{L}_n(\overline{C})$.  By
density, this implies for any $\varepsilon > 0$,
\[ A(r\overline{C}) \le A(r(1+\varepsilon)C) \le A(r(1+\varepsilon)\overline{C}) \]
and after dividing by $r^n$ the left and right sides both converge to
$\delta_{A,n} \mathcal{L}_n(\overline{C}) (1 + O(\varepsilon))$ as $r
\to \infty$.
\end{proof}

Now, we wish to extend the result to $n$-rectifiable sets, in fact, to
a slightly more general setting.  We need some notions from geometric
measure theory.

\begin{definition}
For a measure $\mu$, a $(\mu, n)$-rectifiable set is a bounded Borel
subset $E$ of $\R^d$ such that there are Lipschitz maps $\psi_i \colon
\R^n \to \R^d$ and bounded Borel subsets $E_i$ of $\R^n$ for which
$\mu(E \setminus \bigcup_i \psi_i(E_i)) = 0$.
\end{definition}

We have the following lemma, which is
\cite[Lemma~3.2.18]{federer2014geometric}:

\begin{lemma}\label{epsiloncharts}
Let $C$ be an $(\mathcal{H}_n, n)$-rectifiable set.  Then for every
$\varepsilon > 0$, there are compact subsets $C_1, C_2, \ldots
\subseteq \R^n$ and bi-Lipschitz maps $\psi_i \colon C_i \to C$ with
Lipschitz constant $1 + \varepsilon$ (in both directions) such that
the sets $\psi_i(C_i)$ are disjoint and
\[\mathcal{H}_n\mathopen{}\left(C \setminus \bigcup_{i=1}^\infty
\psi_i(C_i)\right)\mathclose{} = 0.\]
\end{lemma}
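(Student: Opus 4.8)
The plan is to deduce this statement directly from the structure theory of rectifiable sets, specifically the decomposition of $C$ into Lipschitz images. The cited result \cite[Lemma~3.2.18]{federer2014geometric} gives, for an $(\mathcal{H}_n,n)$-rectifiable set, a countable family of Lipschitz maps whose images cover $C$ up to $\mathcal{H}_n$-null sets; the task here is to upgrade these to bi-Lipschitz maps with Lipschitz constant close to $1$ (in both directions) defined on compact pieces with disjoint images. First I would invoke the standard decomposition: there exist Lipschitz functions $f_i \colon \R^n \to \R^d$ and Borel sets $A_i \subseteq \R^n$ with $\mathcal{H}_n(C \setminus \bigcup_i f_i(A_i)) = 0$. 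The key refinement is a lemma, also essentially in Federer, that a Lipschitz image can be further partitioned into countably many pieces on each of which the map is bi-Lipschitz with constant as close to $1$ as desired: on the set where $f_i$ is differentiable (which is $\mathcal{L}_n$-almost everywhere, by Rademacher) and the derivative $Df_i$ has rank $n$, one can stratify by the value of $Df_i$ up to a fine mesh in the (compact) space of injective linear maps, and on each stratum a linear change of coordinates in the target followed by a Taylor estimate makes $f_i$ bi-Lipschitz with constant $1 + \varepsilon$. Points where $Df_i$ fails to have rank $n$ contribute $\mathcal{H}_n$-measure zero to the image by the area formula, so they can be discarded.

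Once each $f_i(A_i)$ is broken into countably many bi-Lipschitz-with-constant-$(1+\varepsilon)$ charts, the next step is to make the domains compact and the images disjoint. For compactness, I would use inner regularity of Lebesgue measure: restrict each domain to a compact subset capturing all but an arbitrarily small fraction of its measure, and iterate over a sequence of shrinking error tolerances $\varepsilon_k \to 0$ so that the union of images still has full $\mathcal{H}_n$-measure in $C$. For disjointness, I would process the charts in order and at stage $i$ remove from the $i$-th image the (Borel) set $\bigcup_{j < i} \psi_j(C_j)$ already claimed by earlier charts, then pass back to a compact subset of the corresponding domain; since we only ever delete sets, bi-Lipschitz-ness with constant $1+\varepsilon$ is preserved under restriction, and the total deleted measure is zero because overlaps among Lipschitz images of $n$-sets that are being subtracted away contribute nothing new. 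A bookkeeping argument then shows $\mathcal{H}_n(C \setminus \bigcup_i \psi_i(C_i)) = 0$.

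The main obstacle is the bi-Lipschitz refinement step: turning an arbitrary Lipschitz map, which may badly distort or collapse directions, into a countable union of pieces on which it is bi-Lipschitz with constant arbitrarily close to $1$ in both directions. This requires the Rademacher differentiability theorem together with a uniform-continuity argument for the difference quotients on each stratum, plus the area formula to control the rank-deficient locus. The remaining steps — passing to compact subsets, enforcing disjointness by greedy removal, and checking that the leftover has measure zero — are routine measure-theoretic bookkeeping once the bi-Lipschitz charts are in hand. Since the statement is attributed to \cite[Lemma~3.2.18]{federer2014geometric}, in the paper itself it suffices to cite that reference and note that the bi-Lipschitz constants, compactness of domains, and disjointness of images are all obtained by the refinements just sketched.
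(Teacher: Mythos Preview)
The paper does not prove this lemma at all; it simply cites \cite[Lemma~3.2.18]{federer2014geometric} as the source. Your proposal correctly arrives at this conclusion in its final sentence, so in that sense you match the paper exactly.

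However, your description of what Federer's Lemma~3.2.18 provides is inaccurate. You write that the cited result gives only ``a countable family of Lipschitz maps whose images cover $C$ up to $\mathcal{H}_n$-null sets'' and that ``the task here is to upgrade these to bi-Lipschitz maps with Lipschitz constant close to $1$.'' In fact, Federer's Lemma~3.2.18 already delivers the full statement: for any $\lambda > 1$, it produces compact sets $K_i \subseteq \R^n$, bi-Lipschitz maps $g_i \colon K_i \to \R^d$ with both Lipschitz constants at most $\lambda$, disjoint images $g_i(K_i)$, and $\mathcal{H}_n\big(C \setminus \bigcup_i g_i(K_i)\big) = 0$. There is nothing to upgrade; the lemma in the paper is a direct restatement.

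That said, your sketch of the underlying argument (Rademacher differentiability, stratification by the value of the derivative on a fine mesh of injective linear maps, the area formula to discard the rank-deficient locus, inner regularity for compactness, and greedy removal for disjointness) is essentially how Federer establishes 3.2.18. So as a self-contained proof outline it is sound; it is just unnecessary given that the cited reference already contains the finished product rather than merely the raw Lipschitz decomposition you attribute to it.
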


We will also need \cite[Lemma~8.7.2]{borodachov2019discrete}:

\begin{lemma} \label{minkeq}
If $C$ is a compact $(\mathcal{H}_n, n)$-rectifiable set with
$\mathcal{M}_n(C) = \mathcal{H}_n(C)$, then every compact subset $K$
of $C$ is $(\mathcal{H}_n,n)$-rectifiable and satisfies
$\mathcal{M}_n(K) = \mathcal{H}_n(K)$.
\end{lemma}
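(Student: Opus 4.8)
The plan is to separate the statement into three pieces: an immediate rectifiability claim, a \emph{standard} lower Minkowski content bound valid for \emph{every} compact $(\mathcal{H}_n,n)$-rectifiable set, and a short ``sandwich'' argument that is the only place where the hypothesis $\mathcal{M}_n(C)=\mathcal{H}_n(C)$ is actually used.

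The rectifiability of compact subsets is free: if $\psi_i\colon\R^n\to\R^d$ are Lipschitz and $E_i\subseteq\R^n$ are Borel with $\mathcal{H}_n(C\setminus\bigcup_i\psi_i(E_i))=0$, then $K\subseteq C$ forces $\mathcal{H}_n(K\setminus\bigcup_i\psi_i(E_i))=0$, so the compact (hence bounded Borel) set $K$ is again $(\mathcal{H}_n,n)$-rectifiable, and the same remark applies to every compact subset of $C$. The second ingredient I would use is that for every compact $(\mathcal{H}_n,n)$-rectifiable $E\subseteq\R^d$ one has $\underline{\mathcal{M}_n}(E)\ge\mathcal{H}_n(E)$. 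This is the ``lower'' half of the comparison between Hausdorff measure and Minkowski content; it can be extracted from \cite[3.2.39]{federer2014geometric}, or proved directly from Lemma~\ref{epsiloncharts} by choosing finitely many of the disjoint bi-Lipschitz charts $\psi_i\colon C_i\to E$ whose images carry all but $\varepsilon$ of $\mathcal{H}_n(E)$, noting that these finitely many compact images are mutually separated so their $\delta$-neighborhoods are disjoint subsets of $E(\delta)$ for small $\delta$, estimating $\mathcal{L}_d(\psi_i(C_i)(\delta))$ from below (as $\delta\to0^+$) by essentially $(1+\varepsilon)^{-n}\mathcal{L}_n(C_i)\,\mathcal{L}_{d-n}(B_\delta^{d-n})$ using that each $\psi_i$ is $(1+\varepsilon)$-bi-Lipschitz and that $\mathcal{M}_n=\mathcal{L}_n=\mathcal{H}_n$ on compact subsets of $\R^n$, and finally summing and letting $\delta\to0^+$ and $\varepsilon\to0^+$. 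I expect this per-chart volume estimate — tracking how the bi-Lipschitz distortion interacts with the $(d-n)$ ``normal'' directions in the ambient $\R^d$ — to be the one genuinely fussy point, and hence the main obstacle; everything else is bookkeeping. Note that the hypothesis on $C$ is \emph{not} needed here: without it only the lower bound survives, and $\overline{\mathcal{M}_n}$ can strictly exceed $\mathcal{H}_n$.

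Granting these two ingredients, the heart of the proof is the reverse inequality $\overline{\mathcal{M}_n}(K)\le\mathcal{H}_n(K)$. Fix $\rho>0$ and set $D_\rho=\{x\in C:d(x,K)\ge\rho\}$, a compact subset of $C$ and hence a compact $(\mathcal{H}_n,n)$-rectifiable set by the first step. Since $d(K,D_\rho)\ge\rho$, whenever $\varepsilon<\rho/2$ the neighborhoods $K(\varepsilon)$ and $D_\rho(\varepsilon)$ are disjoint and both lie in $C(\varepsilon)$, so
\[
\frac{\mathcal{L}_d(C(\varepsilon))}{\mathcal{L}_{d-n}(B_\varepsilon^{d-n})}\ge\frac{\mathcal{L}_d(K(\varepsilon))}{\mathcal{L}_{d-n}(B_\varepsilon^{d-n})}+\frac{\mathcal{L}_d(D_\rho(\varepsilon))}{\mathcal{L}_{d-n}(B_\varepsilon^{d-n})}.
\]
Taking $\limsup_{\varepsilon\to0^+}$, using that the left side converges to $\mathcal{M}_n(C)=\mathcal{H}_n(C)$, the elementary bound $\limsup(a_\varepsilon+b_\varepsilon)\ge\limsup a_\varepsilon+\liminf b_\varepsilon$, and the lower bound above applied to $D_\rho$, gives $\mathcal{H}_n(C)\ge\overline{\mathcal{M}_n}(K)+\mathcal{H}_n(D_\rho)$. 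Now $C$ is the disjoint union of the Borel sets $K$, $D_\rho$, and $\{x\in C:0<d(x,K)<\rho\}$, and the last of these decreases to $\emptyset$ as $\rho\to0^+$; since $\mathcal{H}_n(C)<\infty$, continuity from above gives $\mathcal{H}_n(D_\rho)\to\mathcal{H}_n(C)-\mathcal{H}_n(K)$. Letting $\rho\to0^+$ yields $\overline{\mathcal{M}_n}(K)\le\mathcal{H}_n(K)$, and combining this with $\mathcal{H}_n(K)\le\underline{\mathcal{M}_n}(K)\le\overline{\mathcal{M}_n}(K)$ shows all three quantities agree; in particular $\mathcal{M}_n(K)$ is defined and equals $\mathcal{H}_n(K)$.
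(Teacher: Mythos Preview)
The paper does not actually prove this lemma; it simply quotes it as \cite[Lemma~8.7.2]{borodachov2019discrete}. Your argument is correct and is, in essence, the standard proof found in that reference: the rectifiability of $K$ is trivial, the lower bound $\underline{\mathcal{M}_n}(E)\ge\mathcal{H}_n(E)$ for every compact $(\mathcal{H}_n,n)$-rectifiable $E$ is the only analytic input, and the upper bound on $\overline{\mathcal{M}_n}(K)$ comes from exactly the additivity-and-sandwich step you wrote, splitting $C$ into $K$ and $D_\rho=\{x\in C:d(x,K)\ge\rho\}$ and letting $\rho\to0^+$.

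One remark on the step you flagged as the ``genuinely fussy point.'' There is no need to track the bi-Lipschitz distortion against the $(d-n)$ normal directions by hand: each $\psi_i(C_i)$ is compact and $n$-rectifiable in the strict sense (as the Lipschitz image of a compact subset of $\R^n$), so \cite[Theorem~3.2.39]{federer2014geometric} gives $\mathcal{M}_n(\psi_i(C_i))=\mathcal{H}_n(\psi_i(C_i))$ directly. Summing over finitely many separated charts and passing to the limit then yields $\underline{\mathcal{M}_n}(E)\ge\sum_i\mathcal{H}_n(\psi_i(C_i))\ge(1-\varepsilon)\mathcal{H}_n(E)$ with no extra work. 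Also note that your use of continuity from above for $\mathcal{H}_n$ tacitly assumes $\mathcal{H}_n(C)<\infty$; this is harmless in context (it is how the lemma is applied in Theorem~\ref{poppypackingbound}) but worth stating explicitly.
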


Now we can prove our main theorem:

\begin{theorem}\label{poppypackingbound}
Let $C \subseteq \R^d$ be an $(\mathcal{H}_n, n)$-rectifiable set with
closure $\overline{C}$ satisfying the property that $\mathcal{M}_n(C)
= \mathcal{H}_n(\overline{C}) < \infty$, and let $A$ be a packing
bound function.  Then
\[\lim_{r \to \infty} \frac{A(rC)}{r^n} = \delta_{A,n} \mathcal{M}_n(C).\]
\end{theorem}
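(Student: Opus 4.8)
The plan is to reduce to the case where $C$ is compact, and then to establish matching upper and lower bounds for $\lim_{r\to\infty} A(rC)/r^n$ by transporting the problem chart-by-chart to subsets of $\R^n$, where the limit has already been identified as $\delta_{A,n}$ times Lebesgue measure. Throughout I would work with $\eta > 0$ small and the bi-Lipschitz charts supplied by Lemma~\ref{epsiloncharts}, exploiting that a bi-Lipschitz map of constant $1+\eta$ stretches distances (and $n$-dimensional measure) by factors between $(1+\eta)^{-1}$ and $1+\eta$.

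For the reduction, note that $A(rC) \le A(r\overline{C})$ by monotonicity, while $C$ is dense in $\overline{C}$, so Proposition~\ref{proposition:density} gives $A(r\overline{C}) \le A((1+\varepsilon)rC)$ for every $\varepsilon>0$. Sandwiching $A(rC) \le A(r\overline{C}) \le A((1+\varepsilon)rC)$, dividing by $r^n$, and using $\mathcal{M}_n(C) = \mathcal{M}_n(\overline{C})$ shows it suffices to prove the theorem for the compact set $\overline{C}$, which still satisfies the hypotheses (one checks it is $(\mathcal{H}_n,n)$-rectifiable with $\mathcal{M}_n(\overline{C}) = \mathcal{H}_n(\overline{C}) < \infty$). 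So I may assume $C$ is compact, $(\mathcal{H}_n,n)$-rectifiable, and $\mathcal{M}_n(C) = \mathcal{H}_n(C) < \infty$, and must show $\lim_r A(rC)/r^n = \delta_{A,n}\mathcal{H}_n(C)$.

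For the lower bound, fix $\eta>0$ and apply Lemma~\ref{epsiloncharts} to obtain compact $C_i \subseteq \R^n$ and bi-Lipschitz $\psi_i \colon C_i \to C$ of constant $1+\eta$ with disjoint images covering $C$ up to an $\mathcal{H}_n$-null set; pick $N$ with $\sum_{i=1}^N \mathcal{H}_n(\psi_i(C_i)) > \mathcal{H}_n(C)-\eta$. Rescaled by $r$, the map $z \mapsto \psi_i((1+\eta)z)$ is distance-increasing from $\frac{r}{1+\eta}C_i$ into $r\psi_i(C_i)$, so monotonicity together with the union axiom (applicable once $r$ is large enough to $2$-separate the compact sets $r\psi_i(C_i)$) gives $A(rC) \ge \sum_{i=1}^N A(\frac{r}{1+\eta}C_i)$. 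Dividing by $r^n$, invoking the already-established limit for compact subsets of $\R^n$, using $\mathcal{L}_n(C_i) = \mathcal{H}_n(C_i) \ge (1+\eta)^{-n}\mathcal{H}_n(\psi_i(C_i))$, and letting $\eta \to 0$ (with $N$ depending on $\eta$) yields $\liminf_r A(rC)/r^n \ge \delta_{A,n}\mathcal{H}_n(C)$.

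The upper bound is where I expect the real work. Finitely many chart images $C' := \bigcup_{i=1}^N \psi_i(C_i)$ carry almost all of the $\mathcal{H}_n$-mass, but the remainder $C \setminus C'$ is only small in Hausdorff measure and need not be Jordan-measurable or of small Minkowski content, so it resists a direct bound via the union and sphere axioms; this is precisely the role of the regularity lemma, Proposition~\ref{minkzero}. I would fix $k \ge 1$, take $\eta$ below the resulting threshold $\varepsilon^*$, and choose $N$ as above. Then $C'$ is a compact subset of $C$, so Lemma~\ref{minkeq} gives $\mathcal{M}_n(C') = \mathcal{H}_n(C') > \mathcal{M}_n(C) - \eta$, and Proposition~\ref{minkzero} yields $\bigl(\frac{k}{k+1}\bigr)^n \limsup_r A(rC)/r^n \le \limsup_r A(rC')/r^n + M\eta k^n$. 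Using the distance-increasing map $a \mapsto (1+\eta)\psi_i^{-1}(a)$ from $r\psi_i(C_i)$ into $r(1+\eta)C_i$ (rescaled by $r$), the union bound, and the limit for compact subsets of $\R^n$, I get $\limsup_r A(rC')/r^n \le (1+\eta)^{2n}\delta_{A,n}\mathcal{H}_n(C)$. Substituting, then letting $\eta \to 0$ and finally $k \to \infty$, gives $\limsup_r A(rC)/r^n \le \delta_{A,n}\mathcal{H}_n(C)$. Combining the two bounds finishes the compact case, and hence, by the reduction, the theorem. The crux is really the interplay of Proposition~\ref{minkzero} and Lemma~\ref{minkeq}: the latter guarantees the compact piece $C'$ inherits the Minkowski-content regularity needed to feed into the former, which absorbs the uncontrolled leftover set.
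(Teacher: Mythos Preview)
Your proof is correct and follows essentially the same architecture as the paper's: reduce to the compact case via density, establish the lower bound chart-by-chart using the Lipschitz axiom and the union axiom on the separated images $\psi_i(C_i)$, and handle the upper bound by invoking Lemma~\ref{minkeq} and Proposition~\ref{minkzero} to control the residual set outside the chart images.

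The one noteworthy difference is in how the upper bound is routed. The paper applies Proposition~\ref{minkzero} with $C' = \overline{\bigcup_{i=1}^\infty \psi_i(C_i)}$ (which has the \emph{same} Minkowski content as $C$), and then must unwind this set using the density and nested-union properties before reaching a finite union it can bound chart-by-chart. You instead apply Proposition~\ref{minkzero} directly with the finite union $C' = \bigcup_{i=1}^N \psi_i(C_i)$, accepting a small content deficit $\eta$ that is absorbed into the $M\eta k^n$ error term. This is a genuine streamlining: it bypasses the density and nested-union steps entirely, at the modest cost of carrying one extra parameter ($k$) through the limit. Both routes rest on the same key inputs (Lemma~\ref{minkeq} to guarantee $\mathcal{M}_n(C') = \mathcal{H}_n(C')$, and Proposition~\ref{minkzero} to excise the leftover), so the difference is organizational rather than conceptual.
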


In particular, the above theorem holds for all compact smooth
$n$-manifolds or compact subsets of smooth $n$-manifolds embedded in
$\R^d$ for some $d$.  Since compact $n$-rectifiable sets are also
$(\mathcal{H}_n, n)$-rectifiable and satisfy $\mathcal{M}_n(C) =
\mathcal{H}_n(C)$, Theorem~\ref{poppypackingbound} implies
Theorem~\ref{theorem:packingboundlimit}.

\begin{proof}
By using the density property of $A$, we see that $A(r\overline{C})
\le A(r(1+\varepsilon)C) \le A(r(1+\varepsilon)\overline{C})$, which
reduces the goal to proving the statement for $\overline{C}$.  From
now on we assume $C$ is compact.

We first prove the $\ge$ direction.  In the notation of
Lemma~\ref{epsiloncharts}, choose $N$ so that $\sum_{i=1}^N
\mathcal{H}_n(\psi_i(C_i)) \ge (1-\varepsilon) \mathcal{H}_n(C)$.  By
compactness, there is a $\delta > 0$ such that $d(\psi_i(C_i),
\psi_j(C_j)) \ge \delta$ for all distinct $i,j \le N$.  Then for $r
\ge 2 \delta^{-1}$,
\[ A(rC) \ge \sum_{i=1}^N A(r \psi_i(C_i)) \ge \sum_{i=1}^N
A((1+\varepsilon)^{-1} r C_i ), \]
where the last inequality follows from the Lipschitz property. Thus,
\[
\frac{A(rC)}{r^n\mathcal{H}_n(C)} \ge (1-\varepsilon)
\frac{\sum_{i=1}^N A((1+\varepsilon)^{-1} r C_i )}{\sum_{i=1}^N
  r^n\mathcal{H}_n(\psi_i(C_i))}.
\]
Furthermore, $\mathcal{H}_n(\psi_i(C_i)) \le (1+\varepsilon)^n
\mathcal{H}_n(C_i)$ because $\psi_i$ has Lipschitz constant
$1+\varepsilon$ (this bound follows directly from the definition of
$\mathcal{H}_n$; see, for example,
\cite[Proposition~2.49(iv)]{ambrosiofuscopallara}), and thus
$\mathcal{H}_n(\psi_i(C_i)) \le (1+\varepsilon)^{2n}
\mathcal{H}_n((1+\varepsilon)^{-1}C_i)$.  We conclude that
\[
\liminf_{r \to \infty} \frac{A(rC)}{r^n\mathcal{H}_n(C)} \ge
(1-\varepsilon) (1+\varepsilon)^{-2n} \delta_{A,n},
\]
and the conclusion follows by letting $\varepsilon$ tend to $0$.

Now we wish to prove the other direction,
\[\limsup_{r \to \infty} A(rC) r^{-n} \le \delta_{A,n} \mathcal{M}_n(C).
\]
We note that $\overline{\bigcup_i \psi_i(C_i)}$ is a compact subset of
$C$, and since $\mathcal{M}_n(C) = \mathcal{H}_n(C)$ and the
complement of $\overline{\bigcup_i \psi_i(C_i)}$ has measure $0$ under
$\mathcal{H}_n$, it follows that $\mathcal{M}_n(C) =
\mathcal{H}_n(\overline{\bigcup_i \psi_i(C_i)}) =
\mathcal{M}_n(\overline{\bigcup_i \psi_i(C_i)})$ by
Lemma~\ref{minkeq}.  So by Proposition~\ref{minkzero},
\[\limsup_{r \to \infty} \frac{A(rC)}{r^n} = \limsup_{r\to\infty}
\frac{A\mathopen{}\left(r\,\overline{\bigcup_i \psi_i(C_i)}\right)\mathclose{}}{r^n},\]
and it suffices to prove the corresponding bound for
$\overline{\bigcup_i \psi_i(C_i)}$.

Now by density,
\[ A\mathopen{}\left(r\overline{\bigcup_i \psi_i(C_i)}\right)\mathclose{} \le
A\mathopen{}\left(r(1+\varepsilon) \bigcup_{i=1}^\infty \psi_i(C_i)\right)\mathclose{}. \]
By the nested union and Lipschitz properties,
\begin{align*}
A\mathopen{}\left( r(1+\varepsilon) \bigcup_{i=1}^\infty \psi_i(C_i)
\right)\mathclose{} &\le \lim_{N\to\infty}
A\mathopen{}\left(r(1+\varepsilon)^2 \bigcup_{i=1}^N
\psi_i(C_i)\right)\mathclose{} \\ &\le \sum_{i=1}^\infty
A(r(1+\varepsilon)^3 C_i).
\end{align*}
Dividing by $r^n$ and taking the limit as $r \to \infty$, we find that
\begin{align*}
\limsup_{r \to \infty} \frac{A(rC)}{r^n} &\le (1+\varepsilon)^{3n}
\delta_{A,n} \sum_{i=1}^\infty \mathcal{H}_n(C_i) \\ &\le
(1+\varepsilon)^{4n} \delta_{A,n} \sum_{i=1}^\infty
\mathcal{H}_n(\psi_i(C_i))\\ &= (1+\varepsilon)^{4n} \delta_{A,n}
\mathcal{H}_n(C).
\end{align*}
Letting $\varepsilon \to 0$ completes the result.
\end{proof}

\begin{remark}
We do not know whether Theorem~\ref{poppypackingbound} generalizes to
sets $C$ with non-integral Hausdorff dimension $n$. The limit of
$A(rC)/r^n$ at least makes sense, and the value $\delta_{A,n}$ can
also sometimes be generalized for non-integral $n$. For the linear
programming bound, for example, the radial Fourier transform is used,
which can be written in terms of a Bessel function $J_{\nu}$ with
$\nu=d/2-1$.  By taking $\nu$ to be any real number greater than
$-1/2$, we can formally write down a linear program for non-integral
dimensional Euclidean space, but it is not clear whether it has any
relationship to the limit of $A(rC)/r^n$.  If there is indeed a
generalization in this direction, that would give geometric meaning to
the versions of the linear program when the dimension is not integral.
Could this program even be sharp in some non-integral dimensions?
\end{remark}

\section{The Euclidean limits of $\vartheta'$, $\vartheta$, and $\vartheta^+$}

In this section, we consider the Euclidean limits of the packing bound
functions corresponding to the sandwich functions $\vartheta'$,
$\vartheta$, and $\vartheta^+$.  It turns out that
$\delta_{\vartheta',n} \mathcal{L}_n(B_1^n)$ is the Cohn-Elkies linear
programming bound \cite{cohn2003new} for the sphere packing density in
$\R^n$, which is the best bound known for large $n$. In addition, the
Euclidean limits of $\vartheta$ and $\vartheta^+$ can be computed
exactly, and they are equal to $\mathcal{L}_n(B_1^n)^{-1}$, where
$\mathcal{L}_n(B_1^n)$ is the volume of an $n$-ball of radius $1$.  In
other words,
\[
\delta_{\vartheta,n} = \delta_{\vartheta^+,n} =
\pi^{-\frac{n}{2}}\Gamma\mathopen{}\left(\frac{n}{2} +
1\right)\mathclose{},
\]
and the resulting density bounds for the sphere packing problem are
trivial: $\delta_{\vartheta,n} \mathcal{L}_n(B_1^n) =
\delta_{\vartheta^+,n} \mathcal{L}_n(B_1^n) = 1$.

Despite the weakness of the corresponding sphere packing bounds, the
Euclidean limits $\delta_{\vartheta,n}$ and $\delta_{\vartheta^+,n}$
involve interesting mathematics. These ideas originated in Siegel's
proof of Minkowski's theorem via Poisson summation (see, for example,
Section~2.11.4 in~\cite{dymmckean}).  Minkowski's theorem can be
interpreted as saying that the maximum lattice packing density of a
convex, origin-symmetric body is~$1$, which Siegel proved by applying
Poisson summation to the convolution of the indicator function of the
convex body with itself. It is natural to ask whether another
auxiliary function could prove a better bound than~$1$. Siegel showed
that the answer is no under certain hypotheses \cite{siegel}, which
amounts to computing the limits $\delta_{\vartheta,n}$ and
$\delta_{\vartheta^+,n}$, and Gorbachev \cite{gorbachev2001extremum}
rediscovered this theorem with a different proof.

In this section and the next, we will first discuss how the Delsarte
problem gives a packing bound function, and then we will generalize
the results to the Lasserre hierarchy from \cite{de2015semidefinite}.
We will denote the packing bound functions corresponding to
$\vartheta$, $\vartheta'$, and $\vartheta^+$ under
Theorems~\ref{theorem:pbffromsandwich} and~\ref{thetasandwich} by
$\vartheta$, $\vartheta'$, and $\vartheta^+$ again; this is an abuse
of notation, but it will not cause any actual ambiguity. We will also
define topological variants $\vartheta^{\topo}$,
$\vartheta'^{,\topo}$, and $\vartheta^{+,\topo}$ below, which will
impose continuity.

First, we need a few definitions. By a \emph{finite signed measure},
we mean a signed Borel measure $\mu$ on $\R^d$ such that $-\infty <
\mu(A) < \infty$ for every Borel set $A$ (bounded or not). If $\mu$
and $\mu_1,\mu_2,\dots$ are finite signed measures, we say $\mu_n$ is
weak-$*$ convergent to $\mu$ if every bounded, continuous function $f
\colon \R^d \to \R$ satisfies
\[
\int f \, d\mu_n \to \int f\, d\mu
\]
as $n \to \infty$.

A \emph{positive semidefinite kernel} on a bounded Borel set $C$ is a
function $K \colon C \times C \to \R$ such that $K(x,y)=K(y,x)$ and
for every finite subset $S$ of $C$ and function $w \colon S \to \R$,
\[
\sum_{x,y \in S} K(x,y) w(x) w(y) \ge 0.
\]
For example, $K(x,y) := f(x)f(y)$ is positive semidefinite for any
function $f \colon C \to \R$, and we denote this kernel by $f \otimes
f$.  If $K$ is continuous, then the inequality defining positive
semidefiniteness is equivalent to
\[\iint_{C \times C} K(x,y) \,d\mu(x) \,d\mu(y) \ge 0\]
for all finite signed measures $\mu$; here finite sets $S$ correspond
to $\mu$ with finite support (i.e., linear combinations of delta
functions supported at points), and finitely supported signed measures
are weak-$*$ dense among all finite signed measures.  A finite signed
measure $\nu$ is \emph{positive semidefinite} on $C \times C$ if
\[\iint_{C \times C} K(x,y) \,d\nu(x,y) \ge 0\]
for all continuous, positive semidefinite kernels $K$ on $C$.  For
simplicity, we often will call $\nu$ positive semidefinite on $C$,
rather than $C \times C$.  For finitely supported positive
semidefinite measures $\nu$, $\iint_{C \times C} K(x,y) \,d\nu(x,y)
\ge 0$ for all positive semidefinite kernels $K$, even if $K$ is not
continuous.

In this language, we can interpret the construction of packing bound
functions from Theorem~\ref{theorem:pbffromsandwich},
Definition~\ref{def:theta}, and Theorem~\ref{thetasandwich} as
follows. Let $\Delta(C)$ denote the diagonal $\{(x,x) : x \in C\}$ in
$C \times C$.

\begin{proposition}
For each bounded Borel subset $C$ of $\R^d$,
\begin{enumerate}
    \item $\vartheta^+(C)$ is the supremum of $\nu(C \times C)$ over
      finitely supported positive semidefinite signed measures $\nu$
      on $C$ such that $\nu(\Delta(C)) = 1$ and $\nu \le 0$ on the set
      of pairs $(x,y)$ such that $0< |x-y| < 2$,
    \item $\vartheta(C)$ is the supremum of $\nu(C \times C)$ over
      finitely supported positive semidefinite signed measures $\nu$
      on $C$ such that $\nu(\Delta(C)) = 1$ and $\nu = 0$ for pairs
      $(x,y)$ such that $0 < |x-y| < 2$, and
    \item $\vartheta'(C)$ is the supremum of $\nu(C \times C)$ over
      finitely supported positive semidefinite signed measures $\nu$
      on $C$ such that $\nu(\Delta(C)) = 1$, $\nu = 0$ for pairs
      $(x,y)$ such that $0 < |x-y| < 2$, and $\nu \ge 0$ everywhere.
\end{enumerate}
\end{proposition}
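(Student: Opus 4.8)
The plan is to set up a dictionary between the finite principal submatrices appearing in Definition~\ref{def:theta} and finitely supported positive semidefinite signed measures, and then pass to the supremum over finite subsets of $C$. I will treat the three cases in parallel, since they differ only in which linear constraints are imposed.

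First I would unwind the left-hand sides. By Theorems~\ref{theorem:pbffromsandwich} and~\ref{thetasandwich} together with Definition~\ref{def:theta}, the packing bound function $\vartheta$ evaluated at $C$ equals $\vartheta(\overline{G(C)})$, where $G(C)$ is the graph on vertex set $C$ with $xy$ an edge iff $|x-y|\ge 2$; hence $\overline{G(C)}$ is the graph on $C$ with $xy$ an edge iff $0<|x-y|<2$, and likewise for $\vartheta'$ and $\vartheta^+$. Since $\vartheta,\vartheta',\vartheta^+$ of a (possibly infinite) graph are defined as the supremum over induced finite subgraphs, and the induced finite subgraphs of $\overline{G(C)}$ are exactly the restrictions $\overline{G(C)}|_S$ for finite $S\subseteq C$, it suffices to show for each finite $S\subseteq C$ that $\vartheta^+(\overline{G(C)}|_S)$ equals the supremum of $\nu(C\times C)$ over finitely supported positive semidefinite signed measures $\nu$ supported inside $S\times S$ with $\nu(\Delta(C))=1$ and $\nu\le 0$ on $\{(x,y):0<|x-y|<2\}$, and similarly for $\vartheta$ and $\vartheta'$. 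Taking the supremum over finite $S\subseteq C$ (noting that every finitely supported measure lives in some $S\times S$) then yields the proposition, with the harmless convention, needed only when $C=\emptyset$, that an empty supremum is $0=\vartheta(\emptyset)$.

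For the dictionary, given a finite $S\subseteq C$ and a symmetric positive semidefinite $S\times S$ matrix $M$, I would form the finitely supported symmetric signed measure $\nu_M=\sum_{x,y\in S}M_{xy}\,\delta_{(x,y)}$ on $C\times C$. Then $\nu_M(\Delta(C))=\Tr M$, $\nu_M(C\times C)=\sum_{x,y\in S}M_{xy}$, and for any continuous positive semidefinite kernel $K$ one has $\iint K\,d\nu_M=\sum_{x,y\in S}M_{xy}K(x,y)=\Tr\!\big(M\,K|_S\big)\ge 0$ because $M$ and the restricted kernel $K|_S$ are both positive semidefinite matrices; hence $\nu_M$ is a positive semidefinite measure. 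Conversely, given a finitely supported positive semidefinite signed measure $\nu$ with $\nu(\Delta(C))=1$, I would first replace it by its symmetrization $\tfrac12(\nu+\sigma_*\nu)$ with $\sigma(x,y)=(y,x)$; since $\sigma$ is an involution fixing $\Delta(C)$ and fixing $\{0<|x-y|<2\}$ setwise, this leaves $\nu(C\times C)$, $\nu(\Delta(C))$, and the conditions ``$\nu\le 0$ on $\{0<|x-y|<2\}$'', ``$\nu=0$ on $\{0<|x-y|<2\}$'', and ``$\nu\ge 0$'' all unchanged, and it preserves positive semidefiniteness because $\iint K\,d(\sigma_*\nu)=\iint K\,d\nu$ for symmetric $K$. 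Letting $S$ be the finite set of coordinates occurring in the support and $M_{xy}:=\nu(\{(x,y)\})$ for $x,y\in S$, the matrix $M$ is symmetric, $\Tr M=\nu(\Delta(C))=1$, and $\sum_{x,y}M_{xy}=\nu(C\times C)$; testing positive semidefiniteness of $\nu$ against the continuous kernels $f\otimes f$, and using that a continuous $f$ can take any prescribed values on the finite set $S$, gives $w^{\top}Mw\ge 0$ for all $w\in\R^S$, so $M$ is positive semidefinite.

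It then remains only to match the linear constraints: under $M\leftrightarrow\nu$, the condition ``$M_{xy}=0$ on edges of $\overline{G(C)}|_S$'' is exactly ``$\nu=0$ on pairs with $0<|x-y|<2$'', ``$M_{xy}\le 0$ on those edges'' is ``$\nu\le 0$ there'', and the additional entrywise condition ``$M\ge 0$'' defining $\vartheta'$ is ``$\nu\ge 0$''. This establishes the equality of the two suprema for each finite $S$, and hence the proposition. I expect the only genuine subtlety to be the implication ``finitely supported positive semidefinite measure $\Rightarrow$ positive semidefinite matrix on its support,'' where it matters that positive semidefiniteness of a measure was defined using only \emph{continuous} kernels; this is handled by the symmetrization step above together with interpolation of continuous functions on the finite support, so no appeal to the continuous-versus-arbitrary-kernel equivalence recorded before the proposition is actually needed. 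The remaining bookkeeping of masses and signs is routine.
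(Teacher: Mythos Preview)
Your proof is correct and follows essentially the same route as the paper's own argument, which simply says the result ``follows immediately by considering matrices as finitely supported signed measures on $C \times C$'' and observes that feasible matrices for finite induced subgraphs correspond bijectively to feasible finitely supported measures. You have unpacked this dictionary in full detail---including the symmetrization step and the interpolation argument showing that testing against continuous kernels $f\otimes f$ suffices to recover positive semidefiniteness of the associated matrix---which the paper records separately in the sentence immediately preceding the proposition and otherwise treats as routine.
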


\begin{proof}
This follows immediately by considering matrices as finitely supported
signed measures on $C \times C$.  Every measure $\nu$ as above gives a
feasible solution for the corresponding sandwich function
$\vartheta^+$, $\vartheta$, or $\vartheta'$ over some induced finite
subgraph.  Conversely, any feasible solution over an induced finite
subgraph gives a feasible $\nu$.  Taking the supremum over feasible
solutions on both sides yields the result.
\end{proof}

Similarly, the dual semidefinite programs are as follows:
\begin{enumerate}
\item $\vartheta'(C)^*$ is the infimum of $t$ over all positive
  semidefinite kernels $K$ on finite subsets $S$ of $C$ such that
  $K(x,x) = t-1$ for all $x \in S$ and $K(x,y) \le -1$ when $|x-y| \ge
  2$,
\item $\vartheta(C)^*$ is the infimum of $t$ over all positive
  semidefinite kernels $K$ on finite subsets $S$ of $C$ such that
  $K(x,x) = t-1$ for all $x \in S$ and $K(x,y) = -1$ when $|x-y| \ge
  2$, and
\item $\vartheta^+(C)^*$ is the infimum of $t$ over all positive
  semidefinite kernels $K$ on finite subsets $S$ of $C$ such that
  $K(x,x) = t-1$ for all $x \in S$, $K(x,y) \ge -1$ for all $x,y \in
  S$, and $K(x,y) = -1$ when $|x-y| \ge 2$.
\end{enumerate}
It follows from Proposition~\ref{discretetheta} that $\vartheta'(C) =
\vartheta'(C)^*$, $\vartheta(C) = \vartheta(C)^*$, and $\vartheta^+(C)
= \vartheta^+(C)^*$, and that we can take $S=C$ in the dual programs
when $C$ is finite.

The description in terms of finitely supported signed measures
suggests that we should consider the analogous problems using
arbitrary finite signed measures instead.  Such measures have the
advantage that one can average feasible solutions over the action of a
compact Lie group of isometries and arrive at invariant semidefinite
programs for packing problems \cite{bachoc2012invariant}.  We will
refer to these as the topological analogues of the discrete
$\vartheta^+$, $\vartheta$, and $\vartheta'$, and denote them by
$\vartheta^{+,\topo}$, $\vartheta^{\topo}$, and $\vartheta'^{,\topo}$,
respectively. In other words,
\begin{enumerate}
    \item $\vartheta^{+,\topo}(C)$ is the supremum of $\nu(C \times
      C)$ over finite, positive semidefinite signed measures $\nu$ on
      $C$ such that $\nu(\Delta(C)) = 1$ and $\nu \le 0$ on the set of
      pairs $(x,y)$ such that $0< |x-y| < 2$,
    \item $\vartheta^{\topo}(C)$ is the supremum of $\nu(C \times C)$
      over finite, positive semidefinite signed measures $\nu$ on $C$
      such that $\nu(\Delta(C)) = 1$ and $\nu = 0$ for pairs $(x,y)$
      such that $0 < |x-y| < 2$, and
    \item $\vartheta'^{,\topo}(C)$ is the supremum of $\nu(C \times
      C)$ over finite, positive semidefinite signed measures $\nu$ on
      $C$ such that $\nu(\Delta(C)) = 1$, $\nu = 0$ for pairs $(x,y)$
      such that $0 < |x-y| < 2$, and $\nu \ge 0$ everywhere.
\end{enumerate}

The topological analogue of the dual semidefinite programs for
$\vartheta^+$, $\vartheta$, and $\vartheta'$ uses continuous kernels.
We define
\begin{enumerate}
\item $\vartheta'^{,\topo}(C)^*$ to be the infimum of $t$ over all
  continuous, positive semidefinite kernels $K$ on $C$ such that
  $K(x,x)\le t-1$ for all $x$ and $K(x,y)\le -1$ when $|x-y| \ge 2$,
\item $\vartheta^{\topo}(C)^*$ to be the infimum of $t$ over all
  continuous, positive semidefinite kernels $K$ on $C$ such that such
  that $K(x,x)\le t-1$ for all $x$ and $K(x,y) = -1$ when $|x-y| \ge
  2$, and
\item $\vartheta^{+,\topo}(C)^*$ to be the infimum of $t$ over all
  continuous, positive semidefinite kernels $K$ on $C$ such that such
  that $K(x,x)\le t-1$ for all $x$, $K(x,y) = -1$ when $|x-y| \ge 2$,
  and $K(x,y) \ge -1$ everywhere.
\end{enumerate}
Note that here we require just $K(x,x) \le t-1$, rather than $K(x,x) =
t-1$ as in the finite case (Proposition~\ref{discretetheta}).  That
change makes no difference in the finite case, but it will be
convenient in Proposition~\ref{thetaduality}.

The following proposition is the topological analogue of
Proposition~\ref{discretetheta}.

\begin{proposition}[Weak duality]
For each bounded Borel set $C$,
\[
\vartheta'^{,\topo}(C) \le \vartheta'^{,\topo}(C)^*, \quad
\vartheta^{\topo}(C) \le \vartheta^{\topo}(C)^*, \quad\text{and}\quad
\vartheta^{+,\topo}(C) \le \vartheta^{+,\topo}(C)^*.
\]
\end{proposition}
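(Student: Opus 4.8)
The plan is to prove weak duality by the usual argument: fix any signed measure $\nu$ feasible for the primal problem and any pair $(K,t)$ feasible for the corresponding dual, and show directly that $\nu(C\times C)\le t$; taking the supremum over $\nu$ and then the infimum over $(K,t)$ gives each of the three inequalities. Since $\nu$ is positive semidefinite and $K$ is a continuous positive semidefinite kernel, the definition of positive semidefiniteness of $\nu$ gives $\iint_{C\times C}K\,d\nu\ge 0$. I would then split $C\times C$ into the diagonal $\Delta(C)$, the short pairs $R=\{(x,y):0<|x-y|<2\}$, and the long pairs $F=\{(x,y):|x-y|\ge 2\}$, write $\iint_{C\times C}K\,d\nu$ as the sum of the three corresponding integrals, and estimate each summand from the feasibility constraints.

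On $F$ one has $K(x,y)=-1$ (for $\vartheta^{\topo}$ and $\vartheta^{+,\topo}$) or $K(x,y)\le -1$ together with $\nu\ge 0$ (for $\vartheta'^{,\topo}$), so in every case $\iint_F K\,d\nu\le -\nu(F)$. On $R$ the integral vanishes when the primal forces $\nu=0$ there (the $\vartheta^{\topo}$ and $\vartheta'^{,\topo}$ cases) and is otherwise bounded by $-\nu(R)$ using $K\ge -1$ and $\nu\le 0$ (the $\vartheta^{+,\topo}$ case). Combining $0\le \iint_{\Delta(C)}K\,d\nu+\iint_R K\,d\nu+\iint_F K\,d\nu$ with $\nu(\Delta(C))=1$ and $\nu(C\times C)=\nu(\Delta(C))+\nu(R)+\nu(F)$, everything collapses to $0\le (t-1)-\bigl(\nu(C\times C)-1\bigr)$, that is, $\nu(C\times C)\le t$, provided the diagonal term satisfies $\iint_{\Delta(C)}K\,d\nu\le t-1$. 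Note this uses only the inequality $K(x,x)\le t-1$, not the equality, which is why the dual was stated with $K(x,x)\le t-1$.

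So everything reduces to the diagonal estimate. Writing $\nu_\Delta$ for the measure on $C$ given by $\nu_\Delta(A)=\nu(\{(x,x):x\in A\})$, we have $\iint_{\Delta(C)}K\,d\nu=\int_C K(x,x)\,d\nu_\Delta(x)$ and $\nu_\Delta(C)=1$, so $K(x,x)\le t-1$ pointwise gives $\iint_{\Delta(C)}K\,d\nu\le t-1$ as soon as $\nu_\Delta$ is nonnegative. For $\vartheta'^{,\topo}$ this is automatic since $\nu\ge 0$. The one genuinely new ingredient, and the step I expect to be the main obstacle, is the lemma that \emph{every} finite positive semidefinite signed measure on a bounded Borel set $C$ has $\nu_\Delta\ge 0$ — the continuum analogue of the fact that a positive semidefinite matrix has nonnegative diagonal entries. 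I would prove it by testing $\nu$ against the kernels $K_m(x,y)=e^{-m|x-y|^2}g(x)g(y)$ for continuous $g$ with $0\le g\le 1$: each $K_m$ is continuous and positive semidefinite, because $z\mapsto e^{-m|z|^2}$ is a positive-definite function by Bochner's theorem and the Schur product of positive semidefinite kernels is positive semidefinite, so $\iint_{C\times C}K_m\,d\nu\ge 0$; letting $m\to\infty$ and using dominated convergence (the integrands are uniformly bounded and $\nu$ is finite) yields $\int_C g(x)^2\,d\nu_\Delta(x)\ge 0$ for all such $g$, and then approximating indicators of compact subsets of $C$ by such $g$ and invoking regularity of $\nu_\Delta$ gives $\nu_\Delta\ge 0$. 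With this lemma in hand the three cases are all finished by the computation above; no minimax theorem is needed, since only weak duality is claimed.
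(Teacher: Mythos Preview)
Your argument is correct and follows the same route as the paper: pair a feasible primal measure $\nu$ with a feasible dual kernel $K$, use $\iint K\,d\nu\ge 0$, and split $C\times C$ into the diagonal, the short pairs $R$, and the long pairs $F$. The paper compresses all of this into the single chain
\[
\nu(C\times C)=\iint_{C\times C}d\nu \;\le\; \iint_{C\times C}(1+K)\,d\nu \;\le\; t\,\nu(\Delta(C))=t,
\]
leaving the verification of the second inequality to the reader. Your write-up makes explicit the one nontrivial point hidden in that step, namely that the diagonal restriction $\nu_\Delta$ of a positive semidefinite signed measure is nonnegative; without this the bound $\int_{\Delta}K\,d\nu\le t-1$ would not follow from the pointwise inequality $K(x,x)\le t-1$ in the $\vartheta^{\topo}$ and $\vartheta^{+,\topo}$ cases. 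Your proof of that lemma via the kernels $e^{-m|x-y|^2}g(x)g(y)$ and dominated convergence is clean and standard, so the proposal is a faithful and more detailed version of the paper's own argument rather than a different approach.
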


\begin{proof}
Let $A$ be $\vartheta'^{,\topo}$, $\vartheta^{\topo}$, or
$\vartheta^{+,\topo}$. In each case, the hypotheses on $\nu$, $t$, and
$K$ in the definitions of $A(C)$ and $A(C)^*$ show that
\[
\nu(C \times C) = \iint_{C \times C} d\nu(x,y) \le \iint_{C \times C}
\big(1+K(x,y)\big) \, d\nu(x,y) \le t \,\nu(\Delta(C)) = t,
\]
as desired.
\end{proof}

The following proposition clarifies the relationship between the
topological and discrete invariants.

\begin{proposition}\label{thetaduality}
Let $C \subseteq \R^d$ be compact, and let $C(\varepsilon)$ denote an
$\varepsilon$-neighborhood around $C$.  Then
\[\vartheta'^{,\topo}(C) = \lim_{\varepsilon \to 0^+} \vartheta'(C(\varepsilon)).\]
In addition, for any $\varepsilon > 0$,
\[
\begin{split}
\vartheta'(C) \le \vartheta'^{,\topo}(C) &\le \vartheta'^{,\topo}(C)^*
\le \vartheta'(C(\varepsilon)), \\ \vartheta^{+}(C) \le
\vartheta^{+,\topo}(C) &\le \vartheta^{+,\topo}(C)^* \le
\vartheta^+(C(\varepsilon)), \text{ and}\\ \vartheta(C) \le
\vartheta^{\topo}(C) &\le \vartheta^{\topo}(C)^* \le
\vartheta(C(\varepsilon)).
\end{split}
\]
\end{proposition}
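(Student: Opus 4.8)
The plan is to handle the three–term chain first and then the limit identity. The middle inequality in each chain, $\vartheta'^{,\topo}(C)\le\vartheta'^{,\topo}(C)^*$ and its analogues, is exactly the weak duality proposition proved just above, and the left inequality, $\vartheta'(C)\le\vartheta'^{,\topo}(C)$ and its analogues, is immediate: a finitely supported positive semidefinite signed measure is in particular a finite signed measure, so the feasible region of each discrete primal problem is contained in that of the corresponding topological primal problem. The real content is the right–hand inequality $\vartheta'^{,\topo}(C)^*\le\vartheta'(C(\varepsilon))$ (and the $\vartheta$, $\vartheta^+$ versions), which I would prove by a ``net plus partition of unity'' construction, and then the identity $\vartheta'^{,\topo}(C)=\lim_{\varepsilon\to0^+}\vartheta'(C(\varepsilon))$, whose nontrivial direction is $\ge$.

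For the right–hand inequality, fix $\varepsilon>0$, write $C\subseteq\overline{B(0,R)}$, and choose $\rho>0$ small enough that $\rho R/(1-\rho)<\varepsilon$ and $\rho<1/2$. Let $N\subseteq C$ be a finite $\rho$-net of $C$. The graph on $N$ whose edges are the pairs at distance $\ge 2-2\rho$ becomes, after rescaling $N$ by the factor $1/(1-\rho)$, the standard distance-$\ge 2$ graph on $\tfrac{1}{1-\rho}N$; moreover $\tfrac{1}{1-\rho}N\subseteq C(\varepsilon)$ by the choice of $\rho$, so $\vartheta'(\tfrac{1}{1-\rho}N)\le\vartheta'(C(\varepsilon))$ by monotonicity. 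By the discrete duality of Proposition~\ref{discretetheta} there is a positive semidefinite kernel $K_N$ on $N$ with $K_N(s,s)=t-1$ and $K_N(s,s')\le-1$ whenever $|s-s'|\ge 2-2\rho$, where $t=\vartheta'(\tfrac{1}{1-\rho}N)\le\vartheta'(C(\varepsilon))$. Writing $K_N(s,s')=\langle v_s,v_{s'}\rangle$, taking a continuous partition of unity $\{\chi_s\}_{s\in N}$ on $C$ subordinate to the cover by the balls $B(s,\rho)$, and setting $F(x)=\sum_s\chi_s(x)v_s$ and $\widetilde K(x,y)=\langle F(x),F(y)\rangle$, one gets a continuous positive semidefinite kernel on $C$ with $\widetilde K(x,x)=|F(x)|^2\le\sum_s\chi_s(x)|v_s|^2=t-1$ by Jensen, and with $\widetilde K(x,y)\le-1$ whenever $|x-y|\ge 2$, since then every pair $(s,s')$ contributing to $\widetilde K(x,y)$ satisfies $|s-s'|\ge|x-y|-2\rho\ge 2-2\rho$. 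Thus $(\widetilde K,t)$ is feasible for $\vartheta'^{,\topo}(C)^*$, so $\vartheta'^{,\topo}(C)^*\le\vartheta'(C(\varepsilon))$. For $\vartheta^{\topo}$ and $\vartheta^{+,\topo}$ one repeats the construction using the corresponding discrete dual on $\tfrac{1}{1-\rho}N$: there $K_N(s,s')=-1$ for $|s-s'|\ge 2-2\rho$ (forcing $\widetilde K(x,y)=-1$ when $|x-y|\ge 2$), and in the $\vartheta^+$ case also $K_N\ge-1$ everywhere (forcing $\widetilde K\ge-1$ everywhere), which are precisely the extra constraints in the topological duals. The technical heart is this step; the only real point is that exactly $2\rho$ of slack was built into the discrete problem to absorb the $2\rho$ lost when a kernel is smoothed by a partition of unity.

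For the identity, the direction $\le$ is immediate from the chain: $\vartheta'^{,\topo}(C)\le\vartheta'^{,\topo}(C)^*\le\vartheta'(C(\varepsilon))$ for all $\varepsilon$, and $\varepsilon\mapsto\vartheta'(C(\varepsilon))$ is nondecreasing by monotonicity and bounded (by $\cov(C(\varepsilon_0))<\infty$), so $\lim_{\varepsilon\to0^+}\vartheta'(C(\varepsilon))=\inf_\varepsilon\vartheta'(C(\varepsilon))\ge\vartheta'^{,\topo}(C)$. For $\ge$, set $L=\inf_\varepsilon\vartheta'(C(\varepsilon))$; for each $k$ pick a finite $S_k\subseteq C(\varepsilon_k)$ (with $\varepsilon_k\downarrow 0$) and an optimal primal matrix for $\vartheta'(S_k)$ of value $\ge\vartheta'(C(\varepsilon_k))-1/k$, regarded as a finitely supported positive measure $\nu_k$ on $S_k^2$ with $\nu_k(\Delta(S_k))=1$ and $\nu_k$ vanishing on $\{0<|x-y|<2\}$. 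Pushing forward under a nearest-point map $\pi\colon C(\varepsilon_k)\to C$ gives a finitely supported positive measure $\widetilde\nu_k$ on $C^2$ that is positive semidefinite on $C$ (pullback of a positive semidefinite kernel is positive semidefinite), has $\widetilde\nu_k(\Delta(C))=1$ (the off-diagonal pairs collapsed by $\pi$ lie in $\{0<|x-y|<2\varepsilon_k\}$, where $\nu_k$ vanishes), vanishes on $\{0<|x-y|<2-2\varepsilon_k\}$, and has total mass $\ge L-1/k$. These measures live on the fixed compact set $C^2$ with uniformly bounded mass, so a subsequence converges weak-$*$ to a positive measure $\nu_*$ on $C^2$. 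By the portmanteau theorem, $\nu_*(\{0<|x-y|<2-\gamma\})\le\liminf_k\widetilde\nu_k(\{0<|x-y|<2-\gamma\})=0$ for every $\gamma>0$, so $\nu_*$ vanishes on $\{0<|x-y|<2\}$; likewise $1=\limsup_k\widetilde\nu_k(\Delta(C))\le\nu_*(\Delta(C))\le\nu_*(\{|x-y|<1\})\le\liminf_k\widetilde\nu_k(\{|x-y|<1\})=1$, so $\nu_*(\Delta(C))=1$; and $\nu_*(C^2)\ge\limsup_k\widetilde\nu_k(C^2)\ge L$. Finally $\iint K\,d\nu_*=\lim_k\iint K\,d\widetilde\nu_k\ge0$ for every continuous positive semidefinite kernel $K$ on $C$, since each $\widetilde\nu_k$ is finitely supported, so $\nu_*$ is positive semidefinite on $C$ and hence feasible for $\vartheta'^{,\topo}(C)$ with $\nu_*(C^2)\ge L$, giving $\vartheta'^{,\topo}(C)\ge L$. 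The delicate point is the control of this weak-$*$ limit: one must check that the mass $\pi$ pushes into the shell $\{2-2\varepsilon_k\le|x-y|<2\}$ migrates onto $\{|x-y|=2\}$ (which is permitted) rather than onto the diagonal, and that positive semidefiniteness survives passage to the limit — both of which work because the $\widetilde\nu_k$ are \emph{positive}, so their total variation equals their mass and stays bounded. This positivity is special to $\vartheta'$, which is why the analogous identity is not asserted for $\vartheta$ and $\vartheta^+$.
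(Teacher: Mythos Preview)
Your proof is correct and follows essentially the same strategy as the paper's: for the chain inequality you discretize $C$ and extend a finite dual kernel to a continuous one by convex combinations, and for the limit identity you use weak-$*$ compactness of positive measures. The implementation differs in two minor ways worth noting. First, where the paper builds a simplicial complex $Y$ with $C\subseteq Y\subseteq C(\varepsilon)$ and interpolates via barycentric coordinates (which \emph{are} a partition of unity), you take a $\rho$-net $N\subseteq C$, pre-scale by $1/(1-\rho)$ to land in $C(\varepsilon)$, and use a general partition of unity; your version builds the scaling in from the start and so avoids the paper's final step of massaging $(1+\varepsilon)C(\varepsilon)$ back into some $C(\delta)$. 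Second, for the limit identity the paper simply takes feasible $\nu_n$ supported on $C(\varepsilon_n)^2$ and lets the support of the weak-$*$ limit collapse to $C^2$ automatically, whereas you first push forward to $C^2$ via a nearest-point selection; this extra step is unnecessary but harmless, and your verification that the pushforward preserves positive semidefiniteness and the diagonal normalization is correct.
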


We do not know whether there is any compact set $C \subseteq \R^d$ for
which $\vartheta'(C) \ne \vartheta'^{,\topo}(C)$.

\begin{corollary}[Strong duality]
For each bounded Borel set $C$,
\[
\vartheta'^{,\topo}(C) = \vartheta'^{,\topo}(C)^*.
\]
\end{corollary}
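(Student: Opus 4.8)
The plan is to prove strong duality by pinching $\vartheta'^{,\topo}(C)^*$ between two quantities that both tend to $\vartheta'^{,\topo}(C)$. The weak duality proposition already gives $\vartheta'^{,\topo}(C) \le \vartheta'^{,\topo}(C)^*$, so the entire content is the reverse inequality $\vartheta'^{,\topo}(C)^* \le \vartheta'^{,\topo}(C)$.

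First I would dispatch the case where $C$ is compact, which is essentially immediate from Proposition~\ref{thetaduality}. That proposition supplies, for every $\varepsilon > 0$, the inequality $\vartheta'^{,\topo}(C)^* \le \vartheta'(C(\varepsilon))$, together with the identity $\lim_{\varepsilon \to 0^+} \vartheta'(C(\varepsilon)) = \vartheta'^{,\topo}(C)$. Since the fixed number $\vartheta'^{,\topo}(C)^*$ is bounded above by every term $\vartheta'(C(\varepsilon))$, it is bounded above by their limit $\vartheta'^{,\topo}(C)$; combined with weak duality, this gives $\vartheta'^{,\topo}(C) = \vartheta'^{,\topo}(C)^*$ for compact $C$.

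The remaining work, and what I expect to be the main obstacle, is to remove the compactness assumption, since Proposition~\ref{thetaduality} as stated applies only to compact sets. A tempting shortcut is to pass to the closure $\overline{C}$: one has $C(\varepsilon) = \overline{C}(\varepsilon)$ because $d(x,C) = d(x,\overline{C})$, and restricting a continuous positive semidefinite kernel from $\overline{C}$ to $C$ preserves dual feasibility, so $\vartheta'^{,\topo}(C)^* \le \vartheta'^{,\topo}(\overline{C})^*$. But applying the compact case to $\overline{C}$ only bounds $\vartheta'^{,\topo}(C)^*$ by $\vartheta'^{,\topo}(\overline{C})$, and the naive hope $\vartheta'^{,\topo}(C) = \vartheta'^{,\topo}(\overline{C})$ is false in general: for an open interval of length $2$ the primal value is $1$, whereas for its closure it is $2$, since deleting the endpoints destroys the only pair of points at distance $\ge 2$.

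Thus the passage to a general bounded Borel domain has to be carried out directly on $C$. I would handle it by extending Proposition~\ref{thetaduality}, or at least the chain of inequalities it establishes, to bounded Borel $C$ -- adapting the underlying minimax and weak-$*$ compactness argument, or else using inner regularity to replace a near-optimal primal measure by one supported on a compact subsquare $K \times K$ with $K \subseteq C$ (on which positive semidefiniteness restricts cleanly, via Tietze extension of the associated feature maps), applying the compact result on $K$, and letting $K$ exhaust $C$. The delicate point, which I expect to require the most care, is to arrange any such approximation so that feasibility survives -- in particular so that no primal mass is pushed into the forbidden open annulus $\{(x,y) : 0 < |x-y| < 2\}$ and so that the normalization $\nu(\Delta(C)) = 1$ is maintained. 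The compact case itself, by contrast, is essentially formal given Proposition~\ref{thetaduality}.
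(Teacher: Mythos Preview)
Your argument for compact $C$ is exactly the paper's proof: the paper simply writes that the corollary ``follows directly from Proposition~\ref{thetaduality},'' which is precisely your squeeze $\vartheta'^{,\topo}(C) \le \vartheta'^{,\topo}(C)^* \le \vartheta'(C(\varepsilon)) \to \vartheta'^{,\topo}(C)$ as $\varepsilon \to 0^+$.

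For the non-compact case you have not overlooked anything that the paper supplies. Proposition~\ref{thetaduality} is stated only for compact $C$ (both the limit identity and the chain of inequalities), and the paper's one-sentence proof invokes nothing else. Your open-interval example correctly shows $\vartheta'^{,\topo}(C) \ne \vartheta'^{,\topo}(\overline{C})$ in general, so the closure shortcut is indeed blocked. The paper does not carry out the extension you sketch; its later generalization, Theorem~\ref{lasstrongduality}, is again stated only for compact topological packing graphs, and every application in the paper is to compact sets. So the phrase ``bounded Borel'' in the corollary appears to be an overstatement relative to what the paper actually proves, and your instinct that the general case needs a separate argument is well founded rather than a misreading.
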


This corollary follows directly from Proposition~\ref{thetaduality},
and will be generalized to the Lasserre hierarchy in
Theorem~\ref{lasstrongduality}. We do not know whether strong duality
holds for $\theta$ or $\theta^+$, but they are less significant as
packing bounds.

The proof of Proposition~\ref{thetaduality} will use the following
lemma, which allows us to reduce a problem on an infinite topological
space to a problem on a finite simplicial complex.

\begin{lemma}\label{geometricsimplicial}
For every compact subset $C \subseteq \R^d$ and $\varepsilon>0$, there
is a finite, pure simplicial $d$-complex $Y$ embedded in $\R^d$ such
that $C \subseteq Y \subseteq C(\varepsilon)$ and every simplex in $Y$
has diameter less than $\varepsilon$.
\end{lemma}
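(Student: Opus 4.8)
The plan is to build $Y$ by triangulating a sufficiently fine cubical neighborhood of $C$ and then, if necessary, discarding the cubes that miss $C$. First I would fix $\varepsilon > 0$ and choose a mesh size $h > 0$ small enough that any cube of side $h$ has diameter $h\sqrt{d} < \varepsilon/2$. Consider the grid $h\Z^d$ and let $\mathcal{Q}$ be the (finite, since $C$ is bounded) collection of all closed grid cubes $Q$ of side $h$ that intersect the closed $\varepsilon/2$-neighborhood $\overline{C(\varepsilon/2)}$. Set $Z = \bigcup_{Q \in \mathcal{Q}} Q$; this is a finite union of closed cubes, hence a compact polytopal region, and by construction $C \subseteq C(\varepsilon/2) \subseteq Z$, while every point of $Z$ lies within $h\sqrt d < \varepsilon/2$ of $\overline{C(\varepsilon/2)}$, so $Z \subseteq C(\varepsilon)$ provided $h$ is small. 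Then I would triangulate each cube using the standard Freudenthal (Kuhn) subdivision, which triangulates the unit cube into $d!$ simplices in a way that is consistent across shared faces; rescaled to side $h$, every simplex produced has diameter at most $h\sqrt d < \varepsilon$. Because the Freudenthal subdivision of adjacent cubes agrees on their common face, the union of all these simplices is a genuine finite simplicial complex $Y$, and it is a pure $d$-complex since $Z$ is a union of full-dimensional cubes.

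The one subtlety is the requirement $C \subseteq Y$ together with $Y \subseteq C(\varepsilon)$. With the construction above, $Y = Z$ as a subset of $\R^d$, so $C \subseteq Y$ is immediate from $C \subseteq Z$, and $Y \subseteq C(\varepsilon)$ holds by the neighborhood-size estimate. The diameter bound on simplices is built into the Freudenthal subdivision at scale $h$. Thus all four conditions---$Y$ finite, pure, $d$-dimensional, simplicial, embedded in $\R^d$; $C \subseteq Y \subseteq C(\varepsilon)$; and all simplices of diameter less than $\varepsilon$---are satisfied.

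The main obstacle, such as it is, is purely bookkeeping: verifying that the Freudenthal/Kuhn triangulation of a cubical grid really does glue to a simplicial complex, i.e.\ that the triangulations of two cubes sharing a $(d-1)$-face induce the same triangulation of that face. This is a classical fact (the Kuhn triangulation is determined by the coordinate order, which is the same on every cube of the axis-aligned grid, so the induced face triangulations match); I would simply cite it or spend one sentence recalling why. Everything else---boundedness of $\mathcal{Q}$, the diameter computation $h\sqrt d$, and the inclusions $C(\varepsilon/2) \subseteq Z \subseteq C(\varepsilon)$---is elementary. If one wants to avoid even naming the Freudenthal subdivision, an alternative is to take any triangulation of the compact polytope $Z$ refined (via barycentric subdivision iterated enough times) until the mesh drops below $\varepsilon$; purity is preserved under barycentric subdivision of a pure complex, so this works too.
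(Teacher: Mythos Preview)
Your proposal is correct and follows essentially the same approach as the paper: triangulate $\R^d$ by subdividing a fine cubical grid into $d!$ simplices per cube, then keep a finite subcollection sandwiched between $C$ and $C(\varepsilon)$. The only cosmetic differences are that the paper extends the cube triangulation to $\R^d$ by \emph{reflection} across cube faces (which makes face-compatibility automatic for any cube triangulation) rather than by translation with the Kuhn/Freudenthal subdivision, and it selects the simplices \emph{contained} in $C(\varepsilon)$ rather than the cubes meeting $\overline{C(\varepsilon/2)}$; both variants work for the same reasons you give.
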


Here, saying $Y$ is finite means the complex is built from finitely
many simplices in $\R^d$, not that its geometric realization is a
finite set.

\begin{proof}
Given a triangulation of $\R^d$ with simplices of diameter less than
$\varepsilon$, let $Y$ consist of the simplices that are contained in
$C(\varepsilon)$. To obtain such a triangulation, we can start with a
decomposition of a cube of diameter less than $\varepsilon$ into $d!$
simplices whose vertices are vertices of the cube, and then extend the
decomposition to $\R^d$ via reflection across the faces of the cube.
\end{proof}

\begin{proof}[Proof of Proposition~\ref{thetaduality}]
We first prove that
\[\limsup_{\varepsilon\to0^+} \vartheta'(C(\varepsilon)) \le
\vartheta'^{,\topo}(C).\]
Choose a sequence $\varepsilon_n \to 0$ and $\varepsilon_n>0$ and
feasible solutions $\nu_n$ for $\vartheta'(C(\varepsilon_n))$.  Since
all these measures are nonnegative, their supports are bounded, and
their total measures are bounded by $\vartheta'(C(\max_n
\varepsilon_n))$, there is a weak-$*$ convergent subsequence. By
passing to such a subsequence, we can assume that the entire sequence
is weak-$*$ convergent.

Let $\nu$ be the weak-$*$ limit of $\nu_n$; then $\nu$ is a
nonnegative measure because each $\nu_n$ is nonnegative.  It must be
supported on $\bigcap_n C(\varepsilon_n)$, which is equal to $C$ by
the assumption that $C \subseteq \R^d$ is compact.  It must be
positive semidefinite because every continuous function $f$ on $C$ can
be extended to a continuous function $f'$ on $\R^d$ with compact
support, and $\nu(f \otimes f)=\nu(f' \otimes f')$, which is the limit
of the sequence $\nu_n(f' \otimes f')$, each term of which is
nonnegative.  Finally, $\nu$ must be supported on $\{(x,y) : x=y
\text{ or } |x-y| \ge 2\}$, because $\{(x,y) : 0 < |x-y| < 2\}$ is an
open set and thus
\[
\begin{split}
\nu(\{(x,y) : 0 < |x-y| < 2\}) &\le \liminf_{n \to \infty}
\nu_n(\{(x,y) \in C \times C : 0 < |x-y| < 2\})\\ &= \liminf_{n \to
  \infty} 0 = 0.
\end{split}
\]
Therefore $\nu$ is a feasible solution for $\vartheta'^{,\topo}(C)$.
We also note that the objective $\nu(C \times C) = \nu(\R^d \times
\R^d)$ is a continuous functional for the weak-$*$ topology (because
it is the integral of the constant function $1$). By taking $\nu_n$ to
satisfy $\nu_n(\R^d \times \R^d) \ge \vartheta'(C(\varepsilon_n)) -
\delta$ for very small $\delta > 0$, we can guarantee that
\[\limsup_{\varepsilon\to0^+} \vartheta'(C(\varepsilon)) -
\delta \le \vartheta'^{,\topo}(C).\]
The desired inequality follows.

Let $A = \vartheta$, $\vartheta^+$, or $\vartheta'$.  It remains to
prove that
\[A(C) \le A^{\topo}(C) \le A^{\topo}(C)^* \le A(C(\varepsilon)) \]
for each $\varepsilon > 0$.  The first inequality follows immediately
from the definitions, and the second by weak duality in
Proposition~\ref{thetaduality}, so we just need to show the last
inequality.  In fact, we will prove an upper bound of
$A((1+\varepsilon) C(\varepsilon))$. That bound looks a little weaker,
but $(1+\varepsilon) C(\varepsilon)$ is contained in a neighborhood of
$C(\varepsilon)$ of radius $\varepsilon \sup_{x \in C(\varepsilon)}
|x| = \varepsilon^2 + \varepsilon \sup_{x \in C} |x|$, and therefore
$A((1+\varepsilon) C(\varepsilon)) \le A(C(\delta))$, where $\delta =
\varepsilon(1+\varepsilon+\sup_{x \in C} |x|)$. Thus, we can obtain
the desired upper bound simply by decreasing $\varepsilon$, to make
$\delta$ as small as we want. To prove the upper bound of
$A((1+\varepsilon) C(\varepsilon))$, we will use the dual semidefinite
program given in Proposition~\ref{discretetheta}, and we will relate
it to $A^{\topo}(C)^*$ by discretizing space using a suitable
simplicial complex.

Let $Y$ be a finite, pure simplicial $d$-complex such that $C
\subseteq Y \subseteq C(\varepsilon)$ and all simplices in $Y$ have
diameter less than $\varepsilon$, as in
Lemma~\ref{geometricsimplicial}, and let $Y_0$ be the set of vertices
of simplices in $Y$.  Each point $x$ in $Y$ can be written using
barycentric coordinates as $x = \sum_i a_i x_i$ with $\{ x_i
\}_{i=0}^d \subseteq Y_0$ defining a top-dimensional simplex in $Y$,
$\sum_i a_i = 1$, and $a_i \ge 0$.  Moreover, such an expression is
unique in the sense that two such expressions for $x$ yield simplices
that intersect in a face containing $x$, and $a_i = 0$ for vertices
$x_i$ not contained in the closure of that face.

Now given a positive semidefinite kernel $K$ on $Y_0$, define a kernel
$L$ on all of $Y$ by setting
\[ L(x,y) = L\mathopen{}\left( \sum_{i=0}^d a_i x_i, \sum_{j=0}^d b_j y_j
\right)\mathclose{} = \sum_{i,j=0}^d a_i b_j K(x_i, y_j)
\]
given barycentric coordinates $x=\sum_i a_ix_i$ and $y=\sum_j b_jy_j$
as above. The uniqueness of these coordinates show that $L$ is well
defined and continuous. To show that $L$ is positive semidefinite, we
must show that
\[
\sum_{m,n} w_m w_n L(x_m,x_n) \ge 0
\]
for each finite set of points $x_n$ in $Y$ with weights $w_n$. If we
use barycentric coordinates $x_n = \sum_i a_{i,n} x_{i,n}$, then
\[
\sum_{m,n} w_m w_n L(x_m,x_n) = \sum_{m,n,i,j} w_m w_n a_{i,m} a_{j,n}
K(x_{i,m},x_{j,n}),
\]
which is indeed nonnegative because $K$ is positive
semidefinite. Thus, $L$ is a continuous, positive semidefinite kernel
on $Y$. The remaining argument will split into cases, depending on
whether we are analyzing $\vartheta'$, $\vartheta$, or $\vartheta^+$.

First, suppose $K$ is a feasible solution for $\vartheta'(Y_0)^*$,
with $K(x,x)=t-1$ for all $x$.  Then $L(x,y)$ is a convex combination
of values $K(x_i, y_j)$ with $\max(|x-x_i|,|y-y_j|)\le \varepsilon$
and therefore $L(x,y) \le -1$ for $|x-y| \ge 2+2\varepsilon$.
Furthermore, $K$ takes its maximum value on the diagonal, because it
is a positive semidefinite kernel, and thus $L(x,x) \le t-1$. This
construction shows that
\[
\vartheta'^{,\topo}\mathopen{}\left(\frac{Y}{1+\varepsilon}\right)^*\mathclose{}
\le \vartheta'(Y_0)^* = \vartheta'(Y_0) \le
\vartheta'(C(\varepsilon)),
\]
where the rescaling by a factor of $1+\varepsilon$ takes care of the
$\varepsilon$ in the inequality $|x-y| \ge 2+2\varepsilon$ above, and
rescaling space implies that
\[
\vartheta'^{,\topo}(Y)^* \le \vartheta'((1+\varepsilon)
C(\varepsilon)).
\]
By restricting $L$ to $C$, we conclude that $\vartheta'^{,\topo}(C)^*
\le \vartheta'((1+\varepsilon) C(\varepsilon))$, which concludes the
proof.

Suppose instead that $K$ is a feasible solution for
$\vartheta(Y_0)^*$.  Then by the same reasoning, $L(x,y) = -1$ for
$|x-y| \ge 2 + 2\varepsilon$, so $\vartheta^{\topo}(C)^* \le
\vartheta((1+\varepsilon) C(\varepsilon))$.

Finally, suppose $K$ is a feasible solution for $\vartheta^+(Y_0)^*$.
Then $L(x,y) \ge -1$ everywhere since it is a convex combination of
$K(x_i, y_j) \ge -1$, so $\vartheta^{+,\topo}(C)^* \le
\vartheta((1+\varepsilon) C(\varepsilon))$.
\end{proof}

Our strategy for computing the Euclidean limits will be to compute the
corresponding Euclidean limits of the topological analogues.  Even
though we do not know whether they are packing bound functions, their
limits are still well defined.  The following lemma shows that the
Euclidean limits of the discrete and topological versions of a
sandwich function are the same.

\begin{lemma}\label{cubetoplimit}
Let $A$ be a packing bound function, and let $A^{\topo}$ be a
real-valued function defined on compact subsets of Euclidean space and
satisfying
\[A(C) \le A^{\topo}(C) \le A^{\topo}(C)^* \le A(C(\varepsilon))\]
whenever $\varepsilon>0$.  Letting $I^n = [0,1]^n$ be the unit cube in
$\R^n$ as usual,
\[\lim_{r \to \infty} \frac{A^{\topo}(rI^n)}{r^n} = \lim_{r \to \infty}
\frac{A^{\topo}(rI^n)^*}{r^n} = \delta_{A, n}.\]
\end{lemma}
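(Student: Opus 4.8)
The plan is to squeeze $A^{\topo}(rI^n)/r^n$ and $A^{\topo}(rI^n)^*/r^n$ between two copies of $A(r'I^n)/(r')^n$ for values $r'$ close to $r$, and then invoke the fact --- already established in the proposition on products of intervals above, or equivalently Theorem~\ref{poppypackingbound} with $\mathcal{M}_n(I^n)=1$ --- that $\lim_{r\to\infty} A(rI^n)/r^n = \delta_{A,n}$. The chain of inequalities $A(C) \le A^{\topo}(C) \le A^{\topo}(C)^* \le A(C(\varepsilon))$ in the hypothesis is exactly what makes this sandwich go through, once one sees how $(rI^n)(\varepsilon)$ sits inside a slightly larger cube.

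For the lower bound: applying the first two inequalities of the hypothesis with $C = rI^n$ gives $A(rI^n) \le A^{\topo}(rI^n) \le A^{\topo}(rI^n)^*$ for every $r$. Dividing by $r^n$, taking $\liminf_{r\to\infty}$, and using $\lim_{r\to\infty}A(rI^n)/r^n = \delta_{A,n}$ yields $\liminf_{r\to\infty} A^{\topo}(rI^n)/r^n \ge \delta_{A,n}$, and likewise with $A^{\topo}(rI^n)^*$ in place of $A^{\topo}(rI^n)$.

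For the upper bound --- the only step needing a little care --- fix $\varepsilon = 1$ (any fixed positive value works equally well). For each $r$ the hypothesis gives $A^{\topo}(rI^n)^* \le A\bigl((rI^n)(1)\bigr)$. Since the $1$-neighborhood of $[0,r]^n$ lies in the cube $(-1,r+1)^n$, which is a translate of a subset of $(r+2)I^n$, monotonicity and isometry invariance give $A\bigl((rI^n)(1)\bigr) \le A\bigl((r+2)I^n\bigr)$. Writing $A\bigl((r+2)I^n\bigr)/r^n = \bigl(A((r+2)I^n)/(r+2)^n\bigr)\cdot\bigl((r+2)/r\bigr)^n$, the first factor tends to $\delta_{A,n}$ and the second to $1$, so $\limsup_{r\to\infty} A^{\topo}(rI^n)^*/r^n \le \delta_{A,n}$; since $A^{\topo}(C) \le A^{\topo}(C)^*$, the same upper bound holds for $A^{\topo}(rI^n)/r^n$.

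Combining the two bounds gives $\delta_{A,n} \le \liminf_{r\to\infty} A^{\topo}(rI^n)/r^n \le \limsup_{r\to\infty} A^{\topo}(rI^n)/r^n \le \limsup_{r\to\infty} A^{\topo}(rI^n)^*/r^n \le \delta_{A,n}$, forcing every term to equal $\delta_{A,n}$; in particular both limits in the statement exist and equal $\delta_{A,n}$. The main subtlety is simply making the upper bound uniform in $r$: the fattening by the fixed neighborhood $(\,\cdot\,)(\varepsilon)$ is absorbed into the dilation factor $\bigl((r+2\varepsilon)/r\bigr)^n \to 1$, so $\varepsilon$ never has to shrink as $r \to \infty$, and the rest of the argument is bookkeeping.
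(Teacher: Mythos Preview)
Your proof is correct and follows essentially the same approach as the paper's own proof, which is the one-line observation that $A(rI^n) \le A^{\topo}(rI^n) \le A^{\topo}(rI^n)^* \le A((r+1)I^n)$ and then sandwiching. You simply spell out the details more carefully (using $\varepsilon=1$ and hence $(r+2)I^n$ on the right instead of $(r+1)I^n$, and explicitly invoking monotonicity, isometry invariance, and the already-proved limit $A(rI^n)/r^n\to\delta_{A,n}$), but the argument is the same.
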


\begin{proof}
This follows immediately by noting that
\[A(rI^n) \le A^{\topo}(rI^n) \le A^{\topo}(rI^n)^* \le A((r+1)I^n). \qedhere\]
\end{proof}

We are ready for the main result of this section, an explicit
description of the Euclidean limits of $\vartheta'$, $\vartheta$, and
$\vartheta^+$.

\begin{theorem}\label{thetalimit}
The quantity $\mathcal{L}_n(B_1^n) \delta_{\vartheta',n}$ is the
linear programming bound for the sphere packing density in $\R^n$,
while $\mathcal{L}_n(B_1^n)\delta_{\vartheta, n} =
\mathcal{L}_n(B_1^n)\delta_{\vartheta^+, n} = 1$.
\end{theorem}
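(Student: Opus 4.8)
The plan is to reduce to cubes and then identify the resulting asymptotics with Euclidean linear programs. By Proposition~\ref{thetaduality}, each of $\vartheta'$, $\vartheta$, and $\vartheta^+$, together with its topological companion, satisfies the hypothesis of Lemma~\ref{cubetoplimit}, so
\[
\delta_{\vartheta',n} = \lim_{r\to\infty}\frac{\vartheta'^{,\topo}(rI^n)}{r^n} = \lim_{r\to\infty}\frac{\vartheta'^{,\topo}(rI^n)^*}{r^n},
\]
and likewise with $\vartheta$ or $\vartheta^+$ in place of $\vartheta'$. Thus it suffices to analyze the primal and dual programs on a large cube, and I would show that after rescaling by $1/r$ their values converge to the Cohn--Elkies linear program \cite{cohn2003new} (for $\vartheta'$) and to its equality-constrained variant (for $\vartheta$ and $\vartheta^+$). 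Recall that the Cohn--Elkies bound equals $\mathcal{L}_n(B_1^n)\inf_f f(0)/\hat f(0)$, where $f$ ranges over admissible functions (those with $\hat f\ge 0$, $f(x)\le 0$ for $|x|\ge 2$, and $\hat f(0)>0$), so the goal for $\vartheta'$ is to prove $\delta_{\vartheta',n} = \inf_f f(0)/\hat f(0)$.

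For the inequality $\delta_{\vartheta',n}\le \inf_f f(0)/\hat f(0)$ I would argue by periodization. After a standard approximation, take $f$ Schwartz and strictly negative for $|x|>2$. Periodizing $f$ to period $R$ gives a function $f_R$ on the torus $\mathbb{T}_R = (\R/R\Z)^n$ whose Fourier coefficients are $R^{-n}\hat f(k/R)\ge 0$, so the kernel $K = (R^n/\hat f(0))\,f_R - 1$ has Fourier coefficients $\hat f(k/R)/\hat f(0) - [k=0]\ge 0$ and is positive semidefinite on $\mathbb{T}_R$. Restricting $K$ to a sub-cube of side $R - o(R)$, on which the periodization perturbs $f$ only by a negligible amount, yields a dual-feasible kernel for $\vartheta'^{,\topo}(\cdot)^*$ (after an inessential adjustment of the constraints) with $t = R^n f(0)/\hat f(0) + o(R^n)$; dividing by the cube side, letting $R\to\infty$, and infimizing over $f$ gives the bound. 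For the reverse inequality I would run the construction backwards: translate-averaging $K(x+z,x)+1$ over a near-optimal dual solution on $rI^n$ produces, in the limit, an admissible function whose ratio is at most $\lim_r t/r^n = \delta_{\vartheta',n}$; equivalently, one can build feasible primal measures on $rI^n$ by translate-averaging an optimal solution of the Cohn--Elkies primal linear program and invoke strong LP duality. This establishes that $\mathcal{L}_n(B_1^n)\delta_{\vartheta',n}$ is the linear programming bound.

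For $\vartheta$ and $\vartheta^+$, since $\vartheta'\le\vartheta\le\vartheta^+$ it suffices to prove $\delta_{\vartheta^+,n}\le\mathcal{L}_n(B_1^n)^{-1}\le\delta_{\vartheta,n}$. The upper bound is the same periodization applied to Siegel's function $f = \mathbf{1}_{B_1^n}*\mathbf{1}_{B_1^n}$: here $f$ vanishes for $|x|\ge 2$, so the periodization on $\mathbb{T}_R$ restricts \emph{without any error} to a sub-cube of side $R-2$, and one checks directly that $K = (R^n/\hat f(0))\,f_R - 1$ is positive semidefinite, equals $-1$ exactly when $|x-y|\ge 2$, is $\ge -1$ everywhere because $f\ge 0$, and has $K(x,x) = R^n/\mathcal{L}_n(B_1^n) - 1$, using $f(0) = \mathcal{L}_n(B_1^n)$ and $\hat f(0) = \mathcal{L}_n(B_1^n)^2$. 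Hence $\vartheta^{+,\topo}((R-2)I^n)^*\le R^n/\mathcal{L}_n(B_1^n)$, and dividing by $(R-2)^n$ and letting $R\to\infty$ gives $\delta_{\vartheta^+,n}\le\mathcal{L}_n(B_1^n)^{-1}$. For the matching lower bound, the cube-to-LP identification of the previous paragraph, now with equality constraints, realizes $\delta_{\vartheta,n}$ as the value of the equality-constrained Cohn--Elkies linear program, and the theorem of Siegel \cite{siegel}, rediscovered by Gorbachev \cite{gorbachev2001extremum}, states that this value is exactly $\mathcal{L}_n(B_1^n)^{-1}$. Therefore $\mathcal{L}_n(B_1^n)^{-1}\le\delta_{\vartheta,n}\le\delta_{\vartheta^+,n}\le\mathcal{L}_n(B_1^n)^{-1}$, so both equal $\mathcal{L}_n(B_1^n)^{-1}$ and the associated density bounds are trivial.

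The main obstacle I expect is the identification of the rescaled cube programs with the Euclidean linear programs. The ``continuous solution implies cube solution'' direction requires periodization together with control of the discrepancy between a torus and a large cube; this is immediate for Siegel's function, but for a general Cohn--Elkies function it needs decay estimates and the preliminary reduction to Schwartz (or suitably strictly negative) test functions. The reverse direction requires a weak-$*$ compactness argument and a verification that the limiting kernel or measure remains feasible for the Euclidean program --- in particular that positive semidefiniteness survives the passage to the limit. Invoking strong duality for these infinite-dimensional linear programs (or, equivalently, carrying out the limiting argument on both the primal and the dual side) is the other delicate ingredient, as is the classical but nontrivial inequality of Siegel underlying the value $\mathcal{L}_n(B_1^n)^{-1}$.
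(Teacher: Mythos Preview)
Your outline matches the paper's proof closely: reduce to cubes via Lemma~\ref{cubetoplimit} and Proposition~\ref{thetaduality}, identify the rescaled dual programs on $rI^n$ with the Cohn--Elkies linear program and its equality-constrained variants, and invoke the Siegel--Gorbachev theorem for the exact value $\mathcal{L}_n(B_1^n)^{-1}$ in the $\vartheta$ and $\vartheta^+$ cases. The ``kernel $\Rightarrow$ function'' direction by translate-averaging $K+1$ is exactly what the paper does. The one methodological difference is in the ``function $\Rightarrow$ kernel'' direction: you periodize $f$ to a torus and read off positive semidefiniteness of $K=(R^n/\widehat f(0))f_R-1$ from the nonnegativity of its Fourier coefficients, whereas the paper takes $K(x,y)=f(x-y)-1$ directly on a sub-cube and proves positive semidefiniteness by testing against the signed measure $\nu=\sum_x w_x\delta_x - r^{-n}\big(\sum_x w_x\big)\mu_r$. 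When $f$ has compact support these produce the same kernel, so this is a variant rather than a different route.

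There is, however, a genuine technical gap in your periodization step as written. For a Schwartz $f$ that is merely ``strictly negative for $|x|>2$'', restricting the periodized kernel to a sub-cube of side $R-o(R)$ does \emph{not} make the wrap-around error $\sum_{k\ne 0} f(z+Rk)$ negligible relative to $f(z)$: when $|z|_\infty$ is close to the side length $s$, the value $|f(z)|$ has already decayed to something extremely small, while the nearest wrap-around term involves $|z+Rk|$ of order $R-s=o(R)$, which need not be large. Hence $f_R(z)\le 0$ can fail for $|z|\ge 2$, and there is no evident ``inessential adjustment'' that restores the constraint $K(x,y)\le -1$ while preserving positive semidefiniteness (subtracting a constant or a rank-one kernel destroys one or the other). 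The paper avoids this by first reducing to \emph{compactly supported} $f$, mollifying an arbitrary admissible function by $g_R=(1_{B_{R/2}^n}*1_{B_{R/2}^n})/\mathcal L_n(B_{R/2}^n)$; once $\supp f\subseteq B_M^n$, your periodization restricts \emph{exactly} to $(R^n/\widehat f(0))f(x-y)-1$ on a sub-cube of side $R-M$, and your argument goes through without error terms (and, as you already note, without any approximation at all for Siegel's function). So replace ``Schwartz and strictly negative for $|x|>2$'' by ``compactly supported'' and your proof is complete.
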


The key step in the proof of the theorem is the following
lemma. Recall that a function $f \colon \R^n \to \R$ is \emph{positive
semidefinite} if the kernel $K$ defined by $K(x,y)=f(x-y)$ is positive
semidefinite. In particular, $f$ must be an even function.

\begin{lemma}
The Euclidean limits of $\vartheta'$, $\vartheta$, and $\vartheta^+$
are characterized as follows:
\begin{enumerate}
\item $\delta_{\vartheta', n}$ is the infimum of $f(0) /
  \widehat{f}(0)$ over all continuous, integrable, positive
  semidefinite functions $f\colon \R^n \to \R$ such that $f(x) \le 0$
  for $|x| \ge 2$ and $\widehat{f}(0)>0$,
\item $\delta_{\vartheta, n}$ is the infimum of $f(0) /
  \widehat{f}(0)$ over all feasible solutions to $\delta_{\vartheta',
    n}$ that additionally satisfy $f(x) = 0$ for $|x| \ge 2$, and
\item $\delta_{\vartheta^+,n}$ is the infimum of $f(0) /
  \widehat{f}(0)$ over all feasible solutions to $\delta_{\vartheta,
    n}$ that additionally satisfy $f(x) \ge 0$ for $|x| \le 2$.
\end{enumerate}
\end{lemma}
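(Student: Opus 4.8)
The plan is to pass to the topological duals and move feasible solutions back and forth between large containers and the Euclidean linear program. By Proposition~\ref{thetaduality} the functions $\vartheta'^{,\topo}$, $\vartheta^{\topo}$, $\vartheta^{+,\topo}$ satisfy the hypotheses of Lemma~\ref{cubetoplimit}, so for $A\in\{\vartheta',\vartheta,\vartheta^+\}$ we have
\[
\delta_{A,n} = \lim_{r\to\infty}\frac{A^{\topo}(rI^n)^*}{r^n} = \lim_{r\to\infty}\frac{A^{\topo}(rI^n)}{r^n}.
\]
It then suffices to bound this common limit above and below by the stated infimum of $f(0)/\widehat f(0)$, and I would do each inequality by an explicit transfer construction.

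For $\inf f(0)/\widehat f(0)\le\delta_{A,n}$, I would take a dual-feasible pair $(K,t)$ for $A^{\topo}(rI^n)^*$ with $t$ near the optimum and autocorrelate it, setting
\[
f(z) = \frac{1}{r^n}\int\bigl(1+K(u,u+z)\bigr)\,du
\]
over all $u$ with $u,u+z\in rI^n$. Restricting the test weights to points landing in $rI^n$ shows $f$ is a positive semidefinite function on $\R^n$; it is even since $K$ is symmetric, continuous since $K$ is, and supported in $[-r,r]^n$, hence integrable. The constraint $K(u,u+z)\le-1$ for $|z|\ge2$ makes the integrand nonpositive there, so $f(z)\le0$ for $|z|\ge2$; positive semidefiniteness of $K$ gives $\widehat f(0)=\int f = r^{-n}\bigl(r^{2n}+\iint_{rI^n\times rI^n}K\bigr)\ge r^n>0$; and $f(0)=r^{-n}\int_{rI^n}(1+K(u,u))\,du\le t$. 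Hence $f(0)/\widehat f(0)\le t/r^n$, and letting $t$ approach the optimum and $r\to\infty$ gives the bound. The conditions $K(u,v)=-1$ for far pairs (resp.\ $1+K\ge0$ everywhere) pass to $f(z)=0$ for $|z|\ge2$ (resp.\ $f\ge0$ everywhere), handling $\vartheta$ and $\vartheta^+$.

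For $\delta_{A,n}\le f(0)/\widehat f(0)$, I would periodize a feasible $f$ onto a flat torus. First reduce to compactly supported $f$: for $\vartheta$ and $\vartheta^+$ this is automatic since $f$ vanishes for $|x|\ge2$, while for $\vartheta'$ one multiplies $f$ by a wide tensor product of triangle functions (nonnegative, with nonnegative Fourier transform), which changes $f(0)/\widehat f(0)$ by $o(1)$. For $L$ large, the periodization $F=\sum_{v\in L\Z^n}f(\cdot+v)$ is continuous on $\R^n/L\Z^n$ with Fourier coefficients $L^{-n}\widehat f(k/L)\ge0$, so $\Psi:=(L^n/\widehat f(0))F-1$ is positive semidefinite with mean zero, has $\Psi(0)=(L^n/\widehat f(0))f(0)-1$ (only $v=0$ contributes at the origin once $L$ exceeds the support diameter), and satisfies $\Psi\le-1$ (resp.\ $=-1$, resp.\ $\ge-1$ everywhere) whenever the geodesic distance is $\ge2$, because then each summand $f(\cdot+v)$ already has that sign. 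Now realize $\R^n/L\Z^n$, with $L=rC$, as $r$ times a fixed intrinsically flat $n$-torus $\mathbb{T}\subseteq\R^{2n}$ (a product of circles); the ambient chord metric on $r\mathbb{T}$ is at most its geodesic metric, which is the direction we need, so $\Psi$ remains feasible for $A^{\topo}(r\mathbb{T})^*$ with value $t=(L^n/\widehat f(0))f(0)$. Then $A(r\mathbb{T})\le A^{\topo}(r\mathbb{T})^*\le t$ by Proposition~\ref{thetaduality}, while Theorem~\ref{poppypackingbound} applied to the compact $n$-rectifiable set $\mathbb{T}$ gives $A(r\mathbb{T})/r^n\to\delta_{A,n}\mathcal{M}_n(\mathbb{T})=\delta_{A,n}C^n$; dividing by $r^n$ and letting $r\to\infty$ yields $\delta_{A,n}C^n\le(f(0)/\widehat f(0))C^n$.

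The hard part is this last direction, and specifically the normalization: a periodization certificate naturally lives on a flat torus rather than a cube, and crudely fitting a side-$s$ cube inside a side-$2s$ torus would lose a spurious factor $2^n$. Working with the intrinsically flat \emph{embedded} torus—so that the ambient metric is the smaller (chord) metric—and invoking Theorem~\ref{poppypackingbound}, whose limiting constant $\delta_{A,n}$ is shape-independent, is exactly what removes that factor. The remaining points (the density reduction to compactly supported $f$ for $\vartheta'$, continuity of the autocorrelated $f$, and checking the $\vartheta$- and $\vartheta^+$-specific sign conditions in both constructions) are routine.
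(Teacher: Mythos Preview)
Your proof is correct, and the first direction (autocorrelating a dual-feasible kernel on $rI^n$ to produce a feasible $f$ on $\R^n$) is essentially the paper's argument with a different normalization.

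The second direction, however, is genuinely different from the paper's. The paper works directly on a sub-cube $J^n=[R,r-R]^n$: after normalizing $f$ so that $(1-\varepsilon)r^{-n}\widehat f(0)=1$, it sets $K(x,y)=f(x-y)-1$ and proves this kernel is positive semidefinite on $J^n$ by testing against the signed measure $\sum_x w_x\delta_x-(\sum_x w_x)r^{-n}\mu_r$ and expanding. This is a self-contained computation that never leaves cubes and does not invoke Theorem~\ref{poppypackingbound}. Your route instead periodizes $f$ to the flat torus $\R^n/L\Z^n$, where positive semidefiniteness of $\Psi=(L^n/\widehat f(0))F-1$ is immediate from nonnegativity of the Fourier coefficients $\widehat f(k/L)$, and then transports this certificate to the Clifford torus $r\mathbb{T}\subseteq\R^{2n}$ via the chord--geodesic inequality, finally reading off $\delta_{A,n}$ from Theorem~\ref{poppypackingbound} applied to $\mathbb{T}$. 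Your approach trades the paper's ad hoc positivity argument for Fourier analysis on a compact group plus the general machinery of Theorem~\ref{poppypackingbound}; it is cleaner at the cost of relying on a much heavier earlier result, whereas the paper's argument is elementary and makes the lemma logically independent of the rectifiable-set asymptotics. Both handle the $\vartheta$ and $\vartheta^+$ variants by the same sign-tracking you describe.
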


Note that without loss of generality, we can assume these functions
$f$ are radial functions (by averaging over all rotations), because
the constraints and objective functions are radially
symmetric. Furthermore, we can assume that $f$ has compact support;
this is automatic for $\vartheta$ and $\vartheta^+$, and can be proved
as follows for $\vartheta'$ by mollifying a feasible function $f$. The
convolution $1_{B_{R/2}^n(0)} * 1_{B_{R/2}^n(0)}$ is a continuous
function supported in $B_R^n(0)$, and it is positive semidefinite
since its Fourier transform is
$\big(\widehat{1}_{B_{R/2}^n(0)}\big)^2$. If we normalize it by
setting $g_R := \big(1_{B_{R/2}^n(0)} *
1_{B_{R/2}^n(0)}\big)/\mathcal{L}_n(B_{R/2}^n(0))$ so that $g_R(0)=1$,
then $g_R$ converges pointwise to $1$ everywhere as $R \to \infty$. In
particular, the product $f_R := f \cdot g_R$ is continuous and
supported in $B_R^n(0)$, it is positive semidefinite by the Schur
product theorem (Theorem~7.5.3 in \cite{HJ}), and $\lim_{R \to \infty}
\widehat{f}_R(0) = \widehat{f}(0)$ by dominated convergence. By
replacing $f$ with $f_R$, we can come arbitrarily close to the ratio
$f(0)/\widehat{f}(0)$ using compactly supported functions.

\begin{proof}
We will apply Lemma~\ref{cubetoplimit} and
Proposition~\ref{thetaduality} throughout.  Let $K$ be a continuous,
positive semidefinite kernel on $rI^n \subseteq \R^n$, such that $K$
is a feasible solution for one of the dual topological bounds. We
extend it by $0$ to give a positive semidefinite kernel on $\R^n$.
Define $\widetilde{f} \colon \R^n \times \R^n \to \R$ by the formula
\[ \widetilde{f}(x,y) = \frac{1}{r^{2n}} \int_{\R^n} K(x+z,y+z) +
1_{rI^n \times rI^n}(x+z,y+z) \,dz. \]
Then $\widetilde{f}$ is translation invariant (i.e.,
$\widetilde{f}(x+t,y+t)=\widetilde{f}(x,y)$) and compactly supported,
and it is a continuous function because translating integrable
functions is continuous under the $L^1$ norm. Thus, the function $f
\colon \R^n \to \R$ given by $\widetilde{f}(x,y)=f(x-y)$ is well
defined and a continuous, compactly supported function. The function
$\widetilde{f}$ is a positive semidefinite kernel because it is an
integral of such kernels, and so $f$ is positive semidefinite by
definition. We have
\[
\widehat{f}(0) = r^{-2n} \iint_{\R^n \times \R^n} K(x,y) \, dx \, dy +
1 \ge 1,
\]
and if $K(x,x) = t-1$ for all $x$, then
\[
f(0) = \frac{t}{r^n}.
\]

Now suppose $K$ is feasible for $\vartheta'^{,\topo}(rI^n)^*$.  Then
$\widetilde{f}(x,y) \le 0$ for $|x-y| \ge 2$, because the integrand
defining $\widetilde{f}(x,y)$ cannot be positive in this case.  Thus,
the infimum over $f$ satisfying the conditions in the lemma statement
is bounded above by $r^{-n} \vartheta'^{,\topo}(rI^n)^*$.

Suppose $K$ is feasible for $\vartheta^{\topo}(rI^n)^*$.  Then
$\widetilde{f}(x,y) = 0$ for $|x-y| \ge 2$, because the integrand
defining $\widetilde{f}(x,y)$ is identically $0$.

Finally, suppose $K$ is feasible for $\vartheta^{+,\topo}(rI^n)^*$.
Then $\widetilde{f}(x,y) \ge 0$ for $|x-y| \le 2$, because the
integrand defining $\widetilde{f}(x,y)$ is nonnegative.  Therefore, we
conclude one inequality for each of the statements in the lemma.

For the other direction, suppose $f \colon \R^n \to \R$ is continuous,
positive semidefinite, and integrable, with $\widehat{f}(0)>0$. As
noted after the lemma statement, we can also assume that $\supp(f)
\subseteq B_R^n(0)$ for some $R$.

It is not hard to show that
\[ \lim_{r\to\infty} \frac{\iint_{rI^n \times rI^n} f(x-y) \,dx \,dy}{r^n}
= \widehat{f}(0). \]
Fix $\varepsilon>0$, and let $r$ be large enough that
\begin{equation} \label{eqLrInlimit}
\iint_{rI^n \times rI^n} f(x-y) \,dx \,dy \le r^n (1+\varepsilon)
\widehat{f}(0).
\end{equation}
and $r > 2R$.  Let $J^n = [R,r-R]^{n} \subseteq rI^n$, and define $K
\colon J^n \times J^n \to \R$ by
\[ K(x,y) = f(x-y) - \frac{1-\varepsilon}{r^n} \widehat{f}(0). \]
We claim that $K$ is positive semidefinite.  Let $S$ be a finite
subset of $J^n$, and let $w_x$ be real weights for $x \in S$.  It
suffices to show that
\begin{equation}
\label{eq:Kpsd}
\sum_{x,y \in S} w_x w_y K(x,y) \ge 0.
\end{equation}
To do so, we define a signed measure $\nu$ by
\[ \sum_{x \in S} w_x \delta_x - \frac{\sum_{x \in S} w_x}{r^n} \mu_r, \]
where $\delta_x$ is a unit mass placed at the point $x$ and $\mu_r$ is
the Lebesgue measure on $rI^n$.  Positive semidefiniteness of $f$
implies
\[ \iint f(x-y) \,d\nu(x) \,d\nu(y) \ge 0, \]
and the left side of this inequality is equal to
\begin{equation} \label{eq:psdf}
  \begin{split}
    &\sum_{x,y \in S} w_x w_y f(x-y) +
    \frac{\left( \sum_{x\in S} w_x \right)^2}{r^{2n}} \iint_{rI^n \times rI^n} f(x-y) \,dx \,dy\\
    & \quad- \frac{2}{r^n} \left(\sum_{x \in S} w_x \right) \sum_{x \in S} w_x\int_{rI^n} f(x-y) \, dy.
  \end{split}
\end{equation}
Because $S \subseteq [R,r-R]^n$ and $\supp(f) \subseteq B_R^n(0)$,
\[
\int_{rI^n} f(x-y) \, dy = \widehat{f}(0)
\]
for each $x \in S$. If we substitute this identity and
\eqref{eqLrInlimit} into \eqref{eq:psdf}, we conclude that
\[ \sum_{x,y \in S} w_x w_y f(x-y) - \left( \sum_{x \in S} w_x \right)^2
\frac{1-\varepsilon}{r^n} \widehat{f}(0)  \ge 0, \]
which is equivalent to \eqref{eq:Kpsd}. Thus, $K$ is positive
semidefinite.

Because $\widehat{f}(0) > 0$, we can rescale $f$ so that
\[ \widehat{f}(0) = \frac{r^n}{1-\varepsilon}. \]
Then $K(x,y) = f(x-y) - 1$ is positive semidefinite on $J^n$, and in
particular $K(x,x) = f(0) - 1$.

Now suppose that $f$ satisfies $f(x) \le 0$ for $|x| \ge 2$, as in the
case of $\vartheta'$.  Then $K(x,y) \le -1$ for $|x-y| \ge 2$, so $K$
is feasible for $\vartheta'^{,\topo}(J^n)^*$.  The bound we get on
$\vartheta'^{,\topo}(J^n)^*$ is
\[ f(0) = \frac{r^n}{1-\varepsilon} \frac{f(0)}{\widehat{f}(0)}. \]
Dividing by $(r-2R)^n$ and letting $r \to \infty$ and then
$\varepsilon \to 0$ allows us to conclude that
\[ \lim_{r\to\infty} \frac{\vartheta'^{,\topo}((r-2R)I^n)^*}{(r-2R)^n}
\le \frac{f(0)}{\widehat{f}(0)} \]
and the left side is equal to $\delta_{\vartheta', n}$. This completes
the proof for the case of $\vartheta'$.

Suppose instead that $f(x) = 0$ for $|x| \ge 2$, as in the case of
$\vartheta$.  Then $K(x,y) = -1$ for $|x-y| \ge 2$, and so $K$ is
feasible for $\vartheta^{\topo}(J^n)^*$.  By the same reasoning as
above, we conclude that
\[ \delta_{\vartheta,n} \le \frac{f(0)}{\widehat{f}(0)}. \]

Finally, suppose that $f(x) \ge 0$ for $|x| \le 2$, as in the case of
$\vartheta^+$.  Then $K(x,y) \ge -1$ for $|x-y| \le 2$, and so $K$ is
feasible for $\vartheta^{+,\topo}(J^n)^*$.  We conclude that
\[ \delta_{\vartheta^+,n} \le \frac{f(0)}{\widehat{f}(0)}, \]
which completes the proof.
\end{proof}

\begin{proof}[Proof of Theorem~\ref{thetalimit}]
The lemma directly gives the statement for $\delta_{\vartheta', n}$
and the linear programming bound. All that remains is to compute the
optimal solutions to the following problems:
\begin{enumerate}
\item Minimize $f(0) / \widehat{f}(0)$ over all continuous, positive
  semidefinite functions $f$ such that $f(x) = 0$ for $|x| \ge 2$ and
  $\widehat{f}(0)>0$.
\item Minimize $f(0) / \widehat{f}(0)$ over all continuous, positive
  semidefinite functions $f$ such that $f(x) \ge 0$ for $|x| \le 2$,
  $f(x) = 0$ for $|x| \ge 2$, and $\widehat{f}(0)>0$.
\end{enumerate}
These problems were solved by Gorbachev \cite{gorbachev2001extremum},
and Siegel had given the same answer to essentially the same problems
in \cite{siegel}. For the convenience of the reader, we will sketch
Gorbachev's proof.

The optimal function $f$ is the convolution
\[ f = 1_{B_1^n(0)} * 1_{B_1^n(0)} \]
and the objective is $f(0) / \widehat{f}(0) =
\mathcal{L}_n(B_1^n)^{-1}$. The proof of optimality is a consequence
of the quadrature formula of Ben Ghanem and Frappier
\cite{ghanem1998explicit}, which incidentally also shows that the
Levenshtein bound is optimal among certain band-limited solutions to
the linear programming bound problem \cite{gorbachev2000extremum,
  cohn2002new}.  Specifically, we use the $p=0$ case of Lemma~4 in
\cite{ghanem1998explicit}, under the minimal hypotheses established by
Grozev and Rahman \cite{grozev1995quadrature} (which are not stated
explicitly for this identity in \cite{ghanem1998explicit} but follow
from the same proof). For any continuous, radial function $f \colon
\R^n \to \R$ supported on $B_r^n(0)$ whose Fourier transform is
integrable, the formula says that
\[ f(0) = \frac{1}{\mathcal{L}_n(B_{r/2}^n)} \widehat{f}(0) +
\sum_{m=1}^\infty \alpha_m \widehat{f} \left( \frac{\lambda_m}{\pi r} \right), \]
where the node points $\lambda_m$ are the positive roots of the Bessel
function $J_{n/2}$ and the coefficients $\alpha_m$ are certain
explicit quantities with $\alpha_m>0$.  Any feasible solution $f$ to
problems~(1) or~(2) above satisfies these hypotheses with $r=2$ and
has $\widehat{f} \ge 0$ (see, for example, Corollary~1.26 in Chapter~I
of \cite{SteinWeiss} for why $\widehat{f}$ is integrable).  Since the
infinite sum is nonnegative, we conclude that
\[ \frac{f(0)}{\widehat{f}(0)} \ge \mathcal{L}_n(B_1^n)^{-1}. \qedhere \]
\end{proof}

\begin{remark}
In the one-dimensional case,
\[
1 = \mathcal{L}_1(B_1^1) \delta_{\pack,1} \le \mathcal{L}_1(B_1^1)
\delta_{\vartheta',1} \le \mathcal{L}_1(B_1^1) \delta_{\cov,1} = 1.
\]
In other words, the fact that sphere covering and sphere packing have
the same density in $\R^1$ implies that the linear programming bound
must be sharp in that case. Of course this fact can be proved directly
by exhibiting a closed-form auxiliary function, but it is interesting
to prove it without the need to construct any explicit auxiliary
function.
\end{remark}

\section{The Lasserre hierarchy for sphere packing}

The Lasserre hierarchy \cite{Lasserre} is an important family of
semidefinite relaxations of the independence number; this hierarchy
starts with $\vartheta'$ and extends it to successively stronger
bounds, which converge to the exact independence number. Based on
foundations laid by Laurent \cite{Laurent} and by de Laat and
Vallentin \cite{de2015semidefinite}, in this section we show that the
Lasserre hierarchy consists of sandwich functions, and we write their
Euclidean limits as optimization problems. The net result is a
generalization of the linear programming bound to a hierarchy of
bounds that converge to the exact sphere packing density.

\subsection{Review of bounds for compact spaces}

A \emph{topological packing graph} is a graph and a Hausdorff
topological space such that every finite clique is contained in an
open clique. Equivalently, each vertex and each edge is contained in
an open clique.  Every graph is a topological packing graph with the
discrete topology, and every compact topological packing graph has
finite independence number (and furthermore finite clique covering
number, which is an even stronger assertion). The edges in a
topological packing graph indicate pairs of vertices that are too
close together to appear in the same packing, and independent sets
correspond to packings.

The Lasserre hierarchy on compact topological packing graphs was
introduced by de Laat and Vallentin \cite{de2015semidefinite}.  We
begin by reviewing this hierarchy.

\begin{definition}
We set the following notation:
\begin{enumerate}
\item For a topological space $V$, let $C(V)$ be the set of continuous
  functions from $V$ to $\R$, let $C_0(V) \subseteq C(V)$ consist of
  the functions that vanish at infinity, and let $C_c(V) \subseteq
  C(V)$ consist of those with compact support.
    \item For a locally compact Hausdorff space $V$, let $\M_+(V)$
      denote the set of Radon measures on $V$ (i.e., regular Borel
      measures), and let $\M_\pm(V)$ denote the set of finite, regular
      signed Borel measures on $V$.  By the Riesz representation
      theorem, $\M_\pm(V) = C_0(V)^*$ under the pairing given by
      integration (see, for example
      \cite[Theorem~7.3.6]{cohnmeasuretheory}).
    \item Given a compact topological packing graph $G$, let
      $I_{=t,G}$ be the set of independent sets in $V(G)$ of size
      exactly equal to $t$, with the topology given as a subset of the
      quotient of $V(G)^t$ under the map $(v_1,\dots,v_t) \mapsto
      \{v_1,\dots,v_t\}$.  In particular, if $t > \alpha(G)$ then this
      set is empty, and $I_{=0,G} = \{\emptyset\}$.
    \item Given a compact topological packing graph $G$, let $I_{t,G}$
      be the set of independent sets of size at most $t$ in $G$, with
      the topology given by the disjoint union of $I_{=k,G}$ for $k =
      0, \dots, t$.
\end{enumerate}
\end{definition}

We often write $I_t$ instead of $I_{t,G}$ when the context is clear.
We can think of $I_t$ as a moduli space of packings on $G$ by
independent sets of size at most $t$, and we will often use the
covariant functoriality of these moduli spaces for morphisms in the
category $\overline{\Graph}$.

When $G$ is a compact topological packing graph, the space $I_t$ is
compact by Lemma~1 in \cite{de2015semidefinite}, and we will make
frequent use of the duality between $C(I_t)$ and $\M_\pm(I_t)$. A key
role will be played by an operator
\[
A_t \colon C(I_t \times I_t) \to C(I_{2t}).
\]
For $f \in C(I_t \times I_t)$ and $S \in I_{2t}$, the function $A_tf$
is defined by
\[A_t f(S) = \sum_{\substack{J \cup J' = S \\ J,J' \in I_t}} f(J, J').\]
We define the operator $A_t^*\colon \M_\pm(I_{2t}) \to \M_{\pm}(I_t
\times I_t)$ to be the adjoint of $A_t$.  Specifically, for $\mu \in
\M_\pm(I_{2t})$, the signed measure $A_t^*\mu$ is characterized by
\begin{equation}
\label{eq:charac}
\iint_{I_{t} \times I_t} f(J, J') \,dA_t^*\mu(J, J') = \int_{I_{2t}}
\sum_{\substack{J \cup J' = S \\ J,J' \in I_t}} f(J, J') \, d\mu(S)
\end{equation} 
for all $f \in C(I_t \times I_t)$.

There are corresponding notions of positive semidefiniteness for
kernels and measures on $I_t$.  A kernel $K \colon I_t \times I_t \to
\R$ is positive semidefinite if for every finite subset $S \subseteq
I_t$, the matrix $(K(J, J'))_{J,J' \in S}$ is positive semidefinite.
Equivalently, $K(J,J')=K(J',J)$ and for every finite subset $S
\subseteq I_t$ and any weights $w_J \in \R$ for $J \in S$,
\[ \sum_{J,J' \in S} w_J w_{J'} K(J,J') \ge 0. \]
A signed measure $\mu$ on $I_t \times I_t$ is positive semidefinite if
every continuous, positive semidefinite kernel $K \colon I_t \times
I_t \to \R$ satisfies
\[
\iint_{I_t \times I_t} K(J,J') \, d\mu(J,J') \ge 0.
\]
Note that for compact topological packing graphs, $I_t$ is compact. In
more general cases we use only kernels with compact support.  By
Mercer's theorem \cite[Theorem~3.11.9]{SimonVol4}, an equivalent
condition is that for every continuous function $f\colon I_t \to \R$,
\[\mu(f \otimes f) := \iint_{I_t \times I_t} f(J) f(J') \,d\mu(J, J') \ge 0.\]
When $G$ is a finite graph (which must have the discrete topology,
because it must be Hausdorff), a symmetric measure $\mu \in \M_\pm(I_t
\times I_t)$ is positive semidefinite if and only if the function
$(J,J') \mapsto \mu(\{J\} \times \{J'\})$ is a positive semidefinite
kernel, because the cone of positive semidefinite matrices is
self-dual.

\begin{definition}
For a compact topological packing graph $G$, we define
$\las^{\topo}_t(G)$ to be the supremum of $\lambda(I_{=1})$ over all
$\lambda \in \M_\pm(I_{2t})$ such that $A_t^* \lambda$ is positive
semidefinite as a measure on $I_t \times I_t$ and $\lambda(\{
\emptyset \}) = 1$. We define $\las'^{,\topo}_t(G)$ the same way, with
the additional requirement that $\lambda$ be a positive measure.
\end{definition}

\begin{remark}
In \cite{de2015semidefinite}, de Laat and Vallentin use the notation
$\las_t(G)$ to refer to what we call the hierarchy
$\las'^{,\topo}_t(G)$.  However, the notation we use here is more
consistent with the convention of adding the $'$ for optimization over
positive measures, as in the relationship between $\vartheta$ and
$\vartheta'$. The ``$\topo$'' indicates the dependence on the topology
of $G$, along the lines of $\vartheta'$ and $\vartheta'^{,\topo}$.
\end{remark}

\begin{proposition}
For every compact topological packing graph $G$, $\las^{\topo}_t(G)$
and $\las'^{,\topo}_t(G)$ are nonincreasing in $t$ and satisfy
\[ \las^{\topo}_{2t}(G) \le \las'^{,\topo}_{t}(G) \le \las^{\topo}_t(G). \]
\end{proposition}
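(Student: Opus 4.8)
The plan is to deduce all three assertions from one construction. Given a feasible solution $\lambda\in\M_\pm(I_{2t'})$ for $\las^{\topo}_{t'}(G)$ with $t'\ge t$, restrict it to the clopen subset $I_{2t}\subseteq I_{2t'}$ (recall $I_{2t'}=\bigsqcup_{k=0}^{2t'}I_{=k}$, so $I_{2t}$ really is clopen), obtaining $\lambda':=\lambda|_{I_{2t}}\in\M_\pm(I_{2t})$, and show $\lambda'$ is feasible for $\las^{\topo}_t(G)$ — and, when $t'=2t$, even for $\las'^{,\topo}_t(G)$ — with the same objective. Since $\{\emptyset\},I_{=1}\subseteq I_{2t}$ (here we assume $t\ge 1$), the normalization $\lambda'(\{\emptyset\})=1$ and the objective $\lambda'(I_{=1})=\lambda(I_{=1})$ transfer for free. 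The inequality $\las'^{,\topo}_t(G)\le\las^{\topo}_t(G)$ needs no work at all: adding the constraint $\lambda\ge 0$ only shrinks the feasible region without touching the objective.

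Next I would verify that $A_t^*\lambda'$ is positive semidefinite on $I_t\times I_t$. Because $I_t$ is clopen in $I_{t'}$, any continuous positive semidefinite kernel $K$ on $I_t\times I_t$ extends by zero to a continuous positive semidefinite kernel $\widetilde K$ on $I_{t'}\times I_{t'}$. Plugging $\widetilde K$ into the feasibility of $\lambda$ and expanding with \eqref{eq:charac}, one observes that $S\mapsto\sum_{J\cup J'=S}\widetilde K(J,J')$ is supported on $S\in I_{2t}$ (a union of two sets of size $\le t$ has size $\le 2t$) and equals $(A_tK)(S)$ there; hence $\iint_{I_t\times I_t}K\,dA_t^*\lambda'=\iint_{I_{t'}\times I_{t'}}\widetilde K\,dA_{t'}^*\lambda\ge 0$. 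Taking $t'=t+1$ gives $\las^{\topo}_{t+1}(G)\le\las^{\topo}_t(G)$, and since the restriction of a nonnegative measure is nonnegative, the identical argument gives $\las'^{,\topo}_{t+1}(G)\le\las'^{,\topo}_t(G)$; this establishes monotonicity and the right-hand inequality.

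The heart of the matter is that when $t'=2t$ the restricted measure $\lambda'$ is automatically nonnegative, which yields $\las^{\topo}_{2t}(G)\le\las'^{,\topo}_t(G)$. In the finite case this is just the fact that a diagonal entry $(A_{2t}^*\lambda)(\{J\}\times\{J\})=\lambda(\{J\})$ of a positive semidefinite matrix is nonnegative; the obstacle is that in the topological setting singletons need not be open, so there is no matrix diagonal to read off. I would resolve this using that $G$ is a topological packing graph: for $S=\{v_1,\dots,v_k\}\in I_{=k}$ with $k\le 2t$, choose open cliques $U_i\ni v_i$, which can be taken pairwise disjoint since $V(G)$ is Hausdorff; then the set $N$ of independent sets meeting each $U_i$ in exactly one point is an open neighborhood of $S$ in $I_{=k}$ (the quotient map $V(G)^k\to I_{=k}$ is open), and for distinct $S',S''\in N$ the union $S'\cup S''$ contains two adjacent points of some $U_i$, hence is not independent. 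Therefore, for $f\in C(I_{2t})$ supported in $N$, the function $A_{2t}(f\otimes f)$ is supported on $N$ and equals $f(T)^2$ at each $T\in N$, so $\int_N f^2\,d\lambda=\iint(f\otimes f)\,dA_{2t}^*\lambda\ge 0$; as $f$ ranges over such functions this shows $\lambda$ is nonnegative on $N$. Covering the compact space $I_{=k}$ by finitely many such neighborhoods and gluing with a partition of unity shows $\lambda$ is nonnegative on $I_{=k}$, and summing over $k=0,\dots,2t$ gives $\lambda'=\lambda|_{I_{2t}}\ge 0$, completing the feasibility check for $\las'^{,\topo}_t(G)$.

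I expect this last step — upgrading "diagonal entries of a positive semidefinite matrix are nonnegative" to the topological setting — to be the only genuine difficulty; the rest is the standard Lasserre-hierarchy bookkeeping, with the clopen decomposition $I_{2t'}=\bigsqcup_k I_{=k}$ playing the role that truncation of multisets plays in the finite theory. One should also double-check the harmless degenerate cases (small $t$) and that the suprema behave correctly under the construction, but these are routine.
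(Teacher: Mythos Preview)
Your proposal is correct and follows essentially the same strategy as the paper: restrict a feasible measure for the higher level to $I_{2t}$, check feasibility, and observe that the objective and normalization are unchanged. The paper dispatches the key step $\las^{\topo}_{2t}(G)\le\las'^{,\topo}_t(G)$ in one line, asserting that the restriction of the positive semidefinite measure $A_{2t}^*\lambda$ to the diagonal of $I_{2t}\times I_{2t}$ must be nonnegative and coincides with $\lambda|_{I_{2t}}$; your localization argument via open cliques and the computation $A_{2t}(f\otimes f)=f^2$ on the neighborhood $N$ is precisely a rigorous justification of that assertion in the topological setting, so the two proofs agree in substance, yours being more explicit.
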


\begin{proof}
To see that these quantities are nonincreasing, consider restricting a
measure $\lambda$ on $I_{2(t+1)}$ to $I_{2t}$.  If $\lambda$ is
feasible for $\las^{\topo}_{t+1}(G)$,
resp.\ $\las'^{,\topo}_{t+1}(G)$, then its restriction is feasible for
$\las^{\topo}_t(G)$, resp.\ $\las'^{,\topo}_{t}(G)$. (Note that every
continuous, positive semidefinite kernel on $I_t \times I_t$ extends
by zero to such a kernel on $I_{t+1} \times I_{t+1}$.)

For the remaining inequality, $\las'^{,\topo}_{t}(G) \le
\las^{\topo}_t(G)$ follows immediately from the definitions, and thus
it suffices to prove that $\las^{\topo}_{2t}(G) \le
\las'^{,\topo}_{t}(G)$. Let $\lambda \in \M_\pm(I_{4t})$ be feasible
for $\las^{\topo}_{2t}$, and consider its restriction to $I_{2t}$.  We
will show that this restriction is feasible for
$\las'^{,\topo}_{t}(G)$.  Since $A_{2t}^* \lambda$ is positive
semidefinite, the restriction of $A_{2t}^* \lambda$ to the diagonal
copy of $I_{2t}$ in $I_{2t} \times I_{2t}$ must be nonnegative.
However, this restriction is just the restriction of $\lambda$ to
$I_{2t}$, which is therefore feasible for $\las'^{,\topo}_{t}(G)$.
\end{proof}

We will use the following convergence property, where $\alpha(G)$ is
the independence number of $G$ (i.e., the size of the largest
packing):

\begin{proposition}[de Laat and Vallentin \cite{de2015semidefinite}]
Let $G$ be a compact topological packing graph. Then
\[ \las^{\topo}_{\alpha(G)}(G) = \las'^{,\topo}_{\alpha(G)}(G) = \alpha(G). \]
\end{proposition}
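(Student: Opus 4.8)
The plan is to bound $\las'^{,\topo}_{\alpha(G)}(G)$ from below by $\alpha(G)$ and $\las^{\topo}_{\alpha(G)}(G)$ from above by $\alpha(G)$; since $\las'^{,\topo}_t \le \las^{\topo}_t$, these two bounds together with the evident inequality $\las'^{,\topo}_{\alpha(G)}(G) \le \las^{\topo}_{\alpha(G)}(G)$ pin down both quantities. Write $\alpha = \alpha(G)$. Since $G$ has no independent set of size larger than $\alpha$, we have $I_{2\alpha} = I_\alpha$ as topological spaces, with $I_\alpha = \bigsqcup_{k=0}^{\alpha} I_{=k}$ a disjoint union of clopen pieces; in particular $I_{=0} = \{\emptyset\}$ and $1_{I_{=k}} \in C(I_{2\alpha})$ for each $k$.

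For the lower bound, fix a maximum independent set $I$ with $|I| = \alpha$ and set $\lambda = \sum_{J \subseteq I} \delta_J$. This is a finite positive Radon measure supported on $I_{2\alpha}$, with $\lambda(\{\emptyset\}) = 1$ and $\lambda(I_{=1}) = |I| = \alpha$. Using the defining relation \eqref{eq:charac} for $A_\alpha^*$, one computes $A_\alpha^* \lambda = \sum_{J', J'' \subseteq I} \delta_{(J', J'')}$, so for any continuous positive semidefinite kernel $K$ on $I_\alpha$ the pairing $\iint K \, dA_\alpha^* \lambda = \sum_{J', J'' \subseteq I} K(J', J'')$ is the sum of all entries of the finite positive semidefinite matrix $(K(J',J''))_{J',J'' \subseteq I}$, hence is nonnegative. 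Thus $A_\alpha^* \lambda$ is positive semidefinite, $\lambda$ is feasible for $\las'^{,\topo}_\alpha(G)$, and $\las'^{,\topo}_\alpha(G) \ge \alpha$.

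For the upper bound I would use weak duality. Suppose one can produce a continuous positive semidefinite kernel $K$ on $I_\alpha \times I_\alpha$ with $A_\alpha K = \alpha \cdot 1_{I_{=0}} - 1_{I_{=1}}$ in $C(I_{2\alpha})$. Then for any $\lambda$ feasible for $\las^{\topo}_\alpha(G)$,
\[
\lambda(I_{=1}) = \int_{I_{2\alpha}} 1_{I_{=1}} \, d\lambda = \int_{I_{2\alpha}} \big(\alpha \cdot 1_{I_{=0}} - A_\alpha K\big)\, d\lambda = \alpha\,\lambda(\{\emptyset\}) - \iint_{I_\alpha \times I_\alpha} K \, dA_\alpha^* \lambda \le \alpha,
\]
since $\iint K \, dA_\alpha^* \lambda \ge 0$ by positive semidefiniteness of $A_\alpha^* \lambda$; hence $\las^{\topo}_\alpha(G) \le \alpha$. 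To build such a $K$ I would look for one depending only on the cardinalities $|J|,|J'|$, say $K(J, J') = H_{|J|, |J'|}$ for a symmetric $(\alpha+1) \times (\alpha+1)$ matrix $H$: any such $K$ is automatically continuous (it is constant on each clopen rectangle $I_{=i} \times I_{=j}$) and positive semidefinite exactly when $H \succeq 0$. Then $A_\alpha K(S)$ depends only on $m = |S|$, equalling $\sum_{i,j} N^m_{ij} H_{ij}$, where $N^m_{ij}$ counts pairs $(J', J'')$ of subsets of an $m$-element set with $J' \cup J'' $ equal to that set and $|J'| = i$, $|J''| = j$; the identity $\sum_{i,j} N^m_{ij} z^i w^j = (z+w+zw)^m$ makes these coefficients explicit. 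The requirement on $K$ becomes the $\alpha+1$ linear conditions $\sum_{i,j} N^m_{ij} H_{ij} = c_m$ for $m = 0, \dots, \alpha$, with $c_0 = \alpha$ (which forces $H_{00} = \alpha$, so the dual value is $\alpha$), $c_1 = -1$, and $c_m = 0$ otherwise. These conditions are triangular in the diagonal entries $H_{m,m}$, hence solvable, and one can arrange the solution to be positive semidefinite; for example $H = \bigl(\begin{smallmatrix}1 & -1\\-1 & 1\end{smallmatrix}\bigr)$ for $\alpha=1$ and $H = \bigl(\begin{smallmatrix}2 & -1 & 0\\-1 & 1 & -1\\0 & -1 & 2\end{smallmatrix}\bigr)$ for $\alpha=2$.

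I expect the production of this continuous positive semidefinite certificate to be the main obstacle: writing down a solution of the linear system is routine, but checking that some solution lies in the positive semidefinite cone is the real content, and this is exactly where de Laat and Vallentin's convergence argument \cite{de2015semidefinite} does the work, so I would cite it for that step. For finite graphs the corresponding fact is the sum-of-squares identity $\alpha(G) - \sum_v x_v = \sum_S (\alpha(G) - |S|)\, e_S^2$ in the ring $\R[x_v : v \in V]$ modulo the ideal $(x_v^2 - x_v,\ x_u x_v \text{ for } uv \in E)$ cutting out indicator vectors of independent sets, with $e_S = \prod_{v \in S} x_v \prod_{v \notin S}(1 - x_v)$; this holds because $\alpha(G) - \sum_v x_v$ is nonnegative on a finite variety. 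The subtlety in the topological setting is that the certificate obtained this way involves $|J \cap J'|$, which is discontinuous on $I_\alpha \times I_\alpha$, which is precisely why one must pass to the cardinality-only certificate above. Once it is in hand, combining it with the lower bound yields $\alpha(G) \le \las'^{,\topo}_{\alpha(G)}(G) \le \las^{\topo}_{\alpha(G)}(G) \le \alpha(G)$, which is the assertion.
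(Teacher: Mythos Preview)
The paper does not prove this statement; it cites \cite{de2015semidefinite} and remarks that the proof there, stated for $\las'^{,\topo}$, extends verbatim to $\las^{\topo}$. Your outline is therefore already more detailed than the paper's treatment, and your lower-bound construction is correct.

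The gap you flag---a positive semidefinite $H$ for general $\alpha$---can be closed directly. From your generating function, writing $(z+w+zw)^m = \big((1+z)(1+w)-1\big)^m$ yields $N^m_{ij} = \sum_k (-1)^{m-k}\binom{m}{k}\binom{k}{i}\binom{k}{j}$. If $B$ is the lower-triangular Pascal matrix with $B_{ki}=\binom{k}{i}$, then $H := B^{-1}\operatorname{diag}(\alpha,\alpha-1,\dots,0)\,(B^{-1})^T$ is positive semidefinite and satisfies
\[
\sum_{i,j} N^m_{ij} H_{ij} \;=\; \sum_{k=0}^{\alpha}(\alpha-k)(-1)^{m-k}\binom{m}{k} \;=\; \alpha\,[m=0] - [m=1],
\]
as follows from $\sum_k(-1)^{m-k}\binom{m}{k}=[m=0]$ and $\sum_k k(-1)^{m-k}\binom{m}{k}=[m=1]$. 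For $\alpha=1$ this $H$ is exactly your $\bigl(\begin{smallmatrix}1&-1\\-1&1\end{smallmatrix}\bigr)$; for $\alpha=2$ it differs from yours, but both are valid certificates.

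Equivalently---and this is closer in spirit to the argument in \cite{de2015semidefinite}---one can work on the primal side. For $\lambda$ feasible for $\las^{\topo}_\alpha$, set $y_m = \lambda(I_{=m})$ and test $A_\alpha^*\lambda$ against the continuous kernels $K(J,J') = g(|J|)g(|J'|)$; this forces $M := \sum_m y_m N^m \succeq 0$. The identity above factors $M = B^T\operatorname{diag}(z)B$ with $z_k = \sum_m(-1)^{m-k}\binom{m}{k}y_m$, so $M \succeq 0$ is equivalent to $z_k \ge 0$ for all $k$. Inverting gives $y_0 = \sum_k z_k$ and $y_1 = \sum_k k z_k$, whence $\lambda(I_{=1}) = y_1 \le \alpha\, y_0 = \alpha$. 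Nothing here uses the sign of $\lambda$, which is precisely why the $\las'^{,\topo}$ argument extends to $\las^{\topo}$ unchanged.
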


\begin{remark}
The paper \cite[Theorem~2]{de2015semidefinite} only gives the
statement for $\las'^{,\topo}_{\alpha(G)}(G)$, but their proof extends
word for word to $\las^{\topo}_{\alpha(G)}(G)$.
\end{remark}

The empty set is an isolated point in $I_t$, and for some purposes it
is useful to omit it.  There is an equivalent formulation of $\las'_t$
without $\{ \emptyset \}$ using the Schur complement.  For $\mu$ a
measure on $I_t \setminus \{ \emptyset \}$, let $(A_t^{\ne
  \emptyset})^* \mu$ be the restriction of $A_t^* \mu$ to $I_t
\setminus \{\emptyset\} \times I_t \setminus \{\emptyset\}$, and
define the external tensor product $\mu \otimes \nu$ of two measures
as the measure taking value $\mu(A) \nu(B)$ on $A \times B$.  The
following proposition follows from taking the Schur complement:

\begin{proposition}
The supremum of $\mu(I_{=1})$ over measures $\mu \in \M_+(I_{2t}
\setminus \{ \emptyset \})$ such that $(A_t^{\ne \emptyset})^* \mu -
\mu \otimes \mu$ is positive semidefinite is $\las'^{,\topo}_t(G)$.
\end{proposition}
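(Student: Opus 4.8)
The plan is to match feasible solutions of $\las'^{,\topo}_t(G)$ with the measures in the statement via the bijection $\mu \mapsto \lambda := \mu + \delta_\emptyset$ (with $\delta_\emptyset$ the unit point mass at $\emptyset$) between $\M_+(I_{2t}\setminus\{\emptyset\})$ and the set of $\lambda\in\M_+(I_{2t})$ with $\lambda(\{\emptyset\})=1$. Under this bijection, positive semidefiniteness of $A_t^*\lambda$ will turn out to be exactly the Schur complement condition on $(A_t^{\ne\emptyset})^*\mu-\mu\otimes\mu$. Since $I_{=1}\subseteq I_t$ misses the isolated point $\emptyset$, the objectives $\lambda(I_{=1})$ and $\mu(I_{=1})$ agree, so once the feasibility statements are identified the two suprema coincide.

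The heart of the argument is the block structure of $A_t^*\lambda$ on $I_t\times I_t$ relative to the clopen decomposition $I_t=\{\emptyset\}\sqcup(I_t\setminus\{\emptyset\})$ (clopen because $\emptyset$ is isolated, so every continuous $f$ on $I_t$ splits as $f=c\,1_{\{\emptyset\}}+g$ with $g(\emptyset)=0$). Reading the defining identity \eqref{eq:charac} blockwise: the $(\emptyset,\emptyset)$ entry of $A_t^*\lambda$ is $\lambda(\{\emptyset\})$, since $S=\emptyset$ is the only $S\in I_{2t}$ with $\emptyset\cup\emptyset=S$; the $\{\emptyset\}\times(I_t\setminus\{\emptyset\})$ block equals $\lambda|_{I_t\setminus\{\emptyset\}}$, because $\emptyset\cup J'=S$ forces $J'=S$ (which must then lie in $I_t$), and symmetrically for the transposed block; and the $(I_t\setminus\{\emptyset\})^2$ block is, by definition, $(A_t^{\ne\emptyset})^*\mu$ where $\mu:=\lambda|_{I_{2t}\setminus\{\emptyset\}}$. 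In particular, the corner and off-diagonal blocks depend only on $\lambda|_{I_t}$, while the remaining mass $\lambda|_{I_{2t}\setminus I_t}$ feeds only the lower-right block.

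With $\lambda(\{\emptyset\})=1$, the Schur complement is then a one-variable computation. Testing positive semidefiniteness against $f=c\,1_{\{\emptyset\}}+g$ (using Mercer's theorem, as the paper records, to reduce to kernels of the form $f\otimes f$), one gets
\[
(A_t^*\lambda)(f\otimes f) = c^2 + 2c\,\nu(g) + \big((A_t^{\ne\emptyset})^*\mu\big)(g\otimes g),
\]
where $\nu:=\mu|_{I_t\setminus\{\emptyset\}}$ and $g$ is extended by $0$ beyond $I_t$. Minimizing the quadratic in $c$, this is nonnegative for every $c$ and every continuous $g$ precisely when $\big((A_t^{\ne\emptyset})^*\mu\big)(g\otimes g)\ge\nu(g)^2=(\nu\otimes\nu)(g\otimes g)$ for all $g$, i.e.\ precisely when $(A_t^{\ne\emptyset})^*\mu-\nu\otimes\nu$ is a positive semidefinite kernel on $I_t\setminus\{\emptyset\}$. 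Since $\nu\otimes\nu$ is exactly $\mu\otimes\mu$ restricted to $(I_t\setminus\{\emptyset\})^2$, this is the condition in the statement; reversing the computation (writing the expression as $(c+\nu(g))^2$ plus the nonnegative Schur term) shows conversely that $\lambda:=\mu+\delta_\emptyset$ is feasible for $\las'^{,\topo}_t(G)$ whenever $\mu$ is feasible for the reformulated problem.

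I expect the only real difficulty to be bookkeeping in the second paragraph — keeping straight which portion of $\lambda$ contributes to each block, and reading the statement's ``$(A_t^{\ne\emptyset})^*\mu-\mu\otimes\mu$ is positive semidefinite'' correctly as positive semidefiniteness of a kernel on $I_t\setminus\{\emptyset\}$, with $\mu\otimes\mu$ tacitly restricted to $(I_t\setminus\{\emptyset\})^2$. The Schur complement step itself is routine, since the ``corner'' block is genuinely one-dimensional and the scalar argument above applies directly with no operator-theoretic subtleties.
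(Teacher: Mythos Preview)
Your proof is correct and follows the same Schur complement approach as the paper: both pass from $\mu$ to $\widetilde\mu=\mu+\delta_\emptyset$ and identify feasibility for $\las'^{,\topo}_t(G)$ with the Schur complement condition on $(A_t^{\ne\emptyset})^*\mu-\mu\otimes\mu$. The paper simply invokes \cite[Lemma~9]{de2015semidefinite} for the Schur complement step (and in fact only writes out one direction explicitly), whereas you carry out the block decomposition and the quadratic-in-$c$ computation by hand, making your argument more self-contained.
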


Here $\mu \otimes \mu$ refers to a measure on $I_t
\setminus\{\emptyset\} \times I_t \setminus\{\emptyset\}$, so that it
is comparable with $(A_t^{\ne \emptyset})^* \mu$.

\begin{proof}
Define a new measure $\widetilde{\mu}$ on $I_{2t}$ that is equal to
$\mu$ on $I_{2t} \setminus \{ \emptyset \}$ and puts a mass of $1$ on
the point $\emptyset$.  The version of the Schur complement in
\cite[Lemma~9]{de2015semidefinite} shows that if $(A_t^{\ne
  \emptyset})^* \mu - \mu \otimes \mu$ is positive semidefinite, then
$A_t^* \widetilde{\mu}$ is a positive semidefinite measure and so
$\widetilde{\mu}$ is a feasible measure for $\las'^{,\topo}_t(G)$.
\end{proof}

\begin{remark} \label{remark:schurcomplement}
One can consider a weakened hierarchy in which the positive
semidefiniteness condition above is weakened to the conditions that
$(A_t^{\ne \emptyset})^* \mu$ is positive semidefinite and that
\[ 2 \mu(I_{=2}) + \mu(I_{=1}) = (A_t^{\ne \emptyset})^* \mu(I_{=1} \times I_{=1})
\ge \mu(I_{=1})^2.\]
This weakening was discussed briefly in \cite[equation~(3) in
Section~2.1]{de2018k}.  In particular, for the case $t = 1$, this
weakening of the hierarchy is equivalent to $\vartheta'^{,\topo}$.
Furthermore, de Laat and Vallentin \cite{de2015semidefinite} show that
$\vartheta'^{,\topo}$ is actually equivalent to $\las_1'^{,\topo}$.
\end{remark}

The convex dual of the optimization problem defining
$\las'^{,\topo}_t(G)$ is as follows:

\begin{definition}
For a compact topological packing graph $G$, we define $\las'^{,
  \topo}_t(G)^*$ to be the infimum of $K(\emptyset, \emptyset)$ over
continuous kernels $K \colon I_t \times I_t \to \R$ such that
\begin{enumerate}
    \item $K$ is positive semidefinite,
    \item $A_t K(\{ x \}) \le - 1$, and
    \item $A_t K(S) \le 0$ for $S \in I_t$ with $|S| \ge 2$.
\end{enumerate}
\end{definition}

By \cite[Theorem~1]{de2015semidefinite}, strong duality holds:

\begin{theorem}[Strong duality] \label{lasstrongduality}
Let $G$ be a compact topological packing graph. Then
\[ \las'^{, \topo}_t(G) = \las'^{, \topo}_t(G)^*. \]
\end{theorem}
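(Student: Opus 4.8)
The plan is to read the identity off as infinite-dimensional conic linear programming duality, in the spirit of de Laat and Vallentin~\cite{de2015semidefinite}, whose Theorem~1 is exactly this statement for the hierarchy they write $\las_t$ and we write $\las'^{,\topo}_t$. The program defining $\las'^{,\topo}_t(G)$ is the maximization of the weak-$*$ continuous functional $\lambda \mapsto \lambda(I_{=1})$ over $\lambda \in \M_+(I_{2t})$, subject to the affine constraint $\lambda(\{\emptyset\})=1$ (continuous because $\{\emptyset\}$ is clopen in $I_{2t}$) and the conic constraint that $A_t^*\lambda$ lie in the cone of positive semidefinite measures on $I_t \times I_t$. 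The program defining $\las'^{,\topo}_t(G)^*$ is then precisely the conic dual: by definition the dual cone of the positive semidefinite measures on $I_t\times I_t$ is the cone of continuous positive semidefinite kernels, and the dual cone of $\M_+(I_{2t})$ is the nonnegative continuous functions on $I_{2t}$; these are what turn the conditions $A_t K(\{x\}) \le -1$ and $A_t K(S)\le 0$ for $|S|\ge 2$ into dual feasibility, with the dual objective $K(\emptyset,\emptyset) = A_t K(\emptyset)$. Weak duality, $\las'^{,\topo}_t(G) \le \las'^{,\topo}_t(G)^*$, is the easy half and goes exactly as in the weak duality argument for the topological theta numbers: given feasible $\lambda$ and $K$, positive semidefiniteness of $A_t^*\lambda$ against the continuous positive semidefinite kernel $K$ gives $0 \le \iint_{I_t\times I_t} K\, dA_t^*\lambda$, which by the characterization \eqref{eq:charac} of $A_t^*$ equals $\int_{I_{2t}} A_t K\,d\lambda$; splitting over the strata $I_{=k}$ and using $\lambda \ge 0$ together with $A_t K(\emptyset)=K(\emptyset,\emptyset)$, $A_t K\le -1$ on $I_{=1}$, and $A_t K \le 0$ on $\bigcup_{k\ge 2} I_{=k}$ yields
\[ \lambda(I_{=1}) \le K(\emptyset,\emptyset). \]

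For the reverse inequality I would invoke an infinite-dimensional conic duality theorem. One checks first that both programs are feasible: $\lambda = \delta_\emptyset$ is primal feasible since $A_t^*\delta_\emptyset$ is the point mass at $(\emptyset,\emptyset)$, which is positive semidefinite; and a dual feasible kernel can be built from a finite clique cover of $G$ (finite because $G$ is a compact topological packing graph, so $\alpha(G)<\infty$), which in particular shows $\las'^{,\topo}_t(G)^* \le \chi(\overline G) < \infty$. The substance of the argument is then the regularity hypothesis that rules out a duality gap --- concretely, a control on feasible measures $\lambda$: the positive semidefiniteness of $A_t^*\lambda$, tested against suitable continuous kernels, together with the normalization $\lambda(\{\emptyset\})=1$ and the fact that $I_{2t}$ has only finitely many nonempty strata $I_{=k}$ (since $\alpha(G)<\infty$), should confine the feasible set to a weak-$*$ compact set, so that the primal supremum is attained and the image of the feasible set under the combined objective-and-constraint map is weak-$*$ closed. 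A Hahn--Banach separation of that closed convex set from the open ray strictly above $\las'^{,\topo}_t(G)$ then produces a continuous positive semidefinite kernel $K$ on $I_t \times I_t$ with $K(\emptyset,\emptyset) = \las'^{,\topo}_t(G)$ satisfying the dual constraints; here it is essential that $I_t \times I_t$ is compact, so that the dual cone of the positive semidefinite measures really is the continuous positive semidefinite kernels and the separating functional is represented by an honest kernel rather than merely a measure. Alternatively, one could mimic the proof of Proposition~\ref{thetaduality} and of the strong duality corollary for $\vartheta'^{,\topo}$, reducing to finite graphs --- where strong semidefinite programming duality is classical --- by discretizing $G$ through $\varepsilon$-neighborhoods and simplicial complexes.

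The main obstacle is the regularity step: establishing the weak-$*$ compactness of the primal feasible set, or equivalently the closedness of the relevant image cone, so that no duality gap can arise from an escape of mass in $\lambda$. This is where compactness of $I_{2t}$ and the finiteness of $\alpha(G)$ (hence of the stratification by the $I_{=k}$) are indispensable, and it is precisely the content of \cite[Theorem~1]{de2015semidefinite}; one then verifies that the argument there applies essentially verbatim to $\las'^{,\topo}_t$ as defined here, the only change being the passage from their notation $\las_t$ to ours.
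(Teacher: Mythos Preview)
Your proposal is correct and matches the paper's approach: the paper gives no proof beyond citing \cite[Theorem~1]{de2015semidefinite} (with a remark that a minor gap in the published version is filled in the arXiv version), and you correctly identify this citation while also sketching the underlying conic-duality argument. One small caveat: your alternative route via simplicial discretization as in Proposition~\ref{thetaduality} is specific to subsets of $\R^d$ and would not handle an arbitrary compact topological packing graph, but your primary approach is sound.
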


Note that the proof of this theorem in the published version of
\cite{de2015semidefinite} contains a minor gap, which is filled in the
arXiv version.

\subsection{The Lasserre hierarchy as sandwich functions}

\begin{definition}
For a finite graph $H$ and for $A = \las_t$ or $\las'_t$, define
$\las_t(G) = \las_t^{\topo}(G)$ and $\las_t'(G) =
\las_t'^{,\topo}(G)$.

For a discrete graph $G$, possibly infinite but with finite clique
covering number, we define $\las_t(G)$, resp.\ $\las'_t(G)$, to be the
supremum over all induced finite subgraphs $H \subseteq G$ of
$\las_t(H)$, resp.\ $\las'_t(H)$.
\end{definition}

This definition coincides on finite graphs with
$\las'^{,\topo}_t(G)$. We do not know whether they agree on all
compact topological packing graphs, even for the case $t = 1$ (which
is $\vartheta'$).  Using Theorem~\ref{comparisontopnon} below, for
compact subsets of $\R^n$, this question can be reformulated purely in
terms of $\las'$.

\begin{theorem}\label{lasserresandwich}
The functions $G \mapsto \las_t(\overline{G})$ and $G \mapsto
\las'_t(\overline{G})$ are sandwich functions.
\end{theorem}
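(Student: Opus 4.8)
The plan is to verify the three sandwich function axioms for $\Psi(G)=\las_t(\overline G)$ and $\Psi'(G)=\las'_t(\overline G)$. Under the isomorphism $\Graph\to\overline{\Graph}$, joins become disjoint unions and graph homomorphisms become morphisms of $\overline{\Graph}$, so the axioms translate into: $\las_t(\emptyset)=0$ and $\las_t$ of a one-vertex graph is $1$; $\las_t$ is monotone under morphisms of $\overline{\Graph}$; and $\las_t(A\sqcup B)=\las_t(A)+\las_t(B)$ (and likewise for $\las'_t$). I will verify these first for finite graphs and then invoke Lemma~\ref{finitecheck}, which applies because $\Psi$ and $\Psi'$ are, by construction, suprema over induced finite subgraphs. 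From now on all graphs are finite, hence discrete, so all moduli spaces $I_t,I_{2t}$ are finite, all measures are finitely supported, positive semidefiniteness of a measure on $I_t\times I_t$ is that of the associated matrix, and $\las_t=\las^{\topo}_t$, $\las'_t=\las'^{,\topo}_t$. The single structural tool is that $A_t^*$ is \emph{local}: since $J\cup J'$ recovers $S$ in the sum defining $A_tf(S)$, one has $A_t^*\mu(\{(J,J')\})=\mu(\{J\cup J'\})$ for $J,J'\in I_t$ with $|J\cup J'|\le 2t$, and $0$ otherwise. The sphere bound is then immediate: for the empty graph $I_{2t}=\{\emptyset\}$ forces the objective to be $0$; for a one-vertex graph $I_{2t}=I_t=\{\emptyset,\{v\}\}$, and writing $\lambda=\delta_\emptyset+b\,\delta_{\{v\}}$ the matrix $A_t^*\lambda=\left(\begin{smallmatrix}1&b\\ b&b\end{smallmatrix}\right)$ is positive semidefinite exactly when $0\le b\le 1$, so the supremum of the objective $b$ is $1$ for both $\las_t$ and $\las'_t$.

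For functoriality, let $f\colon A\to B$ be a morphism in $\overline{\Graph}$. If $J$ is independent in $A$, then $f$ is injective on $J$ (equal images would be adjacent, since preimages of vertices are cliques) and $f(J)$ is independent in $B$ (an edge of $f(J)$ would pull back to an edge of $J$); so $f$ induces cardinality-preserving maps $I_{k,A}\to I_{k,B}$ for every $k$, which I also denote $f_*$. Given $\lambda$ feasible for $\las_t(A)$, push it forward to $\mu=f_*\lambda$ on $I_{2t,B}$; then $\mu(\{\emptyset\})=\lambda(\{\emptyset\})=1$ and $\mu(I_{=1,B})=\lambda(I_{=1,A})$, since $f_*$ preserves cardinality and only $\emptyset$ maps to $\emptyset$. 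Using that $f$ restricts to a bijection on each independent set, the local formula gives $A_t^*\mu=(f_*\times f_*)_*(A_t^*\lambda)$; and for a positive semidefinite kernel $K$ on $I_{t,B}$, the pullback $\widetilde K(J,J')=K(f_*J,f_*J')$ is positive semidefinite on $I_{t,A}$, so $\iint K\,dA_t^*\mu=\iint\widetilde K\,dA_t^*\lambda\ge 0$. Hence $\mu$ is feasible for $\las_t(B)$ with the same objective, and positivity of $\lambda$ passes to $\mu$, giving $\las_t(A)\le\las_t(B)$ and $\las'_t(A)\le\las'_t(B)$.

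For additivity, independent sets in $A\sqcup B$ decompose uniquely as $S=S_A\sqcup S_B$, so $I_{k,A\sqcup B}\cong\{(S_A,S_B)\in I_{k,A}\times I_{k,B}:|S_A|+|S_B|\le k\}$, with $\emptyset=(\emptyset,\emptyset)$ and $I_{=1,A\sqcup B}=(I_{=1,A}\times\{\emptyset\})\sqcup(\{\emptyset\}\times I_{=1,B})$. For ``$\ge$'': given $\lambda_A,\lambda_B$ feasible for $\las_t(A),\las_t(B)$, let $\lambda$ be the restriction of $\lambda_A\otimes\lambda_B$ to $I_{2t,A\sqcup B}$; then $\lambda(\{\emptyset\})=1\cdot 1=1$ and $\lambda(I_{=1,A\sqcup B})=\lambda_A(I_{=1,A})+\lambda_B(I_{=1,B})$, the sum of the two objectives. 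The point is that $|J_A|+|J_B|\le t$ and $|J'_A|+|J'_B|\le t$ force $|J_A\cup J'_A|+|J_B\cup J'_B|\le 2t$, so the local formula identifies $A_t^*\lambda$ with the restriction to $I_{t,A\sqcup B}\times I_{t,A\sqcup B}$ of the tensor product $(A_t^*\lambda_A)\otimes(A_t^*\lambda_B)$, reindexed via $(I_{t,A})^2\times(I_{t,B})^2\cong(I_{t,A}\times I_{t,B})^2$; this reindexed tensor of positive semidefinite measures is positive semidefinite (the Kronecker product of positive semidefinite matrices is), and restricting to the sub-moduli-space preserves this, so $\lambda$ is feasible and $\las_t(A\sqcup B)\ge\las_t(A)+\las_t(B)$. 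For ``$\le$'': given $\lambda$ feasible for $\las_t(A\sqcup B)$, let $\lambda_A$ be its restriction to the copy of $I_{2t,A}$ inside $I_{2t,A\sqcup B}$ (the sets with $S_B=\emptyset$), and similarly $\lambda_B$; then $\lambda_A(\{\emptyset\})=1$, the local formula shows $A_t^*\lambda_A$ is the restriction of the positive semidefinite measure $A_t^*\lambda$ to $I_{t,A}\times I_{t,A}$, hence positive semidefinite, so $\lambda_A$ is feasible for $\las_t(A)$; and $\lambda(I_{=1,A\sqcup B})=\lambda_A(I_{=1,A})+\lambda_B(I_{=1,B})\le\las_t(A)+\las_t(B)$, so taking the supremum gives the reverse inequality. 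All restrictions and tensor products of positive measures are positive, so these arguments apply verbatim to $\las'_t$.

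The only step that needs genuine care is the bookkeeping with the cardinality constraints in the additivity argument: one must check that the size restrictions defining $I_{t,A\sqcup B}$ and $I_{2t,A\sqcup B}$ are exactly compatible with the tensor-and-restrict (resp.\ restrict-to-a-factor) descriptions of $A_t^*$, so that the positive semidefiniteness assertions really do reduce to ``pullback, restriction, and Kronecker product of positive semidefinite matrices stays positive semidefinite.'' Pleasantly, and unlike the argument for $\vartheta$ in Theorem~\ref{thetasandwich}, no correction term is needed, because the normalization $\lambda(\{\emptyset\})=1$ is multiplicative under the product construction.
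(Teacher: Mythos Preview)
Your proof is correct and follows essentially the same approach as the paper: reduce to finite graphs via Lemma~\ref{finitecheck}, push forward measures along $\overline{\Graph}$-morphisms for functoriality, and use a product (tensor) construction for the ``$\ge$'' direction and restriction to each factor for the ``$\le$'' direction of additivity. Your presentation is somewhat more conceptual---packaging the combinatorics into the ``local formula'' for $A_t^*$ and the identity $A_t^*(f_*\lambda)=(f_*\times f_*)_*(A_t^*\lambda)$---whereas the paper writes out the double sums explicitly, but the content is the same.

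One small slip: in your local formula you write ``$A_t^*\mu(\{(J,J')\})=\mu(\{J\cup J'\})$ for $J,J'\in I_t$ with $|J\cup J'|\le 2t$, and $0$ otherwise,'' but the size bound is automatic for $J,J'\in I_t$; the correct dichotomy is whether $J\cup J'$ is \emph{independent} (i.e., lies in $I_{2t}$), as the paper states. This does not affect your argument, since you correctly track independence everywhere it matters.
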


\begin{proof}[Proof of Theorem~\ref{lasserresandwich}]
Recall that Lemma~\ref{finitecheck} shows that we only need to check
the statement on finite graphs.  The case when $G$ is a point is
clear.

Consider a graph homomorphism from $\overline{G}$ to $\overline{H}$,
where $G$ and $H$ are finite graphs, and let $f \colon V(G) \to V(H)$
be the underlying map on vertex sets.  Let us also use $f$ for the
induced map on independent sets $f \colon I_{=k,G} \to I_{=k,H}$ for
each $k$.  If $\mu$ is a measure on $I_{t,G}$, we define the measure
$f_* \mu$ on $I_{t,H}$ via
\[ f_* \mu(A) = \mu(f^{-1}(A)). \]

First, we show that if $\mu$ is feasible for $\las_t(G)$, then $f_*
\mu$ is feasible for $\las_t(H)$ with the same objective function
value.  If $A_t^* \mu$ is positive semidefinite on $I_{t,G} \times
I_{t,G}$, then $A_t^* f_* \mu$ is positive semidefinite on $I_{t,H}
\times I_{t,H}$.  To see why, we will use \eqref{eq:charac}; for
finite graphs, it says that
\[
A_t^*\mu(\{J\}\times\{J'\}) = \begin{cases} \mu(\{J \cup J'\}) &
  \text{if $J \cup J' \in I_{2t,G}$, and}\\ 0 & \text{otherwise.}
\end{cases}
\]
Let $w_J$ be a set of real weights for $J \in I_{t,H}$.  If we set
$w_K = w_J$ for $K \in I_{t,G}$ with $f(K)=J$ and $w_K=0$ otherwise,
then
\[ \begin{split}
\sum_{\substack{J,J' \in I_{t,H}\\J \cup J' \in I_{2t,H}}} w_J w_{J'}&
f_* \mu(\{ J \cup J' \})\\ &= \sum_{\substack{J,J' \in I_{t,H}\\J \cup
    J' \in I_{2t,H}}} w_J w_{J'} \mu(\{ K \cup K' : K \in f^{-1}(J),
K' \in f^{-1}(J')\})\\ &= \sum_{\substack{K,K' \in I_{t,G}\\K \cup K'
    \in I_{2t,G}}} w_K w_{K'} \mu(\{K \cup K'\}) \ge 0.
\end{split}\]
This shows that $\las_t(G) \le \las_t(H)$.  Furthermore, if $\mu$ is
nonnegative, then $f_* \mu$ is nonnegative, and thus $\las'_t(G) \le
\las'_t(H)$ as well.

All the remains to check is the union axiom.  Let $G$ and $H$ be
disjoint graphs, and consider $G\sqcup H$.  We first want to show that
$\las_t(G \sqcup H) \ge \las_t(G) + \las_t(H)$ and similarly for
$\las'_t(G \sqcup H)$.  Let $\mu_G$ and $\mu_H$ be measures on $I_{2t,
  G}$ and $I_{2t, H}$.  To define a measure $\mu$ on $I_{2t, G \sqcup
  H}$, it suffices to define the measure of a singleton $\{ S \}$ with
$S \in I_{2t,G \sqcup H}$.  We define this by
\[\mu(\{ S \}) = \mu_G(\{S \cap V(G)\}) \, \mu_{H}(\{S \cap V(H)\}).\]
If $\mu_G(\{ \emptyset \}) = \mu_H(\{ \emptyset \}) = 1$, then $\mu(\{
\emptyset \}) = 1$ immediately and $\mu(I_{=1}) = \mu_G(I_{=1, G}) +
\mu_{H}(I_{=1, H})$.  Thus, the objective is additive.  Moreover, if
$\mu_G$ and $\mu_H$ are both nonnegative, then $\mu$ is as well.  It
suffices to show that if $A_t^*\mu_G$ and $A_t^*\mu_H$ are positive
semidefinite, then $A_t^*\mu$ is. Because $G \sqcup H$ is finite and
the cone of positive semidefinite matrices is self-dual, $A_t^*\mu$ is
positive semidefinite if and only if the matrix
\[
\big(\mu(\{J \cup J'\})\big)_{J,J' \in I_{t,G \sqcup H}}
\]
is positive semidefinite, where we set $\mu(\{S\})=0$ if $S \not\in
I_{2t,G\sqcup H}$. This matrix is the tensor product of the
corresponding matrices for $\mu_G$ and $\mu_H$, and the desired
conclusion follows.

Finally, we want to show that $\las_t(G \cup H) \le \las_t(G) +
\las_t(H)$ and similarly for $\las'_t$.  Let $\mu$ be a measure on
$I_{2t, G \cup H}$ and consider its restriction $\mu_G$ to $I_{2t, G}$
and restriction $\mu_H$ to $I_{2t, H}$.  We see that
\[ \mu_G(I_{=1, G}) + \mu_H(I_{=1, H}) = \mu(I_{=1, G \cup H}).\]
If $\mu$ is nonnegative, then $\mu_G$ and $\mu_H$ are, and if
$A_t^*\mu$ is positive semidefinite, then $A_t^*\mu_G$ and
$A_t^*\mu_H$ are (they correspond to submatrices of the positive
semidefinite matrix for $G \sqcup H$).  Finally,
\[ \mu(\{ \emptyset \}) = \mu_G(\{ \emptyset \}) = \mu_H(\{ \emptyset \}). \]
By starting with a feasible point for $\las_t(G \sqcup H)$ that is
arbitrarily close to the optimum, we conclude that $\las'_t(G \cup H)
\le \las'_t(G) + \las'_t(H)$, and similarly $\las'_t(G \cup H) \le
\las'_t(G) + \las'_t(H)$.
\end{proof}

Because these functions are packing bounds functions, we can conclude
that a Euclidean limit must exist.  In the next section, we formulate
the Euclidean limit of $\las_t'$ and $\las_t'^{,\topo}$ in terms of a
semidefinite program.

\subsection{The Euclidean limit of the Lasserre hierarchy}
\label{euclideanhierarchysection}

For any subset $C \subseteq \R^n$, consider the graph with vertex set
$C$ and edges consisting of pairs $(x,y)$ with $0 < d(x,y) < 2$. If
$C$ is bounded, we can obtain $\las_t'(C)$ by treating the graph as a
discrete graph. If $C$ is compact, then the graph is a compact
topological packing graph and we can obtain $\las_t'^{,\topo}(C)$. For
any $C$, we obtain a topological packing graph, and thus the
collection $I_{t,C}$ of independent sets of size at most $t$ has a
topology, but it will not be compact in general. For example,
$I_{t,\R^n}$ admits an action of $\R^n$ by translation.

Much like the case of $\vartheta'(C)$, we can interpret $\las_t'(C)$
as a supremum over finitely supported measures coming from finite
subgraphs of $C$. From this point of view, the inequality $\las_t'(C)
\le \las_t'^{,\topo}(C)$ is immediate when $C$ is compact. The
following theorem gives a more precise relationship.

\begin{theorem}\label{comparisontopnon}
Let $C$ be a compact subset of $\R^d$, and let
$\overline{C(\varepsilon)}$ be the closure of $C(\varepsilon)$, the
$\varepsilon$-neighborhood of $C$. Then
\[\las_t'^{,\topo}(C) = \lim_{\varepsilon \to 0^+} \las'_t(C(\varepsilon))
= \lim_{\varepsilon \to 0^+} \las_t'^{,\topo}(\overline{C(\varepsilon)}).\]
\end{theorem}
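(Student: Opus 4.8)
The plan is to follow the template of Proposition~\ref{thetaduality}, with the strong-duality input upgraded from the $t=1$ case to Theorem~\ref{lasstrongduality}. Write $L:=\lim_{\varepsilon\to0^+}\las'_t(C(\varepsilon))$, which exists because $\varepsilon\mapsto\las'_t(C(\varepsilon))$ is nondecreasing and bounded (it lies between $\pack(C)$ and $\cov(\overline{C(1)})$), and likewise the limit of $\las_t'^{,\topo}(\overline{C(\varepsilon)})$ exists. I would prove (i) $\las_t'^{,\topo}(C)\le L$, (ii) $L\le\las_t'^{,\topo}(C)$, and (iii) that the third limit also equals $L$; (iii) will be nearly free once (i) holds for every compact set.

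For (i), the chain to establish is $\las'_t(C)\le\las_t'^{,\topo}(C)\le\las_t'^{,\topo}(C)^*\le\las'_t((1+\varepsilon)C(\varepsilon))$, where the first inequality is immediate (a finite subgraph of $C$ gives a finitely supported feasible measure), the second is weak duality, and the equality $\las_t'^{,\topo}(C)=\las_t'^{,\topo}(C)^*$ is Theorem~\ref{lasstrongduality}. The last inequality is the Lasserre analogue of the simplicial-discretization step in the proof of Proposition~\ref{thetaduality}. Using Lemma~\ref{geometricsimplicial}, pick a finite pure simplicial $d$-complex $Y$ with $C\subseteq Y\subseteq C(\varepsilon)$, all simplices of diameter $<\varepsilon$, and vertex set $Y_0$. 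Given any feasible dual kernel $K$ for the finite semidefinite program $\las'_t(Y_0)^*$, I would interpolate it to a kernel $L$ on $I_{t,Y/(1+\varepsilon)}$ by writing each independent set $J=\{\sum_i a_i^{(1)}x_i^{(1)},\dots\}$ of $Y/(1+\varepsilon)$ in barycentric coordinates and letting $L(J,J')$ be the corresponding multilinear average of the values $K(\{x_{i}^{(m)}\}_m,\{x_{j}^{(m')}\}_{m'})$; after the $1+\varepsilon$ rescaling, any family of chosen vertices of the simplices carrying an independent set is again pairwise at distance $\ge2$ (here one uses that the points of $S$ are pairwise $\ge2(1+\varepsilon)$ apart before rescaling while each is within $\varepsilon$ of its vertex), so the interpolation lands in $I_{t,Y_0}$ and is legitimate. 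Positive semidefiniteness of $L$ follows by regrouping the quadratic form over the finitely many rounded configurations; continuity follows from uniqueness of barycentric coordinates; and because the pointwise rounding is injective on independent sets the decompositions $S=J\cup J'$ match up bijectively with those of the rounded set, so $A_tL$ at a point (resp.\ at $S$ with $|S|\ge2$) is a convex average of values of $A_tK$ and hence inherits the bound $\le-1$ (resp.\ $\le0$). Thus $\las_t'^{,\topo}(Y/(1+\varepsilon))^*\le\las'_t(Y_0)^*=\las'_t(Y_0)\le\las'_t(C(\varepsilon))$; rescaling space and restricting $L$ to $I_{t,C}\subseteq I_{t,Y}$ gives $\las_t'^{,\topo}(C)^*\le\las'_t((1+\varepsilon)C(\varepsilon))$, and since $(1+\varepsilon)C(\varepsilon)\subseteq C(\delta)$ with $\delta\to0$, letting $\varepsilon\to0$ yields $\las_t'^{,\topo}(C)\le L$.

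Direction (ii), $L\le\las_t'^{,\topo}(C)$, is where I expect the real difficulty, and where the argument must depart from Proposition~\ref{thetaduality}. The proof there takes a weak-$*$ limit of near-optimal feasible \emph{primal} measures, but for the Lasserre hierarchy with $t\ge2$ those measures need not have uniformly bounded total mass: additivity over far-apart components already forces an almost-optimal feasible measure to have mass growing polynomially (of degree $2t$) with its objective value, hence without bound as the finite subgraphs of $C(\varepsilon)$ are refined, so Banach--Alaoglu does not apply. The workaround I propose is to transfer certificates on the \emph{dual} side, where a uniform bound $\|K\|_\infty\le M_t(K(\emptyset,\emptyset))$ on feasible kernels does hold (one shows this by induction on $|J|$ from the constraints $A_tK(\{x\})\le-1$ and $A_tK(S)\le0$ together with $|K(J,J')|\le\sqrt{K(J,J)K(J',J')}$). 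Concretely: given a finite $H\subseteq C(\varepsilon)$, compose the nearest-point retraction $H\to C$ with the dilation by $1/(1-\varepsilon)$; for $\varepsilon<1$ this sends each independent set of $H$ injectively onto an independent set of $C/(1-\varepsilon)$ (the dilation restores the distance-$\ge2$ condition lost to the $\varepsilon$-slack), so pulling back a near-optimal continuous dual kernel for $\las_t'^{,\topo}(C/(1-\varepsilon))^*$ gives a kernel on $I_{t,H}$ that is still positive semidefinite, still obeys the $A_t$-constraints (the map is a bijection between the relevant decompositions $S=J\cup J'$), and has the same value at $(\emptyset,\emptyset)$; by finite strong duality $\las'_t(H)=\las'_t(H)^*$ is at most that value, and taking the supremum over $H$ yields $\las'_t(C(\varepsilon))\le\las_t'^{,\topo}(C/(1-\varepsilon))$. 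The remaining and, I expect, most delicate step is to pass from the dilate $C/(1-\varepsilon)$ back to $C$ as $\varepsilon\to0$: one needs $\limsup_{\varepsilon\to0^+}\las_t'^{,\topo}(C/(1-\varepsilon))\le\las_t'^{,\topo}(C)$, which I would obtain from isometry invariance, monotonicity of $\las_t'^{,\topo}$ under inclusion, and part (i) applied to small closed neighborhoods (noting $C/(1-\varepsilon)\subseteq\overline{C(\delta)}$ with $\delta\to0$), so that combining with (i) gives $L=\las_t'^{,\topo}(C)$.

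Finally, for the third equality, monotonicity together with the immediate bound $\las'_t(\overline{C(\varepsilon)})\le\las_t'^{,\topo}(\overline{C(\varepsilon)})$ gives $\las'_t(C(\varepsilon))\le\las_t'^{,\topo}(\overline{C(\varepsilon)})$, while part (i) applied to the compact set $\overline{C(\varepsilon)}$, combined with $\overline{C(\varepsilon)}(s)\subseteq C(\varepsilon+2s)$ for small $s$, gives $\las_t'^{,\topo}(\overline{C(\varepsilon)})\le\lim_{u\to\varepsilon^+}\las'_t(C(u))$. Letting $\varepsilon\to0^+$ squeezes $\las_t'^{,\topo}(\overline{C(\varepsilon)})$ to $L$, completing the proof.
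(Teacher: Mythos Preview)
Your part~(i), the simplicial interpolation of dual kernels, is essentially the paper's argument (the paper phrases the multilinear averaging as ``randomized rounding,'' but the content is the same), and your part~(iii) follows correctly from~(i). The problem is part~(ii), both in motivation and in execution.

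First, your reason for abandoning the primal weak-$*$ argument is mistaken. You worry that near-optimal feasible measures for $\las'_t$ on finite subgraphs of $C(\varepsilon)$ have total mass growing polynomially in the objective value and hence without bound. But the objective value itself is bounded: every finite $H\subseteq C(\varepsilon)$ has $\las'_t(H)\le\cov(\overline{C(\varepsilon_0)})$ for any fixed $\varepsilon_0\ge\varepsilon$. More to the point, the total mass is bounded outright. Cover the fixed compact set $\overline{C(\varepsilon_0)}$ by $M$ disjoint Borel cliques $K_1,\dots,K_M$; every independent set $S$ meets each $K_i$ in at most one point, so the indicators $g_I(J):=\mathbf{1}[J\cap K_i\ne\emptyset\ \text{iff}\ i\in I]$ satisfy $A_t^*\mu(g_I\otimes g_{I'})=\mu(U_{I\cup I'})$, where $U_I$ is the set of independent sets meeting exactly the cliques indexed by $I$. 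Positive semidefiniteness of $A_t^*\mu$ then forces the moment matrix $(\mu(U_{I\cup I'}))_{|I|,|I'|\le t}$ to be positive semidefinite with $(\emptyset,\emptyset)$-entry $1$, whence $\mu(U_I)\le 1$ for every $|I|\le 2t$ and $\mu(I_{2t})\le\sum_{k\le 2t}\binom{M}{k}$. This bound is uniform over all feasible measures on all $\overline{C(\varepsilon_n)}\subseteq\overline{C(\varepsilon_0)}$, so Banach--Alaoglu applies. This is exactly the route the paper takes: it proves $\lim_{\varepsilon\to0^+}\las_t'^{,\topo}(\overline{C(\varepsilon)})=\las_t'^{,\topo}(C)$ by extracting a weak-$*$ limit of optimal measures, and then combines with $\las'_t(C(\varepsilon))\le\las_t'^{,\topo}(\overline{C(\varepsilon)})$ to get $L\le\las_t'^{,\topo}(C)$.

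Second, your proposed alternative does not close. The retraction-plus-dilation argument correctly yields $\las'_t(C(\varepsilon))\le\las_t'^{,\topo}(C/(1-\varepsilon))$, but your final step, $\limsup_{\varepsilon\to0^+}\las_t'^{,\topo}(C/(1-\varepsilon))\le\las_t'^{,\topo}(C)$, is not established. Your chain $C/(1-\varepsilon)\subseteq\overline{C(\delta)}$ together with part~(i) only gives $\las_t'^{,\topo}(C/(1-\varepsilon))\le\lim_{s\to0^+}\las'_t(C(\delta+s))$, and letting $\delta\to0$ bounds the $\limsup$ by $L$, not by $\las_t'^{,\topo}(C)$. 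You then have $L\le\liminf\las_t'^{,\topo}(C/(1-\varepsilon))\le\limsup\las_t'^{,\topo}(C/(1-\varepsilon))\le L$, which shows the limit of dilates equals $L$ but says nothing about $\las_t'^{,\topo}(C)$ beyond what~(i) already gave. To get from the dilates back to $C$ you would need precisely the continuity statement $\lim_{\varepsilon\to0^+}\las_t'^{,\topo}((1+\varepsilon)C)=\las_t'^{,\topo}(C)$, which the paper obtains as a corollary of the weak-$*$ argument you were trying to avoid.
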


\begin{proof}
First, we will show that
\[\lim_{\varepsilon \to 0^+} \las'^{,\topo}_t(\overline{C(\varepsilon)})
= \las'^{,\topo}_t(C).\]
We will prove both inequalities separately.  One inequality is
immediate by inclusion of $C \subseteq C(\varepsilon)$.  For the other
direction, since $\overline{C(\varepsilon)}$ is compact, there is a
measure achieving the optimal value of
$\las'^{,\topo}_t(\overline{C(\varepsilon)})$ for any $\varepsilon$.
Take $\mu_n$ achieving this for
$\las'^{,\topo}_t(\overline{C(\varepsilon_n)})$ for a sequence
$\varepsilon_n \to 0$.  This is a sequence of weak-$*$-bounded
measures, and by the Banach-Alaoglu theorem there is a convergent
subsequence in the weak-$*$ topology, which we denote $\mu_n \to \mu$.
We claim that $\mu$ is feasible for $\las'^{,\topo}_t(C)$.  The
support of $\mu$ must be contained in the intersection, and it must be
supported on independent sets by the corresponding property for
$\mu_n$.  Moreover, it must be positive since $\mu_n$ are positive
measures.  To prove positive semidefiniteness for $A_t^*\mu$, it
suffices to check that $\int f(x) f(y) \,dA_t^*\mu(x,y) \ge 0$ for
every continuous $f \colon I_{t,C} \to \R$, because Mercer's theorem
\cite[Theorem~3.11.9]{SimonVol4} allows us to write any positive
semidefinite kernel as an infinite sum of terms of the form $f(x)
f(y)$.  Let $f_\varepsilon$ be a compactly supported, continuous
extension of $f$ to $I_{t,C(\varepsilon)}$.  Using $f_\varepsilon$, we
can write the integral as a limit:
\[\int f(x) f(y) \,dA_t^*\mu(x,y) = \lim_{n\to\infty} \int
f_\varepsilon(x) f_\varepsilon(y) \,dA_t^*\mu_n(x,y).\]
Therefore $A_t^*\mu$ is positive semidefinite, and $\mu$ is feasible
for $\las'^{,\topo}(C)$.  Since the objective is a continuous
functional for the weak-$*$ topology, $\mu$ gives a lower bound of
$\lim_{\varepsilon\to0^+} \las'^{,\topo}_t(\overline{C(\varepsilon)})$
for $\las'^{,\topo}_t(C)$.

As a consequence of this argument, note that for compact sets $C$,
\[\lim_{\varepsilon \to 0^+} \las_t'^{,\topo}((1+\varepsilon)C) = \las_t'^{,\topo}(C),\]
because for every $\varepsilon>0$, there is some $\varepsilon'>0$ such
that $(1+\varepsilon')C$ embeds into $\overline{C(\varepsilon)}$.

Now, to complete the proof it suffices to prove
that \[\las'^{,\topo}_t(C) \le \lim_{\varepsilon\to0^+}
\las'_t(C(\varepsilon)).\] Let $Y$ be a geometric simplicial complex
such that $C \subseteq Y \subseteq C(\varepsilon)$ and every simplex
has diameter at most $\varepsilon$, constructed as in
Lemma~\ref{geometricsimplicial}.  We use $Y_0$ to denote the vertices
of $Y$ and $Y_d$ to denote its top-dimensional simplices.  For any
positive semidefinite kernel on $I_{t, Y_0}$, we will produce a
positive semidefinite kernel on the whole $I_{t, Y}$ by viewing the
simplicial complex as a union of finite elements and by taking convex
combinations and extending the kernel by linearity.  This is the same
idea as in the proof of Proposition~\ref{thetaduality}, but with more
cumbersome notation because of the use of $I_t$. Using this approach,
we will obtain feasible kernels for $\las_t'^{,\topo}(C)^*$ from those
for $\las_t'(C(\varepsilon))^*$, specifically those using the subset
$Y_0$.

Let $K$ be a positive semidefinite kernel on $I_{t,Y_0}$.  Then we can
define a positive semidefinite kernel $L$ on $I_{t,Y}$ as
follows. Given any point in $Y$, we can randomly round it to a vertex
in $Y_0$ by using barycentric coordinates: if the point is $y =
\lambda_0 y_0 + \dots + \lambda_d y_d$ with $\lambda_i \ge 0$, $\sum_i
\lambda_i = 1$, and $\{y_0,\dots,y_d\}$ a top-dimensional simplex,
then we round $y$ to $y_i$ with probability $\lambda_i$. This process
is well defined, because the only way $y$ can be in several
top-dimensional simplices is if all the weights not coming from their
intersection vanish. Similarly, we can round an independent set $J$ by
rounding each point in it independently.  We will denote the rounded
version of $J$ by the random variable $r(J)$. One subtlety is that
$r(J)$ may not be an independent set, because two points at distance
less than $2+2\varepsilon$ may round to points at distance less than
$2$ (recall that the simplices have diameter at most
$\varepsilon$). That will not be a problem, since we can extend $K$ by
zero to obtain a positive semidefinite kernel on arbitrary sets of
size at most $t$, not just independent sets.

Using this notion of rounding, we define $L(J,J')$ as the expected
value \[\mathbb{E} K(r(J),r(J'))\] of $K(r(J),r(J'))$ when we round
each point in $J \cup J'$ independently. Then $L$ is a continuous
function on $I_{t,Y} \times I_{t,Y}$. To show that it is a positive
semidefinite kernel, we must show that for all weights $w_J \in \R$
for $J \in I_{t,Y}$ that vanish for all but finitely many $J$,
\[
\sum_{J,J' \in I_{t,Y}} w_J w_{J'} L(J,J') \ge 0.
\]
To prove this inequality, consider randomly rounding the points of
$Y$ independently. Then
\[
\sum_{J,J' \in I_{t,Y}} w_J w_{J'} L(J,J') = \mathbb{E} \sum_{J,J' \in
  I_{t,Y}} w_J w_{J'} K(r(J),r(J')),
\]
which is nonnegative because $K$ is a positive semidefinite kernel.

All the remains to check is the conditions on $A_tL$ for a feasible
kernel.  If $A_tK(\{x\}) \le -1$ for all $x \in Y_0$, then
$A_tL(\{x\}) \le -1$ for all $x \in Y$, because $A_tL(\{x\}) =
\mathbb{E} A_tK(r(\{x\}))$. However, the case of $A_tL(S)$ with $|S|
\ge 2$ is slightly more subtle. The issue is that $r(S)$ may not be an
independent set even if $S$ is. However, if the minimal distance
between points in $S$ is at least $2+2\varepsilon$, then $r(S)$ must
always be an independent set. In that case,
\[
A_tL(S) = \sum_{\substack{J,J' \subseteq S\\ J \cup J' = S}} L(J,J') =
\mathbb{E} \sum_{\substack{J,J' \subseteq r(S)\\ J \cup J' = r(S)}}
K(J,J') \le 0
\]
if $A_tK(S) \le 0$. In other words, we obtain a feasible kernel for
$(1+\varepsilon)^{-1}C$, rather than $C$.

After rescaling space by a factor of $1+\varepsilon$, we conclude that
\[\las'^{,\topo}_t(C) \le \las'_t((1+\varepsilon)Y_0) \le
\las'_t((1+\varepsilon) C(\varepsilon)).\]
For any $\varepsilon'$, we can choose $\varepsilon$ so that
$(1+\varepsilon) C(\varepsilon) \subseteq C(\varepsilon')$ and letting
$\varepsilon' \to 0$ proves the inequality, and hence the result.
\end{proof}

\begin{corollary}\label{cubelimlas}
Let $I^n$ be the unit cube in $\R^n$. Then
\[\lim_{r\to\infty} \frac{\las'^{, \topo}_t(rI^n)}{r^n} =
\lim_{r \to\infty} \frac{\las'_t(rI^n)}{r^n}.\]
\end{corollary}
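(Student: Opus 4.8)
The plan is to apply Lemma~\ref{cubetoplimit} with $A = \las'_t$ and $A^{\topo} = \las'^{,\topo}_t$. Here $\las'_t$ is a packing bound function by Theorem~\ref{lasserresandwich} combined with Theorem~\ref{theorem:pbffromsandwich}, while $\las'^{,\topo}_t$ and its dual $\las'^{,\topo}_t(\cdot)^*$ are defined on every compact subset of a Euclidean space: such a set $C \subseteq \R^d$, equipped with the graph whose edges are the pairs $(x,y)$ with $0 < |x-y| < 2$, is a compact topological packing graph, since any finite clique consists of finitely many points that are pairwise at distance less than $2$, and enclosing each of them in a sufficiently small open ball yields an open clique containing it. Granting the hypotheses of Lemma~\ref{cubetoplimit}, its conclusion gives
\[
\lim_{r \to \infty} \frac{\las'^{,\topo}_t(rI^n)}{r^n} = \delta_{\las'_t, n} = \lim_{r \to \infty} \frac{\las'_t(rI^n)}{r^n},
\]
where the second equality is the proposition on products of intervals applied to $I^n$ (whose Lebesgue measure is $1$). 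This is exactly the statement of the corollary.

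It therefore remains to verify the sandwich hypothesis of Lemma~\ref{cubetoplimit}: for every compact $C \subseteq \R^d$ and every $\varepsilon > 0$,
\[
\las'_t(C) \le \las'^{,\topo}_t(C) \le \las'^{,\topo}_t(C)^* \le \las'_t(C(\varepsilon)).
\]
The first inequality is immediate, since a finitely supported feasible measure for $\las'_t(C)$ is in particular a finite signed measure feasible for $\las'^{,\topo}_t(C)$, as noted in the discussion preceding Theorem~\ref{comparisontopnon}. The middle inequality is a consequence of strong duality for the topological Lasserre hierarchy: Theorem~\ref{lasstrongduality}, applied to the compact topological packing graph on $C$, gives $\las'^{,\topo}_t(C) = \las'^{,\topo}_t(C)^*$. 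For the last inequality, note that $\las'_t$ is a packing bound function and hence monotone, so $\varepsilon \mapsto \las'_t(C(\varepsilon))$ is nondecreasing; thus $\lim_{\varepsilon' \to 0^+} \las'_t(C(\varepsilon')) \le \las'_t(C(\varepsilon))$ for each fixed $\varepsilon > 0$, and by Theorem~\ref{comparisontopnon} the left-hand side equals $\las'^{,\topo}_t(C)$, which in turn equals $\las'^{,\topo}_t(C)^*$ by strong duality.

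The substantive work underlying this argument has already been done: Theorem~\ref{comparisontopnon} supplies the crucial third inequality through its simplicial-complex discretization, and the strong duality in Theorem~\ref{lasstrongduality} is imported from de Laat and Vallentin. Consequently I expect no genuine obstacle here; the only thing requiring care is the bookkeeping that confirms a compact $C \subseteq \R^d$ fits the framework of compact topological packing graphs---so that Theorems~\ref{lasstrongduality} and~\ref{comparisontopnon} apply---and that the dual quantity $\las'^{,\topo}_t(C)^*$ is the right object to play the role of $A^{\topo}(C)^*$ in Lemma~\ref{cubetoplimit}.
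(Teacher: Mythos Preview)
Your proposal is correct and follows essentially the same route as the paper: both arguments rest on Theorem~\ref{comparisontopnon} and a sandwich inequality squeezing $\las'^{,\topo}_t(rI^n)$ between $\las'_t(rI^n)$ and $\las'_t$ applied to a slightly enlarged cube. The only difference is packaging: you invoke Lemma~\ref{cubetoplimit} and verify its hypotheses (using strong duality to identify $\las'^{,\topo}_t(C)$ with $\las'^{,\topo}_t(C)^*$), whereas the paper's one-line proof simply repeats the content of that lemma directly, citing Theorem~\ref{comparisontopnon} together with the embedding $I^n(\varepsilon) \hookrightarrow (1+2\varepsilon)I^n$.
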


\begin{proof}
This corollary follows from Theorem~\ref{comparisontopnon} together
with the fact that $I^n(\varepsilon)$ embeds into
$(1+2\varepsilon)I^n$.
\end{proof}

We are now ready to give the Euclidean limit of the Lasserre
hierarchy.  Let $\R^n$ act on Radon measures on $I_{2t, \R^n}$ by
translation, and consider translation-invariant measures.  Any
translation-invariant measure $\mu$ will restrict to $I_{=1, \R^n}$ as
some multiple of the Lebesgue measure, and we define
$\widehat{\mu}(0)$ to be this multiple.  That is,
\[ \mu |_{I_{=1, \R^n}}(C) = \widehat{\mu}(0) \mathcal{L}_n(C) \]
for Borel sets $C \subseteq \R^n$.

\begin{definition}
Let $\las'_t(\R^n)$ be the supremum of $\widehat{\mu}(0)$ over all
translation-invariant Radon measures $\mu$ on $I_{2t, \R^n}$ such that
\begin{enumerate}
    \item $A_t^* \mu$ is positive semidefinite as a measure on $I_{t,
      \R^n} \times I_{t, \R^n}$,
    \item $\mu$ is nonnegative, and
    \item $\mu(\{\emptyset\})=1$.
\end{enumerate}
We call such a measure $\mu$ a \emph{correlation measure of order
$2t$} with \emph{center density $\widehat{\mu}(0)$}.
\end{definition}

Let $\P$ be a periodic packing in $\R^n$, i.e., the union of finitely
many translates of a lattice $\Lambda$ such that no two points of $\P$
are closer than distance~$2$ apart, and let $D$ be a fundamental
parallelotope for $\Lambda$. To obtain a correlation measure from
$\P$, we define $\mu_\P$ to be the Radon measure on $I_{2t, \R^n}$
characterized by
\[
\int_{I_{2t, \R^n}} f \, d\mu_\P = \frac{1}{\mathcal{L}_n(D)}\int_{D}
\sum_{\substack{S \subseteq \P+v\\|S| \le 2t}} f(S) \, dv
\]
for compactly supported, continuous functions $f \colon I_{2t,
  \R^n}\to \R$.  In other words, $\mu_\P|_{I_{=k}}$ is essential the
correlation function of order $k$ for $\P$. The purpose of averaging
over $v \in D$ is to make $\mu_\P$ translation-invariant.  The center
density $\widehat{\mu}_\P(0)$ is the usual center density of the
sphere packing $\P$, i.e., $N/\mathcal{L}_n(D)$ if $\P$ consists of
$N$ translates of $\Lambda$.

To show that $\mu_P$ is a correlation measure, all that remains is to
prove that $A_t^*\mu_\P$ is positive semidefinite. Let $K \colon I_t
\times I_t \to \R$ be a continuous, positive semidefinite kernel with
compact support. Then
\begin{align*}
\int_{I_{t, \R^n} \times I_{t, \R^n}} K \, dA_t^*\mu_P &= \int_{I_{2t,
    \R^n}} A_tK \, d\mu_P\\ &= \frac{1}{\mathcal{L}_n(D)}\int_{D}
\sum_{\substack{S \subseteq \P+v\\|S| \le 2t}} \sum_{\substack{J,J'
    \in I_t\\ J \cup J' = S}} K(J,J') \, dv\\ &=
\frac{1}{\mathcal{L}_n(D)}\int_{D} \sum_{\substack{J,J' \in I_t\\ J
    ,J' \subseteq \P+v}} K(J,J') \, dv,
\end{align*}
and
\[
\sum_{\substack{J,J' \in I_t\\ J ,J' \subseteq \P+v}} K(J,J') \ge 0
\]
because $K$ is positive semidefinite. (Note that this is a finite sum,
because $K$ has compact support.)

The main result of this section is that the quantity $\las'_t(\R^n)$
is the Euclidean limit of the packing bound function $\las'_t$.

\begin{theorem} \label{theorem:lasrn}
For each $n$ and $t$,
\[ \las'_t(\R^n) = \delta_{\las'_t, n}. \]
\end{theorem}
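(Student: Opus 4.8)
The plan is to prove the two inequalities $\las'_t(\R^n)\le\delta_{\las'_t,n}$ and $\las'_t(\R^n)\ge\delta_{\las'_t,n}$ separately. In both directions I would use Corollary~\ref{cubelimlas}, together with the fact that $\lim_{r\to\infty}\las'_t(rI^n)/r^n=\delta_{\las'_t,n}$, to identify $\delta_{\las'_t,n}=\lim_{r\to\infty}\las'^{,\topo}_t(rI^n)/r^n$.

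For the upper bound, I would take an arbitrary feasible correlation measure $\mu$ of order $2t$ on $I_{2t,\R^n}$ and restrict it to the compact subspace $I_{2t,rI^n}$ of configurations contained in $rI^n$, obtaining a finite measure $\mu_r$ (finite because $\mu$ is Radon and $rI^n$ is compact). Then $\mu_r\ge0$ and $\mu_r(\{\emptyset\})=1$, so the only thing to check is that $A_t^*\mu_r$ is positive semidefinite on $I_{t,rI^n}\times I_{t,rI^n}$. Since $I_{t,rI^n}$ is compact, this amounts to $\int_{I_{2t,rI^n}}A_t(h\otimes h)\,d\mu\ge0$ for each continuous $h\colon I_{t,rI^n}\to\R$. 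Now $A_t(h\otimes h)$ is supported on $I_{2t,rI^n}$ and agrees there with the quantity computed using the full operator $A_t$ on $I_{2t,\R^n}$ applied to the zero-extension $\widehat h$ of $h$, since any decomposition $J\cup J'=S$ with $S\subseteq rI^n$ automatically has $J,J'\subseteq rI^n$; hence the integral equals $\int_{I_{2t,\R^n}}A_t(\widehat h\otimes\widehat h)\,d\mu$. This is nonnegative because $\mu$ is feasible, modulo the nuisance that $\widehat h$ is only bounded Borel, not continuous; I would get around this by approximating $\widehat h$ pointwise by a uniformly bounded sequence of continuous, compactly supported functions and applying dominated convergence on the finite measure $\mu|_{I_{2t,(r+1)I^n}}$. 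Granting feasibility of $\mu_r$ for $\las'^{,\topo}_t(rI^n)$, we obtain $\widehat\mu(0)\,r^n=\mu_r(I_{=1})\le\las'^{,\topo}_t(rI^n)$; dividing by $r^n$, letting $r\to\infty$, and taking the supremum over $\mu$ gives $\las'_t(\R^n)\le\delta_{\las'_t,n}$.

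For the lower bound, I would choose near-optimal feasible measures $\mu_r\in\M_+(I_{2t,C_r})$ for $\las'^{,\topo}_t(C_r)$, where $C_r=[-r/2,r/2]^n$ is isometric to $rI^n$ (such measures exist by compactness), and translation-symmetrize them: define $\widetilde\mu_r$ on $I_{2t,\R^n}$ by $\int f\,d\widetilde\mu_r=r^{-n}\int_{C_r}\int_{I_{2t,C_r}}f(S+v)\,d\mu_r(S)\,dv$. Assuming $\{\widetilde\mu_r\}$ is vaguely precompact (see below), pass to a vaguely convergent subsequence $\widetilde\mu_r\to\mu$. One has $\widetilde\mu_r(\{\emptyset\})=1$ for every $r$; the $I_{=1}$-marginal of $\widetilde\mu_r$ tends vaguely, by a direct computation using $\las'^{,\topo}_t(C_r)/r^n\to\delta_{\las'_t,n}$, to $\delta_{\las'_t,n}$ times Lebesgue measure; and translating the argument of $\widetilde\mu_r$ alters its integral against a fixed test function only by a term supported in a boundary shell of relative volume $O(1/r)$. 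So $\mu$ is a nonnegative, translation-invariant Radon measure with $\mu(\{\emptyset\})=1$ and $\widehat\mu(0)=\delta_{\las'_t,n}$. Finally, for a continuous, compactly supported, positive semidefinite kernel $K$ on $I_{t,\R^n}$, we have $\int A_tK\,d\mu=\lim_r r^{-n}\int_{C_r}\big(\int_{I_{2t,C_r}}A_t(K_v)\,d\mu_r\big)\,dv$ with $K_v(J,J')=K(J+v,J'+v)$, and each inner integral equals $\int_{I_{t,C_r}\times I_{t,C_r}}K_v\,dA_t^*\mu_r\ge0$, since $K_v$ restricts to a positive semidefinite kernel on the compact space $I_{t,C_r}$ and $A_t^*\mu_r$ is positive semidefinite there. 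Hence $A_t^*\mu$ is positive semidefinite, $\mu$ is a correlation measure of order $2t$ with center density $\delta_{\las'_t,n}$, and $\las'_t(\R^n)\ge\delta_{\las'_t,n}$.

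I expect the main obstacle to be the vague precompactness needed in the lower bound: one must show that near-optimal feasible measures $\mu_r$ have $k$-point marginals bounded on each fixed bounded region by a constant independent of $r$, for $2\le k\le2t$, so that in the limit no mass escapes to infinity or piles up on the higher strata $I_{=k}$. This should follow from the positive semidefiniteness of $A_t^*\mu_r$, which forces the $k$-point mass on a small-diameter set to be controlled by its $1$-point mass --- essentially the marginal estimates already used in \cite{de2015semidefinite} --- but it is the step requiring real care. The only other delicate point, the discontinuity of zero-extensions in the upper-bound argument, is dispatched by the dominated-convergence approximation described above.
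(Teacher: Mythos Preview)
Your upper bound argument (restrict a correlation measure to $I_{2t,rI^n}$) matches the paper's, and you are in fact more careful than the paper about verifying that positive semidefiniteness survives restriction.

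Your lower bound takes a genuinely different route. The paper does not pass to a vague limit of translation-averaged measures; instead it works with \emph{finitely supported} measures (feasible for the discrete $\las'_t(rI^n)$ rather than $\las'^{,\topo}_t(rI^n)$) and uses an explicit tiling construction. One tiles $\R^n$ by translates of $(r+2)I^n$, places a copy of a near-optimal finitely supported $\mu_{rI^n}$ in the centered $rI^n$ inside each tile (so that distinct copies are separated by distance at least $2$), and defines a product measure $\mu_Q$ via $\mu_Q(\{S\})=\prod_i\mu_{rI^n}(\{g_i(S\cap Q_i)\})$. Because everything is atomic, showing that $A_t^*\mu_Q$ is positive semidefinite reduces to the fact that a tensor power of a positive semidefinite matrix is positive semidefinite (exactly the computation from the union axiom in Theorem~\ref{lasserresandwich}). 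Averaging over the compact torus $\R^n/(r+2)\Z^n$ then gives a translation-invariant correlation measure with center density $\mu_{rI^n}(I_{=1})/(r+2)^n\to\delta_{\las'_t,n}$. No compactness or limiting is needed.

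The obstacle you flag is real, and your proposed resolution does not work as stated: positive semidefiniteness of $A_t^*\mu_r$ yields \emph{lower} bounds on the higher marginals via Schur complements (for instance $2\mu_r(I_{=2})\ge\mu_r(I_{=1})^2-\mu_r(I_{=1})$), not upper bounds. In particular, for near-optimal $\mu_r$ one has $\mu_r(I_{=2,C_r})\gtrsim r^{2n}$, so after dividing by $r^n$ there is no evident reason the $I_{=2}$-stratum of $\widetilde\mu_r$ should stay locally bounded as $r\to\infty$, and the difficulty compounds at higher strata. (By contrast, the boundary issue for the $I_{=1}$-marginal, which you pass over, is easily handled: the restriction of $\mu_r$ to $I_{2t}$ of the width-$R$ boundary shell is itself feasible there, so the shell carries at most $\las'_t(\text{shell})=O(r^{n-1})$ one-point mass.) I do not see how to close the precompactness gap without either importing the paper's tiling idea or proving a nontrivial structure theorem allowing one to choose near-optimal $\mu_r$ with controlled higher marginals.
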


\begin{proof}
Because of Corollary~\ref{cubelimlas}, it suffices to check that
\[ \lim_{r \to \infty} \frac{\las'^{,\topo}_t(rI^n)}{r^n} = \las'_t(\R^n). \]
If $\mu$ is a correlation measure of order $2t$ (i.e., feasible for
$\las'_t(\R^n)$), then restricting $\mu$ to $I_{2t, rI^n}$ gives a
feasible measure for $\las'^{,\topo}_t(rI^n)$ with objective $r^n
\las'_t(\R^n)$.  This shows that
\[ \frac{\las'^{,\topo}_t(rI^n)}{r^n} \ge \las'_t(\R^n) \]
for any $r$.

To prove the other direction of the inequality, we use a tiling
construction.  In this case, we use $\las_t'$ instead of
$\las_t'^{,\topo}$. Equivalently, we restrict our attention to
feasible measures with finite support. Given such a measure for
$rI^n$, we can extend it to $\R^n$ as follows.

Consider a tiling $\R^n = \bigcup_{i \in \mathcal{T}} T_i$, where each
tile $T_i$ is congruent to $(r+2)I^n$ and the tiles are translates
under the cubic lattice $(r+2)\Z^n \subseteq \R^n$.  Let $Q_i
\subseteq T_i$ be the centered copy of $rI^n$ in $T_i$ (with $g_i
\colon Q_i \to rI^n$ performing this identification), so the induced
subgraph of $Q := \bigcup_{i \in \mathcal{T}} Q_i$ is a disjoint union
over $i \in \mathcal{T}$.  Let $\mu_{rI^n}$ be a finitely supported
measure on $I_{2t,rI^n}$ such that $\mu_{rI^n}(\{ \emptyset \}) = 1$.
We now define a measure $\mu_Q$ on $I_{2t,Q}$ as follows. For any $S
\in I_{2t,Q}$, let
\[ \mu_{Q}(\{ S \}) = \prod_{i \in \mathcal{T}} \mu_{rI^n}(\{ g_i(S \cap Q_i) \}). \]
This equation directly defines $\mu_Q$ for all single-element subsets
of $I_{2t,Q}$, and for all Borel subsets as an atomic measure. Note in
particular that for each $S$, all but finitely many factors in the
infinite product are $1$. Furthermore, for any compact set $C
\subseteq Q$, the measure $\mu_Q|_{I_{2t,C}}$ has finite support. We
extend $\mu_Q$ to a measure on $I_{2t,\R^n}$ by zero.

Next, we show that $A_t^*\mu_Q$ is positive semidefinite. In other
words,
\[
\iint_{I_{t,\R^n} \times I_{t,\R^n}} K \, dA_t^*\mu_Q \ge 0
\]
for every compactly supported, continuous, positive semidefinite
kernel $K \colon I_{t,\R^n} \times I_{t,\R^n} \to \R$. Every compact
subset of $I_{t,\R^n}$ is contained in $I_{t,C}$ for some compact
subset $C$ of $\R^n$, and thus only finitely many cubes $Q_i$ play a
role for any given $K$. Because $\mu_{rI^n}$ has finite support, what
we need to check is an assertion about positive semidefinite
matrices. Specifically, the relevant matrix for $A_t^* \mu_Q$ is a
tensor power of that for $A_t^* \mu_{rI^n}$, just as in the
verification of the union axiom in the proof of
Theorem~\ref{lasserresandwich}, and positive semidefiniteness is
therefore preserved.

All that remains is to average $\mu_Q$ under the action of $\R^n$ by
translation.  Because $\mu$ is already invariant under translation by
a lattice $(r+2) \Z^n$, we can average over the action of the quotient
torus, which is a compact group.  Therefore, the average is well
defined.

By construction, if $\mu_{rI^n}$ is feasible for $\las'_t(rI^n)$, then
the result $\mu$ after averaging is a translation-invariant measure on
$I_{2t, \R^n}$ that is feasible for $\las'_t(\R^n)$, with objective
\[ \frac{\mu_{rI^n}(I_{=1,rI^n})}{(r+2)^n}. \]
Letting $r$ be arbitrarily large and optimizing over all choices of
$\mu_{rI^n}$ gives the result.
\end{proof}

\begin{corollary}
For each $t$, $\las'_t(\R^n)$ is an upper bound on sphere packing, and
\[ \lim_{t\to\infty} \las'_t(\R^n) = \delta_{\pack,n}. \]
\end{corollary}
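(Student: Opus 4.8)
The plan is to bootstrap from three facts already established: $\las'_t$ is a packing bound function, so $\pack \le \las'_t$ by Corollary~\ref{cor:packlowerbd} and hence $\delta_{\pack,n} \le \delta_{\las'_t,n}$; the Euclidean limit of $\las'_t$ is exactly $\las'_t(\R^n)$ by Theorem~\ref{theorem:lasrn}; and the Lasserre hierarchy on a finite graph collapses at level equal to its independence number (de Laat and Vallentin). The first two facts immediately give the upper-bound assertion, $\las'_t(\R^n) = \delta_{\las'_t,n} \ge \delta_{\pack,n}$, so $\las'_t(\R^n)$ is an upper bound for the (center) density of sphere packing in $\R^n$. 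The content of the corollary is therefore the convergence $\lim_{t\to\infty}\las'_t(\R^n) = \delta_{\pack,n}$, which I would prove by squeezing; the lower estimate $\las'_t(\R^n) \ge \delta_{\pack,n}$ is the line above, so only $\limsup_{t\to\infty}\las'_t(\R^n) \le \delta_{\pack,n}$ requires argument.

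First I would record monotonicity: on a finite graph $H$ one has $\las'_t(H) = \las'^{,\topo}_t(H)$, which is nonincreasing in $t$, and taking suprema over finite induced subgraphs shows $\las'_{t+1}(C) \le \las'_t(C)$ for every bounded Borel $C$, hence $\delta_{\las'_{t+1},n} \le \delta_{\las'_t,n}$; in particular $\lim_{t\to\infty}\las'_t(\R^n) = \lim_{t\to\infty}\delta_{\las'_t,n}$ exists. I would also use the collapse in the form: for finite $H$ and $t \ge \alpha(H)$ we have $\las'_t(H) = \alpha(H)$, since $\las'_t(H) \ge \omega(\overline H) = \alpha(H)$ by the Lov\'asz sandwich bound, $\las'_{\alpha(H)}(H) = \alpha(H)$ by de Laat and Vallentin, and $\las'_t(H)$ is nonincreasing.

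The key step is the estimate on a fixed large cube. Fix $k$ and set $P := \pack(2^kI^n)$, which is finite. Viewing $2^kI^n$ as a discrete graph with edges between distinct points at distance less than $2$, every finite induced subgraph $H$ satisfies $\alpha(H) \le P$, because an independent set of $H$ is a packing in $2^kI^n$. Hence for $t \ge P$ every such $H$ has $\las'_t(H) = \alpha(H)$, and taking the supremum over $H$ — attained by the induced subgraph on a maximum packing — gives $\las'_t(2^kI^n) = P$. Since $\las'_t(2^jI^n)/2^{jn}$ is weakly decreasing in $j$ with infimum $\delta_{\las'_t,n}$, we obtain $\delta_{\las'_t,n} \le \pack(2^kI^n)/2^{kn}$ for all $t \ge P$, so $\limsup_{t\to\infty}\delta_{\las'_t,n} \le \pack(2^kI^n)/2^{kn}$. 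Letting $k \to \infty$ and invoking $\pack(2^kI^n)/2^{kn} \to \delta_{\pack,n}$, which is the definition of $\delta_{\pack,n}$ (Definition~\ref{def:Euclidean}), finishes the squeeze; translating back via Theorem~\ref{theorem:lasrn} yields $\lim_{t\to\infty}\las'_t(\R^n) = \delta_{\pack,n}$, which is the assertion of the corollary.

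The main obstacle is essentially a matter of using the collapse in the right form. One cannot directly apply the topological collapse $\las'^{,\topo}_{\alpha(C)}(C) = \alpha(C)$ to the bounded set $C = 2^kI^n$, because whether $\las'_t(C) = \las'^{,\topo}_t(C)$ is precisely the question left open in the paper. Instead one works entirely on the discrete side: apply the collapse to each finite induced subgraph $H$, where $\las'_t(H)$ unambiguously equals $\las'^{,\topo}_t(H)$ and the de Laat--Vallentin theorem applies verbatim, and then invoke the definition of $\las'_t$ on discrete graphs as the supremum over such $H$. Handling this passage to the limit carefully, together with the routine observation that $2^kI^n$ has finite clique covering number so that $\las'_t$ is even defined on it, is the only delicate point; everything else is immediate from the results already in hand.
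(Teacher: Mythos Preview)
Your proof is correct and follows essentially the same approach as the paper's: fix a large cube, use the de Laat--Vallentin collapse at level $t \ge \pack(2^kI^n)$ to obtain $\las'_t(2^kI^n) = \pack(2^kI^n)$, deduce $\delta_{\las'_t,n} \le \pack(2^kI^n)/2^{kn}$, and let $k \to \infty$. Your version is more carefully written, in particular the passage through finite induced subgraphs to justify the collapse on the discrete side, but the paper's two-sentence proof encodes the same argument.
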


\begin{proof}
For fixed $r$, we may choose $t$ sufficiently large so that
$\las'_t(rI^n) = \alpha(rI^n)$ and $r^{-n} \alpha(rI^n)$ gives an
upper bound for $\las'_t(\R^n)$.  As $r$ becomes large, this bound
will come arbitrarily close to the optimal sphere center density.
\end{proof}

We can formulate an optimization problem dual to $\las'_t(\R^n)$ as
follows.

\begin{definition}
Let $\las'_t(\R^n)^*$ be the infimum of $K(\emptyset,\emptyset)$ over
all continuous kernels $K \colon I_{t,\R^n} \times I_{t,\R^n} \to \R$
with compact support such that
\begin{enumerate}
    \item $K$ is positive semidefinite,
    \item $A_tK(S) \le 0$ whenever $|S| > 1$, and
    \item $\int_{I_{=1,\R^n}} A_tK \, d\mathcal{L}_n \le -1$,
\end{enumerate}
where we view Lebesgue measure $\mathcal{L}_n$ as a measure on
$I_{=1,\R^n}$ by identifying $I_{=1,\R^n}$ with $\R^n$. We call such a
$K$ an \emph{auxiliary function of order $2t$}.
\end{definition}

\begin{remark}
One unsatisfying feature of the above optimization problem is that the
kernel $K$ cannot be made invariant under the action of $\R^n$, as
that would require that $K(\emptyset, \{ x \})$ take a constant value,
contradicting the third condition in the definition.  To formulate a
dual problem in a way that allows for solutions invariant under the
group action, one could use the Schur complement formulation discussed
in Remark~\ref{remark:schurcomplement}, at the cost of complicating
the statement of the optimization problem.
\end{remark}

Let $\mu$ be a correlation measure and $K$ an auxiliary function, both
of order $2t$. Weak duality follows immediately from
\[
0 \le \int_{I_t \times I_t} K \, dA_t^*\mu = \int_{I_{2t}} A_tK \,
d\mu \le K(\emptyset,\emptyset) - \widehat{\mu}(0).
\]
The relationship with $\las'^{,\topo}_t$ is simple. Given any
correlation measure $\mu$ of order $2t$ for $\R^n$, restricting $\mu$
to $I_{2t,rI^n}$ gives a feasible measure for $\las_t'^{,\topo}(rI^n)$
with objective $r^n \widehat{\mu}(0)$, and letting $r \to \infty$
shows that
\[
\las_t'(\R^n) \le \delta_{\las'_t,n}
\]
by Corollary~\ref{cubelimlas} (this argument is the first part of the
proof of Theorem~\ref{theorem:lasrn}). Conversely, suppose $K$ is any
feasible kernel for $\las_t'^{,\topo}(rI^n)^*$. Then extending $r^{-n}
K$ by zero gives a feasible kernel for $\las_t'(\R^n)^*$ with
objective $r^{-n} K(\emptyset,\emptyset)$, and thus
\[
\las_t'(\R^n)^* \le \delta_{\las'_t,n}.
\]
By combining these inequalities with Theorem~\ref{theorem:lasrn}, we
obtain strong duality:

\begin{theorem}
For each $n$ and $t$,
\[ \las'_t(\R^n) = \las'_t(\R^n)^* = \delta_{\las_t',n}. \]
\end{theorem}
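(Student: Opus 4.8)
The plan is to read off the theorem from the three facts recorded just before its statement, with Theorem~\ref{theorem:lasrn} supplying the only substantive input; no new construction is needed, since the hard part --- the tiling argument behind $\las'_t(\R^n) = \delta_{\las'_t,n}$ --- has already been carried out.

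Concretely I would argue in three short steps. \emph{Step 1 (weak duality).} For any correlation measure $\mu$ and any auxiliary function $K$ of order $2t$, the displayed inequality $0 \le \int_{I_t \times I_t} K \, dA_t^*\mu = \int_{I_{2t}} A_tK \, d\mu \le K(\emptyset,\emptyset) - \widehat{\mu}(0)$ gives $\widehat{\mu}(0) \le K(\emptyset,\emptyset)$; taking the supremum over $\mu$ and the infimum over $K$ yields $\las'_t(\R^n) \le \las'_t(\R^n)^*$. \emph{Step 2 (the equality already proved).} Theorem~\ref{theorem:lasrn} gives $\las'_t(\R^n) = \delta_{\las'_t,n}$. \emph{Step 3 ($\las'_t(\R^n)^* \le \delta_{\las'_t,n}$).} Given a feasible kernel $K$ for $\las'^{,\topo}_t(rI^n)^*$, extend $r^{-n}K$ by zero to $I_{t,\R^n} \times I_{t,\R^n}$: it stays positive semidefinite, still satisfies $A_t(r^{-n}K)(S) \le 0$ for $|S| > 1$, and satisfies $\int_{I_{=1,\R^n}} A_t(r^{-n}K) \, d\mathcal{L}_n = r^{-n}\int_{rI^n} A_tK(\{x\}) \, d\mathcal{L}_n(x) \le r^{-n}\bigl(-\mathcal{L}_n(rI^n)\bigr) = -1$, using the pointwise bound $A_tK(\{x\}) \le -1$ on $rI^n$. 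Thus it is feasible for $\las'_t(\R^n)^*$ with objective $r^{-n}K(\emptyset,\emptyset)$, so $\las'_t(\R^n)^* \le r^{-n}\las'^{,\topo}_t(rI^n)^*$. Since $\las'_t$ is a packing bound function (Theorem~\ref{lasserresandwich}), Corollary~\ref{cubelimlas} gives $r^{-n}\las'^{,\topo}_t(rI^n) \to \delta_{\las'_t,n}$, and strong duality on the cube (Theorem~\ref{lasstrongduality}) lets us replace $\las'^{,\topo}_t(rI^n)$ by $\las'^{,\topo}_t(rI^n)^*$ here; letting $r \to \infty$ gives $\las'_t(\R^n)^* \le \delta_{\las'_t,n}$. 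Chaining Steps~1--3, $\delta_{\las'_t,n} = \las'_t(\R^n) \le \las'_t(\R^n)^* \le \delta_{\las'_t,n}$, so all three coincide.

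I expect no real obstacle in this argument itself: every inequality is either elementary or quoted. The genuine difficulty --- controlling the $(r+2)^n$-versus-$r^n$ loss from the tiling construction and the averaging over the quotient torus --- has been absorbed into the proof of Theorem~\ref{theorem:lasrn}. The only spots here that deserve a line of checking are that extension by zero preserves the three dual constraints (immediate for positive semidefiniteness and for the conditions on $A_tK(S)$, and a one-line integration over $rI^n$ for the normalization constraint), and the identification of the two cube limits, which is precisely Corollary~\ref{cubelimlas} together with Theorem~\ref{lasstrongduality}.
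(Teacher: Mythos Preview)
Your proposal is correct and follows exactly the same route as the paper: weak duality, Theorem~\ref{theorem:lasrn}, and the extension-by-zero of cube kernels to get $\las'_t(\R^n)^* \le \delta_{\las'_t,n}$, then chaining. You have in fact made explicit two details the paper leaves implicit---the one-line integration verifying the normalization constraint after rescaling, and the appeal to strong duality on the cube (Theorem~\ref{lasstrongduality}) to identify $\las'^{,\topo}_t(rI^n)^*$ with $\las'^{,\topo}_t(rI^n)$ before passing to the limit via Corollary~\ref{cubelimlas}.
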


Note that the optimum in $\las'_t(\R^n)^*$ will generally not be
achieved unless we broaden the class of auxiliary functions.

\begin{remark}
The Lasserre hierarchy bound $\las_1'(\R^n)$ is equivalent to the
linear programming bound, because
$\las_1'^{,\topo}=\vartheta'^{,\topo}$, as shown in
\cite[Theorem~3]{de2015semidefinite}. It is therefore sharp for $n =
1$, $8$, and $24$, and conjecturally for $n = 2$ (see
\cite{cohn2003new,viazovska2017,CKMRV2017}).  For which other pairs
$(n,t)$ might there be a sharp bound? It is unclear whether these
sharp bounds are a peculiar phenomenon for $t=1$, or whether we can
expect further cases with $t>1$. It is even conceivable that for each
dimension $n$, some finite value of $t$ yields a sharp bound.
\end{remark}

One hint that additional sharp bounds might be possible in Euclidean
space comes from the case of binary codes of block length~$20$ and
minimal distance~$8$. Gijswijt, Mittelmann, and Schrijver \cite{GMS}
obtained a sharp bound for the size of such a code using $\las_2'$
(see Section~VIII of their paper for the reduction to $\las_2'$).

\section*{Acknowledgements}

This paper is a spinoff from a larger project with David de Laat, with
whom we had numerous helpful discussions but who declined
coauthorship.  We also thank Austin Anderson, Alexander Reznikov,
Oleksandr Vlasiuk, and Edward White for pointing out reference
\cite{kolmogorovtikhomirov}.


\begin{thebibliography}{10}

\bibitem{alon} N.\ Alon, \emph{The {S}hannon capacity of a union},
  Combinatorica \textbf{18} (1998), no.~3, 301--310. \MR{1721946}
  \doi{10.1007/PL00009824}

\bibitem{ambrosiofuscopallara} L.\ Ambrosio, N.\ Fusco, and
  D.\ Pallara, \emph{Functions of bounded variation and free
  discontinuity problems}, Oxford Mathematical Monographs, The
  Clarendon Press, Oxford University Press, New York,
  2000.\ \MR{1857292}

\bibitem{ARVW} A.\ Anderson, A.\ Reznikov, O.\ Vlasiuk, and E.\ White,
  \emph{Polarization and covering on sets of low smoothness}, in
  preparation, 2021.

\bibitem{bachoc2012invariant} C.\ Bachoc, D.~C.\ Gijswijt,
  A.\ Schrijver, and F.\ Vallentin, \emph{Invariant semidefinite
  programs}, Handbook on semidefinite, conic and polynomial
  optimization (M.~F.\ Anjos and J.~B.\ Lasserre, eds.),
  Internat.\ Ser.\ Oper.\ Res.\ Management Sci., vol.\ 166, Springer,
  New York, 2012, pp.~219--269.  \arXiv{1007.2905} \MR{2894697}
  \doi{10.1007/978-1-4614-0769-0_9}

\bibitem{bachoc2008new} C.\ Bachoc and F.\ Vallentin, \emph{New upper
bounds for kissing numbers from semidefinite programming},
  J.\ Amer.\ Math.\ Soc.\ \textbf{21} (2008), no.~3,
  909--924. \arXiv{math/0608426} \MR{2393433}
  \doi{10.1090/S0894-0347-07-00589-9}

\bibitem{ghanem1998explicit} R.\ Ben~Ghanem and C.\ Frappier,
  \emph{Explicit quadrature formulae for entire functions of
  exponential type}, J.\ Approx.\ Theory \textbf{92} (1998), no.~2,
  267--279. \MR{1604935} \doi{10.1006/jath.1997.3122}

\bibitem{borodachov2007asymptotics} S.~V.\ Borodachov, D.~P.\ Hardin,
  and E.~B.\ Saff, \emph{Asymptotics of best-packing on rectifiable
  sets}, Proc.\ Amer.\ Math.\ Soc.\ \textbf{135} (2007), no.~8,
  2369--2380. \arXiv{math-ph/0605021} \MR{2302558}
  \doi{10.1090/S0002-9939-07-08975-7}

\bibitem{borodachov2019discrete} \bysame, \emph{Discrete energy on
rectifiable sets}, Springer Monographs in Mathematics, Springer, New
  York, 2019. \MR{3970999} \doi{10.1007/978-0-387-84808-2}

\bibitem{brassmoserpach} P.\ Brass, W.\ Moser, and J.\ Pach,
  \emph{Research problems in discrete geometry}, Springer, New York,
  2005. \MR{2163782} \doi{10.1007/0-387-29929-7}

\bibitem{cohnmeasuretheory} D.~L.\ Cohn, \emph{Measure theory}, second
  ed., Birkh\"{a}user Advanced Texts: Basler Lehrb\"{u}cher,
  Birkh\"{a}user/Springer, New York, 2013. \MR{3098996}
  \doi{10.1007/978-1-4614-6956-8}

\bibitem{cohn2002new} H.\ Cohn, \emph{New upper bounds on sphere
packings {II}}, Geom.\ Topol.\ \textbf{6} (2002),
  329--353. \arXiv{math.MG/0110010} \MR{1914571}
  \doi{10.2140/gt.2002.6.329}

\bibitem{cohn2018gaussian} H.\ Cohn and M.\ de~Courcy-Ireland,
  \emph{The {G}aussian core model in high dimensions}, Duke
  Math.\ J.\ \textbf{167} (2018), no.~13, 2417--2455.
  \arXiv{1603.09684} \MR{3855354} \doi{10.1215/00127094-2018-0018}

\bibitem{cohn2003new} H.\ Cohn and N.\ Elkies, \emph{New upper bounds
on sphere packings {I}}, Ann.\ of Math.\ (2) \textbf{157} (2003),
  no.~2, 689--714. \arXiv{math.MG/0110009} \MR{1973059}
  \doi{10.4007/annals.2003.157.689}

\bibitem{CKMRV2017} H.\ Cohn, A.\ Kumar, S.~D.\ Miller, D.\ Radchenko,
  and M.\ Viazovska, \emph{The sphere packing problem in dimension
  24}, Ann.\ of Math.\ (2) \textbf{185} (2017), no.~3,
  1017--1033. \arXiv{1603.06518} \MR{3664817}
  \doi{10.4007/annals.2017.185.3.8}

\bibitem{threepointbounds2021} H.\ Cohn, D.\ de~Laat, and A.\ Salmon,
  \emph{Three-point bounds for sphere packing}, in preparation, 2021.

\bibitem{cohn2014sphere} H.\ Cohn and Y.\ Zhao, \emph{Sphere packing
bounds via spherical codes}, Duke Math.\ J.\ \textbf{163} (2014),
  no.~10, 1965--2002. \arXiv{1212.5966} \MR{3229046}
  \doi{10.1215/00127094-2738857}

\bibitem{dymmckean} H.\ Dym and H.~P.\ McKean, \emph{Fourier series
and integrals}, Academic Press, New York-London, 1972. \MR{0442564}

\bibitem{federer2014geometric} H.\ Federer, \emph{Geometric measure
theory}, reprint of the 1969 ed., Classics in Mathematics,
  Springer-Verlag, 1996. \MR{0257325} \doi{10.1007/978-3-642-62010-2}

\bibitem{FejesToth} L.\ Fejes~T\'{o}th, \emph{Regular figures}, A
  Pergamon Press Book, The Macmillan Co., New York, 1964. \MR{0165423}

\bibitem{GMS} D.~C.\ Gijswijt, H.~D.\ Mittelmann, and A.\ Schrijver,
  \emph{Semidefinite code bounds based on quadruple distances}, IEEE
  Trans.\ Inform.\ Theory \textbf{58} (2012), no.~5,
  2697--2705. \arXiv{1005.4959} \MR{2952510}
  \doi{10.1109/TIT.2012.2184845}

\bibitem{gorbachev2000extremum} D.~V.\ Gorbachev, \emph{Extremum
problems for entire functions of exponential spherical type},
  Math.\ Notes \textbf{68} (2000), no.~2, 159--166.  \MR{1822646}
  \doi{10.1007/BF02675341}

\bibitem{gorbachev2001extremum} \bysame, \emph{Extremum problem for
periodic functions supported in a ball}, Math.\ Notes \textbf{69}
  (2001), no.~3, 313--319. \MR{1846833} \doi{10.1023/A:1010275206760}

\bibitem{Groemer} H.\ Groemer, \emph{Existenzs\"{a}tze f\"{u}r
{L}agerungen im {E}uklidischen {R}aum}, Math.\ Z.\ \textbf{81} (1963),
  260--278. \MR{163222} \doi{10.1007/BF01111546}

\bibitem{grozev1995quadrature} G.~R.\ Grozev and Q.~I.\ Rahman,
  \emph{A quadrature formula with zeros of {B}essel functions as
  nodes}, Math.\ Comp.\ \textbf{64} (1995), no.~210,
  715--725. \MR{1277767} \doi{10.2307/2153447}

\bibitem{Hales} T.~C.\ Hales, \emph{A proof of the {K}epler
conjecture}, Ann.\ of Math.\ (2) \textbf{162} (2005), no.~3,
  1065--1185. \MR{2179728} \doi{10.4007/annals.2005.162.1065}

\bibitem{HalesMcLaughlin} T.~C.\ Hales and S.\ McLaughlin, \emph{The
dodecahedral conjecture}, J.\ Amer.\ Math.\ Soc.\ \textbf{23} (2010),
  no.~2, 299--344. \arXiv{math/9811079} \MR{2601036}
  \doi{10.1090/S0894-0347-09-00647-X}

\bibitem{Flyspeck} T.\ Hales, M.\ Adams, G.\ Bauer, T.~D.\ Dang,
  J.\ Harrison, L.~T.\ Hoang, C.\ Kaliszyk, V.\ Magron,
  S.\ McLaughlin, T.~T.\ Nguyen, Q.~T.\ Nguyen, T.\ Nipkow, S.\ Obua,
  J.\ Pleso, J.\ Rute, A.\ Solovyev, T.~H.~A.\ Ta, N.~T.\ Tran,
  T.~D.\ Trieu, J.\ Urban, K.\ Vu, and R.\ Zumkeller, \emph{A formal
  proof of the {K}epler conjecture}, Forum Math.\ Pi \textbf{5}
  (2017), e2, 29.  \arXiv{1501.02155} \MR{3659768}
  \doi{10.1017/fmp.2017.1}

\bibitem{hardin2005minimal} D.~P.\ Hardin and E.~B.\ Saff,
  \emph{Minimal {R}iesz energy point configurations for rectifiable
  {$d$}-dimensional manifolds}, Adv.\ Math.\ \textbf{193} (2005),
  no.~1, 174--204. \arXiv{math-ph/0311024} \MR{2132763}
  \doi{10.1016/j.aim.2004.05.006}

\bibitem{HSV} D.\ Hardin, E.~B.\ Saff, and O.\ Vlasiuk,
  \emph{Asymptotic properties of short-range interaction functionals},
  preprint, 2020. \arXiv{2010.11937}

\bibitem{HJ} R.~A.\ Horn and C.~R.\ Johnson, \emph{Matrix analysis},
  second ed., Cambridge University Press, Cambridge,
  2013. \MR{2978290}

\bibitem{knuth} D.~E.\ Knuth, \emph{The sandwich theorem},
  Electron.\ J.\ Combin.\ \textbf{1} (1994), Article 1, 49
  pp. \arXiv{math/9312214} \MR{1269161} \doi{10.37236/1193}

\bibitem{kolmogorovselected} A.~N.\ Kolmogorov, \emph{Selected
works. {III}. {I}nformation theory and the theory of algorithms},
  Springer Collected Works in Mathematics, Springer, Dordrecht,
  2019. \MR{3822138} \doi{10.1007/978-94-017-2973-4}

\bibitem{kolmogorovtikhomirov} A.~N.\ Kolmogorov and
  V.~M.\ Tikhomirov, \emph{{$\varepsilon$}-entropy and
  {$\varepsilon$}-capacity of sets in function spaces}, Uspehi
  Mat.\ Nauk \textbf{14} (1959), no.~2 (86), 3--86. \MR{0112032}

\bibitem{de2018k} D.\ de~Laat, F.~C.\ Machado,
  F.~M.\ de~Oliveira~Filho, and F.\ Vallentin, \emph{$k$-point
  semidefinite programming bounds for equiangular lines},
  Math.\ Program., to appear. \arXiv{1812.06045}
  \doi{10.1007/s10107-021-01638-x}

\bibitem{de2015semidefinite} D.\ de~Laat and F.\ Vallentin, \emph{A
semidefinite programming hierarchy for packing problems in discrete
geometry}, Math.\ Program.\ \textbf{151} (2015), no.~2, Ser.\ B,
  529--553. \arXiv{1311.3789} \MR{3348162}
  \doi{10.1007/s10107-014-0843-4}

\bibitem{Lasserre} J.~B.\ Lasserre, \emph{An explicit equivalent
positive semidefinite program for nonlinear {$0$}-{$1$} programs},
  SIAM J.\ Optim.\ \textbf{12} (2002), no.~3, 756--769. \MR{1884916}
  \doi{10.1137/S1052623400380079}

\bibitem{Laurent} M.\ Laurent, \emph{A comparison of the
{S}herali-{A}dams, {L}ov\'{a}sz-{S}chrijver, and {L}asserre
relaxations for 0-1 programming}, Math.\ Oper.\ Res.\ \textbf{28}
  (2003), no.~3, 470--496. \MR{1997246}
  \doi{10.1287/moor.28.3.470.16391}

\bibitem{LaurentRendl} M.\ Laurent and F.\ Rendl, \emph{Semidefinite
programming and integer programming}, Discrete optimization
  (K.\ Aardal, G.\ Nemhauser, and R.\ Weismantel, eds.), Handbooks in
  Operations Research and Management Science, vol.~12, Elsevier, 2005,
  pp.~393--514. \doi{10.1016/S0927-0507(05)12008-8}

\bibitem{mattila1995geometric} P.\ Mattila, \emph{Geometry of sets and
measures in {E}uclidean spaces: fractals and rectifiability},
  Cambridge Studies in Advanced Mathematics, vol.~44, Cambridge
  University Press, Cambridge, 1995. \MR{1333890}
  \doi{10.1017/CBO9780511623813}

\bibitem{SadocMosseri} J.-F.\ Sadoc and R.\ Mosseri, \emph{Geometrical
frustration}, Collection Al\'{e}a-Saclay: Monographs and Texts in
  Statistical Physics, Cambridge University Press, Cambridge,
  1999. \MR{1716082} \doi{10.1017/CBO9780511599934}

\bibitem{siegel} C.~L.\ Siegel, \emph{\"{U}ber {G}itterpunkte in
convexen {K}\"{o}rpern und ein damit zusammenh\"{a}ngendes
{E}xtremalproblem}, Acta Math.\ \textbf{65} (1935), no.~1,
  307--323. \MR{1555407} \doi{10.1007/BF02420949}

\bibitem{SimonVol4} B.\ Simon, \emph{Operator theory}, A Comprehensive
  Course in Analysis, Part 4, American Mathematical Society,
  Providence, RI, 2015. \MR{3364494} \doi{10.1090/simon/004}

\bibitem{SteinWeiss} E.~M.\ Stein and G.\ Weiss, \emph{Introduction to
{F}ourier analysis on {E}uclidean spaces}, Princeton Mathematical
  Series, No.\ 32, Princeton University Press, Princeton, NJ,
  1971. \MR{0304972}

\bibitem{viazovska2017} M.~S.\ Viazovska, \emph{The sphere packing
problem in dimension 8}, Ann.\ of Math.\ (2) \textbf{185} (2017),
  no.~3, 991--1015. \arXiv{1603.04246} \MR{3664816}
  \doi{10.4007/annals.2017.185.3.7}

\end{thebibliography}
\end{document}